\newtheorem{theo}{Theorem}[section]
\newtheorem{lemm}[theo]{Lemma}
\newtheorem{rema}[theo]{Remark}
\numberwithin{equation}{section}
\begin{document}

\title [Electromagnetic field enhancement]{Electromagnetic field enhancement in
a subwavelength rectangular open cavity}

\author{Yixian Gao}
\address{School of Mathematics and Statistics, Center for Mathematics and
Interdisciplinary Sciences, Northeast Normal University, Changchun, Jilin
130024, P.R.China}
\email{gaoyx643@nenu.edu.cn}

\author{Peijun Li}
\address{Department of Mathematics, Purdue University, West Lafayette, IN 47907,
USA.}
\email{lipeijun@math.purdue.edu}

\author{Xiaokai Yuan}
\address{Department of Mathematics, Purdue University, West Lafayette, IN 47907,
USA.}
\email{yuan170@math.purdue.edu}

\thanks{The research of YG was supported in part by NSFC grant 11571065,
11671071, National Research program of China Grant 2013CB834100
and FRFCU2412017FZ005. The research of PL was supported in part by the NSF grant
DMS-1151308.}

\subjclass[2000]{45A05, 35C20, 35Q60, 35C15}

\keywords{Cavity scattering problem, electromagnetic field enhancement,
scattering resonances, Helmholtz equation, variational formulation,
boundary integral equation, asymptotic analysis.}

\begin{abstract}

Consider the transverse magnetic polarization of the electromagnetic scattering
of a plane wave by a perfectly conducting plane surface, which contains a
two-dimensional subwavelength rectangular cavity. The enhancement is
investigated fully for the electric and magnetic fields arising in such an
interaction. The cavity wall is assumed to be a perfect electric conductor,
while the cavity bottom is allowed to be either a perfect electric conductor
or a perfect magnetic conductor. We show that the significant field
enhancement may be achieved in both nonresonant and resonant regimes. The proofs
are based on variational approaches, layer potential techniques, boundary
integral equations, and asymptotic analysis. Numerical experiments are
also presented to confirm the theoretical findings.

\end{abstract}

\maketitle

\section{Introduction}

The electromagnetic scattering by open cavities has significant applications in
many scientific areas. For instance, the radar cross section (RCS) is a
quantity which measures the detectability of a target by a radar system.
Clearly, it is of high importance in military and civil use for a deliberate
control in the form of enhancement or reduction of the RCS of a target. Since
the cavity RCS can dominate the total RCS, it is critical to have a thorough
understanding of the electromagnetic scattering by a target, particularly a
cavity, for successful implementation of any desired control of its RCS. The
underlying scattering problems have received much attention by many researchers
in both the engineering and the mathematical communities. There has been a
rapid development of the mathematical theories and computational methods in this
area \cite{Ammari2002, BaoYun2016, BaoYunZhou2012}. We refer
to \cite{li2016survey} for a survey of open cavity scattering problems.

In optics, rough optical device surfaces containing subwavelength cavities
have created a lot of interest in electromagnetic field enhancement and
extraordinary optical transmission effects in recent years, due to
their important potential applications, such as in biological and chemical
sensing, spectroscopy, Terahertz semiconductor devices
\cite{astilean2000light, barbara2003electromagnetic, chen2013atomic,
ebbesen1998extraordinary, garcia2010light, Kriegsmann2004, seo2009terahertz,
sturman2010transmission, takakura2001optical, yang2002resonant}. The
amplification of the confined fields in the cavity can be strikingly high. These
amazing features of light localization and enhancement can be quite useful in
imaging, microscopy, spectroscopy, and communication \cite{sarrazin2007bounded,
liedberg1983surface}. However, there are still controversies on the mechanisms
which contribute to anomalous field enhancement \cite{garcia2010light}. The
complication arises from the multiscale nature of the structures and various
enhancement behaviors that it induces. For instance, the enhancement can be
attributed to surface plasmonic resonance  \cite{ebbesen1998extraordinary,
garcia2010light}, non-plasmonic resonances \cite{takakura2001optical,
yang2002resonant}, or even without the resonant effect \cite{lin2015}.

It is very helpful to provide a rigorous mathematical analysis for these
phenomena, which turns out to be extremely challenging. The particular light
patterns observed in the far-field or in the near-field may be the results of
complex multi-interactions of surface waves, cavity resonances, resonant
tunneling of plasmon waves, skin depth effects, etc. It requires the resolution
of the full three-dimensional Maxwell equations in non-smooth geometries even
for a qualitative description of the diffractive properties of
electromagnetic waves in subwavelength structures. For most resonance phenomena,
the localization or enhancement of light is very sensitive to geometrical
parameters and the frequency. This motivates us to begin with simple geometries,
where the analytical approaches are made possible to obtain an accurate
description of the electromagnetic field. In \cite{Bonnetier2010,
Bonnetier20102}, the field enhancement was considered for a single rectangle
cavity and double rectangle cavities. Using asymptotic expansions of Green's
function, they showed that the limiting Green function is a perfectly conducting
plane with a dipole in place of the cavity when the width of the cavity
shrinks to zero. In \cite{Clausel2006, Patrick2006, Joly2006}, an asymptotic
expansion method was proposed to study the solution of the Helmholtz equation in
a domain including a single thin slot. A quantitative analysis was made in
\cite{lin2015, linzhang2017} for the field enhancement of the Helmholtz equation
for a single narrow open slit. We refer \cite{AmmariZhang2015,
ammari2016minnaert} for the study of closely related Helmholtz resonators and
resonances in bubbly media.

In this paper, we consider the electromagnetic scattering of a plane wave by a
perfectly electrically conducting plane surface, which contains a subwavelength
rectangular cavity. More specifically, the cavity is assumed to be invariant in
the $x_3$-axis and only the transverse magnetic polarization (TM) is studied.
Hence the three-dimensional Maxwell equations can be reduced into the
two-dimensional Helmholtz equation. The cavity wall is assumed to be a perfect
electric conductor (PEC), and the cavity bottom is assumed to be either a
perfect magnetic conductor (PMC) or a perfect electric conductor (PEC). A cavity
is called the PEC-PMC type when it has a PEC wall and a PMC bottom, while a
cavity is called the PEC-PEC type when it has a PEC wall and a PEC bottom. We
investigate the electromagnetic field enhancement for both types of cavities.
Using a combination of variational approaches, layer potential techniques,
boundary integral equations, and asymptotic analysis, we demonstrate the field
enhancement mechanisms for the underlying scattering problems different boundary
conditions. Denote by $\lambda, \epsilon, d$ the wavelength of the incident
field, the width of the cavity, and the depth of the cavity, respectively. For
the PEC-PMC cavity, we prove in the nonresonant regime that the electric field
enhancement has an order $O(\lambda/d)$ and the magnetic field has no
enhancement in the cavity provided that the length scales satisfy $\epsilon\ll
d\ll \lambda$. Moreover, if $\epsilon\ll\lambda$, we show that the
Fabry--Perot type resonance occurs. In the resonant regime, we derive an
asymptotic expansion for the resonant frequencies and deduce an order
$O(1/\epsilon)$ enhancement for both the electric and magnetic fields. For the
PEC-PEC cavity, we carry the same analysis in both the nonresonant and resonant
regimes. The electric field is shown to have a weak enhancement of order
$O(\lambda\sqrt{\epsilon/d})$ in the nonresonant regime with length
scales satisfying $\epsilon\ll d\ll\lambda$. In the resonant regime, a similar
quantity analysis is done for the resonant frequencies and the same order
$O(1/\epsilon)$ is achieved for the electric and magnetic field enhancement.
These field enhancement results are summarized in Table \ref{tab}. Numerical
experiments are also done and used to confirm our theoretical findings.

\begin{table}
\centering
\caption{Electromagnetic field enhancement. $\epsilon$: width of the
cavity, $d$: depth of the cavity, $\lambda$: wavelength of the
incident field.}
\begin{tabular}{ | c | c | c | c | c | }
\hline
\hline
\multirow{2}{*}{enhancement}
 & \multicolumn{2}{c|}{PEC-PMC cavity} & \multicolumn{2}{c|}{PEC-PEC cavity}  \\
\cline{2-5}
 & nonresonant & resonant & nonresonant  & resonant \\
\hline
electric field  & $O(\lambda/d)$ & $O(1/\epsilon)$ &
$O(\lambda\sqrt{\epsilon/d})$ & $O(1/\epsilon)$\\
\hline
magnetic field & no & $O(1/\epsilon)$ & no
& $O(1/\epsilon)$
\\
\hline
\hline
\end{tabular}
\label{tab}
\end{table}

The rest of the paper is organized as follows. The PEC-PMC cavity is examined in
sections \ref{sec2}--\ref{sec3} and the PEC-PEC cavity is studied in sections
\ref{sec4}--\ref{sec5}. Sections \ref{sec2} and \ref{sec4} focus on the field
enhancement for the non-resonant case. In each section, the model problems are
introduced, the approximate problems are proposed, the error estimates are
derived between the solutions of the exact and approximate problems, and the
electric and magnetic field enhancement are discussed. Sections \ref{sec3} and
\ref{sec5} address the field enhancement for the resonant case. In each
section, the boundary integral equations are given, the asymptotic analysis is
carried for the Fabry--Perot resonances, and the electric and magnetic field
enhancement are presented under the resonant frequencies. The paper is concluded
with some general remarks and directions for future work in section \ref{sec6}.

\section{PEC-PMC cavity without resonance}\label{sec2}

In this section, we discuss the electromagnetic field enhancement for the
PEC-PMC cavity problem when the wavenumber is sufficiently small, i.e., no
resonance occurs.

\subsection{Problem formulation }\label{PF}

\begin{figure}
\centering
\includegraphics[width=0.45\textwidth]{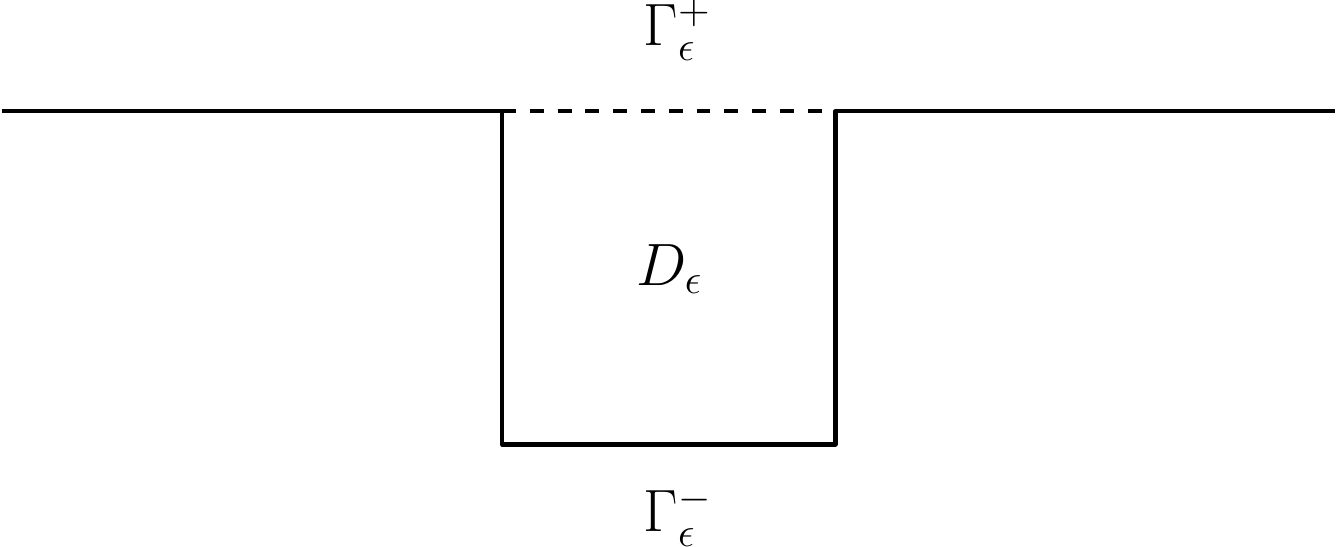}
\caption{The problem geometry of a rectangular open cavity.}
\label{fig_1}
\end{figure}

As is shown in Figure \ref{fig_1}, the problem geometry is assumed to
be invariant in the $x_3$-axis. Let $\boldsymbol x=(x_1, x_2)\in\mathbb R^2$.
Consider a two-dimensional rectangular open cavity $D_\epsilon=\{\boldsymbol
x\in\mathbb R^2: 0<x_1<\epsilon,\,-d<x_2<0\}$, where $\epsilon>0$ and $d>0$ are
constants. Denote by $\Gamma_\epsilon^+=\{\boldsymbol x\in\mathbb R^2:
0<x_1<\epsilon,\,x_2=0\}$ and $\Gamma_\epsilon^-=\{\boldsymbol x\in\mathbb R^2:
0<x_1<\epsilon,\,x_2=-d\}$ the open aperture and the bottom side of the cavity,
respectively. Let $\mathbb R^2_+=\{\boldsymbol x\in\mathbb R^2:
x_2>0\}$ and $\Gamma_0=\{\boldsymbol x\in\mathbb R^2: x_2=0\}$. Denote
$\Omega=\mathbb R^2_+\cup D_\epsilon$.

We assume that the cavity and the upper half-space are filled with the same
homogeneous medium. The electromagnetic wave propagation is governed by the
time-harmonic Maxwell equations (time dependence $e^{{- \rm i} \omega t}$):
\begin{equation}\label{me}
\nabla \times \boldsymbol E  - {\rm i } \omega \mu \boldsymbol H =0, \quad
\nabla \times \boldsymbol H + {\rm i} \omega \varepsilon \boldsymbol E =0
\quad \text {in} ~\Omega \times \mathbb R,
\end{equation}
where $\omega>0$ is the angular frequency, $\varepsilon>0$ is the electric
permittivity, $\mu>0$ is the magnetic permeability, $\boldsymbol E$ and
$\boldsymbol H$ are the electric field and the magnetic field, respectively.
Furthermore, we assume that the boundary $\partial \Omega \setminus
\Gamma_{\epsilon}^{-}$ is a perfect electrical conductor (PEC) while the
boundary $\Gamma_{\epsilon}^{-}$ is a perfect magnetic conductor (PMC). Hence
$(\boldsymbol E, \boldsymbol H )$ satisfy the following boundary conditions:
\begin{equation}\label{bc}
 \boldsymbol\nu \times  \boldsymbol E = 0 ~ \text{on}~ (\partial \Omega
\setminus \Gamma_\epsilon^-) \times \mathbb R, \quad \boldsymbol\nu \times
\boldsymbol H=0 ~ \text {on}~ \Gamma_\epsilon^- \times \mathbb R,
\end{equation}
where $\boldsymbol\nu = (\nu, 0)$ is the unit outward normal vector on
$\partial\Omega\times\mathbb R$, i.e., $\nu$ is the unit outward normal vector
on $\partial\Omega$.

We consider a polarized time-harmonic electromagnetic  wave $(\boldsymbol E^{\rm
inc}, \boldsymbol H^{\rm inc})$ that impinges on the cavity from above. In the
transverse magnetic polarization (TM), the magnetic field is parallel to the
$x_3$-axis, which implies that the incident magnetic wave $\boldsymbol H^{\rm
inc} = (0, 0, u^{\rm inc})$. Here $u^{\rm inc}(\boldsymbol x)= e^{{\rm i} \kappa
\boldsymbol x\cdot\boldsymbol d}$ is a plane wave propagating in the direction
$\boldsymbol d = (\sin\theta, -\cos \theta)$, where $ -\frac{\pi}{2} < \theta  <
\frac{\pi}{2}$ is the incident angle and $\kappa=\omega\sqrt{\varepsilon\mu}$
is the wavenumber. The corresponding incident electric field
$\boldsymbol E^{\rm inc}$ is determined by Ampere's law $\boldsymbol E^{\rm inc}
= {\rm i} (\omega \varepsilon )^{-1} \nabla \times \boldsymbol H^{\rm
inc}=\sqrt{\mu/\varepsilon}(\cos\theta, \sin\theta, 0)e^{{\rm
i}\kappa\boldsymbol x\cdot\boldsymbol d}.$
Denote by $\lambda=2\pi/\kappa$ the wavelength of the incident wave. In
this section, we assume that the length scale of the underlying geometry
satisfies $\epsilon  \ll d \ll \lambda.$

Let $\boldsymbol H = (0, 0, u_{\epsilon})$ be the total magnetic field. It can
be verified from \eqref{me}--\eqref{bc} that $u_\epsilon$ satisfies
\begin{align}\label{hlme}
\begin{cases}
\Delta u_{\epsilon }+\kappa^2 u_\epsilon = 0 \quad
&\text {in}~\Omega,\\
\partial_\nu u_{\epsilon} =0 \quad  &\text {on}~ \partial \Omega \setminus
\Gamma_\epsilon^{-} ,\\
u_{\epsilon}=0 \quad  & \text {on}~ \Gamma_\epsilon^{-}.
 \end{cases}
\end{align}
Denote by $u^{\rm r}$ the reflected field due to the interaction between the
incident field $u^{\rm inc}$ and the PEC plane $\Gamma_0$. It can be
shown that $u^{\rm ref}(\boldsymbol x) = e ^{{\rm i} \kappa \boldsymbol
x\cdot\boldsymbol d'}$, where $\boldsymbol d' =
(\sin\theta, \cos \theta).$ In $\mathbb R_2^{+}$, the total field $u_{\epsilon
}$ consists of the incident wave $u^{\rm inc},$ the reflected wave $u^{\rm
ref}$, and the scattered field $u_{\epsilon }^{\rm sc}$. The scattered fields
$u_{\epsilon}^{\rm sc} $ is required to satisfy the Sommerfeld radiation
condition:
\begin{align}\label{rdc}
\lim\limits_{r \rightarrow \infty} \sqrt r \left( \partial_r u_{\epsilon }^{\rm
sc} - {\rm i} \kappa u_{\epsilon }^{\rm sc} \right)=0, \quad r = |\boldsymbol
x|.
\end{align}

\subsection{An approximate model}

We introduce an approximate model for the problem \eqref{hlme}--\eqref{rdc}
in order to estimate  the field enhancement inside the cavity.

Let
\[
\phi_0 (x_1) = \frac{1}{ \sqrt {\epsilon}}, \quad \phi_n (x_1) = \sqrt
{\frac{2}{ \epsilon }} \cos \left( \frac{n \pi x_1}{\epsilon } \right),~ n \geq
1
\]
be an orthonormal basis on the interval $(0, {\epsilon }).$ It follows from
the Neumann boundary condition in \eqref{hlme} that $u_{\epsilon} $ can be
expanded as the series of waveguide modes:
\begin{align}\label{wg}
u_{\epsilon } (\boldsymbol x) = \sum\limits_{n =0}^{\infty} \left(  \alpha_n^{+}
e^{ -{\rm i} \beta_n x_2} + \alpha_n^{-} e^{{\rm i} \beta_n (x_2 + d)}\right)
\phi_n (x_1), \quad \boldsymbol x\in D_\epsilon,
\end{align}
where the coefficients $\beta_n$ are defined as
\begin{align}\label{Kpp}
 \beta_n =
 \begin{cases}
  \kappa, \quad & n=0,\\
  {\rm i} \sqrt{({n \pi}/{\epsilon})^2 -\kappa^2}, \quad & n \geq 1.
 \end{cases}
\end{align}
If ${\epsilon }$ is small enough (in fact  it only needs  $\epsilon  <
\lambda/2 $ here), it follows that $\beta_n$ is are pure
imaginary numbers for all $n \geq 1$. It is easy to note that for each $n$, if
$ n \pi /{\epsilon }  \leq  \kappa$,  the series consists of two propagating
wave modes traveling upward and downward respectively; if $ n \pi / \epsilon
> \kappa$ the series consists of two evanescent wave modes decaying
exponentially away from the bottom side and the open aperture of the cavity,
respectively.

Using the series \eqref{wg}, we may reformulate \eqref{hlme}--\eqref{rdc} into
the following coupled problem:
\begin{align}\label{eqe}
\begin{cases}
\Delta u_{\epsilon}+ \kappa^2 u_{\epsilon }=0 \quad  &\text
{in}~\mathbb R_2^+,\\
u_{\epsilon} = \sum\limits_{n =0}^{\infty} \left(  \alpha_n^{+}
e^{ -{\rm i} \beta_n x_2} + \alpha_n^{-} e^{{\rm i} \beta_n (x_2 + d)}\right)
\phi_n (x_1) \quad & \text {in}~D_{\epsilon},\\
\partial_\nu u_{\epsilon}=0 \quad & \text {on}~ \partial \Omega \setminus
\Gamma^-_{\epsilon} ,\\
u_{\epsilon } =0 \quad & \text {on}~\Gamma_{\epsilon}^{-},\\
u_{\epsilon } (x_1, 0+) =u_{\epsilon} (x_1, 0-),~\partial_{x_2}  u_{\epsilon}
(x_1, 0+)=\partial_{x_2}  u_{\epsilon} (x_1, 0-) \quad &\text{on}~
\Gamma_\epsilon^+,\\
\lim\limits_{r \rightarrow \infty} \sqrt r \left( \partial_r u_{\epsilon}^{\rm
s} - {\rm i} \kappa u_{\epsilon}^{\rm s} \right)=0 \quad  &\text {in}~\mathbb
R_2^+,
\end{cases}
\end{align}
where $u_\epsilon^{\rm sc} = u_{\epsilon} -(u^{\rm inc} +u^{\rm ref})$ in
$\mathbb R_2^+$. The continuity conditions are imposed along
the open aperture $\Gamma_{\epsilon}^+$, where $ 0+$ and $0-$ indicate the
limits taken from above and below $\Gamma_{\epsilon}^+$, respectively.

For simplicity of notation, we define $u_{\epsilon}^+ (x_1)= u_{\epsilon} (x_1,
0)$ and $ u_{\epsilon}^{-} (x_1)=u_{\epsilon} (x_1, -d)$. Let $u_{{\epsilon},
n}^{\pm}=\langle u_{\epsilon}^{\pm}, \phi_n \rangle_{\Gamma_{\epsilon}^{\pm}}$
be the Fourier coefficients for $u_{\epsilon}^+$ and $u_{\epsilon}^{-}$,
respectively, where the inner product $\langle \cdot, \cdot
\rangle_{\Gamma_{\epsilon}^{\pm}}$ is defined by
\begin{align*}
\langle  u_{\epsilon}^{\pm},
\phi_n\rangle_{\Gamma_{\epsilon}^{\pm}}:=\int_0^{\epsilon} u_{\epsilon}^{\pm}
(x_1) \bar \phi_n (x_1) {\rm d} x_1.
\end{align*}
For any $s \in \mathbb R$, denote by $H^s (\Gamma_{\epsilon}^+)$ the
trace functional space on $\Gamma_{\epsilon}^+$ with the norm given by
\begin{align}\label{snm}
\|u_{\epsilon}^+\|^2_{H^s (\Gamma_{\epsilon}^+)}=\sum\limits_{n=0}^{\infty}
\left(1+ \left( \frac{n \pi}{\epsilon} \right)^2 \right)^s |u_{{\epsilon},
n}^+|^2.
\end{align}

Matching the series \eqref{wg} on $\Gamma_{\epsilon}^+, \Gamma_{\epsilon}^-$
and using the continuity conditions in \eqref{eqe}, we get
\begin{align*}
\begin{cases}
 \alpha_n^+ +\alpha_n^{-} e^{{\rm i} \beta_n d}= u_{{\epsilon}, n}^+,\\
 \alpha_n^+ e^{{\rm i} \beta_n d} + \alpha_n^{-}= u_{ {\epsilon}, n}^{-}.
 \end{cases}
\end{align*}
Solving the above equations by Cramer's rule yields that
\begin{align}\label{ec}
\alpha_n^+= \frac{e^{{\rm i} \beta_n d} u_{{\epsilon}, n}^{-} -u_{{\epsilon},
n}^+}{ e^{{\rm i} 2 \beta_n d}-1},\quad
\alpha_n^-= \frac{e^{{\rm i} \beta_n d} u_{{\epsilon}, n}^{+} -u_{{\epsilon},
n}^-}{ e^{{\rm i} 2 \beta_n d}-1}.
\end{align}
Therefore, in the cavity $D_\epsilon$, the total field
\begin{align}\label{wf}
 u_{\epsilon} (\boldsymbol x) =\sum\limits_{n =0}^{\infty} \left(  \frac{e^{{\rm
i} \beta_n d} u_{{\epsilon}, n}^{-} -u_{{\epsilon}, n}^+}{ e^{{\rm i} 2 \beta_n
d}-1} e^{ -{\rm i} \beta_n x_2} + \frac{e^{{\rm i} \beta_n d} u_{{\epsilon},
n}^{+} -u_{{\epsilon}, n}^-}{ e^{{\rm i} 2 \beta_n d}-1} e^{{\rm i} \beta_n (x_2
+ d)}\right) \phi_n (x_1).
\end{align}
Since $u_\epsilon$ satisfies the homogeneous Dirichlet boundary condition on
$\Gamma_{\epsilon}^{-}$, we have
\begin{align}
\partial_{x_2} u_{\epsilon} (x_1, 0-)&
=\sum\limits_{n=0}^{\infty} {\rm i }
\beta_n \left( \frac{e^{{\rm i} \beta_n d} u_{{\epsilon}, n}^{+} -u_{{\epsilon},
n}^-}{ e^{{\rm i} 2 \beta_n d}-1} e^{{\rm i} \beta_n  d}-\frac{e^{{\rm i}
\beta_n d} u_{{\epsilon}, n}^{-} -u_{{\epsilon}, n}^+}{ e^{{\rm i} 2 \beta_n
d}-1}\right)\phi_n (x_1) \quad \text {on}~\Gamma_{\epsilon}^+, \label{td}\\
 u_{\epsilon } (x_1, -d)& =\sum\limits_{n=0}^{\infty}
 \left(\frac{e^{{\rm i} \beta_n d} u_{{\epsilon}, n}^{-} -u_{{\epsilon}, n}^+}{
e^{{\rm i} 2 \beta_n d}-1} e^{ {\rm i} \beta_n  d}+ \frac{e^{{\rm i} \beta_n d}
u_{{\epsilon}, n}^{+} -u_{{\epsilon}, n}^-}{ e^{{\rm i} 2 \beta_n d}-1}
 \right)\phi_n (x_1)=0 \quad \text {on}~\Gamma_{\epsilon}^{-}. \label{ttd}
\end{align}
It follows from \eqref{ttd} that
\begin{align}\label{uf}
 u_{{\epsilon}, n}^{-} = 0.
\end{align}
Substituting \eqref{uf} into \eqref{td} yields
\begin{align}\label{tnd}
 \partial_{x_2} u_{\epsilon} (x_1, 0-) =  \sum\limits_{n=0}^{\infty} {\rm i }
\beta_n  \frac{e^{{\rm i} 2 \beta_n d} +1}{e^{{\rm i} 2 \beta_n d} -1}
u_{{\epsilon}, n}^+ \phi_n (x_1)
 \quad \text {on}  ~\Gamma_{\epsilon}^+.
\end{align}

Given a function $u\in H^1 (\mathbb R_2^+)$ and let $u ^{+}= u  (x_1, 0)$,
we define a Dirichlet-to-Neumann (DtN) operator:
\begin{align}\label{dtn}
 \mathscr B^{\rm PMC} [u] = \sum\limits_{n=0}^{\infty} {\rm i } \beta_n
\frac{e^{{\rm i} 2 \beta_n d} +1}{e^{{\rm i} 2 \beta_n d} -1} u_{ n}^+ \phi_n
(x_1)\quad\text{on}~\Gamma_\epsilon^+,
\end{align}
where the Fourier coefficients $u_{n}^+ = \langle u^{+}, \phi_n
\rangle_{\Gamma_{\epsilon}^+}.$

\begin{lemm}\label{tth}
The DtN operator $\mathscr B^{\rm PMC}$ is bounded from $H^{1/2}
(\Gamma_{\epsilon}^+) \rightarrow  H^{-1/2} (\Gamma_{\epsilon}^+)$, i.e.,
\begin{align*}
\|\mathscr B^{\rm PMC} [u] \|_{H^{-1/2} (\Gamma_{\epsilon}^+)} \leq C (\lambda,
d) \|u\|_{H^{1/2} (\Gamma_{\epsilon}^+)}\quad \forall u \in
{H^{1/2} (\Gamma_{\epsilon}^+)},
\end{align*}
where the constant $C (\lambda, d) =\max \left\{4,  \frac{\kappa} {|\sin \kappa
d |} \right\}$.
\end{lemm}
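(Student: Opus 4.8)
The plan is to exploit the diagonal (Fourier-multiplier) structure of $\mathscr B^{\rm PMC}$ in the orthonormal basis $\{\phi_n\}$ and reduce the operator bound to a single scalar estimate, mode by mode. Writing the symbol as $m_n:={\rm i}\beta_n\frac{e^{{\rm i}2\beta_n d}+1}{e^{{\rm i}2\beta_n d}-1}$, so that $\mathscr B^{\rm PMC}[u]=\sum_{n\ge0}m_n u_n^+\phi_n$ by \eqref{dtn}, the definition \eqref{snm} of the norms on $H^{\pm1/2}(\Gamma_\epsilon^+)$ gives
\[
\|\mathscr B^{\rm PMC}[u]\|_{H^{-1/2}(\Gamma_\epsilon^+)}^2=\sum_{n=0}^\infty\Bigl(1+(n\pi/\epsilon)^2\Bigr)^{-1/2}|m_n|^2|u_n^+|^2 .
\]
Hence it suffices to prove $|m_n|\le C(\lambda,d)\bigl(1+(n\pi/\epsilon)^2\bigr)^{1/2}$ for every $n\ge0$; summing this against $\bigl(1+(n\pi/\epsilon)^2\bigr)^{1/2}|u_n^+|^2$ then yields the lemma at once. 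So the whole proof comes down to estimating the two families of multipliers dictated by \eqref{Kpp}: the single propagating mode $n=0$ and the evanescent modes $n\ge1$.

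For $n=0$ one has $\beta_0=\kappa$, and I would simplify $\frac{e^{{\rm i}2\kappa d}+1}{e^{{\rm i}2\kappa d}-1}=\frac{\cos\kappa d}{{\rm i}\sin\kappa d}$ (multiply numerator and denominator by $e^{-{\rm i}\kappa d}$ and use Euler's formula), which gives $m_0=\kappa\cot\kappa d$ and therefore $|m_0|\le\kappa/|\sin\kappa d|\le C(\lambda,d)$ — exactly matching the factor $(1+0)^{1/2}=1$. The hypothesis $d\ll\lambda$ of this section guarantees $0<\kappa d<\pi$, so $\sin\kappa d\ne0$; the statement is vacuous otherwise, since then $C(\lambda,d)=\infty$.

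The real work is the evanescent band $n\ge1$. Since $\epsilon\ll d\ll\lambda$ forces $\epsilon<\lambda/2$, \eqref{Kpp} gives $\beta_n={\rm i}\gamma_n$ with $\gamma_n:=\sqrt{(n\pi/\epsilon)^2-\kappa^2}>0$, so ${\rm i}\beta_n=-\gamma_n$ and $e^{{\rm i}2\beta_n d}=e^{-2\gamma_n d}\in(0,1)$; a one-line manipulation then identifies $m_n=\gamma_n\frac{1+e^{-2\gamma_n d}}{1-e^{-2\gamma_n d}}=\gamma_n\coth(\gamma_n d)>0$. The crucial elementary fact I would isolate is $\coth t\le1+1/t$ for $t>0$, which follows from $\coth t-1=\frac{2}{e^{2t}-1}\le\frac{2}{2t}=\frac1t$ using $e^{2t}-1\ge2t$. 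Hence $|m_n|\le\gamma_n+1/d$, and I would bound each term by $\bigl(1+(n\pi/\epsilon)^2\bigr)^{1/2}$: first $\gamma_n< n\pi/\epsilon\le\bigl(1+(n\pi/\epsilon)^2\bigr)^{1/2}$, and second, because $n\ge1$, $1/d\le\frac{\epsilon}{\pi d}\cdot\frac{\pi}{\epsilon}\le\frac{\epsilon}{\pi d}\bigl(1+(n\pi/\epsilon)^2\bigr)^{1/2}$, where the scale separation $\epsilon\ll d$ (so $\epsilon/(\pi d)\le3$, say) makes the prefactor harmless; adding the two gives $|m_n|\le4\bigl(1+(n\pi/\epsilon)^2\bigr)^{1/2}\le C(\lambda,d)\bigl(1+(n\pi/\epsilon)^2\bigr)^{1/2}$.

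The only delicate point is this last step for $n\ge1$: recognizing the symbol as $\gamma_n\coth(\gamma_n d)$, controlling $\coth$ uniformly from above by $1+1/(\gamma_n d)$, and then absorbing the leftover $1/d$ into $\bigl(1+(n\pi/\epsilon)^2\bigr)^{1/2}$ — it is precisely here that the length-scale hypothesis $\epsilon\ll d$ is used and where the constant $4$ in $C(\lambda,d)=\max\{4,\kappa/|\sin\kappa d|\}$ originates. The $n=0$ mode is immediate once $m_0=\kappa\cot\kappa d$ is written down, and the passage from the mode-wise bound back to the operator norm is just the computation with \eqref{snm} recorded above.
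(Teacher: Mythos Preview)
Your proof is correct and follows essentially the same route as the paper's: both reduce the operator bound to a mode-wise multiplier estimate, handle $n=0$ by rewriting $m_0=\kappa\cot\kappa d$, and for $n\ge1$ bound the symbol by $4\bigl(1+(n\pi/\epsilon)^2\bigr)^{1/2}$. The only cosmetic differences are that the paper frames the argument via the duality pairing $\langle\mathscr B^{\rm PMC}[u],w\rangle$ and Cauchy--Schwarz rather than computing the $H^{-1/2}$ norm directly, and for $n\ge1$ it bounds the two factors separately (asserting $\bigl|\tfrac{e^{-2\gamma_n d}+1}{e^{-2\gamma_n d}-1}\bigr|\le4$ without proof), whereas your identification $m_n=\gamma_n\coth(\gamma_n d)$ together with $\coth t\le1+1/t$ supplies the missing justification and is cleaner.
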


\begin{proof}
 For any $u, w \in H^{1/2} (\Gamma_{\epsilon}^+)$, they have the
Fourier series expansions
\begin{align*}
u =\sum\limits_{n=0}^{\infty} u_{n}^+ \phi_{n} (x_1), \quad  w
=\sum\limits_{n=0}^{\infty} w_{n}^+ \phi_{n} (x_1),
\end{align*}
where $u_{n}^+ = \langle u, \phi_{n} \rangle_{\Gamma_{\epsilon}^+}$ and
$w_{n}^+ = \langle w, \phi_{n} \rangle_{\Gamma_{\epsilon}^+}$. It follows from
\eqref{dtn} that
\begin{align*}
\langle \mathscr B^{\rm PMC} [u], w \rangle_{\Gamma_{\epsilon}^+}
=\sum\limits_{n=0}^{\infty} {\rm i} \beta_n \frac{e^{{\rm i} 2 \beta_n d}
+1}{e^{{\rm i} 2 \beta_n d} -1} u_n^+ \bar {w}_n^+.
\end{align*}
Note that the coefficients $\beta_n$ are given by \eqref{Kpp}. For $n=0$, a
straightforward calculation yields that
\begin{align}\label{eq1}
\left|{\rm i} \kappa \frac{e^{{\rm i}2 \kappa  d} +1}{e^{{\rm i}2 \kappa d} -1}
u_0^+{ \bar w}_{0}^+ \right | = \frac{\kappa |\cos \kappa d| }{ |\sin  \kappa
d|} |u_0^+||{ \bar w}_{0}^+| \leq  \frac{ \kappa }  {  |\sin \kappa d|}
|u_0^+||{ \bar w}_{0}^+|,
\end{align}
where we use the fact that $d \ll \lambda$, which implies $\kappa d  =
\frac{2 \pi d }{\lambda} $ tends to zero and $ \cos( \kappa d )\geq
1/2$. For $n \geq 1$, using the fact that $\epsilon \ll \lambda$, we get
\begin{align}\label{eq2}
 \left|{\rm i} \beta_n
 \frac{e^{{\rm i} 2 \beta_n d} +1}{e^{{\rm i} 2 \beta_n d} -1} u_n^+ \bar {w}_n^+ \right|
 \leq  4 \frac{n \pi}{\epsilon} |u_n^+| | \bar {w}_n^+|
 \leq 4  \left( 1 + \left(\frac{n \pi}{\epsilon} \right)^2 \right)^{1/2} |u_n^+|
 \left( 1 + \left(\frac{n \pi}{\epsilon} \right)^2 \right)^{1/2} | \bar w_n^+|,
\end{align}
where we use the estimates
\begin{align*}
|{\rm i} \beta_n|= \left|- \sqrt{\left( \frac{ n\pi}{\epsilon} \right)^2
-\kappa^2} \right| \leq \frac{n \pi}{\epsilon}, \quad \left|\frac{e^{{\rm i} 2
\beta_n d} +1}{e^{{\rm i} 2 \beta_n d} -1}  \right| = \left| \frac{e^{- 2
\sqrt{(n \pi/{\epsilon})^2 -\kappa^2} d} +1}{e^{- 2 \sqrt{(n \pi/{\epsilon})^2
-\kappa^2} d} -1} \right| \leq 4.
\end{align*}
Combining \eqref{eq1}--\eqref{eq2} and using \eqref{snm}, we obtain from the
Cauchy--Schwarz inequality that
\begin{align*}
 \left|\langle \mathscr B^{\rm PMC} [u], w \rangle_{\Gamma_{\epsilon}^+} \right|
\leq C (\lambda, d)\|u\|_{H^{1/2} (\Gamma_{\epsilon}^+)}\|w\|_{H^{1/2}
(\Gamma_{\epsilon}^+)},
\end{align*}
which completes the proof.
\end{proof}

Using \eqref{tnd}--\eqref{dtn} and the continuity of $\partial_{x_2}
u_{\epsilon}$ on $\Gamma^+_\epsilon$, we obtain the transparent
boundary condition (TBC):
\begin{align*}
 \partial_{x_2} u_{\epsilon} =\mathscr B^{\rm PMC}  [u_{\epsilon}] \quad \text
{on}~\Gamma_{\epsilon}^{+}.
\end{align*}
Hence the problem \eqref{hlme}--\eqref{rdc} can be reduced to the following
boundary value problem:
\begin{align}\label{rdp}
\begin{cases}
\Delta u_{\epsilon} + \kappa^2 u_{\epsilon}=0 \quad  & \text {in}~\mathbb
R^2_+,\\
\partial_\nu u_{\epsilon} =0 \quad  &\text {on}~  \Gamma_0\setminus
\Gamma_{\epsilon}^+,\\
\partial_{x_2} u_{\epsilon} = \mathscr B^{\rm PMC} [u_{\epsilon}] \quad & \text
{on}~\Gamma_{\epsilon}^+,\\
\lim\limits_{r \rightarrow \infty} \sqrt r \left( \partial_r u_{\epsilon}^{\rm
sc} - {\rm i} \kappa u_{\epsilon}^{\rm sc} \right)=0 \quad  &\text {in}~\mathbb
R^2_+.
\end{cases}
\end{align}
By calculating the Fourier coefficients $u_{{\epsilon}, n}^+$, which
give the coefficients $u_{{\epsilon}, n}^-$ by \eqref{uf}, we may obtain the
solution in the cavity $D_{\epsilon}$ from the formula \eqref{wf}.

To find an approximate model to \eqref{rdp}, we examine the series \eqref{wg}
more closely. Note that if $\epsilon\ll \lambda$, the wave modes $e^{-{\rm i}
\beta_n x_2} \phi_n (x_1)$ and $e ^{{\rm i} \beta_n (x_2 + d)} \phi_n(x_1)$
decay exponentially in the cavity, with a decaying rate of $O
(e^{-n/{\epsilon}})$ for all $n \geq 1$. Only the leading wave modes $e^{-{\rm
i} \kappa x_2} \phi_0 (x_1)$ and $e^{{\rm i} \kappa (x_2 +d)} \phi_0 (x_1)$
propagate in the cavity. The observation motivates us to approximate the DtN map
\eqref{dtn} by dropping the high order modes and define an approximate DtN
map by
\begin{align}\label{adtn}
\mathscr B_0^{\rm PMC} [v]= {\rm i }  \kappa  \frac{e^{{\rm i} 2 \kappa d}
+1}{e^{{\rm i} 2  \kappa d}-1} v_0^+ \phi_0 (x_1).
\end{align}

Now we arrive at an approximate model problem:
\begin{align}\label{ardp}
\begin{cases}
\Delta v_{\epsilon} + \kappa^2 v_{\epsilon}=0 \quad & \text {in}~\mathbb
R^2_+,\\
\partial_\nu v_{\epsilon} =0  \quad  &\text {on}~   \Gamma_0\setminus
\Gamma_{\epsilon}^+,\\
\partial_{x_2} v_{\epsilon}= \mathscr B_0^{\rm PMC} [v_{\epsilon}]\quad & \text
{on}~\Gamma_{\epsilon}^+,\\
\lim\limits_{r \rightarrow \infty} \sqrt r \left( \partial_r v_{\epsilon}^{\rm
sc} - {\rm i} \kappa  v_{\epsilon}^{\rm sc} \right)=0 \quad  &\text {in}~\mathbb
R^2_+,
  \end{cases}
\end{align}
where $ v_{\epsilon}= 0 $ on $\Gamma_{\epsilon}^{-}$ and $v_{\epsilon}^{\rm sc}
= v_{\epsilon} -(u^{\rm inc} +u^{\rm ref})$ in $\mathbb R^2_+$. Accordingly, we
may approximate $u_{\epsilon}$ by one single mode:
\begin{align}\label{om}
 v_{\epsilon} (\boldsymbol x)= \left(  \tilde  \alpha_0^+ e^{-{\rm i} \kappa
x_2} + \tilde  \alpha_0^{-} e^{{\rm i} \kappa (x_2 + d)}\right)\phi_0
(x_1),\quad \boldsymbol x\in D_\epsilon,
\end{align}
where the coefficients $\tilde \alpha_0^+$ and  $\tilde \alpha_0^-$  are given by
\begin{align}\label{za}
\tilde \alpha_0^+= \frac{e^{{\rm i} \kappa d} v_{{\epsilon}, 0}^{-} -
v_{{\epsilon}, 0}^{+}}{ e^{{\rm i}  2 \kappa d} -1}, \quad \tilde \alpha_0^-=
\frac{e^{{\rm i} \kappa d} v_{{\epsilon}, 0}^{+} - v_{{\epsilon}, 0}^{-}}{
e^{{\rm i}  2 \kappa d} -1}.
\end{align}
It follows from  $ v_{\epsilon} =0 $ on $\Gamma_{\epsilon}^{-} $ that  $\tilde
{\alpha}_0^{-} = -\tilde {\alpha}_0^{+} e^{{\rm i} \kappa d}$, which yields
\begin{align}\label{om1}
 v_{{\epsilon}, 0}^{-} = 0.
\end{align}

\subsection{Enhancement of the approximated field}

This section introduces the estimates for the solution of the approximate model
problem.

\begin{theo}\label{get}
Let $v_{\epsilon}$ be the solution of the approximate model problem \eqref{ardp}
and be given by \eqref{om} in the cavity, then there exist positive constants
$C_1, C_2$ independent of $\lambda, \epsilon, d$ such that
\begin{align*}
C_1 \sqrt{\epsilon/d} \leq  \|\nabla v_{\epsilon}\|_{L^2
(D_{\epsilon})} \leq C_2 \sqrt{\epsilon/d}.
\end{align*}
\end{theo}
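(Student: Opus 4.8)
The plan is to reduce everything to the single propagating mode $\phi_0$ and compute the Dirichlet energy of $v_\epsilon$ in $D_\epsilon$ explicitly, controlling the one unknown coefficient $v_{\epsilon,0}^+$ from above and below. First I would observe that, by \eqref{om}, \eqref{za}, and \eqref{om1}, the field inside the cavity is $v_\epsilon(\boldsymbol x) = \tilde\alpha_0^+\bigl(e^{-{\rm i}\kappa x_2} - e^{{\rm i}\kappa d}e^{{\rm i}\kappa(x_2+d)}\bigr)\phi_0(x_1)$, so that, writing $c = \tilde\alpha_0^+$,
\[
v_\epsilon(\boldsymbol x) = \frac{c}{\sqrt\epsilon}\bigl(e^{-{\rm i}\kappa x_2} - e^{{\rm i}\kappa(2d + x_2)}\bigr).
\]
Since $\phi_0$ is constant, $\partial_{x_1}v_\epsilon = 0$ in $D_\epsilon$, so $\|\nabla v_\epsilon\|_{L^2(D_\epsilon)}^2 = \|\partial_{x_2}v_\epsilon\|_{L^2(D_\epsilon)}^2$. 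Differentiating and integrating the squared modulus over $x_1\in(0,\epsilon)$, $x_2\in(-d,0)$ gives, after the $x_1$-integration cancels the $1/\epsilon$, a quantity of the form $|c|^2\kappa^2\int_{-d}^0 |e^{-{\rm i}\kappa x_2} - e^{{\rm i}\kappa(2d+x_2)}|^2\,dx_2$. Using $d\ll\lambda$, i.e. $\kappa d$ small, the integrand is $|e^{-{\rm i}\kappa x_2}(1 - e^{2{\rm i}\kappa(d+x_2)})|^2 = |1 - e^{2{\rm i}\kappa(d+x_2)}|^2 = 4\sin^2(\kappa(d+x_2))$, which for $x_2\in(-d,0)$ and $\kappa d$ small is comparable to $\kappa^2(d+x_2)^2$; integrating over $x_2$ yields something $\asymp \kappa^2 d^3$. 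Hence $\|\nabla v_\epsilon\|_{L^2(D_\epsilon)}^2 \asymp |c|^2\kappa^4 d^3$, reducing the theorem to the two-sided bound $|c| = |\tilde\alpha_0^+| \asymp (\kappa^2 d^3)^{-1/2}\sqrt{\epsilon/d} = \sqrt\epsilon/(\kappa^2 d^2)$, up to constants depending only on... wait — the claimed bound has no $\lambda,d$ dependence in $C_1,C_2$, so in fact the cancellations must be arranged so that the $\kappa$ and $d$ powers match $\sqrt{\epsilon/d}$ on the nose; I would track the $\kappa d$ factors carefully and expect $|\partial_{x_2}v_\epsilon|$ to contribute a factor $\kappa$ while the coefficient $\tilde\alpha_0^+$ carries a compensating $1/(\kappa d)$ from the denominator $e^{2{\rm i}\kappa d}-1 \asymp \kappa d$ in \eqref{za}.

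The heart of the matter is therefore the two-sided estimate on $v_{\epsilon,0}^+ = \langle v_\epsilon^+,\phi_0\rangle$, equivalently on $\tilde\alpha_0^+$. To get this I would solve the half-space problem \eqref{ardp}: in $\mathbb R^2_+$ the scattered field $v_\epsilon^{\rm sc}$ satisfies the Helmholtz equation with the Neumann condition off $\Gamma_\epsilon^+$ and the boundary data $\partial_{x_2}v_\epsilon = \mathscr B_0^{\rm PMC}[v_\epsilon]$ on $\Gamma_\epsilon^+$, forced by the known incident-plus-reflected field $u^{\rm inc}+u^{\rm ref}$, whose normal derivative vanishes on $\Gamma_0$ and whose trace on $\Gamma_\epsilon^+$ is $2e^{{\rm i}\kappa x_1\sin\theta}$. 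Representing $v_\epsilon^{\rm sc}$ via the half-plane Neumann Green function $G_N(\boldsymbol x,\boldsymbol y)$ — which is $-\tfrac{\rm i}{2}\bigl(H_0^{(1)}(\kappa|\boldsymbol x-\boldsymbol y|)+H_0^{(1)}(\kappa|\boldsymbol x-\boldsymbol y^*|)\bigr)$ with $\boldsymbol y^*$ the reflection of $\boldsymbol y$ — and testing against $\phi_0$ on $\Gamma_\epsilon^+$ turns \eqref{ardp} into a single scalar equation for $v_{\epsilon,0}^+$. Because $\epsilon\ll\lambda$, the relevant Green-function integral $\frac{1}{\epsilon}\int_0^\epsilon\int_0^\epsilon G_N((x_1,0),(y_1,0))\,dx_1\,dy_1$ has the asymptotics $\frac{1}{\pi}\log(\kappa\epsilon) + O(1)$ (the $\log$ of a small aperture), which is $O(1)$ relative to the $O(1/(\kappa d))$ size of the coefficient $\mathscr B_0^{\rm PMC}$'s multiplier $\tfrac{{\rm i}\kappa(e^{2{\rm i}\kappa d}+1)}{e^{2{\rm i}\kappa d}-1} = \kappa\cot(\kappa d) \asymp 1/d$. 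Balancing these, the scalar equation reads schematically $(\text{DtN coefficient})\cdot v_{\epsilon,0}^+ + (\text{bounded Green term})\cdot v_{\epsilon,0}^+ = (\text{data} \asymp 2\sqrt\epsilon)$, whence $v_{\epsilon,0}^+ \asymp 2\sqrt\epsilon / (\kappa\cot\kappa d) \asymp \sqrt\epsilon\, d$, and then $\tilde\alpha_0^+ = (e^{{\rm i}\kappa d}\cdot 0 - v_{\epsilon,0}^+)/(e^{2{\rm i}\kappa d}-1) \asymp \sqrt\epsilon d/(\kappa d) = \sqrt\epsilon/\kappa$, which combined with $\|\nabla v_\epsilon\|^2 \asymp |\tilde\alpha_0^+|^2\kappa^4 d^3$ gives $\asymp \epsilon\kappa^2 d^3$ — so I would recheck the power bookkeeping, as the target is $\epsilon/d$; the likely resolution is that the $x_2$-integral producing $\kappa^2 d^3$ is actually the $\kappa d$-small regime of $\int 4\sin^2$, so if instead one keeps $\sin^2\kappa(d+x_2)$ without expanding, the integral is $2d - \sin(2\kappa d)/\kappa \asymp d$ when $\kappa d \asymp 1$; the statement must be using only $\epsilon\ll\lambda$ and $d\ll\lambda$ in a way that makes the clean answer $\epsilon/d$ emerge, and I would pin down exactly which terms survive.

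The main obstacle, and where I would spend the most care, is the lower bound: showing $\|\nabla v_\epsilon\|_{L^2(D_\epsilon)} \geq C_1\sqrt{\epsilon/d}$ requires ruling out cancellation, i.e. proving $|v_{\epsilon,0}^+|$ does not degenerate. This means establishing that the scalar equation for $v_{\epsilon,0}^+$ is solvable with a coefficient bounded away from zero uniformly in the small parameters — precisely the non-resonance hypothesis, since $\kappa\cot(\kappa d)$ blows up (not vanishes) as $\kappa d\to 0$, but one must also check that the imaginary part of the full coefficient (coming from the radiating Green function, of size $\asymp\kappa\epsilon$ or a constant) does not conspire with the real part to produce a near-zero modulus; for $d\ll\lambda$ the dominant real part $\asymp 1/d$ makes the coefficient large, so $|v_{\epsilon,0}^+|\asymp \sqrt\epsilon d$ from both sides, giving the matching $C_1,C_2$. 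The upper bound is then the easy direction via Lemma \ref{tth}-type continuity of $\mathscr B_0^{\rm PMC}$ and a standard a priori estimate for \eqref{ardp}. I would also double-check the constant-in-$\lambda,d$ claim by verifying that all $\kappa$- and $d$-powers cancel in the product $|\tilde\alpha_0^+|^2\cdot(\text{$x_2$-integral})\cdot\kappa^2$, so that only the $\epsilon/d$ ratio remains.
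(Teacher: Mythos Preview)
Your overall strategy matches the paper's: express $v_\epsilon$ via the single mode, reduce to a two-sided estimate on $\tilde\alpha_0^+$, and obtain that estimate by matching the cavity expansion to the half-space single-layer representation. However, two concrete computational errors derail your power counting, and fixing them resolves the confusion you flag.

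First, the $x_2$-integral: since $v_\epsilon$ vanishes at $x_2=-d$ (it is $\propto\sin\kappa(x_2+d)$), its derivative satisfies $\partial_{x_2}v_\epsilon \propto \cos\kappa(x_2+d)$, not $\sin$. Thus $\int_{-d}^0|\partial_{x_2}v_\epsilon|^2\,dx_2 \propto \int_{-d}^0 4\cos^2\kappa(x_2+d)\,dx_2 \asymp d$ when $\kappa d\ll 1$, not $\kappa^2 d^3$. This gives $\|\nabla v_\epsilon\|_{L^2(D_\epsilon)}^2 \asymp |\tilde\alpha_0^+|^2\kappa^2 d$. Second, your scalar equation is set up incorrectly. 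After testing the single-layer identity against $\phi_0$, the DtN multiplier $\kappa\cot(\kappa d)\asymp 1/d$ enters only through the Neumann data inside the layer potential, where it is multiplied by the aperture integral $c_0=\tfrac{\kappa}{2\sqrt\epsilon}\langle h_1,\phi_0\rangle=O(\kappa\epsilon\ln\epsilon)$. The resulting correction term is of size $c_0\cot(\kappa d)=O(\epsilon\ln\epsilon/d)\ll 1$, so the equation is $v_{\epsilon,0}^+\bigl(1+O(\epsilon\ln\epsilon/d)\bigr)=2\langle u^{\rm inc},\phi_0\rangle\asymp 2\sqrt\epsilon$, giving $v_{\epsilon,0}^+\asymp\sqrt\epsilon$ (not $\sqrt\epsilon\,d$) and hence $|\tilde\alpha_0^+|=|v_{\epsilon,0}^+|/|e^{2i\kappa d}-1|\asymp\sqrt\epsilon/(\kappa d)$. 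Now the bookkeeping closes: $\|\nabla v_\epsilon\|^2\asymp(\sqrt\epsilon/(\kappa d))^2\kappa^2 d=\epsilon/d$, with constants independent of $\lambda,\epsilon,d$. The lower bound follows in the same way because $c_0\cot(\kappa d)$ is genuinely small and $\cos\kappa(x_2+d)\geq 1/2$ throughout $(-d,0)$, so $|\partial_{x_2}v_\epsilon|\gtrsim 1/d$ pointwise --- this is exactly the non-resonance mechanism you anticipated, but the dominant balance sits in $\tilde\alpha_0^+$ via the factor $e^{2i\kappa d}-1$, not in the matching equation itself.
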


\begin{proof}
In $\mathbb R^2_+$, since the incident field $u^{\rm inc}$ and the reflected
field $u^{\rm ref}$ satisfy the Helmholtz equation in  \eqref{ardp}, the
scattered field $v_{\epsilon}^{\rm sc}$ also satisfies
\begin{align*}
\Delta v^{\rm sc}_{\epsilon} + \kappa^2 v_{\epsilon}^{\rm sc}=0 \quad  \text
{in}~\mathbb R^2_+.
\end{align*}
Noting that $\partial_{x_2} u^{\rm inc} + \partial_{x_2} u^{\rm ref} =0$ on
$\Gamma_0$, we have
\begin{align*}
 \partial_{x_2} v_{\epsilon}^{\rm s} =0 \quad  \text {on}~\Gamma_0\setminus
\Gamma_{\epsilon}^+.
\end{align*}

Let $G$ be the half-space Green function of the Helmholtz equation with Neumann
boundary condition, i.e., it
satisfies
\begin{align*}
\begin{cases}
\Delta G (\boldsymbol x, \boldsymbol y)+ \kappa^2 G (\boldsymbol x, \boldsymbol
y ) = \delta (\boldsymbol x, \boldsymbol y), \quad & \boldsymbol x, \boldsymbol
y \in \mathbb R_2^+,\\
\frac{\partial G (\boldsymbol x, \boldsymbol y)}{ \partial
\nu_{\boldsymbol y}} =0, \quad &\text {on}~\Gamma_0.
\end{cases}
\end{align*}
It is easy to note that
\begin{equation}\label{gfhs}
G (\boldsymbol x, \boldsymbol y)= - \frac{\rm i}{4} \left( H_0^{(1)} (\kappa
|\boldsymbol x -\boldsymbol y|) +H_0^{(1)} (\kappa |\boldsymbol x' -\boldsymbol
y|)\right),
\end{equation}
where $\boldsymbol y=(y_1, y_2)$, $H_{0}^{(1)}$ is the Hankel function of the
first kind with order $0$, and $ \boldsymbol x'$ is  the reflection of the point
$\boldsymbol x$ with respect to $\Gamma_0$, i.e., $\boldsymbol
x'=(x_1, -x_2)$.

It follows from Green's identity that
\begin{align*}
v_{\epsilon}^{\rm sc} (\boldsymbol x) =\int_{\Gamma_{\epsilon}^+} G
(\boldsymbol x, \boldsymbol y)\frac{\partial v_\epsilon^{\rm
sc}(\boldsymbol y)}{\partial\nu_{\boldsymbol y}}{\rm d} s_{\boldsymbol y},
\quad \boldsymbol x \in \mathbb R^2_+,
\end{align*}
which gives
\begin{align*}
 v_{\epsilon} (\boldsymbol x) = u^{\rm inc}(\boldsymbol x)+ u^{\rm
r}(\boldsymbol x) +\int_{\Gamma_{\epsilon}^+} G (\boldsymbol x, \boldsymbol y)
\frac{\partial v_\epsilon^{\rm sc}(\boldsymbol y)}{\partial\nu_{\boldsymbol y}}
{\rm d} s_{\boldsymbol y}, \quad \boldsymbol x \in \mathbb R^2_+.
\end{align*}
By the fact that $ \partial_{x_2} u^{\rm inc} + \partial_{x_2} u^{\rm ref} =0$
on $\Gamma_0$, especially on $ \Gamma_{\epsilon}^+$, we have
\begin{align*}
\frac{\partial v_{\epsilon}^{\rm sc}}{ \partial \nu_{\boldsymbol y}} =
\frac{\partial v_{\epsilon}}{ \partial \nu_{\boldsymbol y}}
- \frac{\partial (u^{\rm inc} + u^{\rm ref})}{ \partial \nu_{\boldsymbol y}}
= \frac{\partial v_{\epsilon}}{ \partial \nu_{\boldsymbol y}} \quad \text
{on}~\Gamma_{\epsilon}^+.
\end{align*}
It follows from the continuity of single layer potentials
\cite{Colton1983, Colton1998}  and the above equality that
\begin{align}\label{vs}
v_{\epsilon} (\boldsymbol x)= u^{\rm inc}(\boldsymbol x)+ u^{\rm
ref}(\boldsymbol x)- \frac{\rm i}{2} \int_{\Gamma_{\epsilon}^+} H_0^{(1)}
(\kappa |\boldsymbol x -\boldsymbol y|) \frac{\partial v_{\epsilon}(\boldsymbol
y)}{ \partial \nu_{\boldsymbol y}} {\rm d} s_{\boldsymbol y}, \quad \boldsymbol
x \in \Gamma_{\epsilon}^+.
 \end{align}
Using \eqref{om}--\eqref{om1} yields that
\begin{align*}
\partial_{x_2} v_{\epsilon}= {\rm i} \kappa \left(-\tilde {\alpha}_0^+ + \tilde
{\alpha}_0^{-} e^{{\rm i} \kappa d} \right) \phi_0 (x_1) \quad \text
{on}~\Gamma_{\epsilon}^+.
\end{align*}
Substituting the above equality into \eqref{vs} and using the fact that $\phi_0
(x_1) = \frac{1}{ \sqrt {\epsilon}}$, we get
\begin{align*}
v_{\epsilon} (x_1, 0)=u^{\rm inc} (x_1, 0) +u^{\rm ref} (x_1, 0)
+\frac{\kappa}{2} \left( -\tilde {\alpha}_0^+ + \tilde {\alpha}_0^{-} e^{{\rm i}
\kappa d }\right) \frac{1}{\sqrt {\epsilon}} h_1 (x_1),  \quad x_1 \in (0,
\epsilon),
\end{align*}
where \begin{align}\label{H1}
h_1 (x_1)=\int_0^{\epsilon} H_{0}^{(1)} (\kappa |x_1 -y_1|) {\rm d} y_1.
\end{align}

Therefore, the Fourier coefficients $v_{\epsilon, 0}^+$ may be expressed as
\begin{align*}
v_{\epsilon, 0}^+ = \langle u^{\rm inc}, \phi_0
\rangle_{\Gamma_{\epsilon}^+}+\langle u^{\rm ref}, \phi_0
\rangle_{\Gamma_{\epsilon}^+} +\frac{\kappa}{2} \left( -\tilde {\alpha}_0^+
+\tilde {\alpha}_0^{-} e^{{\rm i} \kappa d }\right) \frac{1}{\sqrt {\epsilon}}
\langle h_1, \phi_0 \rangle_{\Gamma_{\epsilon}^+}.
\end{align*}
It follows from the fact  $u^{\rm inc} (x_1, 0)= u^{\rm ref} (x_1, 0)$ and
\eqref{om} that
\begin{align*}
\tilde \alpha_{0}^+ +\tilde \alpha_{0}^{-} e^{{\rm i} \kappa d} =2 \langle
u^{\rm inc}, \phi_0  \rangle_{\Gamma_{\epsilon}^+}
+ \frac{\kappa}{2  \sqrt {\epsilon}}  \langle h_1, \phi_0
\rangle_{\Gamma_{\epsilon}^+}  \left( -\tilde {\alpha}_0^+
+ \tilde {\alpha}_0^{-} e^{{\rm i} \kappa d }\right).
\end{align*}
By $ \tilde {\alpha}_0^{-} =- \tilde {\alpha}_0^{+} e^{{\rm i} \kappa d}$ and
the above equation, we obtain
\begin{align}\label{alp}
 \tilde \alpha_0^+ = \frac{2  \langle u^{\rm inc}, \phi_0
\rangle_{\Gamma_{\epsilon}^+} }{  (1+ c_0) -  (1- c_0) e^{{\rm i} 2 \kappa
d}}, \quad \tilde \alpha_0^{-}=- \frac{2  e^{{\rm i } \kappa d} \langle u^{\rm
inc}, \phi_0  \rangle_{\Gamma_{\epsilon}^+} }{  (1+ c_0) -  (1- c_0)
e^{{\rm i} 2 \kappa d}},
\end{align}
where $c_0 =  \frac{\kappa}{2  \sqrt {\epsilon}}  \langle h_1, \phi_0
\rangle_{\Gamma_{\epsilon}^+}.$

Note that $x_1, y_1 \in \Gamma_{\epsilon}^+$ for small $|x_1 -y_1|$,
asymptotically, it holds that  (cf. \cite{Watson1995})
\begin{align*}
H_0^{(1)} (\kappa  |x_1 -y_1|) = \frac{2  {\rm i}}{\pi} \ln |x_1 -y_1| + \frac{2
{\rm i}}{\pi} \ln \frac{\kappa}{2} +\gamma_0+O (|x_1 -y_1|^2 \ln |x_1 -y_1|),
\end{align*}
where $\gamma_0$ is the Euler constant defined by
$\gamma_0=\lim\limits_{\tau\rightarrow \infty} \sum\limits_{m=1}^{\tau}
\frac{1}{m} -\ln \tau.$
A direct calculation yields
\begin{align}\label{C0}
c_0 =  \frac{\kappa}{2  \sqrt {\epsilon}}  \langle h_1, \phi_0
\rangle_{\Gamma_{\epsilon}^+}= \frac{\kappa}{ 2 {\epsilon} }
\int_0^{\epsilon}\int_0^ {\epsilon} H_0^{(1)} (\kappa |x_1 -y_1|) {\rm d} y_1
{\rm d} x_1 =\frac{{\rm i} \kappa}{\pi} {\epsilon} \ln {\epsilon} + O
({\epsilon}).
\end{align}
It follows from the assumption $\epsilon\ll d \ll \lambda$ that
\begin{align}
\label{inq1} &|(1+c_0) - (1-c_0) e^{{\rm i} 2 \kappa d}| \leq 2 |e^{{\rm i} 2
\kappa d}-1|=4 \kappa d,\\
\label{inq2} &|(1+c_0) - (1-c_0) e^{{\rm i} 2 \kappa d}|
\geq \frac{1}{2}|e^{{\rm i} 2 \kappa d} -1|\geq  \kappa d.
\end{align}
Using the definition of the incident wave gives
\begin{align}\label{inq3}
|\langle u^{\rm inc}, \phi_0
\rangle_{\Gamma_{\epsilon}^+}|=\left|\int_0^{\epsilon} e^{{\rm i} \kappa
\sin\theta x_1} \frac{1}{ \sqrt {\epsilon}} {\rm d} x_1 \right| =\sqrt
{\epsilon} + O({\epsilon}^{\frac{3}{2}}).
\end{align}
Combining \eqref{alp}--\eqref{inq3}, we obtain
\begin{align*}
|\tilde \alpha_0^{+}| \leq C  \frac{\sqrt {\epsilon}} {\kappa d}, \quad  |\tilde
\alpha_0^{-}| \leq C  \frac{\sqrt {\epsilon }} {\kappa d},
\end{align*}
where the  positive constant $C$ is independent of $\lambda, \epsilon, d$.

It follows from \eqref{om} that
\begin{align*}
\|\nabla v_{\epsilon}\|^2_{L^2  (D_{\epsilon})} \leq \int_0^
{\epsilon}\int_{-d}^{0} \left( \kappa  (|\tilde \alpha_0^+| +|\tilde
\alpha_0^{-}|) \phi_0 (x_1) \right)^2 {\rm d} x_2 {\rm d} x_1 \leq C_2^2
(\epsilon/d).
\end{align*}
Substituting \eqref{alp} into \eqref{om} yields
\begin{align}\label{nep}
 v_{\epsilon} (\boldsymbol x)= \left(
 \frac{2  \langle u^{\rm inc}, \phi_0  \rangle_{\Gamma_{\epsilon}^+} }{
 (1+ c_0) -  (1- c_0) e^{{\rm i} 2 \kappa d}} e^{-{\rm i} \kappa x_2} - \frac{2
e^{{\rm i } \kappa d} \langle u^{\rm inc}, \phi_0
\rangle_{\Gamma_{\epsilon}^+} }{  (1+ c_0) -  (1- c_0) e^{{\rm i} 2 \kappa d}}
e^{{\rm i} \kappa  (x_2 + d)}\right)\phi_0 (x_1).
\end{align}
A simple calculation yields
\begin{align}\label{lb}
 \frac{\partial v_{\epsilon}}{\partial {x_2}}
 &= -{\rm i} \kappa e^{{\rm i}  \kappa d } \frac{2  \langle u^{\rm inc},
\phi_0  \rangle_{\Gamma_{\epsilon}^+} }{  (1+ c_0) - (1- c_0) e^{{\rm i} 2
\kappa d}} \phi_0 (x_1) \left( e^{ -{\rm i} \kappa  (x_2 +d)} + e^{ {\rm i}
\kappa (x_2 +d)}\right) \nonumber\\
 &= - {\rm i}\kappa e^{{\rm i}  \kappa d } \frac{2  \langle u^{\rm inc},
\phi_0  \rangle_{\Gamma_{\epsilon}^+} }{  (1+ c_0) -  (1- c_0)e^{{\rm i} 2
\kappa  d}} 2 \cos \kappa (x_2 +d)\phi_0 (x_1).
\end{align}
Noting that $\epsilon$ is small enough and $\kappa (x_2 + d) \ll 1$, we have
from \eqref{inq1}--\eqref{inq3} and \eqref{lb} that there exists a positive
constant $C_1$ such that
\begin{align*}
 \left|\frac{\partial v_{\epsilon}}{\partial {x_2}} \right| \geq  C_1 \frac
{1}{d},
\end{align*}
which yields that
\begin{align*}
\|\nabla v_{\epsilon}\|^2_{L^2 (D_{\epsilon})} \geq  \int_{0}^{\epsilon}
\int_{-d}^{0} \left|\frac{\partial v_{\epsilon}}{\partial x_2} \right|^2 {\rm
d} x_{2} {\rm d} x_1 \geq C_{1}^2 (\epsilon/d),
\end{align*}
which completes the proof.
\end{proof}

\begin{theo}\label{ght}
Let $v_{\epsilon}$ be the solution of the approximate model problem \eqref{ardp}
and be given by \eqref{om}, then there exist two positive constant
$C_3$ and $C_4$ independent of ${\epsilon}, d, \lambda$ such that
\begin{align*}
C_3 \sqrt{{\epsilon} d} \leq  \|v_{\epsilon}\|_{L^2 (D_{\epsilon})} \leq C_4
\sqrt{{\epsilon} d}.
\end{align*}
\end{theo}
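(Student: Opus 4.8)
The plan is to exploit the explicit single–mode representation \eqref{nep} of $v_\epsilon$ in the cavity, exactly as in the proof of Theorem \ref{get}, but now estimating the $L^2$–norm of $v_\epsilon$ itself rather than its gradient. Writing $v_\epsilon(\boldsymbol x) = (\tilde\alpha_0^+ e^{-{\rm i}\kappa x_2} + \tilde\alpha_0^- e^{{\rm i}\kappa(x_2+d)})\phi_0(x_1)$ with $\tilde\alpha_0^- = -\tilde\alpha_0^+ e^{{\rm i}\kappa d}$, I would first combine the two exponentials into a single cosine: $v_\epsilon(\boldsymbol x) = -2{\rm i}\,\tilde\alpha_0^+ e^{{\rm i}\kappa d}\sin\!\big(\kappa(x_2+d)\big)\phi_0(x_1)$, which is the Dirichlet-at-$x_2=-d$ counterpart of the cosine that appeared in \eqref{lb}. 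Since $\phi_0(x_1)=1/\sqrt\epsilon$ is constant, the $x_1$–integration over $(0,\epsilon)$ just contributes a factor $1$, so $\|v_\epsilon\|_{L^2(D_\epsilon)}^2 = 4|\tilde\alpha_0^+|^2 \int_{-d}^0 \sin^2\!\big(\kappa(x_2+d)\big)\,{\rm d}x_2$.

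Next I would control the remaining one–dimensional integral. Using the assumption $d\ll\lambda$, i.e. $\kappa d = 2\pi d/\lambda \to 0$, one has $\sin\big(\kappa(x_2+d)\big)$ comparable to $\kappa(x_2+d)$ on $(-d,0)$; more precisely $\tfrac12\kappa(x_2+d) \le \sin\big(\kappa(x_2+d)\big)\le \kappa(x_2+d)$ for $0\le x_2+d\le d$ when $\kappa d$ is small enough. Integrating gives $\int_{-d}^0 \sin^2\!\big(\kappa(x_2+d)\big)\,{\rm d}x_2 = c\,\kappa^2 d^3$ with $c$ bounded above and below by absolute constants. Therefore $\|v_\epsilon\|_{L^2(D_\epsilon)}^2 \asymp |\tilde\alpha_0^+|^2 \kappa^2 d^3$. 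Finally I would insert the two-sided bound on $|\tilde\alpha_0^+|$ already established in the proof of Theorem \ref{get}: from \eqref{alp}, \eqref{inq1}–\eqref{inq3} one gets both $|\tilde\alpha_0^+|\le C\sqrt\epsilon/(\kappa d)$ and, reading the same chain of inequalities in the reverse direction, $|\tilde\alpha_0^+|\ge C'\sqrt\epsilon/(\kappa d)$ (the numerator $|\langle u^{\rm inc},\phi_0\rangle_{\Gamma_\epsilon^+}|$ is bounded below by $\tfrac12\sqrt\epsilon$ and the denominator above by $4\kappa d$). Substituting, $\|v_\epsilon\|_{L^2(D_\epsilon)}^2 \asymp \frac{\epsilon}{\kappa^2 d^2}\,\kappa^2 d^3 = \epsilon d$, which yields $C_3\sqrt{\epsilon d}\le \|v_\epsilon\|_{L^2(D_\epsilon)}\le C_4\sqrt{\epsilon d}$ with constants independent of $\lambda,\epsilon,d$.

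The only genuinely delicate point is bookkeeping of the constants: I must make sure the lower bound on $|\tilde\alpha_0^+|$ really is available, which requires the lower bound \eqref{inq2} on the denominator together with the lower bound \eqref{inq3} on $|\langle u^{\rm inc},\phi_0\rangle_{\Gamma_\epsilon^+}|$, and I should note that $c_0 = O(\epsilon\ln(1/\epsilon))$ from \eqref{C0} is negligible compared with $\kappa d$ in all of these estimates under $\epsilon\ll d\ll\lambda$. Everything else is a routine elementary integral, so no serious obstacle is expected; the argument is essentially the $L^2$–analogue of the gradient estimate in Theorem \ref{get}, with the extra factor $d^2$ coming from integrating $\sin^2(\kappa(x_2+d))$ instead of $\cos^2(\kappa(x_2+d))$.
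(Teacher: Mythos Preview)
Your proposal is correct and follows essentially the same route as the paper: both compute $\|v_\epsilon\|_{L^2(D_\epsilon)}^2$ from the explicit single-mode form \eqref{nep}, reduce to $4|\tilde\alpha_0^+|^2\int_{-d}^0\sin^2\kappa(x_2+d)\,{\rm d}x_2$, bound the integral two-sidedly by constants times $\kappa^2 d^3$ using $\kappa d\ll 1$, and then invoke \eqref{inq1}--\eqref{inq3} to get $|\tilde\alpha_0^+|^2\asymp \epsilon/(\kappa d)^2$. One cosmetic slip: for the \emph{lower} bound on $|\tilde\alpha_0^+|$ you need the \emph{upper} bound \eqref{inq1} on the denominator (not \eqref{inq2}), as your own parenthetical correctly states.
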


\begin{proof}
It follows from \eqref{nep} that
\begin{align*}
\int_{0}^{\epsilon}\int_{-d}^{0} &|v_{\epsilon} (x_1, x_2)|^2 {\rm d} x_1 {\rm
d} x_2= \frac {\left| 2 \langle u^{\rm inc},
\phi_{0}\rangle_{\Gamma_{\epsilon}^+}  e^{{\rm i} \kappa d}\right|^2} {\left|
(1+c_0)- (1 -c_0)^{{\rm i} 2 \kappa d}\right|^2} \int_{-d}^{0}| e^{-{\rm i}
\kappa (x_2 +d)}-e^{{\rm i} \kappa (x_2 +d)}|^2 {\rm d} x_2 \\
 & =\frac {\left| 2 \langle u^{\rm inc}, \phi_{0}\rangle_{\Gamma_{\epsilon}^+}
\right|^2} {\left| (1+c_0)- (1 -c_0)^{{\rm i} 2 \kappa d}\right|^2}
\int_{-d}^{0} 4 \sin^2 \kappa (x_2 +d)
{\rm d} x_2.
\end{align*}
Noting that $d \ll \lambda$, we have
\begin{align}\label{I1}
\int_{-d}^{0}  4 \sin^2  \kappa (x_2 +d) {\rm d} x_2 \leq  4 \int_{-d}^{0}
(\kappa (x_2 +d) )^2 {\rm d} x_{2} \leq \frac{4}{3}  \kappa^2 d^{3}
\end{align}
and
\begin{align}\label{I2}
\int_{-d}^{0}  4 \sin^2  \kappa (x_2 +d) {\rm d} x_2  = 2 (d - \frac{\sin 2
\kappa d}{2 \kappa}) = \frac{4}{3} \kappa^2 d^3 -O (\kappa^4 d^{5}) \geq
\frac{2}{3} \kappa^2 d^3.
\end{align}
It follows from \eqref{inq1}--\eqref{inq3} that
\begin{align*}
c_1 \frac{\epsilon}{ \kappa^2 d^2} \leq  \frac {\left| 2 \langle u^{\rm inc},
\phi_{0}\rangle_{\Gamma_{\epsilon}^+}  e^{{\rm i} \kappa d}\right|^2}
{\left| (1+c_0)- (1 -c_0)^{{\rm i} 2 \kappa d}\right|^2}  \leq c_2
\frac{\epsilon}{ \kappa^2 d^2}
\end{align*}
for some positive constants $c_1$ and  $c_2$.

Combining the above estimates, we obtain that there exist constants $C_3$
and $C_4$ independent  on $\lambda, \epsilon, d$ such that
\begin{align*}
 C_3 ^2  {\epsilon} d  \leq \|v_{\epsilon}\|^2_{L^2 (D_{\epsilon})} \leq C_4 ^2
{\epsilon} d,
\end{align*}
which completes the proof.
\end{proof}

\subsection{Accuracy of the approximate model}\label{sub2.4}

To show the accuracy of the approximate model, we introduce a TBC in
$\mathbb R^2_+$ and reformulate the problems \eqref{rdp} and \eqref{ardp} in a
bounded domain. Let $B^+_R=\{\boldsymbol x\in\mathbb R^2:
|\boldsymbol x-\boldsymbol x_c|<R, ~ x_2>0\}$ be the upper half disc with
radius $R$ centered at $\boldsymbol x_c=(\epsilon/2, 0)$, where $R$ is a
sufficiently large positive real number. Denote by $\partial
B^+_{R}=\{\boldsymbol x\in\mathbb R^2: |\boldsymbol x-\boldsymbol x_c|=R, ~
x_2>0\}$ be the upper half circle. Let $\Gamma^+_{R}=\left\{\boldsymbol
x\in\mathbb R^2: |x_1 - {\epsilon}/2|< R, x_2 =0 \right\}$ be the line segment
on $\Gamma_0$ with length $2 R$. The problem geometry is shown
in Figure \ref{pg_2}.

\begin{figure}
\centering
\includegraphics[width=0.4\textwidth]{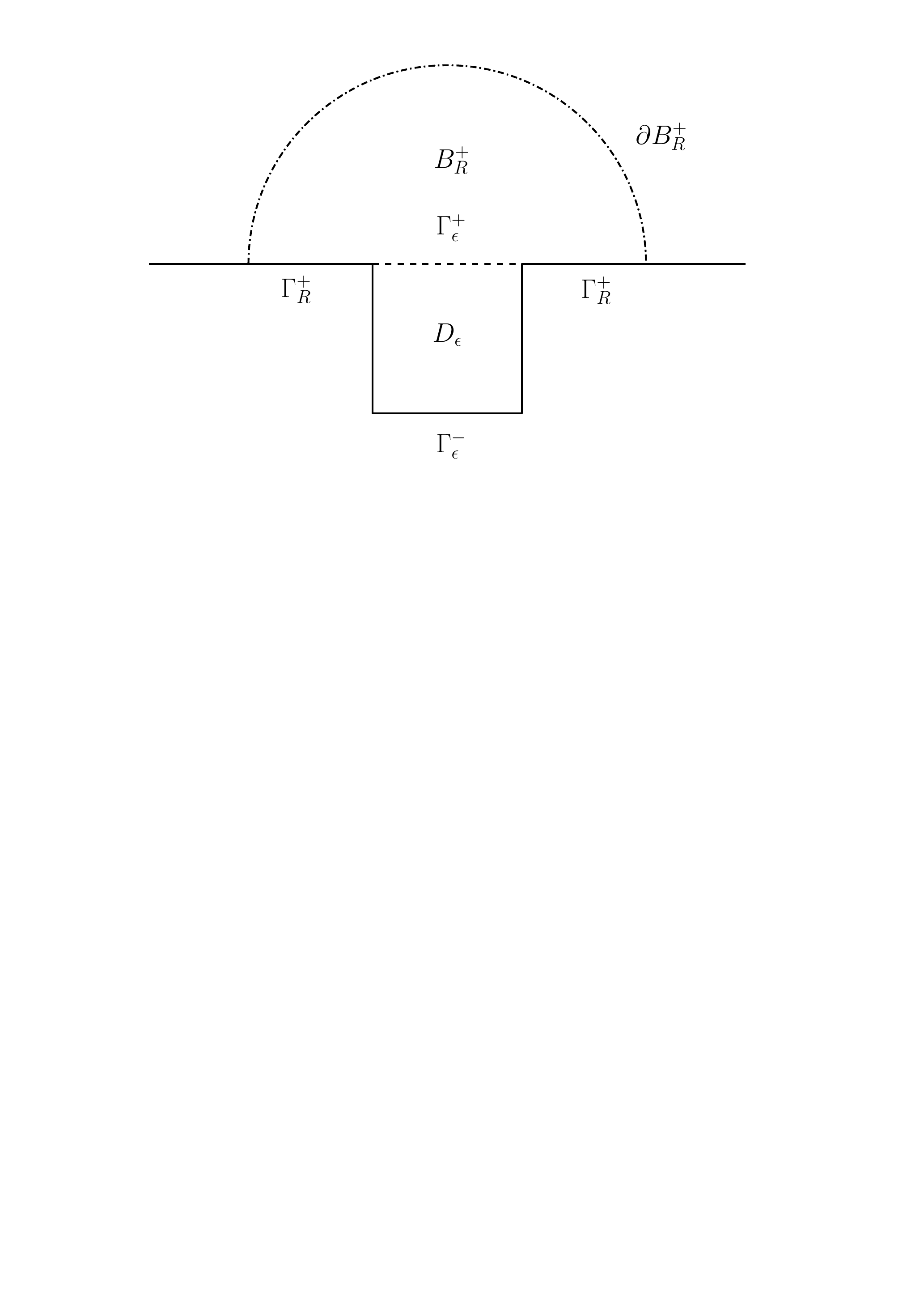}
\caption{The problem geometry with a transparent boundary.}
\label{pg_2}
\end{figure}

In the exterior domain $\mathbb R^2_+ \setminus\bar B_R^+$, by noting
the PEC condition on $\Gamma_0 \setminus \Gamma_R^+$ and the radiation
condition \eqref{rdc}, the scattered field can be expressed as the Fourier
series:
\begin{align*}
u_{\epsilon}^{\rm sc} (r, \eta) =\sum\limits_{n=0}^{\infty}
\frac{H_{n}^{(1)} (\kappa r)}{ H_{n}^{ (1)} (\kappa R)}
u_{{\epsilon}, n}^{\rm sc} \cos (n \eta), \quad 0< \eta < \pi,
\end{align*}
where $u_{{\epsilon}, n}^{\rm sc} = \frac{2}{\pi} \int_0^{\pi} u_{\epsilon}^{\rm
sc} (R, \eta) \cos (n \eta) {\rm d} \eta$, $H_{n}^{(1)}$ is
the Hankel function of the first kind with order $n$, and the polar coordinate
$r= \sqrt {(x_1 -{\epsilon}/2)^2 +x_2^2}$.
Taking the normal derivative of $u_{\epsilon}^{\rm sc}$ on $\partial B_{R}^+$
yields
\begin{align}\label{sdtn}
 \frac{\partial u_{\epsilon}^{\rm s} (R, \eta) }{\partial r} =
\sum\limits_{n=0}^{\infty} \frac{\kappa (H_{n}^{(1)})' (\kappa R)}{ H_{n}^{(1)}
(\kappa R)} u_{{\epsilon}, n}^{\rm sc} \cos (n \eta).
\end{align}

For any $v \in H^{1/2} (\partial B_{R}^+)$ with the Fourier expansion
\[
v= \sum\limits_{n=0}^{\infty} v_{n} \cos (n \eta),\quad v_{n} =\frac{2}{ \pi}
\int_0^{\pi} v (\eta) \cos (n \eta) {\rm d} \eta,
\]
we define the DtN operator
on $\partial B_{R}^+$:
\begin{align}\label{cdtn}
( \mathscr T v ) (\eta) = \sum\limits_{n=0}^{\infty} \frac{\kappa (
H_{n}^{(1) })'(\kappa R)}{H_{n}^{(1)} (\kappa R)} v_n \cos (n \eta), \quad 0 <
\eta < \pi,
\end{align}
It follows from \eqref{sdtn} and \eqref{cdtn} that the normal derivative of
total field on $\partial B_{R}^+$ can be written as
\begin{align}\label{tdtn}
 \partial_r u_{\epsilon} =\partial_r u_{\epsilon}^{\rm sc}+ \partial_r (u^{\rm
inc} + u^{\rm ref})=\mathscr T (u_{\epsilon}) +g,
\end{align}
where $g= \partial_r (u^{\rm inc} + u^{\rm ref}) -\mathscr T (u^{\rm inc} +
u^{\rm ref}).$

\begin{lemm}
 The DtN operator $\mathscr T: H^{1/2} (\partial B_{R}^+) \rightarrow
H^{-1/2} (\partial B_{R}^+)$ is bounded.
\end{lemm}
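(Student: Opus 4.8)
The plan is to mirror the argument already carried out for the interior DtN operator $\mathscr B^{\rm PMC}$ in Lemma~\ref{tth}, but now working with the Fourier--Bessel (cosine) expansion on the half-circle $\partial B_R^+$. The core of the matter is a uniform bound on the symbol $\kappa (H_n^{(1)})'(\kappa R)/H_n^{(1)}(\kappa R)$ of the operator $\mathscr T$ defined in \eqref{cdtn}: I need to show there is a constant $C$ (depending on $\kappa R$ but that is fine, since $R$ is a fixed parameter here) such that
\begin{align*}
\left| \frac{\kappa (H_n^{(1)})'(\kappa R)}{H_n^{(1)}(\kappa R)} \right| \leq C (1+n^2)^{1/2} \quad \text{for all } n \geq 0.
\end{align*}
Granting such an estimate, the boundedness follows exactly as in the proof of Lemma~\ref{tth}: for $v, w \in H^{1/2}(\partial B_R^+)$ with cosine coefficients $v_n, w_n$, one computes $\langle \mathscr T v, w\rangle_{\partial B_R^+} = \sum_{n=0}^\infty \frac{\kappa (H_n^{(1)})'(\kappa R)}{H_n^{(1)}(\kappa R)} v_n \bar w_n$ (up to the normalization constant of the cosine basis), bounds each term by $C(1+n^2)^{1/2}|v_n||\bar w_n|$, factors $(1+n^2)^{1/2} = (1+n^2)^{1/4}(1+n^2)^{1/4}$, and applies the Cauchy--Schwarz inequality together with the definition of the $H^{\pm 1/2}(\partial B_R^+)$ norms to conclude $|\langle \mathscr T v, w\rangle_{\partial B_R^+}| \leq C \|v\|_{H^{1/2}(\partial B_R^+)}\|w\|_{H^{1/2}(\partial B_R^+)}$, which is the claim.

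The main obstacle is therefore the symbol estimate, which is a classical but nontrivial fact about Hankel functions. I would establish it in two regimes. For the large-order regime $n \geq n_0$ with $n_0$ chosen so that $n > \kappa R$, one uses the standard asymptotics (or the monotonicity/recurrence relations) for $H_n^{(1)}$ of large order with fixed argument; in this regime $\kappa R (H_n^{(1)})'(\kappa R)/H_n^{(1)}(\kappa R) \to -n$ as $n \to \infty$, and more precisely $|\kappa R (H_n^{(1)})'(\kappa R)/H_n^{(1)}(\kappa R)| \leq n + O(1)$, which gives the desired $O((1+n^2)^{1/2})$ growth with a constant depending only on $\kappa R$. (A convenient tool is the identity $z (H_n^{(1)})'(z)/H_n^{(1)}(z) = n - z H_{n+1}^{(1)}(z)/H_n^{(1)}(z)$ together with the bound $|z H_{n+1}^{(1)}(z)/H_n^{(1)}(z)| \leq C$ for $n > |z|$, coming from the known growth rates $H_n^{(1)}(z) \sim -\frac{{\rm i}}{\pi}\Gamma(n)(2/z)^n$.) For the finitely many remaining indices $0 \leq n < n_0$, the symbol is just a finite collection of fixed nonzero complex numbers—nonzero because $H_n^{(1)}(\kappa R) \neq 0$ for real $\kappa R > 0$—so it is trivially bounded. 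Taking the maximum of the two bounds yields the uniform constant $C$.

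A subtlety worth flagging: one must confirm $H_n^{(1)}(\kappa R) \neq 0$ so that the operator is well defined; this holds because $J_n$ and $Y_n$ have no common real zeros, hence $H_n^{(1)} = J_n + {\rm i} Y_n$ never vanishes on $(0,\infty)$. Also, since this Sobolev space on $\partial B_R^+$ uses the cosine basis and $R$ is treated as fixed, I do not track the dependence of the constant on $R$ (in contrast to $C(\lambda,d)$ in Lemma~\ref{tth}), which is consistent with the statement of the present lemma that asserts boundedness without an explicit constant. I expect the write-up to be short: quote the Hankel recurrence and the large-order growth estimate from a reference such as \cite{Watson1995}, split the sum at $n_0$, and invoke Cauchy--Schwarz—essentially a direct transcription of the bookkeeping in the proof of Lemma~\ref{tth}.
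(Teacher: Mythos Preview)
Your proposal is correct and follows essentially the same approach as the paper: bound the symbol $\kappa (H_n^{(1)})'(\kappa R)/H_n^{(1)}(\kappa R)$ by $C(1+n^2)^{1/2}$ via a Bessel recurrence, then apply Cauchy--Schwarz with the Fourier definition of the $H^{\pm 1/2}$ norms. The only cosmetic difference is that the paper routes the symbol estimate through the modified Bessel functions $K_n$ (using $H_n^{(1)}(z)\propto K_n(iz)$ and the recurrence $zK_n'(z)=-nK_n(z)-zK_{n-1}(z)$) to obtain a single uniform bound $|A_n|\le n/(\kappa R)+1$ valid for all $n$, whereas you work directly with the Hankel recurrence and split into large-$n$ asymptotics plus finitely many small-$n$ terms; either packaging yields the same $O(n)$ growth and the same conclusion.
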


\begin{proof}
Let $A_{n}= \frac{(H_n^{(1)})' (\kappa R)}{H_{n}^{(1)} (\kappa R)}$.  By the
relation of the Hankel function and the modified  Bessel function
(cf. \cite{Abramowitz1972}), it yields that
 \[|A_n|= \left| \frac{K_{n}'( {\rm i} \kappa  R)}{ K_n ({\rm i} \kappa  R)} \right|,\]
 where $K_n$ is the  modified Bessel function of the second kind. Using the
formula $z K_n' (z) =- n K_{n} (z)-z K_{n-1} (z),$ we obtain
 \begin{align*}
|A_n |=\left| \frac{K_{n}'( {\rm i}\kappa  R)}{ K_n ({\rm i} \kappa  R)}
\right| =\left| \frac{n}{{\rm i} \kappa R} + \frac{K_{n-1} ({\rm i } \kappa R)}
{K_{n} ({\rm i } \kappa  R)} \right|\leq \frac{n}{ \kappa  R} +1 \leq c \sqrt
{1+ n^2},
\end{align*}
where $c$ is a positive constant.

For any $v, w \in H^{1/2} (\partial B_R^+) $, we have from the
definition of \eqref{cdtn} that
 \begin{align*}
 \left| \langle \mathscr T v, w  \rangle_{\partial{B}_{R}^+}\right|
&= \left| \kappa  R \int_{0}^{\pi} \sum_{n=0}^{\infty} A_n v_{n} \cos (n \eta)
\bar w {\rm d} \eta \right|=\left| \frac{\kappa  \pi R}{2} \sum
\limits_{n=0}^{\infty} A_n v_n \bar w_{n} \right|\\
& \leq \frac{ c \kappa  \pi R}{ 2} \sum\limits_{n=0}^{\infty} \sqrt {1+n^2}
|v_n \bar w_n| \\
&\leq  \frac{ c \kappa \pi R}{2}\left( \sum \limits_{n=0}^{\infty} \sqrt {1+n^2}
 |v_n|^2\right)^{\frac{1}{2}}\left( \sum \limits_{n=0}^{\infty} \sqrt {1+n^2}
|w_n|^2\right)^{\frac{1}{2}}\\
&= C(R, \lambda) \|v\|_{H^{1/2} (\partial B_{R}^+)}
\|w\|_{H^{1/2} (\partial B_{R}^+)}.
 \end{align*}
Thus we have
\begin{align*}
 \|\mathscr T v\|_{H^{-1/2} (\partial B_{R}^+)} = \sup \limits_{w \in
H^{1/2} (\partial B_{R}^+)} \frac{ \left| \langle \mathscr T v, w
\rangle_{\partial{B}_{R}^+}\right|}{ \|w\|_{H^{1/2} (\partial B_{R}^+)}
}\leq C(R, \lambda)  \|v\|_{H^{1/2} (\partial B_{R}^+)},
\end{align*}
which completes the proof.
\end{proof}

\begin{lemm}\label{rip}
We have
\begin{align*}
 {\rm Re} \langle \mathscr T v, v \rangle_{\partial B_{R}^+} \leq 0,  \quad
{\rm Im} \langle \mathscr T v, v \rangle_{\partial B_{R}^+}  > 0,
 \quad  \forall v\in H^{1/2} (\partial B_{R}^+).
\end{align*}
\end{lemm}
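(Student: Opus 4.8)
The plan is to use the Fourier-series representation of the DtN operator $\mathscr{T}$ in \eqref{cdtn} together with the identity $\langle \mathscr{T}v, v\rangle_{\partial B_R^+} = \frac{\kappa\pi R}{2}\sum_{n=0}^\infty A_n |v_n|^2$, where $A_n = (H_n^{(1)})'(\kappa R)/H_n^{(1)}(\kappa R)$. Since the $|v_n|^2$ are nonnegative and $\kappa R > 0$, it suffices to establish that $\operatorname{Re} A_n \le 0$ and $\operatorname{Im} A_n > 0$ for every $n \ge 0$. Thus the whole statement reduces to a sign analysis of the logarithmic derivative of the Hankel function $H_n^{(1)}$ at the positive real argument $\kappa R$.

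First I would compute $\operatorname{Im} A_n$ directly. Writing $A_n = \kappa(H_n^{(1)})'(\kappa R)\,\overline{H_n^{(1)}(\kappa R)}/(\kappa|H_n^{(1)}(\kappa R)|^2)$, the imaginary part is governed by the Wronskian-type quantity $\operatorname{Im}\bigl((H_n^{(1)})'(t)\,\overline{H_n^{(1)}(t)}\bigr)$ for real $t>0$. Using $H_n^{(1)} = J_n + \mathrm{i}Y_n$ with $J_n, Y_n$ real, this imaginary part equals $J_n'(t)Y_n(t) - Y_n'(t)J_n(t) = -W\{J_n,Y_n\}(t) \cdot(-1)$; the classical Wronskian identity $J_n(t)Y_n'(t) - J_n'(t)Y_n(t) = \frac{2}{\pi t}$ then gives $\operatorname{Im}\bigl((H_n^{(1)})'(t)\overline{H_n^{(1)}(t)}\bigr) = \frac{2}{\pi t} > 0$, hence $\operatorname{Im}A_n = \frac{2}{\pi t |H_n^{(1)}(t)|^2} > 0$ with $t = \kappa R$. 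This immediately yields the strict positivity of $\operatorname{Im}\langle\mathscr{T}v,v\rangle$ whenever $v\neq 0$ (and trivially equality when $v=0$, so the stated strict inequality should be read for nontrivial $v$, or one notes the authors implicitly exclude $v\equiv 0$).

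For $\operatorname{Re}A_n \le 0$, the cleanest route is the one the preceding lemma already hints at: pass to the modified Bessel function via $A_n = \mathrm{i}\,K_n'(\mathrm{i}\kappa R)/K_n(\mathrm{i}\kappa R)$ — but a more transparent argument is to use the integral representation $H_n^{(1)}(t) = c_n \int$ of a manifestly decaying-phase type, or simply the known asymptotic and monotonicity properties. Concretely, I would use the representation $\frac{(H_n^{(1)})'(t)}{H_n^{(1)}(t)}$ and the fact that $u(t) := |H_n^{(1)}(t)|^2 = J_n(t)^2 + Y_n(t)^2$ is strictly \emph{decreasing} in $t$ for $t>0$ (a classical result, see e.g. Watson): since $u'(t) = 2\operatorname{Re}\bigl((H_n^{(1)})'(t)\overline{H_n^{(1)}(t)}\bigr) = 2|H_n^{(1)}(t)|^2\operatorname{Re}A_n$, the negativity $u'(t) < 0$ forces $\operatorname{Re}A_n < 0$. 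Combining, $\operatorname{Re}\langle\mathscr{T}v,v\rangle = \frac{\kappa\pi R}{2}\sum_n (\operatorname{Re}A_n)|v_n|^2 \le 0$.

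The main obstacle is establishing the monotonicity $\frac{d}{dt}\bigl(J_n(t)^2 + Y_n(t)^2\bigr) < 0$ for all $t>0$ and all $n\ge 0$; this is the one nonelementary input. I would handle it by citing Watson's treatise (already referenced in the paper as \cite{Watson1995}) — specifically the Nicholson-type formula $J_n(t)^2 + Y_n(t)^2 = \frac{8}{\pi^2}\int_0^\infty K_0(2t\sinh s)\cosh(2ns)\,ds$, from which term-by-term differentiation under the integral, using $K_0' < 0$ and $\sinh s > 0$, yields strict negativity of the derivative at once. The remaining steps — expanding $\langle\mathscr{T}v,v\rangle$ in the orthogonal basis $\{\cos n\eta\}$ and assembling the two sign conclusions — are routine and parallel the computation already carried out in the proof of the boundedness lemma just above.
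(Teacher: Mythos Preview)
Your proposal is correct and follows essentially the same approach as the paper: both expand $\langle \mathscr T v, v\rangle_{\partial B_R^+}$ as $\tfrac{\kappa\pi R}{2}\sum_n A_n|v_n|^2$, obtain $\operatorname{Im}A_n>0$ from the Wronskian $W(J_n,Y_n)=\tfrac{2}{\pi t}$, and obtain $\operatorname{Re}A_n\le 0$ from the monotonicity of $|H_n^{(1)}(t)|^2=J_n^2(t)+Y_n^2(t)$. The only difference is that you supply a justification for that monotonicity via Nicholson's integral formula, whereas the paper simply cites it as a known property of the Hankel functions.
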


\begin{proof}
It follows from the definition \eqref{cdtn} that
\begin{align}\label{exp}
\langle \mathscr T v, v \rangle_{\partial B_{R}^+} = \frac {\kappa \pi R}{2}
\sum\limits_{n=0}^{\infty} A_n |v_n|^2.
\end{align}
Note that $H_{n}^{(1)} = J_{n} + {\rm i} Y_n$, where $J_n$ and $Y_n$ are the
Bessel functions of the first and second kind, respectively, and the modulus of
each Hankel function is decreasing function \cite{Abramowitz1972}, then
\begin{align}\label{RA}
 {\rm Re} A_n=   \frac{J_n (k R) J_n ' (k R) + Y_{n} (k R) Y_n ' (k R)}
{J_{n}^2 (k R) + Y_n^2 (k R)} =\frac{1}{2} \frac{ ( J_n ^2) ' (k R) + ( Y_n ^2)
'(k R)}{ J_{n}^2 (k R) + Y_n^2 (k R)}\leq0.
\end{align}
It  follows from the Wronskian formula  \cite{Abramowitz1972},
\begin{align}\label{Im}
 {\rm Im} A_n &= \frac{J_n (\kappa  R)  Y_n' (\kappa  R) - Y_n (\kappa  R)  J_n'
(\kappa  R) }{ J_{n}^2 (\kappa  R) + Y_n^2 (\kappa  R)} \nonumber\\
 &= \frac{W  (J_n (\kappa R), Y_n (\kappa R))}{ J_{n}^2 (\kappa  R) + Y_n^2 (\kappa R)}
 =\frac{2}{ \kappa \pi R } \frac{1}{ J_{n}^2 (\kappa R) + Y_n^2 (\kappa R)} >0,
\end{align}
where $W(\cdot, \cdot)$ denotes the Wronskian determinant of $J_n$ and $Y_n$.
The proof is completed by combining \eqref{exp}--\eqref{Im}.
\end{proof}

By the TBC operator \eqref{tdtn},  the model problem \eqref{rdp} can be
reformulated as follows:
\begin{align}\label{rdm}
\begin{cases}
 \Delta u_{\epsilon} + \kappa^2  u_{\epsilon}=0 \quad  &\text
{in}~B_R^+,\\
 \partial_\nu u_{\epsilon}=0 \quad & \text {on}~ \Gamma_{R}^+ \setminus
\Gamma_{\epsilon}^+,\\
 \partial_{x_2} u_{\epsilon} = \mathscr B^{\rm PMC} [u_{\epsilon}] \quad & \text
{on}~\Gamma_{\epsilon}^+,\\
 \partial_r u_{\epsilon}= \mathscr T [u_{\epsilon}] +g \quad & \text
{on}~\partial B_{R}^+.
 \end{cases}
 \end{align}
The variational formulation of \eqref{rdm} is to find $
u_{\epsilon} \in H^1 (B_R^+)$ such that
\begin{align}\label{vf1}
a (u_{\epsilon}, w)=\langle g, w\rangle_{\partial B_R^+} \quad\forall
w \in H^1 (B_R^+),
\end{align}
where the sesquilinear form
\begin{align}\label{bfn}
a (u_{ \epsilon}, w)=\int_{B_R^+} \left(  \nabla u_{\epsilon} \cdot \nabla
\bar{w} -\kappa^2 u_{\epsilon} \bar w \right) {\rm d} \boldsymbol x
 -\int_{\partial B_{R}^+} \mathscr T[u_{\epsilon}] \bar w  {\rm d} \gamma
+\int_{\Gamma_{\epsilon}^+} \mathscr B^{\rm PMC}[u_{\epsilon}] \bar w {\rm d}
\gamma.
\end{align}

\begin{theo}\label{wpst}
The variational problem \eqref{vf1} has a unique solution $u_{\epsilon} \in H^1
(B_R^+)$.
\end{theo}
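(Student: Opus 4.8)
The plan is to solve \eqref{vf1} by the Fredholm alternative: I will show that the sesquilinear form $a$ is bounded on $H^1(B_R^+)\times H^1(B_R^+)$ and satisfies a G\r{a}rding inequality, so that the associated operator is a compact perturbation of an isomorphism; existence then follows once uniqueness of the homogeneous problem is verified. Boundedness of $a$ is immediate: the volume integral in \eqref{bfn} is clearly bounded, and for the two nonlocal boundary terms I would combine the boundedness of the trace operators $H^1(B_R^+)\to H^{1/2}(\partial B_R^+)$ and $H^1(B_R^+)\to H^{1/2}(\Gamma_\epsilon^+)$ with the mapping properties $\mathscr T\colon H^{1/2}(\partial B_R^+)\to H^{-1/2}(\partial B_R^+)$ and $\mathscr B^{\rm PMC}\colon H^{1/2}(\Gamma_\epsilon^+)\to H^{-1/2}(\Gamma_\epsilon^+)$ established above (Lemma \ref{tth} and the preceding lemma), so that $|a(u_\epsilon,w)|\le C\|u_\epsilon\|_{H^1(B_R^+)}\|w\|_{H^1(B_R^+)}$.

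The heart of the matter is the G\r{a}rding estimate. I would write
\[
a(u_\epsilon,u_\epsilon)=\|u_\epsilon\|^2_{H^1(B_R^+)}-(1+\kappa^2)\|u_\epsilon\|^2_{L^2(B_R^+)}
-\langle\mathscr T[u_\epsilon],u_\epsilon\rangle_{\partial B_R^+}
+\langle\mathscr B^{\rm PMC}[u_\epsilon],u_\epsilon\rangle_{\Gamma_\epsilon^+}
\]
and take real parts. Lemma \ref{rip} gives ${\rm Re}\,\langle\mathscr T[u_\epsilon],u_\epsilon\rangle_{\partial B_R^+}\le 0$. For the cavity term, the explicit symbol in \eqref{dtn}--\eqref{Kpp} yields, after the short computation $\frac{e^{2{\rm i}\kappa d}+1}{e^{2{\rm i}\kappa d}-1}=-{\rm i}\cot\kappa d$ and $\beta_n={\rm i}\sqrt{(n\pi/\epsilon)^2-\kappa^2}$ for $n\ge1$,
\[
\langle\mathscr B^{\rm PMC}[u_\epsilon],u_\epsilon\rangle_{\Gamma_\epsilon^+}
=\kappa\cot(\kappa d)\,|u_{\epsilon,0}^+|^2
+\sum_{n\ge1}\sqrt{(n\pi/\epsilon)^2-\kappa^2}\,
\frac{1+e^{-2\sqrt{(n\pi/\epsilon)^2-\kappa^2}\,d}}{1-e^{-2\sqrt{(n\pi/\epsilon)^2-\kappa^2}\,d}}\,|u_{\epsilon,n}^+|^2,
\]
which is real and nonnegative since $\kappa d$ is small (so $\cot\kappa d>0$). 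Hence ${\rm Re}\,a(u_\epsilon,u_\epsilon)\ge\|u_\epsilon\|^2_{H^1(B_R^+)}-(1+\kappa^2)\|u_\epsilon\|^2_{L^2(B_R^+)}$. Because $H^1(B_R^+)\hookrightarrow L^2(B_R^+)$ is compact (Rellich), the operator induced by $a$ is of the form ``isomorphism $+$ compact,'' so \eqref{vf1} is uniquely solvable as soon as the homogeneous problem ($g=0$) has only the trivial solution.

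For uniqueness, let $u_\epsilon\in H^1(B_R^+)$ satisfy $a(u_\epsilon,w)=0$ for all $w$. Taking $w=u_\epsilon$ and the imaginary part, and using that $\langle\mathscr B^{\rm PMC}[u_\epsilon],u_\epsilon\rangle_{\Gamma_\epsilon^+}$ is real, I get ${\rm Im}\,\langle\mathscr T[u_\epsilon],u_\epsilon\rangle_{\partial B_R^+}=0$; by \eqref{exp} and \eqref{Im} this is $\frac{\kappa\pi R}{2}\sum_{n}{\rm Im}\,A_n\,|u_{\epsilon,n}|^2$ with every ${\rm Im}\,A_n>0$, forcing $u_\epsilon=0$ on $\partial B_R^+$. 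Then the transparent boundary condition gives $\partial_r u_\epsilon=\mathscr T[u_\epsilon]=0$ on $\partial B_R^+$ as well, so $u_\epsilon$ solves the Helmholtz equation in $B_R^+$ with vanishing Cauchy data on the analytic arc $\partial B_R^+$; Holmgren's uniqueness theorem together with unique continuation in the connected domain $B_R^+$ then yields $u_\epsilon\equiv 0$, and the cavity field reconstructed via \eqref{wf} vanishes too. I expect the G\r{a}rding step to be the only delicate point, namely checking the signs of the two nonlocal boundary contributions; Lemma \ref{rip} disposes of $\mathscr T$, and should the sign of the rank-one $n=0$ part of $\mathscr B^{\rm PMC}$ ever be unfavorable (it is not, here, since $\kappa d$ is small), one could alternatively absorb that finite-rank term into the compact perturbation and invoke only the manifestly positive $n\ge1$ modes.
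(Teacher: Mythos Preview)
Your argument is correct and follows essentially the same route as the paper: a G\r{a}rding inequality obtained from ${\rm Re}\,\langle\mathscr T u,u\rangle\le 0$ (Lemma \ref{rip}) together with the explicit nonnegativity of $\langle\mathscr B^{\rm PMC}u,u\rangle$, then uniqueness via ${\rm Im}\,a(u_\epsilon,u_\epsilon)=0$ forcing the Fourier coefficients on $\partial B_R^+$ to vanish, and finally Fredholm. The only cosmetic difference is that for uniqueness the paper first extends $u_\epsilon$ by zero to the exterior of $B_R^+$ and then invokes unique continuation, whereas you apply Holmgren/unique continuation directly from the vanishing Cauchy data on $\partial B_R^+$; both are valid and equivalent here.
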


\begin{proof}
By the definition \eqref{dtn}, we have for  sufficiently small $\epsilon$ that
\begin{align*}
\langle \mathscr B^{\rm PMC} [u_{\epsilon}], u_{\epsilon}
\rangle_{\Gamma_{\epsilon}^+}&= \sum\limits_{n=0}^{\infty} {\rm i} \beta_{n}
\frac{e^{{\rm i} 2\beta_n d}+1}{e^{{\rm i} 2\beta_n d}-1} |u_{{\epsilon}, n}
|^2\\
&= \frac{ \kappa \cos \kappa d}{ \sin \kappa d} |u_{{\epsilon}, 0}|^2+
\sum\limits_{n=1}^{\infty} \sqrt {(n \pi / \epsilon)^2 - \kappa^2} \frac{ e^{-
\sqrt{ (n \pi / \epsilon)^2 - \kappa^2}} +1}{1- e^{- \sqrt{ (n \pi / \epsilon)^2
- \kappa^2}}}|u_{{\epsilon}, n}|^2>0.
\end{align*}
It follows from Lemma \ref{rip} and the above inequality  that
\begin{align*}
{\rm Re}  \left\{a (u_{\epsilon}, u_{\epsilon}) \right\}
&=\|\nabla u_{\epsilon}\|^2_{L^2 (B_R^+)} - \kappa^2 \|u_{\epsilon}\|^2_{L^2
(B_R^+)} -{\rm Re}\langle \mathscr T [u_{\epsilon}], u_{\epsilon}
\rangle_{\partial B_{R}^+} + \langle \mathscr B^{\rm PMC} [u_{\epsilon}],
u_{\epsilon} \rangle_{\Gamma_{\epsilon}^+}\\
& \geq \|\nabla u_{\epsilon}\|^2_{L^2 (B_R^+)} - \kappa^2
\|u_{\epsilon}\|^2_{L^2 (B_R^+)},
\end{align*}
which shows that the sesquilinear form $a(\cdot, \cdot)$ satisfies a G{\aa}rding
type inequality.

Next is show the uniqueness. It suffices to show that $u_{\epsilon}=0$ if
$g=0$. A simple calculation yields
\begin{align*}
{\rm Im} a (u_{\epsilon}, u_{\epsilon})=- {\rm Im } \langle \mathscr T
[u_{\epsilon}], u_{\epsilon} \rangle_{\partial B_{R}^+} =-\frac {\kappa \pi
R}{2} \sum\limits_{n=0}^{\infty}  {\rm Im }A_n |u_{{\epsilon},n}|^2=0.
\end{align*}
It follows from Lemma \ref{rip} that  $u_{{\epsilon}, n}=0$ for all  $n$.
Therefore $u_{\epsilon}=0$ on $\partial B_{R}^+.$  By the definition
\eqref{cdtn}, we have $\mathscr T u_{\epsilon}=0$, which implies $\partial_r
u_{\epsilon}=\mathscr T u_{\epsilon}=0$ on $\partial B_{R}^+$ by \eqref{tdtn}.
Hence we have $u_\epsilon=0$ in $\mathbb R^2_+\setminus\bar{B}_R^+$. It
follows from the unique continuation \cite{Jerison1985} that
$u_{\epsilon}= 0$ in $B_R^+$. The proof is completed by applying the
Fredholm alternative theorem.
\end{proof}

Correspondingly, the approximate model problem \eqref{ardp} can be written as
follows:
\begin{align}\label{ardm}
 \begin{cases}
  \Delta v_{\epsilon} + \kappa^2 v_{\epsilon}=0 \quad  & \text {in}~B_R^+,\\
  \partial_\nu v_{\epsilon}=0 \quad & \text {on}~\Gamma_{R}^+ \setminus
\Gamma_{\epsilon}^+,\\
  \partial_{x_2} v_{\epsilon} = \mathscr B^{\rm PMC}_0 [v_{\epsilon}] \quad &
\text {on}~\Gamma_{\epsilon}^+,\\
  \partial_r v_{\epsilon} = \mathscr T [v_{\epsilon}] +g \quad &  \text
{on}~\partial B_{R}^+.
 \end{cases}
\end{align}
The variational formulation of \eqref{ardm} is to find $v_{\epsilon} \in H^1
(B_R^+)$ such that
\begin{align}\label{avf1}
a_0  (v_{\epsilon}, w)= \langle g, w\rangle_{\partial B_R^+}\quad\forall w \in
H^1 (B_R^+),
\end{align}
where the sesquilinear form
\begin{align}\label{abfn}
a_0 (v_{\epsilon}, w) =\int_{B_R^+} \left(  \nabla v_{\epsilon} \cdot
\nabla \bar{\epsilon} -\kappa^2 v_{\epsilon} \bar w \right) {\rm d} \boldsymbol
x -\int_{\partial B_{R}^+} \mathscr T [v_{\epsilon}] \bar w  {\rm d} \gamma
+\int_{\Gamma_{\epsilon}^+} \mathscr B^{\rm PMC}_0 [v_{\epsilon}] \bar w {\rm d}
\gamma.
\end{align}
The well-posedness of the variational problem \eqref{avf1} can be shown
similarly to Theorem \ref{wpst}. The proof is omitted for brevity.

In the cavity $D_{\epsilon}$, the wave fields $u_{\epsilon}$ and $v_{\epsilon}$
are given by \eqref{wf} and \eqref{om}--\eqref{za}, respectively. Their
Fourier coefficients can be calculated from the solutions
obtained in \eqref{rdm} and \eqref{ardm}. The following lemma gives the accuracy
of approximate model \eqref{ardm} and \eqref{om}--\eqref{za}.

\begin{lemm}\label{accm}
Let $u_{\epsilon}$ be the expansion of \eqref{wf} inside the cavity  with the
Fourier coefficients given by the model \eqref{rdm}, and let $v_{\epsilon}$ be
the expansion of \eqref{om}--\eqref{za} with the Fourier coefficients given by
the approximate  model \eqref{ardm}, then the following estimates hold:
\begin{align*}
\|\nabla u_{\epsilon} -\nabla v_{\epsilon}\|_{L^2 (D_{\epsilon})} \leq  C
(\lambda, d, R)   {\epsilon}, \quad \|u_{\epsilon} -v_{\epsilon}\|_{L^2
(D_{\epsilon})} \leq C (\lambda, d, R) {\epsilon}  \sqrt {{\epsilon}
|\ln
{\epsilon}|},
\end{align*}
where $C(\lambda, d, R)$ is a positive constant independent of $\epsilon$.
\end{lemm}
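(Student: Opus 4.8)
The plan is to reduce both cavity fields to their aperture Fourier data, use the $L^2(D_\epsilon)$-orthogonality of the waveguide modes $\phi_n$, and then separately estimate the coefficients $u_{\epsilon,n}^+$ for $n\ge1$ — the modes that $v_\epsilon$ does not carry — and the mode-$0$ discrepancy $u_{\epsilon,0}^+-v_{\epsilon,0}^+$. By \eqref{wf}--\eqref{uf}, inside $D_\epsilon$ one has $u_\epsilon=\sum_{n\ge0}\big(\alpha_n^+e^{-{\rm i}\beta_nx_2}+\alpha_n^-e^{{\rm i}\beta_n(x_2+d)}\big)\phi_n$ with $\alpha_n^+=-u_{\epsilon,n}^+/(e^{{\rm i}2\beta_nd}-1)$ and $\alpha_n^-=e^{{\rm i}\beta_nd}u_{\epsilon,n}^+/(e^{{\rm i}2\beta_nd}-1)$; comparing with \eqref{om}--\eqref{za} (and $v_{\epsilon,0}^-=0$), the $n=0$ term of $u_\epsilon$ is exactly the one-mode field $v_\epsilon$ with $v_{\epsilon,0}^+$ replaced by $u_{\epsilon,0}^+$, so by the one-mode reconstruction bound already used in Theorems \ref{get}--\ref{ght} it contributes at most $C(\lambda,d)|u_{\epsilon,0}^+-v_{\epsilon,0}^+|$ to both $\|u_\epsilon-v_\epsilon\|_{L^2(D_\epsilon)}$ and $\|\nabla(u_\epsilon-v_\epsilon)\|_{L^2(D_\epsilon)}$. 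Writing $\beta_n={\rm i}\mu_n$, $\mu_n=\sqrt{(n\pi/\epsilon)^2-\kappa^2}$ (so $\mu_n\asymp n\pi/\epsilon$ for $n\ge1$ when $\epsilon\ll\lambda$) and noting $e^{{\rm i}\beta_nd}=e^{-\mu_nd}$ is super-exponentially small for $n\ge1$, the remaining higher-mode part of $u_\epsilon$ is, up to negligible terms, $\sum_{n\ge1}u_{\epsilon,n}^+e^{\mu_nx_2}\phi_n$, whose modes are $L^2$- and $H^1$-orthogonal; hence it suffices to prove the pointwise bound $|u_{\epsilon,n}^+|\le C(\lambda,d)\,\epsilon^{3/2}n^{-2}\ln(1+n)$ for $n\ge1$ together with $|u_{\epsilon,0}^+-v_{\epsilon,0}^+|\le C(\lambda,d,R)\,\epsilon^{3/2}\sqrt{|\ln\epsilon|}$, since then
\begin{align*}
\sum_{n\ge1}\mu_n|u_{\epsilon,n}^+|^2\asymp\frac{\pi}{\epsilon}\sum_{n\ge1}n|u_{\epsilon,n}^+|^2\le C(\lambda,d)\,\epsilon^2,\qquad \sum_{n\ge1}\frac{|u_{\epsilon,n}^+|^2}{\mu_n}\asymp\frac{\epsilon}{\pi}\sum_{n\ge1}\frac{|u_{\epsilon,n}^+|^2}{n}\le C(\lambda,d)\,\epsilon^4,
\end{align*}
and adding the mode-$0$ contribution yields the asserted $O(\epsilon)$ and $O(\epsilon^{3/2}\sqrt{|\ln\epsilon|})$ bounds.

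To obtain the coefficient estimates I would repeat the derivation of \eqref{vs} for both models. Using the half-space Neumann Green function and the PEC condition on $\Gamma_0\setminus\Gamma_\epsilon^+$, on $\Gamma_\epsilon^+$ one gets
\begin{align*}
u_\epsilon^+(x_1)=2u^{\rm inc}(x_1,0)-\frac{\rm i}{2}\int_0^\epsilon H_0^{(1)}(\kappa|x_1-y_1|)\,\mathscr B^{\rm PMC}[u_\epsilon](y_1)\,{\rm d}y_1,
\end{align*}
and the same identity for $v_\epsilon^+$ with $\mathscr B_0^{\rm PMC}$. Writing $m_n={\rm i}\beta_n(e^{{\rm i}2\beta_nd}+1)/(e^{{\rm i}2\beta_nd}-1)$ — so $|m_n|\le4n\pi/\epsilon$ by the estimates in the proof of Lemma \ref{tth} and $|m_0|=\kappa|\cot\kappa d|\asymp1/d$ — and projecting onto $\phi_n$, the first identity becomes the linear system $u_{\epsilon,n}^+=2\langle u^{\rm inc},\phi_n\rangle+\sum_{m\ge0}m_mu_{\epsilon,m}^+K_{nm}$, where $K_{nm}$ are the entries of the single-layer operator with kernel $-\frac{\rm i}{2}H_0^{(1)}(\kappa|x_1-y_1|)$. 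The crucial point is the $\epsilon$-scaling of $K_{nm}$: after $x_1=\epsilon\xi$ one has $H_0^{(1)}(\kappa\epsilon|\xi-\eta|)=\tfrac{2{\rm i}}{\pi}\ln\epsilon+\tfrac{2{\rm i}}{\pi}\ln|\xi-\eta|+O(1)$, and the $\xi$-independent part of the kernel is annihilated by testing against $\phi_n$ for $n\ge1$ because $\int_0^\epsilon\phi_n=0$; consequently $|K_{nm}|\lesssim\epsilon$ whenever $\max(n,m)\ge1$, with $|K_{0n}|=|K_{n0}|\lesssim\epsilon\,n^{-2}\ln(1+n)$ (the surviving factor being $\int_0^1\psi(t)\cos(n\pi t)\,{\rm d}t$ with $\psi(t)=t\ln t+(1-t)\ln(1-t)-1$, whose derivative is logarithmically singular), while the diagonal satisfies $m_nK_{nn}\to-1$ as $\epsilon\to0$ and $K_{00}=O(\epsilon|\ln\epsilon|)$. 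Since $|\langle u^{\rm inc},\phi_n\rangle|\le C(\lambda)\epsilon^{3/2}n^{-2}$ by smoothness of $u^{\rm inc}$, and $|u_{\epsilon,0}^+|\le C(\lambda,d)\sqrt\epsilon$ (obtained self-consistently, its leading term being $2\langle u^{\rm inc},\phi_0\rangle\asymp2\sqrt\epsilon$), a contraction/Neumann-series argument for this system in the weighted space $\{(a_n)_{n\ge1}:\sum_{n\ge1}n|a_n|^2<\infty\}$ delivers the claimed bound on $u_{\epsilon,n}^+$. Finally, subtracting the two reduced equations and testing against $\phi_0$ gives $(1-m_0K_{00})(u_{\epsilon,0}^+-v_{\epsilon,0}^+)=\sum_{n\ge1}m_nu_{\epsilon,n}^+K_{0n}$ with $|1-m_0K_{00}|\ge\tfrac12$ for $\epsilon$ small; bounding the right-hand side by Cauchy--Schwarz using $|m_n|\le4n\pi/\epsilon$, $|K_{0n}|\lesssim\epsilon n^{-2}\ln(1+n)$ and $\sum_{n\ge1}n|u_{\epsilon,n}^+|^2\lesssim\epsilon^3$ yields $|u_{\epsilon,0}^+-v_{\epsilon,0}^+|\le C(\lambda,d,R)\,\epsilon^{3/2}\sqrt{|\ln\epsilon|}$ (a possible logarithmic factor entering through the single-layer operator on the $\epsilon$-interval).

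I expect the main obstacle to be precisely this second step: both the self-consistent a priori bound $|u_{\epsilon,0}^+|\lesssim\sqrt\epsilon$ for the exact model — the one-mode shortcut of Theorems \ref{get}--\ref{ght} is unavailable and the high-mode coupling must be absorbed by the fixed point — and the careful bookkeeping of $\epsilon$-powers through the rescaled logarithmic integral operator, where the mean-zero property $\int_0^\epsilon\phi_n=0$ ($n\ge1$) is exactly what keeps the logarithm out of the $O(\epsilon)$ gradient estimate. The remaining ingredients — the orthogonal mode decomposition, the super-exponential decay $e^{-\mu_nd}$ of the higher cavity modes, the one-mode reconstruction bounds, and the $B_R^+$ well-posedness (Theorem \ref{wpst} and its analogue for \eqref{avf1}) supplying the $R$-dependence — should all be routine.
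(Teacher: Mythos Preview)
Your route is genuinely different from the paper's and, as you anticipate, the obstacle you flag is real. The paper never analyses the aperture integral equation mode-by-mode. Instead it works variationally in $B_R^+$: it introduces the no-cavity solution $u_0=u^{\rm inc}+u^{\rm ref}$, shows $\|u_\epsilon-u_0\|_{H^1(B_R^+)}$, $\|v_\epsilon-u_0\|_{H^1(B_R^+)}\le C\epsilon\sqrt{|\ln\epsilon|}$ by comparing sesquilinear forms, then bootstraps to $\|v_\epsilon-u_0\|_{H^{3/2}(B_R^+)}\le C\sqrt\epsilon$ by elliptic regularity, and finally obtains $\|u_\epsilon-v_\epsilon\|_{H^1(B_R^+)}\le C\epsilon$. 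The two cavity estimates then follow from $\|u_\epsilon\|_{H^{1/2}(\Gamma_\epsilon^+)}\le C\epsilon$ (via triangle inequality through $u_0$ and $v_\epsilon$) and $|u_{\epsilon,0}^+-v_{\epsilon,0}^+|\le C\sqrt{\epsilon|\ln\epsilon|}\,\|u_\epsilon-v_\epsilon\|_{H^1(B_R^+)}$. Two ingredients are imported from \cite{lin2015}: the uniform-in-$\epsilon$ bound $\|\mathscr L_\epsilon^{-1}\|_{\mathcal L(H^1(B_R^+))}\le C$ and the trace estimates relating $|w_0^+|$, $\|w\|_{H^{1/2}(\Gamma_\epsilon^+)}$ to Sobolev norms on $B_R^+$.

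Your Neumann-series step will not close as written. After rescaling, the matrix $M=(m_mK_{nm})$ restricted to $n,m\ge1$ has diagonal entries $m_nK_{nn}\sim -2nL_{nn}$ with $L_{nn}$ the cosine matrix element of $\frac{1}{\pi}\ln|\xi-\eta|$ on $(0,1)$; these are $O(1)$ and do not tend to zero with $\epsilon$, so $\|M\|$ is not small and $(I-M)^{-1}$ cannot be built by contraction. What is true is that $I-M$ is (up to $\epsilon$-small remainders) a \emph{fixed} operator---essentially the composition of the log single-layer on $(0,1)$ with an order-one multiplier---whose invertibility is equivalent to the well-posedness you already have from Theorem \ref{wpst}. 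To make your scheme work you would need to (i) identify this fixed operator and prove it is boundedly invertible on your weighted space, and (ii) show the $\epsilon$-dependent corrections are relatively small. That is feasible but is substantially more work than the paper's variational comparison, and your very strong pointwise claim $|u_{\epsilon,n}^+|\lesssim\epsilon^{3/2}n^{-2}\ln(1+n)$ would require still more (it is stronger than the $H^{1/2}$ bound the paper actually uses). If you want to salvage the boundary-integral viewpoint, a cleaner path is to use the paper's $\|u_\epsilon\|_{H^{1/2}(\Gamma_\epsilon^+)}\le C\epsilon$ as a black box for the $n\ge1$ sum and then run your subtraction argument for the $n=0$ mode, which is correct and in fact matches the paper's Step~5.
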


\begin{proof}
The proof consists of five steps in order to show the estimates between
$u_\epsilon$ and $v_\epsilon$.

Step 1: we first give the estimate of $\|u_{\epsilon} -u_{0}\|_{H^1
(B_R^+)}$ and $\|v_{\epsilon} -u_{0}\|_{H^1 (B_R^+)}$, where $u_0$ is
the solution of \eqref{rdm} when there is no cavity, i.e., $u_0$ satisfies the
variational problem
\begin{align}\label{uvf}
b(u_0, w) = \langle g, w\rangle_{\partial B_R^+} \quad\forall w \in
H^1 (B_R^+),
\end{align}
where the sesquilinear form
\begin{align*}
b(u_0, w) =\int_{B_R^+} \left( \nabla u_0 \cdot \nabla \bar w -
\kappa^2 u_{0} \bar w \right) {\rm d} \boldsymbol x - \int_{\partial B_{R}^+}
\mathscr T[u_0] \bar w {\rm d} \gamma.
\end{align*}
It follows from \eqref{vf1} and \eqref{uvf} that
\begin{align}\label{i1}
a(u_{\epsilon}-u_0, w )=\langle g, w\rangle_{\partial B_R^+}-a (u_0, w)=b
(u_0, w) -a (u_0, w)=-\int_{\Gamma_{\epsilon}^+} \mathscr B^{\rm PMC} [u_0] \bar
w {\rm d} \gamma.
\end{align}
Using \eqref{dtn} and \eqref{adtn}, we have from Lemma \ref{tth} that
\begin{align}\label{est}
|b(u_0, w) -a (u_0, w)|
&\leq \left|\int_{\Gamma_{\epsilon}^+} \mathscr B^{\rm PMC}_0 [u_0]  \bar w {\rm
d} \gamma \right| +\left|\int_{\Gamma_{\epsilon}^+} (\mathscr B^{\rm PMC} -
\mathscr B^{\rm PMC}_0) [u_0]  \bar w {\rm d} \gamma \right| \notag\\
& \leq \frac{\kappa}{|\sin \kappa d|} |u_{0, 0}^+| |\bar w_0^+|+C
\|u_0\|_{H^{1/2} (\Gamma_{\epsilon}^+)} \|w\|_{H^{1/2}
(\Gamma_{\epsilon}^+)},
\end{align}
where $u_{0, 0}^+=\langle u_0, \phi_0\rangle_{\Gamma_\epsilon^+}$ and
$w_0^+=\langle w, \phi_0\rangle_{\Gamma_\epsilon^+}$.

Using the estimates \cite[Lemma A.1, Lemma A.2]{lin2015}
\begin{align*}
&|u_{0, 0}^+| \leq C (R) \sqrt {\epsilon} \|u_{0}\|_{H^2 (B_R^+)} \quad
\text {if}~ u_0 \in H^2 (B_R^+),\\
&|w_0^+| \leq C (R) \sqrt {{\epsilon} |\ln {\epsilon}|} \|w\|_{H^1 (B_R^+)}
\quad \text {if}~ w \in H^1 (B_R^+),\\
& \|u_0\|_{H^{1/2} (\Gamma_{\epsilon}^+)} \leq C (R) {\epsilon} \|u_0\|_{H^3
(B_R^+)} \quad  \text {if}~ u_0 \in H^3 (B_R^+), \\
& \|w\|_{H^{1/2} (\Gamma_{\epsilon}^+)} \leq C (R) \|w\|_{H^1 (B_R^+)}
\quad \text {if}~ w \in H^1 (B_R^+),
\end{align*}
we obtain from \eqref{est} that
\begin{align*}
|b(u_0, w) -a (u_0, w)| \leq C (\lambda, d, R) \left({\epsilon}  \sqrt {|\ln
{\epsilon}|} \|u_{0}\|_{H^2 (B_R^+)}\|w\|_{H^1 (B_R^+)} + {\epsilon}
\|u_0\|_{H^3 (B_R^+)} \|w\|_{H^1 (B_R^+)} \right).
\end{align*}
Since $u_0 =u^{\rm inc} +u^{\rm ref} =e^{{\rm i} \kappa
\boldsymbol x\cdot\boldsymbol d } +e^{{\rm i} \kappa  \boldsymbol
x\cdot\boldsymbol d'}$, we have from straightforward calculations that
\begin{align}\label{i2}
|b(u_0, w) -a (u_0, w)| \leq C (\lambda, d, R)   {\epsilon} \sqrt {|\ln
{\epsilon}|}\|w\|_{H^1 (B_R^+)}.
\end{align}

Denote by $\mathscr L_{\epsilon}$ be the induced operator for the sesquilinear
form \eqref{bfn} such that
\begin{align*}
a_{\epsilon} (u_{\epsilon}, w)= \left( \mathscr L_{\epsilon} u_{\epsilon}, w
\right),
\end{align*}
where $\left( \cdot, \cdot \right)$ is the inner product on $L^2 (B_R^+)$.
It follows from \cite[Lemma A.3]{lin2015} that there exists a positive constant
${\epsilon}_0$ such that
\begin{align}\label{i4}
\|\mathscr L_{\epsilon}^{-1}\|_{\mathcal L (H^1 (B_R^+))} \leq C (
\lambda, d, R)\quad\forall~ 0 < {\epsilon} < {\epsilon}_0,
\end{align}
where $ C (\lambda, d, R)$ is a constant independent of ${\epsilon}$.
Combing \eqref{i1}--\eqref{i4} yields
\begin{align*}
\|u_{\epsilon} -u_0\|_{H^1 (B_R^+)} \leq   C (\lambda, d, R)  {\epsilon}
\sqrt {|\ln {\epsilon}|}.
\end{align*}

Since $v_{\epsilon}$ satisfies the variational problem  \eqref{avf1}, following
the same arguments as above, we may show for sufficiently small $\epsilon$ that
\begin{align}\label{i6}
\|v_{\epsilon} -u_{0}\|_{H^1 (B_R^+)} \leq C (
\lambda, d, R) {\epsilon} \sqrt {|\ln {\epsilon}|}.
\end{align}

Step 2: we estimate $\|v_{\epsilon} -u_0\|_{H^{3/2}
(B_R^+)}$. Let $\xi =v_{\epsilon} -u_0$. It is easy to verify that $\xi$
satisfies
\begin{align*}
\begin{cases}
\Delta \xi + \kappa^2 \xi =0  \quad  &\text {in}~ B_R^+,\\
\partial_\nu \xi =0 \quad &\text {on}~\Gamma_{R}^+ \setminus
\Gamma_{\epsilon}^+,\\
\partial_{x_2} \xi =\mathscr B^{\rm PMC}_0 [v_{\epsilon}] \quad &\text {on}~
\Gamma_{\epsilon}^+,\\
\partial_r \xi =\mathscr T \xi \quad & \text {on}~\partial B_{R}^+.
\end{cases}
\end{align*}
It follows from the standard regularity estimate for the Neumann problem that
\begin{align}
\|\xi\|_{H^{3/2} (B_R^+)}
&\leq C (\lambda, d, R) \|\mathscr B^{\rm PMC}_0 [v_{\epsilon}]\|_{L^2
(\Gamma_{\epsilon}^+)}\nonumber\\
&\leq C (\lambda, d, R) \left( |u_{0,0}^+| +|v_{w, 0}^+ -u_{0, 0}^+|\right)
\|\phi_0\|_{L^2 (0, {\epsilon})}\nonumber\\
& \leq C (\lambda, d, R) \left( \sqrt {\epsilon} \|u_0\|_{H^2 (B_R^+)} +
\sqrt {{\epsilon} |\ln {\epsilon}|} \|v_{\epsilon} -u_{0}\|_{H^1 (B_R^+)}
\right)\nonumber\\
&\leq C (\lambda, d, R)   \sqrt {\epsilon},\label{i6.1}
\end{align}
where the last inequality is obtained by using \eqref{i6}.

Step 3: This step is to estimate $\|u_{\epsilon} -v_{\epsilon}\|_{H^1
(B_R^+)}$. From \eqref{vf1} and \eqref{avf1}, we have
\begin{align}\label{i7}
a (u_{\epsilon} -v_{\epsilon}, w)=a_0 (v_{\epsilon}, w) -a( v_{\epsilon}, w).
\end{align}
Comparing \eqref{bfn} and \eqref{abfn} yields
\begin{align}\label{i8}
&|a_0 (v_{\epsilon}, w) -a( v_{\epsilon}, w)| \leq \left|
\int_{\Gamma_{\epsilon}^+} (\mathscr B^{\rm PMC}- \mathscr
B^{\rm PMC}_0)[v_{\epsilon}] \bar w {\rm d} \gamma\right| \nonumber\\
&\leq  \left| \int_{\Gamma_{\epsilon}^+} (\mathscr B^{\rm PMC}- \mathscr
B_0^{\rm PMC})[u_0] \bar w {\rm d} \gamma\right| +\left|
\int_{\Gamma_{\epsilon}^+} (\mathscr B^{\rm PMC}- \mathscr
B_0^{\rm PMC})[v_{\epsilon}-u_0] \bar w {\rm d} \gamma\right|.
\end{align}
Using the estimates \eqref{est}--\eqref{i2} gives
\begin{align*}
\left| \int_{\Gamma_{\epsilon}^+} (\mathscr B^{\rm PMC}- \mathscr B_0^{\rm
PMC})[u_0] \bar w {\rm d} \gamma\right| \leq  C (\lambda, d, R) {\epsilon}~
\|w\|_{H^1 (B_R^+)}.
\end{align*}
For the second term in \eqref{i8}, we have from the estimate of $\|v_{\epsilon}
-u_0\|_{H^{3/2} (\Omega_{R})}$ that
 \begin{align*}
 \left| \int_{\Gamma_{\epsilon}^+} (\mathscr B^{\rm PMC}- \mathscr
B^{\rm PMC}_{0})[v_{\epsilon}-u_0] \bar w {\rm d} \gamma\right|  & \leq  C (
\lambda, d, R) \|v_{\epsilon} -u_{0}\|_{H^{1/2} (\Gamma_{\epsilon}^+)}
\|w\|_{H^{1/2} (\Gamma_{\epsilon}^+)}\\
& \leq   C (\lambda, d, R)  \sqrt {\epsilon} \|v_{\epsilon} -u_{0}\|_{H^{3/2}
(B_R^+)} \|w\|_{H^1 (B_R^+)}\\
&\leq   C (\lambda, d, R) {\epsilon} \|w\|_{H^1 (B_R^+)}.
 \end{align*}
For the variational problem \eqref{i7}, a combination of the above estimates
and the boundedness of inverse of the linear operator $\mathscr L_{\epsilon}$ in
\eqref{i4} lead to the following estimate
 \begin{align}\label{i9}
 \|u_{\epsilon} -v_{\epsilon}\|_{H^1 (B_R^+)} \leq  C (\lambda, d, R){\epsilon}.
 \end{align}

 Step 4: we estimate $\|\nabla u_{\epsilon}- \nabla v_{\epsilon}\|_{L^2
(D_{\epsilon})}$.  We have from the expansions
 of $u_{\epsilon}$ in \eqref{eqe} and $v_{\epsilon}$ in \eqref{om} that
 \begin{align*}
 \|\nabla u_{\epsilon} -\nabla v_{\epsilon}\|^2_{L^2 (D_{\epsilon})}
 &= \int_{-d}^0 \kappa^2 \left| - (\alpha_0^+ -\tilde \alpha_0^+)e^{-{\rm i}
\kappa x_2} +(\alpha_0^- -\tilde \alpha_0^- )e^{{\rm i} \kappa (x_2
+d)})\right|^2 {\rm d }x_2 \\
&\hspace{1cm}+\int_{-d}^0 \bigg(  \sum\limits_{n=1}^{\infty} \left(\frac{n \pi
}{ {\epsilon}} \right)^2 \left|\alpha_{n}^+ e^{-{\rm i}
\beta_n x_2} +\alpha_{n}^- e^{{\rm i} \beta_n (x_2 +d)} \right|^2 \\
&\hspace{1cm}+ ({\rm i} \beta_n)^2 \left| -\alpha_{n}^+ e^{-{\rm i} \beta_n x_2}
+\alpha_{n}^- e^{{\rm
i} \beta_n (x_2 +d)}\right|^2\bigg){\rm d}x_2\\
 &\leq 2 \kappa^2 d \left(|\alpha_0^+ -\tilde \alpha_0^+|^2 + |\alpha_0^-
-\tilde \alpha_0^-|^2\right) +\sum\limits_{n=1}^{\infty} \frac{1+ 2 (n \pi /
{\epsilon})^2}{  n \pi / 2 {\epsilon}} \left( |\alpha_n^+|^2
+|\alpha_n^-|^2\right),
 \end{align*}
where the Parseval identity is used in the last inequality.

For the first term  ($n=0$) in the above inequality,  following from  \eqref{ec}
and \eqref{za} and  noting that $d \ll \lambda$, we get
 \begin{align}
 &|\alpha_0^+ -\tilde \alpha_0^+|^2 = \frac{|u_{{\epsilon},0}^+ -
v_{{\epsilon},0}^+|^2}{|e^{{\rm i} 2 \kappa d} -1|^2} \leq  \frac{2}{
\kappa^2 d^2 }|u_{{\epsilon},0}^+ - v_{{\epsilon},0}^+|^2, \nonumber\\
 &|\alpha_0^- -\tilde \alpha_0^-|^2 =\frac{| e^{{\rm i} \kappa
d}(u_{{\epsilon},0}^+ - v_{{\epsilon},0}^+)|^2}{|e^{{\rm i} 2 \kappa d}
-1|^2}\leq  \frac{2}{ \kappa^2 d^2 }|u_{{\epsilon},0}^+ -
v_{{\epsilon},0}^+|^2.\label{p1}
 \end{align}
 For the second term ($n \geq 1$), by \eqref{ec} and ${\rm i} \beta_n  d = -
d\sqrt {(n \pi / {\epsilon})^2 - \kappa^2}$, we have
 \begin{align}\label{p2}
|\alpha_n^+|^2=  \frac{|- u_{{\epsilon},n}^+|^2}{|e^{{\rm i} 2 \beta_{n}d}-1|^2}
\leq 2 | u_{{\epsilon},n}^+|^2, \quad |\alpha_n^- |^2= \frac{| e^{{\rm i}
\beta_n d} u_{{\epsilon},n}^+|^2}{|e^{{\rm i} 2 \beta_{n}d}-1|^2} \leq  2|
u_{{\epsilon},n}^+|^2.
 \end{align}
Consequently, we obtain
 \begin{align*}
 \|\nabla u_{\epsilon} -\nabla v_{\epsilon}\|^2_{L^2 (D_{\epsilon})} &\leq C (
\lambda, d, R) \left( |u_{{\epsilon},0}^+ - v_{{\epsilon},0}^+|^2 +
\sum\limits_{n=1}^{\infty} \left(1 + \left(\frac{n \pi}{\epsilon} \right)^2
\right)^{1/2}  |u_{{\epsilon},n}^+|^2\right)\\
&\leq C (\lambda, d, R) \left(
|u_{{\epsilon},0}^+ - v_{{\epsilon},0}^+|^2  +\|u_{\epsilon}\|^2_{H^{1/2}
(\Gamma_{\epsilon}^+)} \right).
 \end{align*}
It follows from \eqref{i6.1} and \eqref{i9} that
 \begin{align}\label{i10}
 \|u_{\epsilon}\|_{H^{1/2} (\Gamma_{\epsilon}^+)} & \leq \|u_0\|_{H^{1/2}
(\Gamma_{\epsilon}^+)}+\|v_{\epsilon} - u_0\|_{H^{1/2} (\Gamma_{\epsilon}^+)}
+\|u_{\epsilon} -v_{\epsilon}\|_{H^{1/2} (\Gamma_{\epsilon}^+)}\nonumber\\
 & \leq  C (\lambda, d, R) \left( {\epsilon} \|u_{0}\|_{H^3 (B_R^+)}+ \sqrt
{\epsilon} \|v_{\epsilon} -u_0\|_{H^{3/2} (B_R^+)} + \|u_{\epsilon}
-v_{\epsilon}\|_{H^1 (B_R^+)} \right)\nonumber\\
& \leq  C (\lambda, d, R) {\epsilon}.
 \end{align}
On the other hand, we have from \eqref{i9} that
 \begin{align}\label{i11}
 |u_{{\epsilon},0}^+ - v_{{\epsilon},0}^+|  \leq   C (\lambda, d, R)   \sqrt
{{\epsilon} |\ln {\epsilon}|} \|u_{\epsilon} -v_{\epsilon}\|_{H^1 (B_R^+)}
\leq  C (\lambda, d, R)   {\epsilon}  \sqrt {{\epsilon} |\ln {\epsilon}|}.
 \end{align}
Combining the above estimates yields
 \begin{align*}
\|\nabla u_{\epsilon} -\nabla v_{\epsilon}\|^2_{L^2
(D_{\epsilon})}  \leq  C (\lambda, d, R)  {\epsilon}.
 \end{align*}

Step 5: the last step is to estimate $\|u_{\epsilon} -v_{\epsilon}\|_{L^2
(D_{\epsilon})}$. Using the similar method as that in Step 4, we can obtain
from the Parseval identity and \eqref{p1}--\eqref{p2} that
 \begin{align*}
 \|u_{\epsilon} -v_{\epsilon}\|^2_{L^2 (D_{\epsilon})} &\leq 2 d
\left(|\alpha_0^+ -\tilde \alpha_0^+|^2 + |\alpha_0^- -\tilde
\alpha_0^-|^2\right) +\sum\limits_{n=1}^{\infty} \frac{1}{  n \pi / 2
{\epsilon}} \left( |\alpha_n^+|^2 +|\alpha_n^-|^2\right)\\
&\leq   C (\lambda, d, R)  \left( |u_{{\epsilon},0}^+ - v_{{\epsilon},0}^+|^2 +
\sum\limits_{n=1}^{\infty}  \left(\frac{\epsilon}{n \pi} \right)
|u_{{\epsilon},n}^+|^2 \right)\\
 & \leq  C (\lambda, d, R)  \left( |u_{{\epsilon},0}^+ - v_{{\epsilon},0}^+|^2
+{\epsilon}^2 \sum\limits_{n=1}^{\infty}  \left(1 + \left(\frac{n \pi}{\epsilon}
\right) \right)^2  |u_{{\epsilon},n}^+|^2\right)\\
& \leq C (\lambda, d, R) \left( |u_{{\epsilon},0}^+ - v_{{\epsilon},0}^+|^2
+{\epsilon}^2 \|u_{\epsilon}\|_{H^{1/2} (\Gamma_{\epsilon}^+)} \right)\\
 &\leq C (\lambda, d, R)   \left(  {\epsilon}  \sqrt {{\epsilon} |\ln
{\epsilon}|}+  {\epsilon}^3 \right)\\
& \leq 2  C (\lambda, d, R) {\epsilon}
\sqrt {{\epsilon} |\ln {\epsilon}|},
 \end{align*}
where the estimates \eqref{i10}--\eqref{i11} are used.
\end{proof}

\subsection{Field enhancement without resonance}

The electromagnetic field enhancement factor is defined as the ratio of
the energy between the total field and the incident field in the cavity. Hence
the electric and magnetic field enhancement factors are given by
\begin{align*}
Q_{\boldsymbol E}= \frac{\|\boldsymbol E_{\epsilon}\|_{L^2
(D_{\epsilon})}}{\|\boldsymbol E^{\rm inc}\|_{L^2 (D_{\epsilon})}} \quad
\text{and} \quad Q_{\boldsymbol H}= \frac{\|\boldsymbol H_{\epsilon}\|_{L^2
(D_{\epsilon})}}{\|\boldsymbol H^{\rm inc}\|_{L^2 (D_{\epsilon})}}.
\end{align*}

\begin{theo}\label{NTH}
There exists a constant ${\epsilon}_0$ depending on $\lambda$ and $d$ such
that for all $0 < {\epsilon} < {\epsilon}_0$ the following estimates hold:
\begin{align*}
C_1 \left(\lambda/d\right) \leq  Q_{\boldsymbol E} \leq C_2  \left(
\lambda/d\right), \quad  C_1 \leq Q_{\boldsymbol H} \leq C_2,
\end{align*}
where $C_1$ and $C_2$ are some positive constants independent of $\lambda,
{\epsilon}_0, {\epsilon}, d$.
\end{theo}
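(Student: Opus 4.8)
The plan is to reduce the two vector enhancement factors to the scalar quantities $\|\nabla u_\epsilon\|_{L^2(D_\epsilon)}$ and $\|u_\epsilon\|_{L^2(D_\epsilon)}$, and then to transfer the sharp two-sided estimates already available for the approximate field $v_\epsilon$ over to the true field $u_\epsilon$ by means of Lemma \ref{accm}. First I would scalarize. Since the total magnetic field is $\boldsymbol H_\epsilon=(0,0,u_\epsilon)$ and $|u^{\rm inc}|\equiv1$, we have $\|\boldsymbol H_\epsilon\|_{L^2(D_\epsilon)}=\|u_\epsilon\|_{L^2(D_\epsilon)}$ and $\|\boldsymbol H^{\rm inc}\|_{L^2(D_\epsilon)}=\sqrt{\epsilon d}$; Ampère's law $\boldsymbol E_\epsilon={\rm i}(\omega\varepsilon)^{-1}\nabla\times\boldsymbol H_\epsilon$ gives $|\boldsymbol E_\epsilon|=(\omega\varepsilon)^{-1}|\nabla u_\epsilon|$ in $D_\epsilon$, while $|\boldsymbol E^{\rm inc}|\equiv\sqrt{\mu/\varepsilon}$, so that $\|\boldsymbol E^{\rm inc}\|_{L^2(D_\epsilon)}=\sqrt{\mu/\varepsilon}\,\sqrt{\epsilon d}=\kappa(\omega\varepsilon)^{-1}\sqrt{\epsilon d}$. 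Hence
\begin{align*}
Q_{\boldsymbol E}=\frac{\|\nabla u_\epsilon\|_{L^2(D_\epsilon)}}{\kappa\sqrt{\epsilon d}},\qquad Q_{\boldsymbol H}=\frac{\|u_\epsilon\|_{L^2(D_\epsilon)}}{\sqrt{\epsilon d}},
\end{align*}
and it suffices to prove that $\|\nabla u_\epsilon\|_{L^2(D_\epsilon)}$ is comparable to $\sqrt{\epsilon/d}$ and $\|u_\epsilon\|_{L^2(D_\epsilon)}$ is comparable to $\sqrt{\epsilon d}$, with comparison constants independent of $\lambda,\epsilon,d$.

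Next I would pass from the approximate to the exact field. By the triangle inequality and Lemma \ref{accm},
\begin{align*}
\bigl|\,\|\nabla u_\epsilon\|_{L^2(D_\epsilon)}-\|\nabla v_\epsilon\|_{L^2(D_\epsilon)}\,\bigr|\le\|\nabla u_\epsilon-\nabla v_\epsilon\|_{L^2(D_\epsilon)}\le C(\lambda,d,R)\,\epsilon,
\end{align*}
while Theorem \ref{get} gives $C_1\sqrt{\epsilon/d}\le\|\nabla v_\epsilon\|_{L^2(D_\epsilon)}\le C_2\sqrt{\epsilon/d}$. For fixed $\lambda$ and $d$ one has $\epsilon/\sqrt{\epsilon/d}=\sqrt{\epsilon d}\to0$ as $\epsilon\to0$, so there exists $\epsilon_0=\epsilon_0(\lambda,d)$ such that $C(\lambda,d,R)\epsilon\le\tfrac12 C_1\sqrt{\epsilon/d}$ for all $0<\epsilon<\epsilon_0$; hence $\tfrac12 C_1\sqrt{\epsilon/d}\le\|\nabla u_\epsilon\|_{L^2(D_\epsilon)}\le(C_2+\tfrac12 C_1)\sqrt{\epsilon/d}$. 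The identical argument, now using Theorem \ref{ght} together with $\|u_\epsilon-v_\epsilon\|_{L^2(D_\epsilon)}\le C(\lambda,d,R)\,\epsilon\sqrt{\epsilon|\ln\epsilon|}$ and $\epsilon\sqrt{\epsilon|\ln\epsilon|}/\sqrt{\epsilon d}=\epsilon\sqrt{|\ln\epsilon|/d}\to0$, shows (after possibly shrinking $\epsilon_0$) that $\|u_\epsilon\|_{L^2(D_\epsilon)}$ is comparable to $\sqrt{\epsilon d}$.

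Finally, substituting these into the two identities above yields $Q_{\boldsymbol E}$ comparable to $\sqrt{\epsilon/d}/(\kappa\sqrt{\epsilon d})=1/(\kappa d)=\lambda/(2\pi d)$, i.e.\ $C_1(\lambda/d)\le Q_{\boldsymbol E}\le C_2(\lambda/d)$, and $Q_{\boldsymbol H}$ comparable to $1$. The one point that needs care is the bookkeeping of constants: the constants $C_1,\dots,C_4$ produced by Theorems \ref{get} and \ref{ght} are already uniform in $\lambda,\epsilon,d$, and all the $\lambda,d$-dependence of the error constant in Lemma \ref{accm} is absorbed into the threshold $\epsilon_0(\lambda,d)$, so the final $C_1,C_2$ are pure numbers. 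The main, albeit mild, obstacle is checking that a single $\epsilon_0$ can be chosen so that both correction terms $\epsilon$ and $\epsilon\sqrt{\epsilon|\ln\epsilon|}$ are simultaneously dominated by their respective leading terms for all $0<\epsilon<\epsilon_0$; this holds because both $\sqrt{\epsilon d}\to0$ and $\epsilon\sqrt{|\ln\epsilon|/d}\to0$ as $\epsilon\to0$ with $\lambda,d$ fixed.
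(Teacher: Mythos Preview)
Your proposal is correct and follows essentially the same route as the paper: scalarize the enhancement factors via $|\boldsymbol E_\epsilon|=(\omega\varepsilon)^{-1}|\nabla u_\epsilon|$ and $|\boldsymbol H_\epsilon|=|u_\epsilon|$, then use the triangle inequality together with Lemma~\ref{accm} to transfer the two-sided bounds of Theorems~\ref{get} and~\ref{ght} from $v_\epsilon$ to $u_\epsilon$, absorbing the $(\lambda,d)$-dependent error constant into the threshold $\epsilon_0$. Your upfront bookkeeping of the scalarization is slightly more explicit than the paper's, but the argument is the same.
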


\begin{rema}
The electric field enhancement has an order $O\left(\lambda/d\right)$,  which is
enormous due to  the length scale $d \ll \lambda$, while the magnetic field has
no significant enhancement in this situation.
\end{rema}

\begin{proof}
Recalling the scale assumption ${\epsilon} \ll d \ll \lambda,$ we have from
Theorem \ref{get} and Lemma \ref{accm} that
\begin{align*}
\|\nabla u_{\epsilon}\|_{L^2 (D_{\epsilon})} &\leq \|\nabla v_{\epsilon}\|_{L^2
(D_{\epsilon})} +\|\nabla u_{\epsilon} -\nabla v_{\epsilon}\|_{L^2
(D_{\epsilon})}\\
& \leq C_2 \sqrt {\frac{{\epsilon}}{d}} +   C (d, \lambda, R) {\epsilon} \leq
\frac{3 C_2}{2}  \sqrt {\frac{{\epsilon}}{d}},
\end{align*}
where we use the fact that $C(\lambda, d, R)\sqrt {{\epsilon} d} \leq
\frac{C_2}{2}$ for sufficiently small ${\epsilon}$. On the
other hand,
\begin{align*}
\|\nabla u_{\epsilon}\|_{L^2 (D_{\epsilon})} &\geq \|\nabla v_{\epsilon}\|_{L^2
(D_{\epsilon})} -\|\nabla u_{\epsilon} -\nabla v_{\epsilon}\|_{L^2
(D_{\epsilon})}\\
& \geq C_1   \sqrt {\frac{{\epsilon}}{d}} -   C (\lambda, d, R) {\epsilon}
\geq \frac{C_1}{2} \sqrt {\frac{{\epsilon}}{d}}.
\end{align*}
A straightforward calculation yields
\begin{align*}
\|\nabla u^{\rm inc}\|^2_{L^2
(D_{\epsilon})}=\int_{-d}^{0}\int_0^{\epsilon} |\nabla u^{\rm
inc}(\boldsymbol x)|^2 {\rm d} \boldsymbol x= \kappa^2 {\epsilon}d .
\end{align*}
Combining the above estimates gives
\begin{align*}
 \frac{C_1}{ 4 \pi} \frac{\lambda}{ d}\leq \frac{\|\nabla u_{\epsilon}\|_{L^2
(D_{\epsilon})}}{\|\nabla u^{\rm inc}\|_{L^2 (D_{\epsilon})}} \leq\frac{3 C_2}{
4 \pi} \frac{\lambda}{d}.
\end{align*}
Noting that $|\nabla \times \boldsymbol H_{\epsilon}| = |\nabla \times (0, 0,
u_{\epsilon})|= |\nabla u_{\epsilon}|$ and $\nabla \times \boldsymbol
H_{\epsilon} =- {\rm i} \omega \varepsilon \boldsymbol E_{\epsilon}$,
we obtain
\begin{align*}
\frac{C_1}{ 4 \pi} \frac{\lambda}{ d}\leq  \frac{\|\nabla u_{\epsilon}\|_{L^2
(D_{\epsilon})}}{\|\nabla u^{\rm inc}\|_{L^2 (D_{\epsilon})}} = \frac{\|
\boldsymbol E_{\epsilon}\|_{L^2 (D_{\epsilon})}}{\| \boldsymbol E^{\rm
inc}\|_{L^2 (D_{\epsilon})}}=Q_{\boldsymbol E}   \leq \frac{3 C_2}{ 4 \pi}
\frac{\lambda}{d}.
\end{align*}

For the magnetic field, it follows from Theorem \ref{ght} and   Lemma \ref{accm} that
\begin{align*}
\|u_{\epsilon}\|_{L^2 (D_{\epsilon})} & \leq   \|v_{\epsilon}\|_{L^2
(D_{\epsilon})} + \|u_{\epsilon}-v_{\epsilon}\|_{L^2 (D_{\epsilon})} \\
&\leq C_2 \sqrt {{\epsilon} d}+ C (\lambda, d, R) {\epsilon}  \sqrt {{\epsilon}
|\ln {\epsilon}|} \leq \frac{3}{2}C_2 \sqrt {{\epsilon} d}
\end{align*}
and
\begin{align*}
\|u_{\epsilon}\|_{L^2 (D_{\epsilon})} &\geq \|v_{\epsilon}\|_{L^2
(D_{\epsilon})} + \|u_{\epsilon}-v_{\epsilon}\|_{L^2 (D_{\epsilon})}\\
& \geq C_1 \sqrt {{\epsilon} d}+ C (\lambda, d, R) {\epsilon}  \sqrt {{\epsilon}
|\ln {\epsilon}|} \geq \frac{1}{2}C_1 \sqrt {{\epsilon} d},
\end{align*}
where we use $C (\lambda, d, R) {\epsilon} \sqrt {|\ln
{\epsilon}|/d} \leq \min \left\{ C_1/2, C_2/ 2 \right\}$ for
sufficiently small $\epsilon$. Clearly, we have
\begin{align*}
\|u^{\rm inc}\|^2_{L^2 (D_{\epsilon})}= \int_{-d}^0\int_{0}^{\epsilon} |u^{\rm
inc}(\boldsymbol x)|^2 {\rm d} \boldsymbol x=  {\epsilon} d.
\end{align*}
Therefore, we obtain
\begin{align*}
\frac{1}{2}C_1 \leq \frac{\|u_{\epsilon}\|_{L^2 (D_{\epsilon})}}{ \|u^{\rm
inc}\|_{L^2 (D_{\epsilon})}} = \frac{\| \boldsymbol H_{\epsilon}\|_{L^2
(D_{\epsilon})}}{\| \boldsymbol H^{\rm inc}\|_{L^2
(D_{\epsilon})}}=Q_{\boldsymbol H}   \leq \frac{3}{2}C_2,
\end{align*}
which completes the proof.
\end{proof}

\section{PEC-PMC cavity with resonances}\label{sec3}

This section is to discuss the Fabry--Perot type resonance and derive the
asymptotic expansions for those resonance; analyze quantitatively the field
enhancement at the resonance frequencies in a narrow cavity, i.e., $\epsilon \ll
\lambda.$

Recall the model problem \eqref{hlme}--\eqref{rdc}:
\begin{align}\label{fe}
\begin{cases}
 \Delta u_{\epsilon}+\kappa^2 u_{\epsilon}= 0 \quad  &\text
{in}~\Omega,\\
 \partial_\nu u_{\epsilon} =0 \quad  &\text {on}~ \partial \Omega \setminus
\Gamma_{\epsilon}^{-} ,\\
 u_{\epsilon}=0 \quad  & \text {on}~ \Gamma_{\epsilon}^{-},\\
 \lim\limits_{r \rightarrow \infty} \sqrt r \left( \partial_r u_{\epsilon}^{\rm
s} - {\rm i} \kappa u_{\epsilon}^{\rm s} \right)=0 \quad & \text{in}~\mathbb
R_+^2.
\end{cases}
\end{align}
It is known that the scattering problem \eqref{fe} has a unique solution for
a complex wavenumber $k$ with ${\rm Im} k \geq 0$ (cf.,\cite{Ammari2002}).
By analytic continuation, the solution has a meromorphic extension to the
whole complex plane, except for a countable number of points, which are poles of
the resolvent associated with the scattering problem \eqref{fe}. These poles
are called the resonances of the scattering problem, and the associated
nontrivial solutions are called resonance modes. If the frequency of the
incident wave is taken to be close to the real part of the resonance, an
enhancement of the field is expected if the imaginary part of the resonance is
small.

\subsection{Boundary integral equation}\label{sub3.1}

Define by $g^{\rm PMC}_{\epsilon} (\boldsymbol x, \boldsymbol y)$  the Green
function in the PEC-PMC cavity $D_{\epsilon}$, i.e., it satisfies
\begin{align*}
\begin{cases}
\Delta g^{\rm PMC}_{\epsilon} (\boldsymbol x, \boldsymbol y) + \kappa^2
g^{\rm PMC}_{\epsilon} (\boldsymbol x, \boldsymbol y)= -\delta (\boldsymbol x-
\boldsymbol y), \quad &
\boldsymbol x, \boldsymbol y \in D_{\epsilon},\\
\frac{\partial g^{\rm PMC}_{\epsilon} (\boldsymbol x, \boldsymbol y)}{\partial
\nu_{\boldsymbol y}} =0 \quad &\text {on}~ \partial D_{\epsilon} \setminus
\Gamma_{\epsilon}^-,\\
g^{\rm PMC}_{\epsilon} (\boldsymbol x, \boldsymbol y) =0 \quad
&\text {on}~\Gamma_{\epsilon}^-.
\end{cases}
\end{align*}
Since the width of the cavity $\epsilon$ tends to zero, we may assume that
$\kappa^2$ is not the eigenvalue of the Laplacian with the above boundary
conditions in the cavity. Thus the Green function of the Helmholtz operator in
$D_{\epsilon}$ exists and can be expressed as \cite{Collin1960}
\begin{align*}
g^{\rm PMC}_{\epsilon} (\boldsymbol x, \boldsymbol y)=\sum\limits_{m, n
=0}^{\infty} c_{m, n} \phi_{m, n} (\boldsymbol x)\phi_{m, n} (\boldsymbol y),
\end{align*}
where
\begin{align}\label{cad}
&c_{m, n} = \frac{1}{ \kappa^2 - (m \pi / {\epsilon})^2 - ((n+1/2) \pi / d)^2}, \nonumber\\
&\phi_{m, n} (\boldsymbol x) =\sqrt {\frac{\alpha_{m, n}}{{\epsilon} d}}
 \cos \left( \frac{m \pi x_1}{ \epsilon} \right) \sin \left(  \frac{(n+1/2)
\pi}{d} (x_2+d)\right),\\
 &\alpha_{m, n} =
 \begin{cases}
 2, \quad & m =0,  n \geq 0,\\
 4, \quad & m \geq 1, n \geq 0.
 \end{cases}\nonumber
 \end{align}

\begin{lemm}\label{ell}
The scattering problem \eqref{fe} is equivalent to the  following boundary
integral equation:
\begin{align}\label{bie}
\int_{\Gamma_{\epsilon}^+} \left(  \left(- \frac{{\rm i}}{2} \right) H_{0}^1
(\kappa |\boldsymbol x- \boldsymbol y|) + g_{\epsilon} (\boldsymbol x,
\boldsymbol y)\right)\frac{\partial u_{\epsilon} (\boldsymbol y)}{ \partial
\nu_{\boldsymbol y}}   {\rm d} s_{\boldsymbol y} +u^{\rm inc}(\boldsymbol x)
+u^{\rm ref} (\boldsymbol x)=0, \quad \boldsymbol x\in \Gamma_{\epsilon}^+.
\end{align}
\end{lemm}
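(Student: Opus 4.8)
The plan is to obtain \eqref{bie} by writing the Green representation of $u_\epsilon$ separately on the two sides of the aperture $\Gamma_\epsilon^+$ and then matching the Cauchy data there. Throughout, let $\nu_{\boldsymbol y}$ denote the unit normal on $\Gamma_\epsilon^+$ pointing from $D_\epsilon$ into $\mathbb R^2_+$, and set $\psi:=\partial u_\epsilon/\partial\nu_{\boldsymbol y}=\partial_{x_2}u_\epsilon$ on $\Gamma_\epsilon^+$, which is single valued thanks to the flux continuity built into \eqref{eqe}. I will keep careful track of the normalization of each fundamental solution; in particular $g_\epsilon^{\rm PMC}$, like $G$, is the Green function for $\Delta+\kappa^2$ whose near-diagonal part equals $-\tfrac{\rm i}{4}H_0^{(1)}(\kappa|\cdot-\cdot|)$, as is visible from the modal expansion \eqref{cad}.

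First, in $\mathbb R^2_+$ I would apply Green's identity to $u_\epsilon^{\rm sc}$ and the half-space Neumann Green function $G$ of \eqref{gfhs} on the truncated region $B_R^+\setminus\overline{B_\delta(\boldsymbol x)}$ and let $\delta\to 0$, $R\to\infty$. The contribution on the large half-circle vanishes since $u_\epsilon^{\rm sc}$ and $G$ both satisfy the radiation condition \eqref{rdc}; the contribution on $\Gamma_0\setminus\Gamma_\epsilon^+$ vanishes because $\partial_{x_2}u_\epsilon^{\rm sc}=0$ there (recall $\partial_{x_2}(u^{\rm inc}+u^{\rm ref})=0$ on $\Gamma_0$) and $\partial_\nu G=0$ on all of $\Gamma_0$; what remains is an integral over $\Gamma_\epsilon^+$, where, a point of $\Gamma_0$ being its own mirror image, $G$ collapses to $-\tfrac{\rm i}{2}H_0^{(1)}(\kappa|\boldsymbol x-\boldsymbol y|)$. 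Using $\partial_{x_2}u_\epsilon^{\rm sc}=\psi$ on $\Gamma_\epsilon^+$ and the continuity of the single-layer potential as $\boldsymbol x$ tends to $\Gamma_\epsilon^+$ from the upper half-space, this yields
\begin{align}\label{p:hs}
u_\epsilon(\boldsymbol x)=u^{\rm inc}(\boldsymbol x)+u^{\rm ref}(\boldsymbol x)-\frac{\rm i}{2}\int_{\Gamma_\epsilon^+}H_0^{(1)}(\kappa|\boldsymbol x-\boldsymbol y|)\,\psi(\boldsymbol y)\,{\rm d}s_{\boldsymbol y},\qquad\boldsymbol x\in\Gamma_\epsilon^+ .
\end{align}
Next, in the cavity I would apply Green's identity to $u_\epsilon$ and $g_\epsilon^{\rm PMC}$. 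The boundary term over $\partial D_\epsilon$ splits into the two lateral walls, where $\partial_\nu u_\epsilon=\partial_\nu g_\epsilon^{\rm PMC}=0$, the bottom $\Gamma_\epsilon^-$, where $u_\epsilon=g_\epsilon^{\rm PMC}=0$, and the aperture $\Gamma_\epsilon^+$, where $\partial_\nu g_\epsilon^{\rm PMC}=0$ by construction; hence only a single-layer term over $\Gamma_\epsilon^+$ survives, and letting $\boldsymbol x\to\Gamma_\epsilon^+$ gives
\begin{align}\label{p:cav}
u_\epsilon(\boldsymbol x)=-\int_{\Gamma_\epsilon^+}g_\epsilon^{\rm PMC}(\boldsymbol x,\boldsymbol y)\,\psi(\boldsymbol y)\,{\rm d}s_{\boldsymbol y},\qquad\boldsymbol x\in\Gamma_\epsilon^+ .
\end{align}
Subtracting \eqref{p:cav} from \eqref{p:hs} and invoking the Dirichlet continuity $u_\epsilon(x_1,0+)=u_\epsilon(x_1,0-)$ across $\Gamma_\epsilon^+$ produces exactly \eqref{bie}; thus a solution of \eqref{fe} yields, via its aperture flux, a solution of \eqref{bie}.

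For the converse, given $\psi\in H^{-1/2}(\Gamma_\epsilon^+)$ solving \eqref{bie} I would define $u_\epsilon:=u^{\rm inc}+u^{\rm ref}+\int_{\Gamma_\epsilon^+}G(\cdot,\boldsymbol y)\psi(\boldsymbol y)\,{\rm d}s_{\boldsymbol y}$ in $\mathbb R^2_+$ and $u_\epsilon:=-\int_{\Gamma_\epsilon^+}g_\epsilon^{\rm PMC}(\cdot,\boldsymbol y)\psi(\boldsymbol y)\,{\rm d}s_{\boldsymbol y}$ in $D_\epsilon$, and verify that this $u_\epsilon$ solves \eqref{fe}. The Helmholtz equation, the conditions $\partial_\nu u_\epsilon=0$ on $\Gamma_0\setminus\Gamma_\epsilon^+$ and on the walls, $u_\epsilon=0$ on $\Gamma_\epsilon^-$, and the radiation condition all hold by the defining properties of $G$ and $g_\epsilon^{\rm PMC}$. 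The only nontrivial points are the two transmission conditions on $\Gamma_\epsilon^+$: the Dirichlet continuity is precisely \eqref{bie}, whereas, by the same uniqueness of the half-space Neumann problem and of the mixed problem in $D_\epsilon$ that underlies \eqref{p:hs}--\eqref{p:cav}, each of the two pieces has normal trace $\partial_{x_2}u_\epsilon=\psi$ on $\Gamma_\epsilon^+$ (using $\partial_{x_2}(u^{\rm inc}+u^{\rm ref})=0$), so the flux is automatically continuous and no explicit jump-relation computation is needed. Hence $u_\epsilon$ solves \eqref{fe}, and the two problems are equivalent.

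The main obstacle is technical rather than conceptual: one must carefully track the $\pm\delta$ normalizations of $G$ and $g_\epsilon^{\rm PMC}$ and the orientation of $\nu_{\boldsymbol y}$ on $\Gamma_\epsilon^+$ so that \eqref{p:hs} and \eqref{p:cav} combine with the correct relative sign in \eqref{bie}, and one must justify the passage to $\Gamma_\epsilon^+$ in a weak sense. Because $u_\epsilon$ generically has corner singularities at the endpoints $(0,0)$ and $(\epsilon,0)$ of the aperture, the density $\psi$ lies only in $H^{-1/2}(\Gamma_\epsilon^+)$, so the single-layer potentials and their traces must be interpreted by duality, using that the restriction of $g_\epsilon^{\rm PMC}$ to $\Gamma_\epsilon^+\times\Gamma_\epsilon^+$ differs from $-\tfrac{\rm i}{4}H_0^{(1)}(\kappa|\cdot-\cdot|)$ by a kernel that is smooth up to the diagonal. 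The nonresonance assumption on $\kappa^2$ stated just before the lemma ensures that $g_\epsilon^{\rm PMC}$ is well defined, so the kernel in \eqref{bie} makes sense.
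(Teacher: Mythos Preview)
Your argument is correct and follows the same route as the paper: represent $u_\epsilon$ via Green's identity separately in $\mathbb R^2_+$ (with the half-space Neumann Green function $G$) and in $D_\epsilon$ (with $g_\epsilon^{\rm PMC}$), pass to $\Gamma_\epsilon^+$ using the continuity of the single-layer potential, and equate the two traces. The paper's proof stops there; you go further by sketching the converse and by flagging the $H^{-1/2}$ regularity of the aperture flux near the corners, both of which are welcome additions that the paper leaves implicit under the word ``equivalent.''

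One small inaccuracy to fix: your parenthetical remark that $g_\epsilon^{\rm PMC}$ has the \emph{same} near-diagonal part $-\tfrac{\rm i}{4}H_0^{(1)}(\kappa|\cdot-\cdot|)$ as $G$ is off by a sign, since in this paper $G$ is normalized by $(\Delta+\kappa^2)G=\delta$ whereas $g_\epsilon^{\rm PMC}$ satisfies $(\Delta+\kappa^2)g_\epsilon^{\rm PMC}=-\delta$; hence their singular parts differ in sign. This does not affect your derivation, because you correctly reproduce the signed representations \eqref{so} and \eqref{sc}, and the subtraction yielding \eqref{bie} is insensitive to that remark.
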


\begin{proof}
 Noting that $ \partial_\nu u^{\rm inc} +\partial_\nu u^{\rm
ref}=0$ on $\Gamma_0$, especially on $\Gamma_{\epsilon}^+$, we have
 \begin{align*}
\partial_\nu u_{\epsilon}^{\rm sc}= \partial_\nu u_{\epsilon} - \partial_\nu
(u^{\rm inc} + u^{\rm ref}) =\partial_\nu u_{\epsilon} \quad \text
{on}~\Gamma_{\epsilon}^+.
\end{align*}
 It follows from Green's identity in $\mathbb R_2^+$ that
 \begin{align}\label{so}
 u_{\epsilon} (\boldsymbol x) &=u^{\rm inc} (\boldsymbol x) +u^{\rm ref}
(\boldsymbol x) + u_{\epsilon}^{\rm sc} (\boldsymbol x) =u^{\rm inc}(\boldsymbol
x) +u^{\rm ref} (\boldsymbol x) +\int_{\Gamma_{\epsilon}^+} G (\boldsymbol x,
\boldsymbol y) \frac{\partial u_{\epsilon}^{\rm sc} (\boldsymbol y)}{ \partial
\nu_{\boldsymbol y}} {\rm d} s_{\boldsymbol y} \nonumber\\
 &=u^{\rm inc}(\boldsymbol x) +u^{\rm ref} (\boldsymbol x)
+\int_{\Gamma_{\epsilon}^+} G (\boldsymbol x, \boldsymbol y) \frac{\partial
u_{\epsilon} (\boldsymbol y)}{ \partial \nu_{\boldsymbol y}} {\rm d}
s_{\boldsymbol y}, \quad \boldsymbol x \in \mathbb R_2^+,
 \end{align}
where the half-space Green function $G$ is given in \eqref{gfhs}. Using Green's
identity  inside the cavity yields
\begin{align}\label{sc}
u_{\epsilon} (\boldsymbol x)=-\int_{\Gamma_{\epsilon}^+}
g^{\rm PMC}_{\epsilon} (\boldsymbol x, \boldsymbol y)\frac{\partial u_{\epsilon}
(\boldsymbol y)}{\partial \nu_{\boldsymbol y}} {\rm d} s_{\boldsymbol y}, \quad
\boldsymbol x \in D_{\epsilon}.
 \end{align}
 By the continuity of the single layer potential, we have  from \eqref{so} and \eqref{sc} that
 \begin{align*}
 u_{\epsilon} (\boldsymbol x)= u^{\rm inc}(\boldsymbol x) +u^{\rm ref}
(\boldsymbol x) +\int_{\Gamma_{\epsilon}^+} \left(- \frac{{\rm i}}{2} \right)
H_{0}^1 (\kappa |\boldsymbol x- \boldsymbol y|) \frac{\partial u_{\epsilon}
(\boldsymbol y)}{\partial \nu_{\boldsymbol y}} {\rm d} s_{\boldsymbol y}, \quad
\boldsymbol x \in \Gamma_{\epsilon}^+
 \end{align*}
and
\begin{align*}
u_{\epsilon} (\boldsymbol x)= -\int_{\Gamma_{\epsilon}^+} g^{\rm PMC}_{\epsilon}
(\boldsymbol x, \boldsymbol y)\frac{\partial u_{\epsilon} (\boldsymbol y)}{
\partial \nu_{\boldsymbol y}} {\rm d} s_{\boldsymbol y}, \quad \boldsymbol
x \in \Gamma_{\epsilon}^+.
\end{align*}
The proof is finished by imposing the continuity of the solution on the open
aperture $\Gamma_{\epsilon}^+$.
\end{proof}

It is clear to note that
\begin{align*}
\partial_\nu u_{\epsilon}\big|_{\Gamma_{\epsilon}^+}=\partial_{x_2}
u_{\epsilon}(x_1, 0), \quad u^{\rm inc} +u^{\rm ref} =2 e^{{\rm i} \kappa \sin
\theta x_1}, \quad x_1\in(0, \epsilon).
\end{align*}
Introduce new variables by rescaling with respect to $\epsilon$: $X=
x_1/{\epsilon}, Y= y_1/{\epsilon}$. Define three functions:
\begin{align}\label{ND1}
&\varphi (Y)= - \partial_{y_2} u_{\epsilon} ({\epsilon} Y, 0),
\nonumber\\
& G_{\epsilon}^{e} (X, Y) =   \left(- \frac{\rm i}{2} \right) H_0^{(1)}
({\epsilon} k |X-Y|), \nonumber\\
&G_{\epsilon}^{i} (X, Y)= g^{\rm PMC}_{\epsilon} ({\epsilon} X, 0; {\epsilon}
Y, 0)=\sum \limits_{m, n=0}^{\infty} \frac{c_{m, n} \alpha_{m,n}} { {\epsilon}
d} \cos (m \pi X) \cos (m \pi Y)
\end{align}
and the boundary integral operators:
\begin{align}
(T^e \varphi) (X)=\int_0^1 G_{\epsilon}^e (X, Y) \varphi (Y) {\rm d} Y, \quad
X\in (0, 1),\nonumber\\
(T^i \varphi ) (X)=\int_0^1 G_{\epsilon}^{i} (X, Y) \varphi (Y) {\rm d} Y,
\quad X\in (0, 1).\label{bio}
\end{align}
Hence the boundary integral equation \eqref{bie} is equivalent to the following
operator equation:
\begin{align}\label{oes}
( T^e+T^i) \varphi = f/{\epsilon},
\end{align}
where $f(X):= (u^{\rm inc}+u^{\rm ref}) ({\epsilon} X, 0) =2 e^{{\rm i} \kappa
\sin \theta  {\epsilon} X}.$

\subsection{Asymptotics of the integral operators}

In this subsection, we study the  asymptotic properties of the integral
operators in \eqref{bio}. For clarity, we first introduce several notation
below.
\begin{align*}
\Gamma_1 (\kappa, {\epsilon})&=\frac{1}{\pi} (\ln \kappa + \gamma_1) +\frac{1}{
\pi} \ln {\epsilon}, \quad \Gamma_2 (\kappa, {\epsilon})=-\frac{\tan \kappa d}{
{\epsilon} \kappa} +\frac{2 \ln 2}{ \pi},\\
R_1^{\epsilon} (X, Y)&= - \frac{\kappa^2 }{4 \pi} |X-Y|^2 {\epsilon}^2  \ln {\epsilon}, \\
R_2 ^{\epsilon} (X,Y) &=\left(\gamma_2 - \frac{\ln (\kappa |X-Y|)}{4 \pi}
\right)\kappa^2 |X-Y|^2  \epsilon^2+o ((\kappa {\epsilon} |X-Y|)^2),\\
R_3^{\epsilon} (X, Y)&= - \frac{{\epsilon}^2 \kappa^2}{\pi}\bigg(\frac{1}{\pi
^2} \sum\limits_{m=1}^{\infty} \frac{1}{m^3}+ \frac{(X+Y)^2}{4} \ln (\pi (X+Y))
+\frac {(X-Y)^2}{4} \ln (\pi (X-Y))\\
 &\hspace{1cm}+ o ((X+Y)^2 +(X-Y)^2)\bigg),
\end{align*}
where $\gamma_1 =\gamma_0 - \ln 2 - \frac{\pi {\rm i}}{2}$, $\gamma_2=
-\frac{\rm i}{4 \pi} + \frac{\rm i}{8}+ \frac{1}{4 \pi} (\ln 2 -\gamma_0)$, and
$\gamma_0$ is the Euler constant. For the fixed depth of the cavity $d$ and the
wave number $\kappa$, if ${\epsilon}$ small enough, we have the following
asymptotic expansions for the kernels $G_{\epsilon}^e$ and $G_{\epsilon}^i$.

\begin{lemm}\label{AE}
If ${\epsilon} \ll \lambda$, then the following estimates hold:
\begin{align*}
&G_{\epsilon}^{e}=\Gamma_1 (\kappa, {\epsilon})+\frac{1}{\pi} \ln
|X-Y|+R_1^{\epsilon} (X, Y)+R_2 ^{\epsilon} (X,Y),\\
&G_{\epsilon}^i = \Gamma_2 (\kappa, {\epsilon})+\frac{1}{\pi } \left(  \ln
\left|\sin \frac{\pi (X+Y)}{2}\right|+\ln \left|\sin \frac{\pi (X-Y)}{2}\right|
\right) +R_3 ^{\epsilon} (X, Y),
\end{align*}
where
\[
|R_1^{\epsilon} (X, Y)|\leq C_1 {\epsilon^2} |\ln {\epsilon}|,\quad
|R_2^{\epsilon} (X, Y)| \leq C_2 {\epsilon}^2,\quad |R_3 ^{\epsilon} (X, Y)|
\leq C_3 {\epsilon}^2.
\]
Here $C_j, j=1, 2, 3$ are positive constants independent of $\epsilon$.
\end{lemm}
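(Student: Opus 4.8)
The plan is to expand the two kernels $G_\epsilon^e$ and $G_\epsilon^i$ separately, treating each as the sum of an explicit singular part plus a constant (in $X,Y$) plus a controlled remainder.

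For $G_\epsilon^e$, I would start from the standard small-argument expansion of the Hankel function $H_0^{(1)}(z) = 1 + \frac{2{\rm i}}{\pi}(\ln(z/2)+\gamma_0) + O(z^2\ln z)$, valid since $\epsilon\kappa|X-Y|\le\epsilon\kappa\to 0$. Writing $z=\epsilon\kappa|X-Y|$ and using $\ln z = \ln\epsilon + \ln\kappa + \ln|X-Y|$, the logarithm splits as $\frac{1}{\pi}\ln|X-Y|$ (the singular part), a term $\frac{1}{\pi}(\ln\kappa+\gamma_0-\ln 2 - \frac{\pi{\rm i}}{2}) + \frac{1}{\pi}\ln\epsilon = \Gamma_1(\kappa,\epsilon)$ (collecting the definitions of $\gamma_1$), and the $O(z^2\ln z)$ tail. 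That tail is exactly $R_1^\epsilon + R_2^\epsilon$: the $\epsilon^2\ln\epsilon$ piece gives $R_1^\epsilon$ with the stated coefficient $-\kappa^2|X-Y|^2/(4\pi)$, and the remaining $\epsilon^2$-order terms (including the $\ln(\kappa|X-Y|)$ factor and the constant $\gamma_2$) assemble into $R_2^\epsilon$. Since $X,Y\in(0,1)$, $|X-Y|$ is bounded, so $|R_1^\epsilon|\le C_1\epsilon^2|\ln\epsilon|$ and $|R_2^\epsilon|\le C_2\epsilon^2$ follow directly, with constants depending only on $\kappa$ (equivalently $\lambda$) and $d$.

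For $G_\epsilon^i$, the main task is to resum the series in \eqref{ND1}. Using the $n$-sum first: $\sum_{n\ge 0}\frac{c_{m,n}\alpha_{m,n}}{\epsilon d}\cos(m\pi X)\cos(m\pi Y)$ — for fixed $m$ this is a classical cotangent-type series (sum over $n$ of $1/(\kappa^2-(m\pi/\epsilon)^2-((n+1/2)\pi/d)^2)$), which evaluates in closed form to something proportional to $\tan$ (for $m=0$) or $\tanh$ (for $m\ge 1$) of a length times $d$. The $m=0$ term produces, after using $d\ll\lambda$ and the smallness of $\epsilon$, the constant $-\frac{\tan\kappa d}{\epsilon\kappa}$ appearing in $\Gamma_2$. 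For $m\ge 1$, since $(m\pi/\epsilon)^2\gg\kappa^2$, the $\tanh$ is exponentially close to $1$, so $\sum_{m\ge 1}\frac{c_{m,n}\alpha_{m,n}}{\epsilon d}\approx -\frac{2}{m\pi}$ up to exponentially small corrections, and I would then use the classical Fourier series $\sum_{m\ge 1}\frac{1}{m}\cos(m\theta) = -\ln|2\sin(\theta/2)|$ applied with $\theta = \pi(X+Y)$ and $\theta=\pi(X-Y)$. This yields the two $\ln|\sin(\pi(X\pm Y)/2)|$ terms and the $\frac{2\ln 2}{\pi}$ constant, with everything left over collected into $R_3^\epsilon$; the local expansion $\ln|\sin t|=\ln|t|+O(t^2)$ near $0$ is what exposes the $(X\pm Y)^2\ln(\pi(X\pm Y))$ structure written in the definition of $R_3^\epsilon$, and the $\sum 1/m^3$ constant comes from the next-order term in the $\tanh$-to-$1$ approximation summed against $1/m$. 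Bounding $|R_3^\epsilon|\le C_3\epsilon^2$ then reduces to controlling these tails uniformly for $X,Y\in(0,1)$.

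The main obstacle I expect is the $G_\epsilon^i$ computation: carefully performing the $n$-summation to get the exact $\tan$/$\tanh$ closed forms, and then justifying uniformly in $X,Y$ that replacing $\tanh$ by $1$ in the $m$-sum costs only $O(\epsilon^2)$ (one must check the error series $\sum_m(\text{exp. small in }m/\epsilon)$ really is this small, not merely term-by-term small), together with the delicate extraction of the logarithmic singularity via the Abel-summed Fourier series — the series $\sum\frac{1}{m}\cos(m\pi(X-Y))$ only converges conditionally, so some care (e.g. pairing with the known closed form, or regularization) is needed to make the manipulation rigorous. By comparison, the $G_\epsilon^e$ expansion is essentially a bookkeeping exercise once the Hankel asymptotics are invoked.
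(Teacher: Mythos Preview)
Your approach is essentially identical to the paper's: Hankel small-argument expansion for $G_\epsilon^e$, and for $G_\epsilon^i$ an $n$-summation via the $\tan/\tanh$ partial-fraction identities followed by the Fourier series $\sum_{m\ge 1}\frac{\cos m\pi x}{m}=-\ln|2\sin(\pi x/2)|$ and its $1/m^3$ analogue. One small correction: the $\epsilon^2$-order $\sum 1/m^3$ contribution in $R_3^\epsilon$ does not arise from the $\tanh\to 1$ replacement (that error is exponentially small in $1/\epsilon$, not polynomial), but from expanding the prefactor $1/\sqrt{(m\pi/\epsilon)^2-\kappa^2}=\frac{\epsilon}{m\pi}\bigl(1+\tfrac12(\kappa\epsilon/m\pi)^2+\cdots\bigr)$; once you track this you will find the computation goes through exactly as planned.
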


\begin{proof}
It follows from the asymptotic of the Hankel function near zero (cf. \cite{Abramowitz1972})  that
\begin{align*}
G_{\epsilon}^{e} (X, Y) &= \left(- \frac{\rm i}{2} \right) H_0^1 \left(
{\epsilon} \kappa |X-Y| \right)\\
&=\frac{1}{\pi} (\gamma_1 + \ln |{\epsilon}\kappa (X-Y)|)-\frac{1}{4 \pi} |
{\epsilon} k (X-Y)|^2 \ln | {\epsilon} \kappa (X-Y)| \\
&\hspace{1cm}+ \gamma_2 ({\epsilon} \kappa |X-Y|)^2
 +o ((\kappa {\epsilon} |X-Y|)^2)\\
&=\Gamma_1 (\kappa, {\epsilon})+\frac{1}{\pi} \ln |X-Y|+R_1^{\epsilon} (X,
Y)+R_2 ^{\epsilon} (X,Y).
\end{align*}
The integral kernel $ G_{\epsilon}^{i} (X, Y)$ can be expressed as
\begin{align}\label{gw}
G_{\epsilon}^{i} (X, Y)=\frac{1} { {\epsilon} d}  \sum \limits_{m, n=0}^{\infty}
c_{m, n} \alpha_{m,n}\cos (m \pi X) \cos (m \pi Y).
\end{align}
Recalling the definitions of $c_{m, n}$ and $\alpha_{m, n}$ in \eqref{cad},
we set
\[
C_m ({\epsilon}, \kappa) =\sum\limits_{n=0}^{\infty} c_{m, n} \alpha_{m,
n}=\sum\limits_{n=0}^{\infty} \frac{\alpha_{m, n}}{\kappa^2 -\left( \frac{m
\pi}{\epsilon}\right)^2 -\left( \frac{(n +\frac{1}{2}) \pi}{d} \right)^2}.
\]
Using the fact (cf. \cite{Gradshteyn2015}) that
\begin{align*}
\tan \frac{\pi x}{2} =\frac{4 x}{\pi} \sum\limits_{n=0}^{\infty} \frac{1}{ (2
n+1)^2 -x^2}, \quad x\neq n+\frac{1}{2}, n\in\mathbb Z,
\end{align*}
we may check for $m=0$  that
\begin{align*}
C_0 (\kappa)= \sum \limits_{n =0}^{\infty} \frac{2}{ \kappa^2 -\left( \frac{(n
+\frac{1}{2}) \pi}{d} \right)^2 }=- \frac{d\tan (\kappa  d)} {\kappa}.
\end{align*}
Using the identity
\[
\tanh \frac{ \pi x}{ 2} =\frac {4 x} {\pi} \sum \limits_{n=0}^{\infty}\frac{1}{
(2n +1)^2  +x^2},
\]
we have for $m \geq 1$ that
\begin{align*}
C_{m} ( {\epsilon}, \kappa) &=\sum\limits_{n=0}^{\infty} \frac{4}
{\kappa^2-\left( \frac{m \pi}{\epsilon}\right)^2 -\left( \frac{(n +\frac{1}{2})
\pi}{d} \right)^2}\\
&=- \frac{2 d}{\sqrt {\left( \frac{m \pi}{\epsilon} \right)^2 -\kappa^2}} \tanh
\left(d\sqrt {\left( \frac{m \pi}{\epsilon} \right)^2 -\kappa^2}\right)\\
&= - \frac{2d {\epsilon} }{ m \pi}-\frac{{\epsilon}^3  \kappa^2 d}{m^3
\pi^3}+O\left(\frac{-2{\epsilon}^5 \kappa^4d}{ 4 ! m^5 \pi^5}\right).
\end{align*}
Using the following facts (cf. \cite{Kress1999,lewin1975theory}):
\begin{align*}
&\sum\limits_{m=1}^{\infty} \frac{\cos m \pi x}{ m}= -\left( \ln 2 + \ln \left|
\sin \frac{\pi x}{2}\right|\right), \quad 0 < x < 2,\\
&\sum \limits_{m=1}^{\infty} \frac{\cos m \pi x
}{m^3}=\sum\limits_{m=1}^{\infty}\frac{1}{m^3} +\frac{(\pi x)^2}{2} \ln (\pi x)+
O (x^2), \quad 0< x<2
\end{align*}
and substituting $C_0(\kappa), C_m (\epsilon, \kappa) (m \geq 1)$ into
\eqref{gw}, we obtain
\begin{align*}
G_{\epsilon}^{i} (X, Y)=&\frac{1}{ {\epsilon} d} \bigg( C_0
(\kappa)-\sum\limits_{m=1}^{\infty} \frac{2d {\epsilon}} { m \pi} \cos (m \pi X)
\cos (m \pi  Y)\\
&\hspace{1cm}-\sum\limits_{m=1}^{\infty}\frac{{\epsilon}^3  \kappa^2 d}{m^3
\pi^3} \cos (m \pi X) \cos (m \pi  Y) + O(\frac{-2{\epsilon}^5 \kappa^4 d}{ 4 !
m^5 \pi^5})\bigg)\\
= & -\frac{\tan (\kappa d)}{ {\epsilon} \kappa}+\frac{1}{\pi } \left( 2 \ln 2
+\ln \left|\sin \frac{\pi (X+Y)}{2}\right|+\ln \left|\sin \frac{\pi
(X-Y)}{2}\right| \right)\\
&\hspace{1cm}- \frac{{\epsilon}^2
\kappa^2}{\pi}\bigg(\frac{1}{\pi^2}\sum\limits_{m=1}^{\infty} \frac{1}{m^3}+
\frac{(X+Y)^2}{4} \ln (\pi (X+Y)) +\frac {(X-Y)^2}{4} \ln (\pi (X-Y))\\
 &\hspace{1cm}+ O ((X+Y)^2 +(X-Y)^2)\bigg),
\end{align*}
which complete the proof.
\end{proof}

Let
\begin{align}
&\Gamma =\Gamma_1 (\kappa, {\epsilon}) +\Gamma_2 (\kappa, {\epsilon}), \quad
k^1_{\infty} (X, Y)= R_1^{\epsilon} (X, Y), \quad  k^2_{\infty}=R_2^{\epsilon}
(X,Y)+R_3 ^{\epsilon} (X, Y), \nonumber\\
& k (X, Y)= \frac{1}{\pi } \left( \ln |X-Y|+ \ln \left|\sin \frac{\pi
(X+Y)}{2}\right|+\ln \left|\sin \frac{\pi (X-Y)}{2}\right| \right) \label{Gd1}.
\end{align}
Denote by $K, K^1_{\infty}, K^2_{\infty}$ the integral operators corresponding
to the Schwarz kernels $k (X, Y), k^1_{\infty} (X, Y)$ and $k^2_{\infty} (X,
Y)$, respectively.

Let $I$ be a bounded open interval in $\mathbb R$ and define
\begin{align*}
H^s (I):= \left\{ u= U |_{I}, ~ U \in H^s (\mathbb R) \right\},
\end{align*}
where $s\in\mathbb R$. Then $H^s (I)$  is a Hilbert space with the norm
\[
\|u\|_{H^s(I)} = \inf \left\{ \|U\|_{H^s (\mathbb R)} \bigg| U \in
H^s(\mathbb R) ~\text {and}~ U |_{I}= u \right\}.
\]
Define
\[
\tilde H^s (I):=\left\{ u= U |_{I} \bigg| U\in H^s (\mathbb R)~ \text
{and}~ {\rm supp}(U) \subset \bar I\right\}.
\]
It can be shown that the space $\tilde H^s (I)$  is the
dual of $H^{- s} (I)$ and the norm for $\tilde H^{s} (I)$ can be defined
via the duality (cf.  \cite{Adams2003}).

We also define the operator $P:  \tilde H^{-1/2} (0, 1) \rightarrow  H^{1/2} (0,
1)$ by
\[
P\phi (X)= \langle  \phi, 1_{(0,1)}\rangle_{L^2 (0,1)} 1_{(0, 1)},
\]
where the duality between $\tilde H^{-1/2} (0, 1)$ and $H^{1/2} (0, 1)$ is
defined by $\langle \cdot, \cdot\rangle_{L^2(0,1)}$, $1_{(0, 1)}$ is the
function defined on the interval $(0, 1)$ and is equal to one. It is easy to
note that $1_{(0, 1)} \in H^{1/2} (0, 1)$.

\begin{lemm}\label{ope}
The following conclusions hold:

(i) The  operator $T^e + T^{i}$ admits the decomposition:
\begin{align*}
T^e +T^{i} =\Gamma P + K + K^1_{\infty}+ K^2_{\infty}.
\end{align*}
Moreover, $K^1_{\infty}$ and $K^2_{\infty}$ are bounded from $ \tilde H^{-1/2}
(0, 1) $ to $ H^{1/2} (0, 1)$ with the operator norm satisfying
$\|K^1_{\infty}\| \leq {\epsilon}^2 \ln |{\epsilon}|,  \|K^2_{\infty} \| \leq
{\epsilon}^2$  uniformly for wavenumber $\kappa$.

(ii) The  operator $K$ is invertible from $\tilde H^{-1/2} (0, 1)$ to $H^{1/2}
(0, 1)$. Moreover, the constant $q_0:= \langle K^{-1} 1_{(0, 1)}, 1_{(0, 1)}
\rangle_{L^2 (0,1)}\neq 0$.

\end{lemm}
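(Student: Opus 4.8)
The plan is to handle the two parts separately. For part~(i), the decomposition is just bookkeeping of Lemma~\ref{AE}: adding the two kernel expansions, the kernel of $T^e+T^i$ equals $\Gamma+k(X,Y)+k^1_\infty(X,Y)+k^2_\infty(X,Y)$ with $\Gamma=\Gamma_1(\kappa,\epsilon)+\Gamma_2(\kappa,\epsilon)$ and $k,k^1_\infty,k^2_\infty$ as in \eqref{Gd1}. Integrating against $\varphi$, the constant $\Gamma$ yields $\Gamma\int_0^1\varphi(Y)\,{\rm d}Y\,1_{(0,1)}=\Gamma\langle\varphi,1_{(0,1)}\rangle_{L^2(0,1)}1_{(0,1)}=\Gamma P\varphi$, the singular part $k$ yields $K\varphi$, and $R_1^\epsilon$ and $R_2^\epsilon+R_3^\epsilon$ yield $K^1_\infty\varphi$ and $K^2_\infty\varphi$; this is the claimed identity. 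For the two operator bounds I would observe, from the explicit formulas in Lemma~\ref{AE}, that the kernels $k^1_\infty=R_1^\epsilon$ and $k^2_\infty=R_2^\epsilon+R_3^\epsilon$ are H\"older continuous on $[0,1]\times[0,1]$ and that $X\mapsto k^j_\infty(X,\cdot)$ is H\"older continuous into $H^{1/2}(0,1)$ with norm $\le C\epsilon^2|\ln\epsilon|$ for $j=1$ and $\le C\epsilon^2$ for $j=2$, uniformly for bounded $\kappa$; the standard mapping estimate for integral operators with such kernels (as in \cite{lin2015}) then gives $\|K^1_\infty\|\le C\epsilon^2|\ln\epsilon|$ and $\|K^2_\infty\|\le C\epsilon^2$.

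For part~(ii), I would write $K=K_e+K_i$, where $K_e$ has kernel $\frac1\pi\ln|X-Y|$ and $K_i$ has kernel $\frac1\pi\big(\ln|\sin\frac{\pi(X+Y)}2|+\ln|\sin\frac{\pi(X-Y)}2|\big)$, and show that the form $a(\phi,\psi):=\langle -K\phi,\bar\psi\rangle_{L^2(0,1)}$ is bounded, Hermitian and coercive on $\tilde H^{-1/2}(0,1)$. Here $K_e=-2V$, with $V$ the classical logarithmic single layer potential on the segment $(0,1)$; since the logarithmic capacity of $(0,1)$ equals $1/4<1$, $V$ is a symmetric, positive-definite, coercive isomorphism from $\tilde H^{-1/2}(0,1)$ onto $H^{1/2}(0,1)$ (cf.\ \cite{Kress1999}), so $\langle -K_e\phi,\bar\phi\rangle_{L^2(0,1)}=2\langle V\phi,\bar\phi\rangle\ge c\|\phi\|^2_{\tilde H^{-1/2}(0,1)}$. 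For $K_i$, the identity $\sum_{m\ge1}\frac1m\cos(m\pi X)\cos(m\pi Y)=-\ln2-\frac12\big(\ln|\sin\frac{\pi(X+Y)}2|+\ln|\sin\frac{\pi(X-Y)}2|\big)$ shows that $K_i$ is diagonal in the $L^2(0,1)$-orthonormal basis $\{1\}\cup\{\sqrt2\cos(m\pi X)\}_{m\ge1}$, with eigenvalues $-\frac{2\ln2}{\pi}$ and $-\frac1{m\pi}$ $(m\ge1)$, all strictly negative, so $-K_i$ is bounded and positive on $\tilde H^{-1/2}(0,1)$. Since $k$ is real and symmetric, $a$ is Hermitian and bounded, and $a(\phi,\phi)=\langle -K_e\phi,\bar\phi\rangle+\langle -K_i\phi,\bar\phi\rangle\ge\langle -K_e\phi,\bar\phi\rangle\ge c\|\phi\|^2_{\tilde H^{-1/2}(0,1)}$; the Lax--Milgram theorem then makes $-K$, and hence $K$, an isomorphism of $\tilde H^{-1/2}(0,1)$ onto $H^{1/2}(0,1)$. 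Consequently $-K^{-1}:H^{1/2}(0,1)\to\tilde H^{-1/2}(0,1)$ is again symmetric and positive definite, so $-q_0=\langle -K^{-1}1_{(0,1)},1_{(0,1)}\rangle_{L^2(0,1)}>0$ because $1_{(0,1)}\neq0$; in particular $q_0<0$, and thus $q_0\neq0$.

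The hard part will be part~(ii): one must keep track of the sign of every term, invoke the positive-definiteness of the logarithmic single layer potential on $(0,1)$ — which genuinely relies on the capacity bound $1/4<1$, since the diameter of $(0,1)$ is exactly $1$ — carry the whole argument in the negative-order trace space $\tilde H^{-1/2}(0,1)$ so that Lax--Milgram applies, and derive the trigonometric identity that diagonalizes $K_i$. Part~(i) and the final $q_0\neq0$ step are comparatively routine.
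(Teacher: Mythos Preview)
Your treatment of part~(i) matches the paper's: both simply read off the decomposition from the kernel expansions of Lemma~\ref{AE}, and the operator bounds on $K^1_\infty,K^2_\infty$ follow from the pointwise estimates there.

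For part~(ii) the paper gives no argument at all: it merely cites \cite[Theorem~4.1, Lemma~4.2]{Bonnetier2010}. Your route is genuinely different and self-contained. You split $K=K_e+K_i$, invoke the classical coercivity of the logarithmic single layer on $(0,1)$ (using that its logarithmic capacity is $1/4<1$), diagonalize $K_i$ in the Neumann cosine basis via the series identity already used in the proof of Lemma~\ref{AE} to show $-K_i\ge0$, and then apply Lax--Milgram. This buys you more than the paper asks for: you obtain not only $q_0\neq0$ but the definite sign $q_0<0$, which is consistent with how $q_0$ is used later (e.g., in Lemma~\ref{lm3.6}). The only point where a reader might want one more sentence is the boundedness of the individual pieces $K_e$ and $K_i$ from $\tilde H^{-1/2}(0,1)$ to $H^{1/2}(0,1)$; for $K_e$ this is the standard open-arc single-layer mapping property, and $K_i$ then inherits boundedness as $K-K_e$, so the splitting of the quadratic form is legitimate. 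With that understood, your argument is correct and more informative than the paper's citation.
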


\begin{proof}
The proof of (i) follows directly from the definitions of the operators $T^e$,
$T^i$ in \eqref{bio} and the asymptotic expansions of their kernels given by
Lemma \ref{AE}. The proof of (ii) follows  directly from \cite[Theorem 4.1,
Lemma 4.2]{Bonnetier2010}.
\end{proof}

\subsection{Asymptotics of the resonances}

The scattering resonance of \eqref{fe} is defined as a complex wavenumber
$\kappa$ with negative imaginary part such that there is a nontrivial solution
to \eqref{fe} when the incident field is zero. This is the characteristic value
of the operator $ \Gamma P + K + K^1_{\infty}+ K^2_{\infty}$ with respect to the
variable $\kappa.$ For simplicity, we write
 \begin{align*}
 \Gamma P + K + K^1_{\infty}+ K^2_{\infty} :=\mathscr P +\mathscr L,
 \end{align*}
where $\mathscr P= \Gamma P, \mathscr L= K + K^1_{\infty}+ K^2_{\infty}. $ By
Lemma \ref{ope}, it is easy to see that $\mathscr L$ is invertible for
sufficiently small $\epsilon.$  Now assume that there exists $\varphi_{0}$ such
that
\begin{align*}
(\mathscr P +\mathscr L ) \varphi_0 =0.
\end{align*}
Then
\begin{align*}
(\mathscr L^{-1} \mathscr P  + \mathscr I)\varphi_0 =0,
\end{align*}
where $\mathscr I$ is the identity operator. It is straightforward to check that
the eigenvalues of operator $\mathscr L^{-1} \mathscr P + \mathscr I$ are
\begin{align*}
\lambda (\kappa, {\epsilon}) =1+ \Gamma (\kappa, {\epsilon}) \langle \mathscr
L^{-1} 1_{(0, 1)}, 1_{(0, 1)} \rangle_{L^2(0,1)}.
 \end{align*}
 Therefore, the characteristic values of the operator $\mathscr P
+\mathscr L$ are the roots of the analytic functions $\lambda (\kappa,
{\epsilon})$, and the associated characteristic function is given by
 \begin{align*}
 \varphi_0 =\Gamma (\kappa ,{\epsilon}) \mathscr L^{-1} 1_{(0, 1)}.
 \end{align*}

 \begin{lemm} \label{rt}
The resonance of the scattering problem \eqref{fe} are the roots of the
analytic functions $\lambda (\kappa, {\epsilon})=0.$ Moreover,
 \begin{equation}\label{ep}
 \mathscr L^{-1} 1_{(0, 1)}=K^{-1} 1_{(0, 1)}+O({\epsilon}^2\ln
{\epsilon})+O({\epsilon}^2),\quad
\langle \mathscr L^{-1} 1_{(0, 1)}, 1_{(0, 1)}\rangle_{L^2
(0,1)}=q_0+O({\epsilon}^2\ln {\epsilon})+O({\epsilon}^2).
\end{equation}
 \end{lemm}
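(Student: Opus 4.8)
The plan is to prove the two assertions separately: the first is a characterization of the resonances, the second a perturbation estimate obtained from a Neumann series.

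First I would establish the resonance characterization. By Lemma~\ref{ell} the scattering problem \eqref{fe} is equivalent to the boundary integral equation \eqref{bie}, and after the rescaling of Subsection~\ref{sub3.1} to the operator equation \eqref{oes}; carrying out the same manipulation with vanishing incident data shows that a complex wavenumber $\kappa$ with negative imaginary part is a resonance, i.e.\ carries a nontrivial mode, exactly when $(T^e+T^i)\varphi_0=0$ has a nontrivial solution $\varphi_0\in\tilde H^{-1/2}(0,1)$ --- here one uses the meromorphic continuation in $\kappa$ recalled at the beginning of Section~\ref{sec3} to make sense of \eqref{fe} and of the representations \eqref{so}--\eqref{sc} for $\kappa$ off the real axis. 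By Lemma~\ref{ope}(i) we may write $T^e+T^i=\mathscr P+\mathscr L$ with $\mathscr P=\Gamma P$, $\mathscr L=K+K^1_\infty+K^2_\infty$, and $\mathscr L$ is invertible for small $\epsilon$, so the homogeneous equation becomes $(\mathscr L^{-1}\mathscr P+\mathscr I)\varphi_0=0$. Since $P$ is the rank-one map $\phi\mapsto\langle\phi,1_{(0,1)}\rangle_{L^2(0,1)}1_{(0,1)}$, the operator $\mathscr L^{-1}\mathscr P+\mathscr I$ has eigenvalue $1$ on $\ker P$ and the single remaining eigenvalue $\lambda(\kappa,\epsilon)=1+\Gamma(\kappa,\epsilon)\langle\mathscr L^{-1}1_{(0,1)},1_{(0,1)}\rangle_{L^2(0,1)}$ with eigenfunction $\mathscr L^{-1}1_{(0,1)}$, exactly as recorded before the statement. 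Thus a nontrivial $\varphi_0$ exists if and only if $\lambda(\kappa,\epsilon)=0$, which is the first claim.

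Next I would prove the asymptotics \eqref{ep}. Using that $K\colon\tilde H^{-1/2}(0,1)\to H^{1/2}(0,1)$ is invertible by Lemma~\ref{ope}(ii), factor $\mathscr L=K\bigl(\mathscr I+K^{-1}(K^1_\infty+K^2_\infty)\bigr)$. The operator-norm bounds $\|K^1_\infty\|\le\epsilon^2|\ln\epsilon|$ and $\|K^2_\infty\|\le\epsilon^2$ of Lemma~\ref{ope}(i) give $\|K^{-1}(K^1_\infty+K^2_\infty)\|<1$ for $\epsilon$ small, so a Neumann series yields $\mathscr L^{-1}=\bigl(\mathscr I+K^{-1}(K^1_\infty+K^2_\infty)\bigr)^{-1}K^{-1}$ and hence
\begin{align*}
\mathscr L^{-1}1_{(0,1)}=K^{-1}1_{(0,1)}-K^{-1}(K^1_\infty+K^2_\infty)K^{-1}1_{(0,1)}+O(\epsilon^4(\ln\epsilon)^2),
\end{align*}
where the first correction is $O(\epsilon^2\ln\epsilon)+O(\epsilon^2)$ in $\tilde H^{-1/2}(0,1)$ since $\|1_{(0,1)}\|_{H^{1/2}(0,1)}$ is a fixed constant; this is the first identity of \eqref{ep}. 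Pairing with $1_{(0,1)}\in H^{1/2}(0,1)$, using continuity of the duality between $\tilde H^{-1/2}(0,1)$ and $H^{1/2}(0,1)$ and the definition $q_0=\langle K^{-1}1_{(0,1)},1_{(0,1)}\rangle_{L^2(0,1)}$ from Lemma~\ref{ope}(ii), gives the second identity.

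The hard part is the first step, and more precisely the assertion that the resonance (homogeneous) problem is genuinely \emph{equivalent} to $(\mathscr P+\mathscr L)\varphi_0=0$ rather than merely implied by it: one must verify that a nontrivial kernel element $\varphi_0$ reconstructs, via \eqref{so}--\eqref{sc}, an honest resonance mode in the sense of the meromorphic continuation of \eqref{fe}, including the outgoing behaviour when ${\rm Im}\,\kappa<0$. Once that equivalence is in hand, everything else is the rank-one spectral computation already sketched before the lemma together with the routine Neumann-series bookkeeping of the second step.
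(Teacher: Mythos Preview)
Your proposal is correct and follows essentially the same approach as the paper: the resonance characterization via the rank-one spectral computation for $\mathscr L^{-1}\mathscr P+\mathscr I$ is exactly what the paper records in the discussion preceding the lemma, and the Neumann-series expansion of $\mathscr L^{-1}$ from the operator-norm bounds of Lemma~\ref{ope} is precisely the paper's argument for \eqref{ep}. If anything, you supply more detail than the paper, whose proof of the first assertion is a single sentence and which does not address the point you flag as the ``hard part'' (the genuine two-sided equivalence with the resonance problem) any more carefully than you do.
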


 \begin{proof}
For given roots of $\lambda (\kappa, {\epsilon})$, it is easy to check that they
are the characteristic values of the operator $\mathscr P
+\mathscr L$ with corresponding characteristic function defined above.

It follows from the asymptotic expansions in  Lemma \ref{ope}  and the Neumann
series that
\begin{align*}
 \mathscr L^{-1} &=(K+K^1_{\infty}+K^2_{\infty})^{-1}= K^{-1}\left(
\sum\limits_{j=0}^{\infty} (-1)^{j} (K^{-1} (K_{\infty}^{1}+K^{2}_{\infty}))^{j}
\right)\\
&=K^{-1}+O({\epsilon}^2\ln {\epsilon})+O({\epsilon}^2),
\end{align*}
which gives \eqref{ep}.
\end{proof}

 Next, we present the asymptotic expansion of resonances for the
scattering problem \eqref{fe}.
 \begin{theo}\label{th3.5}
The scattering problem \eqref{fe} has a set of resonances $\{ k_{n}\}$, which
satisfy
\begin{align}\label{kns}
k_{n}=\frac{n \pi}{d}+\frac{n\pi}{d^2}\left(\frac{1}{\pi} {\epsilon}\ln
{\epsilon} +\left(\frac{1}{q_0} +\frac{1}{\pi} (2\ln 2 +\ln
\frac{n\pi}{d}+\gamma_1)\right){\epsilon}\right)+O({\epsilon}^2\ln {\epsilon}), \quad n =1, 2, \cdots.
\end{align}
\end{theo}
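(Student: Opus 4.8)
The plan is to reduce everything to the scalar equation produced in Lemma \ref{rt}. By that lemma a resonance of \eqref{fe} is a root of
\[
\lambda(\kappa,\epsilon)=1+\Gamma(\kappa,\epsilon)\,\bigl\langle \mathscr L^{-1}1_{(0,1)},1_{(0,1)}\bigr\rangle_{L^2(0,1)},
\]
and by \eqref{ep} the inner product equals $q_0+O(\epsilon^2\ln\epsilon)$ uniformly in $\kappa$ on compact subsets of $\{{\rm Re}\,\kappa>0\}$, with $q_0\neq0$ by Lemma \ref{ope}. (Note $K$ has the $\kappa$-independent kernel $k(X,Y)$, so $q_0$ is a genuine constant.) Hence, for $\epsilon$ small, $\lambda(\kappa,\epsilon)=0$ is equivalent to
\[
\Gamma(\kappa,\epsilon)=-\frac{1}{q_0+O(\epsilon^2\ln\epsilon)}=-\frac1{q_0}+O(\epsilon^2\ln\epsilon).
\]
Substituting $\Gamma=\Gamma_1+\Gamma_2$ with $\Gamma_1(\kappa,\epsilon)=\tfrac1\pi(\ln\kappa+\gamma_1)+\tfrac1\pi\ln\epsilon$ and $\Gamma_2(\kappa,\epsilon)=-\tfrac{\tan\kappa d}{\epsilon\kappa}+\tfrac{2\ln2}{\pi}$, I would move the only term that is singular as $\epsilon\to0$ to one side, obtaining the equivalent scalar equation
\[
\frac{\tan\kappa d}{\epsilon\kappa}=\frac1{q_0}+\frac1\pi\bigl(\ln\kappa+\gamma_1+2\ln2\bigr)+\frac1\pi\ln\epsilon+O(\epsilon^2\ln\epsilon).
\]

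The next step is to localise the roots. The right-hand side above is $O(|\ln\epsilon|)$, so any root must satisfy $\tan\kappa d=O(\epsilon|\ln\epsilon|)$, which forces $\kappa$ to lie within $O(\epsilon|\ln\epsilon|)$ of one of the points $n\pi/d$, $n=1,2,\dots$. Fixing $n$, I would set $\kappa d=n\pi+\theta$, use $\tan\kappa d=\tan\theta=\theta+O(\theta^3)$, multiply through by $\epsilon\kappa$, and recast the equation as a fixed-point problem $\theta=\Phi_n(\theta,\epsilon)$ with
\[
\Phi_n(\theta,\epsilon)=\epsilon\,\frac{n\pi+\theta}{d}\left[\frac1{q_0}+\frac1\pi\Bigl(\ln\frac{n\pi+\theta}{d}+\gamma_1+2\ln2\Bigr)+\frac1\pi\ln\epsilon\right]+O(\epsilon^3\ln\epsilon).
\]
On a disc $|\theta|\le C\epsilon|\ln\epsilon|$ the map $\Phi_n(\cdot,\epsilon)$ is analytic and, for $\epsilon$ small, maps the disc into itself and is a contraction there; the contraction mapping theorem (equivalently, Rouch\'e's theorem applied to $\lambda(\cdot,\epsilon)$ on a small circle about $n\pi/d$) then yields, for each $n$, a unique root $k_n$ in that neighbourhood together with the a priori bound $\theta=O(\epsilon|\ln\epsilon|)$.

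Finally I would extract the expansion by a single iteration. Feeding $\theta=O(\epsilon|\ln\epsilon|)$ back into $\Phi_n$, replacing $\tfrac{n\pi+\theta}{d}$ by $\tfrac{n\pi}{d}+O(\theta)$ and $\ln\tfrac{n\pi+\theta}{d}$ by $\ln\tfrac{n\pi}{d}+O(\theta)$, every correction re-enters at order $O(\epsilon^2\ln\epsilon)$, so
\[
\theta=\frac{\epsilon\,n\pi}{d}\left[\frac1{q_0}+\frac1\pi\ln\epsilon+\frac1\pi\Bigl(2\ln2+\ln\frac{n\pi}{d}+\gamma_1\Bigr)\right]+O(\epsilon^2\ln\epsilon),
\]
and $k_n=(n\pi+\theta)/d$ is precisely \eqref{kns} once the $\epsilon\ln\epsilon$ and $\epsilon$ terms are collected. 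As a consistency check, the $-\tfrac{\pi{\rm i}}{2}$ inside $\gamma_1$ makes ${\rm Im}\,k_n=-\tfrac{n\pi\epsilon}{2d^2}+o(\epsilon)<0$, as a scattering resonance should satisfy. The step I expect to be the main obstacle is the root-localisation: one must verify that the $\kappa$-dependence of the remainder in $\langle\mathscr L^{-1}1_{(0,1)},1_{(0,1)}\rangle$ and of $\ln\kappa$ is $O(\epsilon^2\ln\epsilon)$, respectively Lipschitz, uniformly on these shrinking neighbourhoods, so that the contraction/Rouch\'e estimates close and the error term in \eqref{kns} is uniform in $n$ over any fixed finite range of indices.
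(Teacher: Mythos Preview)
Your proposal is correct and follows essentially the same approach as the paper: reduce to the scalar equation $\lambda(\kappa,\epsilon)=0$ via Lemma \ref{rt}, localise the roots near $k_{n,0}=n\pi/d$, and extract the first two terms of the expansion using Rouch\'e/contraction. The only cosmetic difference is that the paper multiplies through by $\epsilon$ to define $p(\kappa,\epsilon)=\epsilon\lambda(\kappa,\epsilon)$ and an intermediate $p_1$ with the exact $q_0$, applying Rouch\'e twice (first to pass from $-\tan\kappa d/\kappa$ to $p_1$, then from $p_1$ to $p$), whereas you divide by $\langle\mathscr L^{-1}1,1\rangle$ to isolate $\tan\kappa d/(\epsilon\kappa)$ and run a single fixed-point iteration; both organisations yield \eqref{kns} with the same error control.
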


\begin{proof}
By Lemma \ref{rt}, we consider the root of
\begin{align*}
\lambda (\kappa, {\epsilon})=1+(\Gamma_1(\kappa, {\epsilon})+\Gamma_2 (\kappa,
{\epsilon}))\langle \mathscr L^{-1}1_{(0, 1)}, 1_{(0, 1)}\rangle_{L^2 (0,1)}=0.
\end{align*}
Recall that $ \Gamma_1 (\kappa, {\epsilon})=\frac{1}{\pi} (\ln \kappa +
\gamma_1) +\frac{1}{ \pi} \ln {\epsilon}$ and  $\Gamma_2 (\kappa,
{\epsilon})=-\frac{\tan \kappa d}{ {\epsilon} \kappa} +\frac{2 \ln 2}{ \pi}.$
 The above equation can be written as
 \begin{align*}
 1+\left( - \frac{\tan \kappa d}{ {\epsilon} \kappa} +\frac{1}{\pi} (2\ln 2+\ln
\kappa +\gamma_1)+ \frac{1}{\pi}\ln {\epsilon}\right)\langle \mathscr
L^{-1}1_{(0, 1)}, 1_{(0, 1)}\rangle_{L^2 (0,1)}=0.
 \end{align*}
 Using Lemma \ref{rt}, we obtain
 \begin{align}\label{pk}
 p(\kappa, {\epsilon}):= {\epsilon} \lambda (\kappa, {\epsilon})={\epsilon}
+\left(- \frac{\tan \kappa d}{\kappa }+ {\epsilon}\rho
(\kappa)+\frac{1}{\pi}{\epsilon}\ln {\epsilon} \right) \left( q_0 +O
({\epsilon}^2 \ln {\epsilon}) +O ({\epsilon}^2) \right),
 \end{align}
 where $\rho (\kappa)=\frac{1}{\pi} (2\ln 2+\ln \kappa +\gamma_1).$

Note that the axis $\{ z \in \mathbb C: {\rm arg} z=\pm \frac{\pi}{ 2 d} \}$
is the branch cut for $p(\kappa, \epsilon)$, hence we choose a small number
$\theta_0 >$ 0 and consider the
domain $\{z \in \mathbb C:  -\frac{\pi}{2d} +\theta_0 <{\rm arg} z <
\frac{\pi}{2d} -\theta_0 ~\text{or}~  \frac{\pi}{2d} +\theta_0 <{\rm arg} z <
\frac{3\pi}{2d} -\theta_0  \}$. On the other hand, we are only interested in
those resonances which are not in the high frequency regime. Therefore, we only
need to find all the roots of $p(\kappa, {\epsilon})$ in the domain
\[\Omega_{\theta_0, M}=\left\{z \in \mathbb C: |z| \leq M,~ -\frac{\pi}{2d}
+\theta_0 <{\rm arg} z < \frac{\pi}{2d} -\theta_0 ~\text{or}~  \frac{\pi}{2d}
+\theta_0 <{\rm arg} z < \frac{3\pi}{2d} -\theta_0 \right\}\]
for some fixed number $M>0.$  Since $p (\kappa, {\epsilon})$ blows up as $\kappa
\rightarrow 0 $ or $\kappa \rightarrow  (j+1/2) \pi /d, j\in \mathbb Z$.  As a
result, there exists $\delta_0>0$ such that all the roots of $p$ in
$\Omega_{\theta_0, M}$ actually lies in the smaller domain
\[\Omega_{\delta_0, \theta_0, M}:= \{z \in \mathbb C: |z| \geq \delta_0 ~\text
{or}~ |z- (j+1/2) \pi /d| \geq \delta_0, j \in \mathbb Z\} \cap
\Omega_{\theta_0, M}.\]

It is clear that $- \frac{\tan \kappa d}{  \kappa}$ is analytic in
$\Omega_{\delta_0, \theta_0, M}$ and its roots are given by $k_{n,0}=n\pi/d,
n=1,2,\cdots.$ Note that each root is simple.
Denote by $k_{n}$ the roots of  $\lambda (\kappa, {\epsilon})$.  From Rouche's
theorem, we deduce that $k_n$ are also simple and are close to $k_{n,0}$ if
${\epsilon}$ is sufficiently small. We now derive the leading order asymptotic
terms for these roots. Define
  \begin{align}\label{dfp}
  p_1 (\kappa, {\epsilon})= {\epsilon} +\left(- \frac{\tan \kappa d}{\kappa }+
{\epsilon}\rho (\kappa)+\frac{1}{\pi}{\epsilon}\ln {\epsilon} \right) q_0.
  \end{align}
Expanding $p_1(\kappa, {\epsilon})$ at $k_{n,0}$ yields
\begin{align*}
p_1(\kappa, {\epsilon})&= {\epsilon} + \left(\left(- \frac{\tan \kappa
d}{\kappa }\right)'\bigg|_{\kappa=k_{n0}}(\kappa-k_{n0})+{\epsilon}\rho(k_{n0})+
{\epsilon}\rho'(k_{n0})(\kappa-k_{n0})+O(\kappa-k_{n0})^2+\frac{1}{\pi}{\epsilon
} \ln {\epsilon} \right) q_0\\
  &={\epsilon} +\bigg(\frac{-d^2}{n\pi} (\kappa-k_{n0})+\frac{\epsilon}{\pi}
\left(2\ln 2 +\ln\frac{n\pi}{d}+\gamma_1 \right)+ \frac{{\epsilon}
d}{n\pi^2}(\kappa-k_{n0})  +O (\kappa-k_{n0})^2+\frac{1}{\pi}{\epsilon}\ln
{\epsilon}\bigg)q_0.
  \end{align*}
We can conclude that $p_1(\kappa, {\epsilon})$ has simple roots in
$\Omega_{\delta_0, \theta_0, M}$ which are close to $k_{n0}$'s. Moreover, these
roots are analytic with respect to the variables ${\epsilon}$ and  ${\epsilon}
\ln{\epsilon}.$  Denoting the roots of $p_1$ by $k_{n1}$ and expanding them in
term of ${\epsilon}$ and ${\epsilon} \ln {\epsilon},$ we obtain
\begin{align}\label{K1}
k_{n1}=&k_{n0}+\frac{k_{n0}}{d}\left(\frac{1}{\pi} {\epsilon}\ln {\epsilon}
+\left(\frac{1}{q_0} +\frac{1}{\pi} (2\ln 2
+\ln \frac{n\pi}{d}+\gamma_1)\right){\epsilon}\right)
+O({\epsilon}^2\ln {\epsilon}).
\end{align}

Using Rouche's theorem, we claim that $k_{n1}$ gives the leading order
for the asymptotic expansion of the roots $k_n$. More precisely, we have
 \begin{align}\label{cl}
 k_n=k_{n1}+O({\epsilon}^2\ln {\epsilon}).
\end{align}
In the following, we give a proof of the claim. Note that
\begin{align*}
p(\kappa, {\epsilon})-p_1(\kappa, {\epsilon})=\left( -\frac{\tan \kappa
d}{\kappa}+\frac{1}{\pi}{\epsilon} \ln {\epsilon}+\rho(\kappa) {\epsilon}\right)
O({\epsilon}^2 \ln {\epsilon})
\end{align*}
and
\begin{align*}
p_{1}(\kappa, {\epsilon})= -\frac{\tan \kappa d}{\kappa} q_0 +\frac{q_0}{\pi}
{\epsilon} \ln {\epsilon}+(1+\rho(\kappa) q_0) {\epsilon}.
\end{align*}
Hence there exists a  constant $C_n$ such that
\begin{align*}
|p(\kappa, {\epsilon}) -p_1(\kappa, {\epsilon})|<|p_1(\kappa,
{\epsilon})|,\quad \forall \kappa\in \left\{
|\kappa-k_{n1}| =C_n {\epsilon}^2 \ln {\epsilon} \right\}.
\end{align*}
which implies the claim \eqref{cl} by  Rouche's theorem.
\end{proof}

\subsection{The field enhancement with resonance}

First we give an asymptotic expansion of $p(\kappa, {\epsilon})$ at the
resonant frequencies.

\begin{lemm}\label{lm3.6}
At the resonant frequencies $\kappa={\rm Re} k_n$ where $k_n$ is given in
\eqref{kns}, we have
\begin{align*}
p(\kappa, {\epsilon})=-\frac{{\rm i}  q_0 }{2}{\epsilon}+O({\epsilon}^2 \ln ^2{\epsilon}).
\end{align*}
\end{lemm}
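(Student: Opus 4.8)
The plan is to use the fact, from Lemma \ref{rt}, that $p(k_n,\epsilon)=\epsilon\,\lambda(k_n,\epsilon)=0$, together with a first-order Taylor expansion of the analytic function $\kappa\mapsto p(\kappa,\epsilon)$ about the resonance $k_n$. Since the only singularities of $-\tan(\kappa d)/\kappa$ and of $\rho(\kappa)$ are at $0$ and the points $(j+\tfrac12)\pi/d$, all at a fixed distance from $k_{n,0}=n\pi/d$, the function $p(\cdot,\epsilon)$ and its derivatives are uniformly bounded on a fixed neighborhood of $k_{n,0}$ containing both $k_n$ and ${\rm Re}\,k_n$. Writing ${\rm Re}\,k_n=k_n-{\rm i}\,{\rm Im}\,k_n$ and using $p(k_n,\epsilon)=0$ then gives
\begin{align*}
p({\rm Re}\,k_n,\epsilon)=-{\rm i}\,({\rm Im}\,k_n)\,\partial_\kappa p(k_n,\epsilon)+O\big(({\rm Im}\,k_n)^2\big).
\end{align*}

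First I would extract ${\rm Im}\,k_n$ from \eqref{kns}. The kernel $k(X,Y)$ in \eqref{Gd1} is real-valued, so $q_0=\langle K^{-1}1_{(0,1)},1_{(0,1)}\rangle_{L^2(0,1)}$ is a nonzero real number by Lemma \ref{ope}; moreover $\gamma_1=\gamma_0-\ln2-\tfrac{\pi{\rm i}}{2}$ has imaginary part $-\pi/2$. Hence in \eqref{kns} the only imaginary contribution at order $\epsilon$ comes from the term $\tfrac{n\pi}{d^2}\cdot\tfrac1\pi\gamma_1\epsilon$, and
\begin{align*}
{\rm Im}\,k_n=-\frac{n\pi}{2d^2}\,\epsilon+O(\epsilon^2\ln^2\epsilon).
\end{align*}
Next I would compute $\partial_\kappa p(k_n,\epsilon)$ from \eqref{pk}: the pieces carrying an explicit factor $\epsilon$ differentiate to $O(\epsilon)$, the $\kappa$-independent term $\tfrac1\pi\epsilon\ln\epsilon$ drops out, and the leading part is $q_0\,\partial_\kappa\!\big(-\tfrac{\tan\kappa d}{\kappa}\big)$ evaluated at $\kappa=k_n$; since $-\tan(\kappa d)/\kappa$ is analytic near $k_{n,0}$ with derivative $-d^2/(n\pi)$ there and $k_n=k_{n,0}+O(\epsilon\ln\epsilon)$ by \eqref{kns}, this yields
\begin{align*}
\partial_\kappa p(k_n,\epsilon)=-\frac{d^2 q_0}{n\pi}+O(\epsilon\ln\epsilon).
\end{align*}

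Finally, substituting the last two displays into the expansion of $p({\rm Re}\,k_n,\epsilon)$, using $(-{\rm i})\big(-\tfrac{n\pi}{2d^2}\epsilon\big)\big(-\tfrac{d^2q_0}{n\pi}\big)=-\tfrac{{\rm i}q_0}{2}\epsilon$ for the main term and absorbing the cross terms together with the $O(({\rm Im}\,k_n)^2)=O(\epsilon^2)$ remainder into $O(\epsilon^2\ln^2\epsilon)$, one arrives at $p({\rm Re}\,k_n,\epsilon)=-\tfrac{{\rm i}q_0}{2}\epsilon+O(\epsilon^2\ln^2\epsilon)$. The work here is bookkeeping rather than new ideas; the points that need care are verifying $q_0\in\mathbb R\setminus\{0\}$ (so that ${\rm Re}\,k_n$ carries the stated real part and ${\rm Im}\,k_n$ is genuinely of size $\epsilon$ with coefficient $-n\pi/2d^2$), noting $\partial_\kappa p(k_n,\epsilon)$ stays bounded away from $0$ since $q_0\neq0$, and checking that the remainders from \eqref{kns}, from the Taylor expansion, and from the $O(\epsilon)$ and $O(\epsilon^2\ln\epsilon)$ corrections to $\partial_\kappa p$ all combine into at worst $O(\epsilon^2\ln^2\epsilon)$.
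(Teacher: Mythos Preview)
Your proposal is correct and follows essentially the same approach as the paper: both argue by Taylor expanding about the resonance $k_n$, computing $\partial_\kappa p(k_n,\epsilon)\approx -\tfrac{d^2 q_0}{n\pi}$ from the derivative of $-\tan(\kappa d)/\kappa$ at $k_{n,0}$, and extracting ${\rm Im}\,k_n=-\tfrac{n\pi}{2d^2}\epsilon$ from the imaginary part of $\gamma_1$ in \eqref{kns}. The only cosmetic difference is that the paper first passes to the truncation $p_1$ (using $p=p_1+O(\epsilon^2\ln\epsilon)$ and $p_1(k_n)=O(\epsilon^2\ln\epsilon)$), whereas you expand $p$ itself and use $p(k_n,\epsilon)=0$ directly; your route is marginally cleaner but the substance is identical.
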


\begin{proof}
Assume that $|\kappa-{\rm Re}k_n|\leq {\epsilon} \ln {\epsilon}.$
It follows from the definition of $p_1$ in \eqref{dfp} that
\begin{align*}
p(\kappa, {\epsilon})
&=p_1(\kappa, {\epsilon})+O({\epsilon}^2 \ln {\epsilon})\\
&=p_1'(k_{n})(\kappa-k_n)+O(\kappa-k_n)^2 +O({\epsilon}^2 \ln {\epsilon})\\
&=\left( \frac{-d}{k_{n0}}+\frac{\epsilon}{\pi k_{n0}}+O(|k_n-k_{n0}|)\right)
q_0 (\kappa-k_{n})+O({\epsilon}^2 \ln ^2{\epsilon})\\
&=\left( \frac{-d}{k_{n0}}+\frac{\epsilon}{\pi k_{n0}}\right)q_0
(\kappa-{\rm Re}k_n -{\rm i}{\rm Im}k_n)+O({\epsilon}^2 \ln ^2{\epsilon}).
\end{align*}
Recalling  $\gamma_1 =\gamma_0 - \ln 2 - \frac{\pi {\rm i}}{2}$, we have from
the expansion of $k_{n1}$ in \eqref{K1} that
\begin{align*}
{\rm Im} k_n={\rm Im} k_{n1}+O({\epsilon}^2 \ln {\epsilon})=\frac{k_{n0}}{d\pi}
({\rm Im}\gamma_1) {\epsilon}=-\frac{k_{n0} {\epsilon} }{2d}.
\end{align*}
When $\kappa={\rm Re}k_n$, we have
\begin{align*}
p(\kappa, {\epsilon})={\rm i} q_0 \left( -\frac{ 1 }{2}{\epsilon}+\frac{1}{2 d}
{\epsilon}^2\right)+O({\epsilon}^2 \ln ^2{\epsilon})=-\frac{{\rm i}  q_0
}{2}{\epsilon}+O({\epsilon}^2 \ln ^2{\epsilon}),
\end{align*}
which completes the proof.
\end{proof}

\begin{lemm}\label{lm3.7}
The solution $\varphi$ of equation \eqref{oes} has the following asymptotic
expansion in $\tilde H^{-1/2} (0, 1)$:
\begin{align*}
\varphi=K^{-1} 1_{(0, 1)} \cdot  \frac{\left( 2 + \sin \theta  \cdot O(\kappa {\epsilon})
 \right)}{p}+O ({\epsilon}^2 \ln {\epsilon}).
\end{align*}
Moreover,
\begin{align*}
\langle \varphi, 1_{(0, 1)} \rangle_{L^2 (0,1)}=\frac{ q_0\left( 2 + \sin \theta
 \cdot O(\kappa {\epsilon})  \right)}{p}+O ({\epsilon}^2 \ln {\epsilon}).
\end{align*}
\end{lemm}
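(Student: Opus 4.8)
The plan is to solve the operator equation \eqref{oes} in closed form by exploiting the rank-one structure of its leading part, reduce it to a scalar equation for the quantity $c:=\langle\varphi,1_{(0,1)}\rangle_{L^2(0,1)}$, and then insert the asymptotic expansions established in Lemmas \ref{ope}, \ref{rt}, and \ref{lm3.6}.

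By Lemma \ref{ope}(i) we write $T^e+T^i=\mathscr P+\mathscr L$ with $\mathscr P=\Gamma P$ rank one, $P\psi=\langle\psi,1_{(0,1)}\rangle_{L^2(0,1)}1_{(0,1)}$, and $\mathscr L=K+K^1_\infty+K^2_\infty$ invertible from $H^{1/2}(0,1)$ onto $\tilde H^{-1/2}(0,1)$ for $\epsilon$ small. Applying $\mathscr L^{-1}$ to \eqref{oes} gives
\[
\varphi=\mathscr L^{-1}(f/\epsilon)-c\,\Gamma\,\mathscr L^{-1}1_{(0,1)}.
\]
Pairing this identity with $1_{(0,1)}$ and using $\lambda(\kappa,\epsilon)=1+\Gamma q$ with $q:=\langle\mathscr L^{-1}1_{(0,1)},1_{(0,1)}\rangle_{L^2(0,1)}$, together with $p(\kappa,\epsilon)=\epsilon\lambda(\kappa,\epsilon)$, yields the scalar identity
\[
c=\frac{\langle\mathscr L^{-1}f,1_{(0,1)}\rangle_{L^2(0,1)}}{p(\kappa,\epsilon)}.
\]

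Next I expand the numerator. By the Neumann series in Lemma \ref{rt}, $\mathscr L^{-1}=K^{-1}+O(\epsilon^2\ln\epsilon)$ as bounded operators $H^{1/2}(0,1)\to\tilde H^{-1/2}(0,1)$, while $f(X)=2e^{{\rm i}\kappa\sin\theta\epsilon X}=2\cdot 1_{(0,1)}+2(e^{{\rm i}\kappa\sin\theta\epsilon X}-1)$ with $\|e^{{\rm i}\kappa\sin\theta\epsilon X}-1\|_{H^{1/2}(0,1)}\lesssim|\sin\theta|\,\kappa\epsilon$. Combining these with $q_0=\langle K^{-1}1_{(0,1)},1_{(0,1)}\rangle_{L^2(0,1)}\neq0$ (Lemma \ref{ope}(ii)) and $q=q_0+O(\epsilon^2\ln\epsilon)$ (Lemma \ref{rt}) gives
\[
\langle\mathscr L^{-1}f,1_{(0,1)}\rangle_{L^2(0,1)}=q_0\bigl(2+\sin\theta\,O(\kappa\epsilon)\bigr)+O(\epsilon^2\ln\epsilon);
\]
dividing by $p$ and using the lower bound $|p(\kappa,\epsilon)|\gtrsim\epsilon$ at the resonant frequencies (Lemma \ref{lm3.6}) to absorb the remainder produces the second displayed formula of the lemma. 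For the first formula I substitute $c$ back into the identity for $\varphi$. The key simplification is the algebraic identity $\Gamma/p=1/(\epsilon q)-1/(pq)$, immediate from $p=\epsilon(1+\Gamma q)$: inserting it shows that the $O(1/\epsilon)$ part of $\mathscr L^{-1}(f/\epsilon)$ is exactly cancelled by the corresponding $O(1/\epsilon)$ part of $c\,\Gamma\,\mathscr L^{-1}1_{(0,1)}$, and, after once more replacing $\mathscr L^{-1}1_{(0,1)}$ by $K^{-1}1_{(0,1)}$ up to $O(\epsilon^2\ln\epsilon)$, what survives is $\frac{2+\sin\theta\,O(\kappa\epsilon)}{p}K^{-1}1_{(0,1)}$ plus the claimed remainder.

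The main obstacle is this last cancellation: one must check that the non-resonant $O(1/\epsilon)$ pieces of the two terms defining $\varphi$ cancel to leading order --- precisely the content of the identity tying $p$, $\Gamma$, and $q$ together --- so that the surviving error genuinely has the asserted order rather than $O(1/\epsilon)$. Everything else is routine: the mapping properties of $K^{-1}$, the operator-norm bounds $\|K^1_\infty\|\lesssim\epsilon^2|\ln\epsilon|$ and $\|K^2_\infty\|\lesssim\epsilon^2$ from Lemma \ref{ope}, the Taylor estimate $|e^{{\rm i}\kappa\sin\theta\epsilon X}-1|\lesssim\kappa\epsilon|\sin\theta|$ on $(0,1)$, boundedness of $\|1_{(0,1)}\|_{H^{1/2}(0,1)}$, and the lower bound on $|p|$ near resonance.
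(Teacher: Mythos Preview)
Your proposal is correct and follows essentially the same approach as the paper: apply $\mathscr L^{-1}$, exploit the rank-one structure of $\mathscr P=\Gamma P$ to reduce to a scalar equation for $\langle\varphi,1_{(0,1)}\rangle$, solve it, substitute back, and expand using $\mathscr L^{-1}=K^{-1}+O(\epsilon^2\ln\epsilon)$ together with the Taylor expansion of $f$. Your cancellation identity $\Gamma/p=1/(\epsilon q)-1/(pq)$ is algebraically equivalent to the paper's manipulation $\Gamma/\lambda=(\lambda-1)/(\lambda q)$ (both consequences of $\lambda=1+\Gamma q$, $p=\epsilon\lambda$), so the mechanism by which the two $O(1/\epsilon)$ pieces combine into $\frac{1}{p}K^{-1}1_{(0,1)}\cdot(2+\sin\theta\,O(\kappa\epsilon))$ is the same in both arguments.
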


\begin{proof}
It follows from Lemma \ref{ope} that equation \eqref{oes} is equivalent to
\begin{align*}
(\mathscr L^{-1} \mathscr P  +\mathscr I)\varphi =\mathscr L^{-1}
(f/{\epsilon}).
\end{align*}
Recall that  the operator $\mathscr L^{-1} \mathscr P +\mathscr I$ has the
eigenvalue $\lambda (\kappa, {\epsilon}).$ Thus
\begin{align*}
 \langle \varphi, 1_{(0,1)} \rangle_{L^2 (0, 1)} =\frac{1}{\lambda (\kappa,
{\epsilon})} \langle  \mathscr L^{-1} (f/{\epsilon}), 1_{(0,1)}\rangle_{L^2
(0,1)},
\end{align*}
and
\begin{align*}
\varphi&=  \mathscr L^{-1} (f/{\epsilon})- \mathscr L^{-1} \Gamma  P \varphi =
\mathscr L^{-1} (f/{\epsilon})-\mathscr L^{-1} \Gamma \langle
\varphi,1_{(0,1)}\rangle 1_{(0, 1)}\\
&=\mathscr L^{-1} (f/{\epsilon})-\mathscr L^{-1} 1_{(0,1)} \frac{\Gamma}{\lambda
(\kappa, {\epsilon})}\langle  \mathscr L^{-1}
(f/{\epsilon}), 1_{(0,1)}\rangle_{L^2 (0,1)}.
\end{align*}
Recall that $\lambda (\kappa, {\epsilon})= 1+\Gamma \langle \mathscr L^{-1}
1_{(0, 1)}, 1_{(0,1)}\rangle_{L^2 (0,1)}$. A simple calculation yields
\begin{align*}
\varphi=\mathscr L^{-1} (f/{\epsilon})-\mathscr L^{-1} 1_{(0,1)} \frac{(\lambda
(\kappa, {\epsilon})-1)/\langle \mathscr L^{-1} 1_{(0, 1)},
1_{(0,1)}\rangle_{L^2 (0,1)}}{\lambda (\kappa, {\epsilon})}\langle  \mathscr
L^{-1} (f/{\epsilon}), 1_{(0,1)}\rangle_{L^2(0,1)}.
\end{align*}
Using the definition of $ f$  and expanding it with respect to $\epsilon$, we
get
\begin{align*}
\frac{f}{\epsilon}= \frac{2 e^{{\rm i}  \kappa {\epsilon}X  \sin \theta
}}{\epsilon}=\frac{2}{\epsilon}+{\rm i} \kappa  X\sin  \theta + O(\kappa^2
{\epsilon}),
\end{align*}
which gives
\begin{align*}
\mathscr L^{-1} (f/{\epsilon})=\frac{1}{\epsilon} \left( 2 + \sin \theta \cdot O
(\kappa \epsilon)  \right) \left( K^{-1} 1_{(0,1)} + O({\epsilon}^2 \ln
{\epsilon})\right).
\end{align*}
Combining the above equation with  \eqref{ep}, we obtain
\begin{align*}
{\epsilon} \varphi = &\left( 2 +  \sin\theta \cdot  O (\kappa \epsilon)\right)
\left( K^{-1} 1_{(0,1)} + O ({\epsilon}^2 \ln {\epsilon})\right)\\
&\hspace{1cm} +\frac{(1-\lambda)/(q_0+O ({\epsilon}^2 \ln {\epsilon}))}{\lambda}
 \cdot \left(  2 q_0 + q_0 \sin \theta \cdot O( \kappa {\epsilon})  \right)\cdot
\left( K^{-1} 1_{(0,1)} + O ({\epsilon}^2 \ln {\epsilon})\right)\\
=& \frac{\left( 2 + \sin\theta \cdot O(\kappa {\epsilon})\right)}{\lambda}
\left( K^{-1} 1_{(0,1)} + O ({\epsilon}^2 \ln {\epsilon})\right).
\end{align*}
Thus
\begin{align*}
&\varphi=K^{-1} 1_{(0,1)} \cdot  \frac{\left( 2 +  \sin \theta  \cdot O(\kappa
{\epsilon})   \right)}{p}+O ({\epsilon}^2 \ln {\epsilon}), \\
&\langle \varphi, 1_{(0,1)} \rangle_{L^2 (0,1)}=\frac{ q_0\left( 2 + \sin \theta
 \cdot O(\kappa {\epsilon})  \right)}{p}+O ({\epsilon}^2 \ln {\epsilon}),
\end{align*}
which completes the proof.
\end{proof}

Combining  Lemmas \ref{lm3.6} and Lemma  \ref{lm3.7}, we obtain
at the resonant frequencies  $ \kappa={\rm Re} k_n$ that
\begin{align}\label{af}
 \varphi= \frac{f_1}{\epsilon} +O ({\epsilon}^2 \ln {\epsilon}), \quad \langle
\varphi, 1_{(0,1)} \rangle_{L^2 (0,1)} =\frac{c_2}{\epsilon} +O ({\epsilon}^2
\ln {\epsilon}),
\end{align}
where
\[
f_1= K^{-1} 1_{(0,1)} \cdot  \frac{ 2 + \sin \theta  \cdot
O(\kappa {\epsilon}) }{-\frac{\rm i}{2} q_0 +O({\epsilon} \ln {\epsilon})} \in
\tilde H^{-1/2} (0,1),\quad
c_2=\frac{ q_0\left( 2 + \sin \theta  \cdot O(\kappa {\epsilon})
\right)}{-\frac{\rm i}{2} q_0 +O({\epsilon} \ln {\epsilon})}
\]

\subsubsection{Enhancement in the far field}\label{sub3.4}

We first investigate the scattered field in the domain $\mathbb R_2^+\setminus
\bar{B}_{R}^+$. Recall that
\begin{align*}
u_{\epsilon}^{\rm sc}(\boldsymbol x)=\int_{\Gamma_{\epsilon}^+} G(\boldsymbol x,
\boldsymbol y) \frac{\partial u_{\epsilon}^{\rm sc} (\boldsymbol y) }{\partial
\nu_{\boldsymbol y}} {\rm d} s_{\boldsymbol y}=\int_{\Gamma_{\epsilon}^+}
G(\boldsymbol x, \boldsymbol y) \frac{\partial u_{\epsilon} (\boldsymbol y)
}{\partial \nu_{\boldsymbol y}} {\rm d} s_{\boldsymbol y}, \quad \boldsymbol x
\in \mathbb R_2^+.
\end{align*}
Here we use the fact $\partial_\nu u^{\rm inc}
+\partial_\nu u^{\rm ref}=0$ on $\Gamma_0$, especially on
$\Gamma_{\epsilon}^+$. Using
\begin{align*}
\frac{\partial u_{\epsilon} (\boldsymbol y) }{\partial \nu_{\boldsymbol y}}
\bigg|_{\Gamma_{\epsilon}^+}=\frac{\partial
u_{\epsilon} (y_1, 0)}{\partial {y_2}}=-\varphi
\left(\frac{y_1}{\epsilon}\right),
\end{align*}
we have
\begin{align*}
u_{\epsilon}^{\rm sc}(\boldsymbol x)=- \int_{\Gamma_{\epsilon}^+}  G(\boldsymbol
x, (y_1, 0)) \varphi \left(\frac{y_1}{\epsilon}\right) {\rm
d}y_1=-{\epsilon}\int_{0}^1 G(\boldsymbol x, ({\epsilon} Y, 0))\varphi (Y) {\rm
d} Y.
\end{align*}
Since
\begin{align*}
G(\boldsymbol x, ({\epsilon} Y, 0))= G(\boldsymbol x, (0, 0))
(1+O({\epsilon})),\quad \boldsymbol x\in \mathbb R_2^+\setminus \bar{B}_R^+,
\end{align*}
we get from Lemma \ref{lm3.7} that
\begin{align*}
\int_0^1 \varphi (Y) {\rm d} Y=\langle \varphi, 1_{(0,1)} \rangle_{L^2
(0,1)}=\frac{ q_0\left( 2 + \sin \theta  \cdot O(\kappa {\epsilon})
 \right)}{p}+O ({\epsilon}^2 \ln {\epsilon}).
\end{align*}
Hence
\begin{align*}
u_{\epsilon}^{\rm sc}(\boldsymbol x) &=- {\epsilon} G(\boldsymbol x, (0, 0))
(1+O({\epsilon})) \int_0^1 \varphi (Y) {\rm d} Y\\
&=- {\epsilon} G(\boldsymbol x, (0, 0)) \cdot  \frac{ q_0\left( 2 + \sin \theta
\cdot O(\kappa {\epsilon})  \right)}{p} +O ({\epsilon}^2 \ln {\epsilon}).
\end{align*}
In the resonance case when $\kappa={\rm Re} k_n$, the enhancement comes from
the term $\frac{1}{p}.$ It follows from Lemma \ref{lm3.6} that
\begin{align*}
\frac{1}{p}=\frac{2 {\rm i}} {q_0 {\epsilon}} (1+O ({\epsilon} \ln^2 {\epsilon})).
\end{align*}
Correspondingly, we have
\begin{align*}
u_{\epsilon}^{\rm sc}(\boldsymbol x)=-2 {\rm i}  G(\boldsymbol x, (0, 0)) \cdot
\left( 2 + \sin \theta  \cdot O(\kappa {\epsilon})   \right)+O ({\epsilon} \ln^2
{\epsilon}),
\end{align*}
which shows that the enhancement of the scattered magnetic field in the far
field region has an order $O(1/{\epsilon})$ compared to the non-resonance case
when $\kappa\neq {\rm Re} k_n$.

It follows from the Ampere law $\nabla \times \boldsymbol
H_{\epsilon}=-{\rm i} \omega \varepsilon \boldsymbol E_{\epsilon}$ and
$\boldsymbol H_{\epsilon}=(0, 0, u_{\epsilon})$ that the enhancement of the
scattered electric field also has an order $O (1/{\epsilon})$ in the far field
region when $\kappa={\rm Re} k_n.$

\subsubsection {Enhancement in the cavity}

Now we study the field enhancement in the cavity. It follows from
\eqref{wg} that the total  field  $u_{\epsilon}$ may be expanded
as the sum of waveguide modes:
\begin{align}\label{wgm}
u_{\epsilon} (\boldsymbol x) =&\frac{1}{\sqrt{\epsilon}} \left( \alpha_0^+
e^{-{\rm i} \kappa x_2} +\alpha^{-}_0 e^{{\rm i} \kappa
(x_2+d)}\right)+\sum\limits_{m\geq 1}\sqrt{\frac{2}{\epsilon}} \alpha_m^{+} \cos
\left( \frac{m \pi x_1}{\epsilon}\right)e^{-{\rm i}\beta_m x_2}\nonumber\\
&\hspace{1cm}+\sum\limits_{m\geq 1}\sqrt{\frac{2}{\epsilon}} \alpha_m^{-} \cos
\left( \frac{m \pi x_1}{\epsilon}\right)e^{{\rm i}\beta_m (x_2+d)},
\end{align}
where $\beta_m={\rm i} \sqrt {(m \pi/ {\epsilon})^2 -\kappa^2}.$

\begin{lemm}\label{cep}
The coefficients in \eqref{wgm} have the following expansions:
\begin{align}\label{AL1}
\frac{\alpha_0^+}{\sqrt {\epsilon}}=-\frac{q_0 (2+ \sin \theta  \cdot O(\kappa
{\epsilon}) )e^{-{\rm i} \kappa d}}{ 2{\rm i} \kappa  \cos (\kappa d)
p}+O({\epsilon}^2\ln {\epsilon}),\quad \frac{\alpha_0^-}{\sqrt {\epsilon}}=
\frac{q_0 (2+ \sin \theta  \cdot O(\kappa {\epsilon}) )}{ 2{\rm i} \kappa  \cos
(\kappa d) p}+O({\epsilon}^2\ln {\epsilon}),
\end{align}
and
\begin{align*}
\sqrt{\frac{2}{\epsilon}}|\alpha_m^{+}| \leq  \frac{C}{\sqrt m},\quad
\sqrt{\frac{2}{\epsilon}}|\alpha_m^{-}| \leq  \frac{C}{\sqrt m},
\end{align*}
where the positive  constant $C$ independents  of  ${\epsilon}, \kappa$ and $m$.
\end{lemm}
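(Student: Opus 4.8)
The plan is to reduce both families of coefficients to the scalar density $\varphi$, whose asymptotics are already supplied by Lemma~\ref{lm3.7} and \eqref{af}. From \eqref{ec} together with the vanishing $u_{\epsilon,m}^-=0$ imposed by the Dirichlet condition on $\Gamma_\epsilon^-$ (see \eqref{uf}) one has $\alpha_m^+=-u_{\epsilon,m}^+/(e^{{\rm i}2\beta_m d}-1)$ and $\alpha_m^-=e^{{\rm i}\beta_m d}u_{\epsilon,m}^+/(e^{{\rm i}2\beta_m d}-1)$. To bring in $\varphi$, I would equate the series \eqref{tnd} for $\partial_{x_2}u_\epsilon(\cdot,0-)$ (which, by continuity of $\partial_{x_2}u_\epsilon$ across $\Gamma_\epsilon^+$, equals $\partial_{x_2}u_\epsilon(\cdot,0)=-\varphi(\cdot/\epsilon)$ by \eqref{ND1}) and take the $m$-th Fourier coefficient. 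Writing $\varphi_m:=\langle\varphi(\cdot/\epsilon),\phi_m\rangle_{\Gamma_\epsilon^+}$, which after rescaling reads $\varphi_0=\sqrt\epsilon\,\langle\varphi,1_{(0,1)}\rangle_{L^2(0,1)}$ and $\varphi_m=\sqrt{2\epsilon}\,\langle\varphi,\cos(m\pi\,\cdot)\rangle_{L^2(0,1)}$ for $m\ge1$, one obtains $u_{\epsilon,m}^+=-\varphi_m(e^{{\rm i}2\beta_m d}-1)/\bigl({\rm i}\beta_m(e^{{\rm i}2\beta_m d}+1)\bigr)$, and hence the closed forms $\alpha_m^+=\varphi_m/\bigl({\rm i}\beta_m(e^{{\rm i}2\beta_m d}+1)\bigr)$ and $\alpha_m^-=-e^{{\rm i}\beta_m d}\varphi_m/\bigl({\rm i}\beta_m(e^{{\rm i}2\beta_m d}+1)\bigr)$.

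For the propagating mode $m=0$ I use $\beta_0=\kappa$ and $e^{{\rm i}2\kappa d}+1=2e^{{\rm i}\kappa d}\cos\kappa d$ to express $\alpha_0^\pm/\sqrt\epsilon$ as an explicit multiple of $\varphi_0/\sqrt\epsilon$. Then $\varphi_0/\sqrt\epsilon=\langle\varphi,1_{(0,1)}\rangle_{L^2(0,1)}=c_2/\epsilon+O(\epsilon^2\ln\epsilon)$ by \eqref{af}, and the displayed prefactor $q_0(2+\sin\theta\cdot O(\kappa\epsilon))/p$ in \eqref{AL1} is nothing but $c_2/\epsilon$ up to the stated remainder, which is checked by inserting the definition of $c_2$ and the expansion $p=-\tfrac{{\rm i}}{2}q_0\epsilon+O(\epsilon^2\ln^2\epsilon)$ of Lemma~\ref{lm3.6}. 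This identification is purely algebraic.

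For the evanescent modes $m\ge1$ put $\mu_m:=\sqrt{(m\pi/\epsilon)^2-\kappa^2}>0$, so that ${\rm i}\beta_m=-\mu_m$, $e^{{\rm i}2\beta_m d}=e^{-2\mu_m d}\in(0,1)$, $|e^{{\rm i}\beta_m d}|\le1$, and for $\epsilon$ small enough $\mu_m\ge m\pi/(2\epsilon)$ uniformly in $m$. Since $e^{-2\mu_m d}+1\ge1$, the closed forms give $|\alpha_m^\pm|\le|\varphi_m|/\mu_m\le 2\epsilon|\varphi_m|/(m\pi)$. The density coefficient is controlled by duality between $\tilde H^{-1/2}(0,1)$ and $H^{1/2}(0,1)$: $|\varphi_m|\le\sqrt{2\epsilon}\,\|\varphi\|_{\tilde H^{-1/2}(0,1)}\|\cos(m\pi\,\cdot)\|_{H^{1/2}(0,1)}\le C\sqrt{m/\epsilon}$, using the elementary bound $\|\cos(m\pi\,\cdot)\|_{H^{1/2}(0,1)}\le C\sqrt m$ and $\|\varphi\|_{\tilde H^{-1/2}(0,1)}\le C/\epsilon$, the latter following from Lemma~\ref{lm3.7} once $|p|\gtrsim\epsilon$ at the resonant frequencies is used (Lemma~\ref{lm3.6}) and the numerator $2+\sin\theta\cdot O(\kappa\epsilon)$ is noted to stay bounded because $\epsilon\ll\lambda$ keeps $\kappa\epsilon$ bounded. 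Hence $\sqrt{2/\epsilon}\,|\alpha_m^\pm|\le\sqrt{2/\epsilon}\cdot2\epsilon|\varphi_m|/(m\pi)\le C/\sqrt m$ with $C$ independent of $\epsilon,\kappa,m$.

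The main obstacle is not a single hard estimate but the careful bookkeeping of the two analytic ingredients: the uniform-in-$\kappa$ bound $\|\varphi\|_{\tilde H^{-1/2}(0,1)}=O(1/\epsilon)$ over the whole family $\kappa={\rm Re}\,k_n$ (where one really must invoke $\epsilon\ll\lambda$ to keep $\kappa\epsilon$ bounded and $p$ bounded below by $c\epsilon$), and the scaling $\|\cos(m\pi\,\cdot)\|_{H^{1/2}(0,1)}\lesssim\sqrt m$ that converts the $\tilde H^{-1/2}$ bound on $\varphi$ into the $m^{-1/2}$ decay of the mode coefficients; everything else is the algebra already packaged in \eqref{ec}, \eqref{tnd}, \eqref{af}, and Lemmas~\ref{lm3.6}--\ref{lm3.7}.
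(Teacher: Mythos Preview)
Your proposal is correct and follows essentially the same route as the paper: both arguments impose the Dirichlet condition on $\Gamma_\epsilon^-$ to relate $\alpha_m^+$ and $\alpha_m^-$, match the Fourier coefficients of $\partial_{x_2}u_\epsilon(\cdot,0)=-\varphi(\cdot/\epsilon)$, invoke Lemma~\ref{lm3.7} for $m=0$, and for $m\ge1$ combine $|{\rm i}\beta_m|\sim m/\epsilon$ with the duality bound $|\langle\varphi,\cos(m\pi\cdot)\rangle|\le\|\varphi\|_{\tilde H^{-1/2}}\|\cos(m\pi\cdot)\|_{H^{1/2}}\lesssim\sqrt m/\epsilon$. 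The only cosmetic difference is that you route the $m=0$ identity through \eqref{ec}, \eqref{tnd}, and the constant $c_2$ of \eqref{af}, whereas the paper reads off $\langle\varphi,1_{(0,1)}\rangle=q_0(2+\sin\theta\cdot O(\kappa\epsilon))/p+O(\epsilon^2\ln\epsilon)$ directly from Lemma~\ref{lm3.7}; citing Lemma~\ref{lm3.7} alone is slightly cleaner since the displayed formulas in \eqref{AL1} are stated in terms of $p$ rather than its resonant value.
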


\begin{proof}
Recall that $u_{\epsilon}=0$ on $\Gamma_{\epsilon}^-$ then   equation
\eqref{wgm} becomes
\begin{align}\label{ce2}
u_{\epsilon} (\boldsymbol x) =&\frac{\alpha_0^-}{\sqrt {\epsilon}}\left( -
e^{-{\rm i} \kappa (x_2 + d)}+e^{{\rm i} \kappa (x_2+d)}\right)\nonumber\\
 &\hspace{1cm}+\sum\limits_{m\geq 1} \sqrt {\frac {2}{\epsilon}} \alpha_m^{-}
\left( - e^{-{\rm i} \beta_m (x_2+d)} + e^{{\rm i} \beta_m (x_2+d)}\right)\cos
\left( \frac{m \pi x_1}{\epsilon}\right).
\end{align}
Taking the derivative of \eqref{ce2} with respect to $x_2$ yields
\begin{align*}
\frac{\partial u_{\epsilon}(\boldsymbol x)}{\partial x_2}=
\frac{\alpha_0^-}{\sqrt {\epsilon}}{\rm i}\kappa \left(  e^{-{\rm i} \kappa
(x_2 + d)}+e^{{\rm i} \kappa  (x_2+d)}\right)+\sum\limits_{m\geq 1}
\sqrt {\frac {2}{\epsilon}} \alpha_m^{-} {\rm i} \beta_m\left(  e^{-{\rm i}
\beta_m (x_2+d)} + e^{{\rm i} \beta_m (x_2+d)}\right)\cos \left( \frac{m \pi
x_1}{\epsilon}\right),
\end{align*}
which gives
\begin{align}\label{al2}
\frac{\partial u_{\epsilon}}{\partial x_2}(x_1, 0)= \frac{\alpha_0^-}{\sqrt
{\epsilon}}2{\rm i}\kappa \cos (\kappa d) +\sum\limits_{m\geq 1}\sqrt {\frac
{2}{\epsilon}} \alpha_m^{-} 2{\rm i} \beta_m \cos (\beta_m d)\cos \left( \frac{m
\pi x_1}{\epsilon}\right).
\end{align}
Multiplying \eqref{al2} with $\phi_0(x_1)$ and    integrating it  over
$\Gamma^+_{\epsilon}$, we get
\begin{align*}
\frac{\alpha_0^-}{\sqrt {\epsilon}}{\rm i}\kappa 2 \cos (\kappa
d) &=\frac{1}{\epsilon}\int_{\Gamma_{\epsilon}^+} \frac{\partial
u_{\epsilon}}{\partial x_2}(x_1, 0) {\rm d}
x_1 =\frac{1}{\epsilon}\int_{\Gamma_{\epsilon}^+} -\varphi
\left(\frac{x_1}{\epsilon}\right) {\rm d}x_1\\
&=\int_0^1 \varphi (X) {\rm d} X =\langle \varphi, 1_{(0,1)} \rangle_{L^2
(0,1)}\\
&=\frac{ q_0\left( 2 + \sin \theta  \cdot O(\kappa {\epsilon})  \right)}{p}+O
({\epsilon}^2 \ln {\epsilon}),
\end{align*}
which gives  the formulas for $\alpha_0^+$ and $\alpha_0^-$ in \eqref{AL1}.

For $m\geq 1$, it follows from multiplying\eqref{al2}  with $\cos \left(
\frac{m \pi x_1}{\epsilon}\right)$ and  integrating over $\Gamma^+_{\epsilon}$
that
\begin{align*}
\sqrt{\frac{2}{\epsilon}}\alpha_m^{-} {\rm i} \beta_m \cos (\beta_m d)
&=\frac{1}{\epsilon}\int_{\Gamma_{\epsilon}^+} \frac{\partial
u_{\epsilon}}{\partial x_2}(x_1, 0) \cos \left( \frac{m \pi
x_1}{\epsilon}\right) {\rm d}x_1\\
&=-\int_0^1 \varphi(X) \cos (m \pi X) {\rm d} X.
\end{align*}
Note that ${\rm i} \beta_m =O \left( \frac{m}{\epsilon} \right)$ for $m \geq 1$,
and by \eqref{af} that $\|\varphi\|_{H^{-1/2} (0,1)} \lesssim
\frac{1}{\epsilon}, \|\cos (m \pi X)\|_{H^{1/2}(0,1)} \lesssim \sqrt m.$
Thus, we get
\begin{align*}
\sqrt{\frac{2}{\epsilon}}|\alpha_m^{-}| \leq  \frac{C}{\sqrt m},
\end{align*}
which completes the proof.
\end{proof}

In the following, we give the field enhancement inside the cavity.

\begin{theo}
Denote by $\mathring{D}_{\epsilon}=\left\{ \boldsymbol x \in D_{\epsilon} \big|
-d +{\epsilon} \leq  x_2 \leq -{\epsilon} \right\}$ the interior of the
cavity. If $\kappa= {\rm Re} k_n$ where $k_n$ is given in \eqref{kns}, then we
have for $\epsilon \ll d$ that
\begin{align*}
u_{\epsilon} (\boldsymbol x)=\left( \frac{2}{\epsilon}+  \sin \theta  \cdot O(1)
 + O(\ln ^2{\epsilon}) \right) \frac{2 {\rm i} \sin \kappa (x_2 +d)}{\kappa \cos
(\kappa d)} +O ({\epsilon}^2 \ln {\epsilon}), \quad\boldsymbol x\in
\mathring{D}_{\epsilon}.
\end{align*}
Moreover, the electric and magnetic field enhancements are of an order
$O(1/{\epsilon})$.
\end{theo}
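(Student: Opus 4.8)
The plan is to read the expansion off the waveguide-mode representation of $u_{\epsilon}$ in the cavity, isolating the one propagating mode. Since $u_{\epsilon}=0$ on $\Gamma_{\epsilon}^{-}$, \eqref{ce2} (equivalently \eqref{wgm}) gives
\[
u_{\epsilon}(\boldsymbol x)=\frac{\alpha_0^{-}}{\sqrt{\epsilon}}\,2{\rm i}\sin\kappa(x_2+d)+\sum_{m\ge1}\sqrt{\tfrac{2}{\epsilon}}\,\alpha_m^{-}\bigl(-e^{-{\rm i}\beta_m(x_2+d)}+e^{{\rm i}\beta_m(x_2+d)}\bigr)\cos\!\bigl(\tfrac{m\pi x_1}{\epsilon}\bigr),
\]
so it suffices to evaluate the $m=0$ term at the resonant frequency and to bound the evanescent tail on $\mathring D_{\epsilon}$.

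For the $m=0$ term I would insert the formula for $\alpha_0^{-}/\sqrt{\epsilon}$ from \eqref{AL1} together with the resonance asymptotics of Lemma \ref{lm3.6}, namely $1/p=\tfrac{2{\rm i}}{q_0\epsilon}\bigl(1+O(\epsilon\ln^2\epsilon)\bigr)$ at $\kappa={\rm Re}\,k_n$. Since $\kappa d=n\pi+O(\epsilon\ln\epsilon)$ by \eqref{kns}, so that $\cos\kappa d=(-1)^n+O((\epsilon\ln\epsilon)^2)$ and $1/\kappa=O(1)$, a direct computation gives
\[
\frac{\alpha_0^{-}}{\sqrt{\epsilon}}=\frac{1}{\kappa\cos\kappa d}\Bigl(\frac{2}{\epsilon}+\sin\theta\cdot O(1)+O(\ln^2\epsilon)\Bigr)+O(\epsilon^2\ln\epsilon),
\]
and multiplying by the bounded factor $2{\rm i}\sin\kappa(x_2+d)$ reproduces exactly the asserted main term, the $O(\epsilon^2\ln\epsilon)$ being preserved.

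The crux is the evanescent series. Testing $\partial_{x_2}u_{\epsilon}(\cdot,0)$ against $\cos(m\pi x_1/\epsilon)$ as in the derivation of \eqref{al2}, and writing ${\rm i}\beta_m=-\mu_m$ with $\mu_m=\sqrt{(m\pi/\epsilon)^2-\kappa^2}$ and $\cos\beta_m d=\cosh\mu_m d$, yields $\sqrt{2/\epsilon}\,\alpha_m^{-}=\hat\varphi_m/(\mu_m\cosh\mu_m d)$, where $\hat\varphi_m=\int_0^1\varphi(X)\cos(m\pi X)\,{\rm d}X$; hence the $m$-th mode equals $-\tfrac{2\hat\varphi_m}{\mu_m}\,\tfrac{\sinh\mu_m(x_2+d)}{\cosh\mu_m d}\cos(m\pi x_1/\epsilon)$, of modulus at most $\tfrac{2|\hat\varphi_m|}{\mu_m}\,e^{-\mu_m|x_2|}$ because $0<x_2+d<d$. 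Now $\|\varphi\|_{H^{-1/2}(0,1)}\lesssim1/\epsilon$ by \eqref{af}, $\|\cos(m\pi X)\|_{H^{1/2}(0,1)}\lesssim\sqrt m$, $\mu_m\ge m\pi/(2\epsilon)$ for $\epsilon$ small, and $|x_2|\ge\epsilon$ on $\mathring D_{\epsilon}$, so each term is $\lesssim e^{-m\pi}/\sqrt m$ and the series over $m\ge1$ contributes only a lower-order remainder on $\mathring D_{\epsilon}$; adding it to the $m=0$ term gives the stated expansion. I expect this step to be the main obstacle: one must use the exact coefficient identity with the $\cosh\mu_m d$ factor — not the cruder bound $\sqrt{2/\epsilon}|\alpha_m^{-}|\le C/\sqrt m$ of Lemma \ref{cep} — to expose the decay $e^{-\mu_m|x_2|}$, and one must stay inside $\mathring D_{\epsilon}$, away from the boundary layers near $\Gamma_{\epsilon}^{+}$ and $\Gamma_{\epsilon}^{-}$ where the evanescent modes are not small.

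For the enhancement factors I would use $L^2$ norms in place of the pointwise bound: orthonormality of the $\phi_n$ gives $\|u_{\epsilon}\|_{L^2(D_{\epsilon})}^2$ and $\|\nabla u_{\epsilon}\|_{L^2(D_{\epsilon})}^2$ as sums over the modes. At $\kappa={\rm Re}\,k_n$ the $m=0$ contributions are $\epsilon|\alpha_0^{-}/\sqrt{\epsilon}|^2\cdot4\!\int_0^d\sin^2\kappa t\,{\rm d}t$, of order $d/(\kappa^2\epsilon)$, and $\epsilon|\alpha_0^{-}/\sqrt{\epsilon}|^2\cdot4\kappa^2\!\int_0^d\cos^2\kappa t\,{\rm d}t$, of order $d/\epsilon$, whereas the $m\ge1$ contributions sum to $O(\epsilon^2)$ and $O(1)$ respectively (using $\sum_{m\ge1}m^{-1}|\hat\varphi_m|^2\lesssim\|\varphi\|_{H^{-1/2}(0,1)}^2\lesssim1/\epsilon^2$), hence are negligible. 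Since $\|u^{\rm inc}\|_{L^2(D_{\epsilon})}^2=\epsilon d$, $\|\nabla u^{\rm inc}\|_{L^2(D_{\epsilon})}^2=\kappa^2\epsilon d$, and $\|\nabla u_{\epsilon}\|_{L^2}/\|\nabla u^{\rm inc}\|_{L^2}=\|\boldsymbol E_{\epsilon}\|_{L^2}/\|\boldsymbol E^{\rm inc}\|_{L^2}$ as in Theorem \ref{NTH}, both $Q_{\boldsymbol H}$ and $Q_{\boldsymbol E}$ are of order $1/(\kappa\epsilon)=O(1/\epsilon)$.
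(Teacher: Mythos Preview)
Your approach is correct and matches the paper's: isolate the propagating $m=0$ mode using the coefficient formula \eqref{AL1} from Lemma~\ref{cep}, insert the resonance asymptotics $1/p=\tfrac{2{\rm i}}{q_0\epsilon}(1+O(\epsilon\ln^2\epsilon))$ of Lemma~\ref{lm3.6}, and control the evanescent tail on $\mathring D_\epsilon$. The paper's argument is terser --- it cites \eqref{wgm} and Lemma~\ref{cep}, records the tail as $O(e^{-1/\epsilon})$, and infers the $O(1/\epsilon)$ enhancement directly from the pointwise size of $u_\epsilon$ together with Amp\`ere's law --- whereas you unpack the tail estimate explicitly and add an $L^2(D_\epsilon)$ computation for $Q_{\boldsymbol H}$ and $Q_{\boldsymbol E}$, but the substance is the same.
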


\begin{proof}
By the expansion \eqref{wgm} and Lemma \ref{cep}, it is clear that in the region
$\mathring{D}_{\epsilon}$
\begin{align*}
u_{\epsilon} (\boldsymbol x) &=\frac{q_0 (2+ \sin \theta  \cdot O(\kappa
{\epsilon}) )}{ 2 {\rm i} \kappa  \cos (\kappa d) p} \left( -e^{{\rm -i} \kappa
(x_2+d)} +e^{{\rm i} \kappa (x_2+d)} \right) +O ({\epsilon}^2 \ln {\epsilon})+O
(e^{- 1/{\epsilon}})\\
&=\frac{q_0  (2+  \sin \theta  \cdot O(\kappa {\epsilon}) )\sin \kappa (x_2+d)}
{\kappa \cos \kappa d} \frac{1}{p}+O ({\epsilon}^2 \ln{\epsilon})+O (e^{-
1/{\epsilon}}).
\end{align*}
Noting when $\kappa={\rm Re} k_n,$ we have
\begin{align*}
\frac{1}{p}=\frac{2 {\rm i}} {q_0 {\epsilon}} (1+O ({\epsilon} \ln^2
{\epsilon})),
\end{align*}
which yields that
\begin{align*}
u_{\epsilon} (\boldsymbol x)= \left( \frac{2}{\epsilon}+  \sin \theta  \cdot O(1
)  + O(\ln ^2{\epsilon}) \right) \frac{2 {\rm i} \sin \kappa (x_2 +d)}{\kappa
\cos (\kappa d)} +O ({\epsilon}^2 \ln {\epsilon}).
\end{align*}
Therefore, the magnetic field enhancement due to the resonance is of an order $O
(1/{\epsilon})$ in $\mathring{D}_{\epsilon}$. We conclude from Ampere's law that
the electric field enhancement is also of an order $O(1/{\epsilon})$.
\end{proof}

\subsubsection {Field enhancement on the open aperture}\label{sub3.4.3}

Now we present the enhancement of the scattered field on the open aperture of
the cavity. Recall \eqref{so} for the scattered field on $\Gamma_{\epsilon}^+$:
\begin{align*}
u_{\epsilon}^{\rm sc} (x_1, 0)=\int_{\Gamma_{\epsilon}^+} G ((x_1, 0),
\boldsymbol y) \frac{\partial u_{\epsilon} (\boldsymbol y)}{ \partial
\nu_{\boldsymbol y}} {\rm d} s_{\boldsymbol y}.
\end{align*}
Since $x_1= {\epsilon} X, y_1= {\epsilon} Y, \varphi=- \partial_\nu
u_{\epsilon}\big|_{\Gamma_{\epsilon}^+}$, we have
\begin{align*}
u_{\epsilon}^{\rm sc} (x_1, 0)=-\int_0^1 G_{\epsilon}^e (X, Y) {\epsilon}
\varphi (Y) {\rm d }Y.
\end{align*}
It follows from the asymptotic expansion of $G_{\epsilon}^e$ in Lemma \ref{AE}
and Lemma \ref{lm3.7} that
\begin{align*}
u_{\epsilon}^{\rm sc} (x_1, 0)=& -{\epsilon}\int_0^{1} \left( \Gamma_1
+\frac{1}{\pi} \ln |X-Y| +O({\epsilon}^2 \ln {\epsilon})\right) \varphi (Y) {\rm
d} Y\\
=&-{\epsilon} \Gamma_1 \langle \varphi, 1_{(0,1)} \rangle_{L^2 (0,1)} -
{\epsilon}\frac {2+ \sin \theta  \cdot O(\kappa {\epsilon}) }{p}  \frac{1}{\pi}
h_2(X)+O({\epsilon}^2 \ln {\epsilon})\\
=&-{\epsilon} \left(\frac{1}{\pi} (\ln \kappa +\gamma_1)+\frac{1}{\pi} \ln
{\epsilon} \right) \left( \frac{q_0 (2+ \sin \theta  \cdot O(\kappa {\epsilon})
)}{p} +O ({\epsilon}^2 \ln {\epsilon}) \right)\\
&\hspace{1cm}- {\epsilon}\frac {2+ \sin\theta  \cdot O(\kappa {\epsilon}) }{p}
\frac{1}{\pi}h_2(X) +O({\epsilon}^2 \ln {\epsilon})\\
=&-\frac{1}{\pi}\frac{2 q_0}{p} {\epsilon} \ln {\epsilon}-\frac{1}{\pi}
\frac{2}{p} \left( q_0(\ln \kappa + \gamma_1)+h_2(X) \right) {\epsilon}
-\frac{1}{\pi} \left( q_0(\ln \kappa +\gamma_1) + h_2(X)   \right) \frac{
\sin \theta  \cdot O(1) }{p} \epsilon^2\\
&\hspace{1cm}-\frac{1}{\pi}\frac{ q_0  \sin \theta
\cdot O(1) }{p} \epsilon^2\ln{\epsilon}+O({\epsilon}^2 \ln {\epsilon}).
\end{align*}
where $h_2(X):=\int_0^1 K^{-1} 1_{(0, 1)} (Y) \ln |X-Y| {\rm d} Y$. Note
that at the resonant frequencies $\kappa ={\rm Re} k_n,$
\begin{align*}
\frac{1}{p}=\frac{2 {\rm i}} {q_0 {\epsilon}} (1+O ({\epsilon} \ln^2 {\epsilon})).
\end{align*}
Thus we have
\begin{align*}
u_{\epsilon}^{\rm sc} (x_1, 0)= -\frac{ 4 {\rm i}}{\pi} \ln {\epsilon} - \frac{4
{\rm i}}{\pi} \left((\ln \kappa + \gamma_1) +\frac{
h_2(x_1/{\epsilon})}{q_0}\right)+O ({\epsilon} \ln^3 {\epsilon}).
\end{align*}
It is clear to note that the leading order of the resonant mode is a constant
on the open aperture with an order of $O (\ln {\epsilon})$ and the magnetic  field
enhancement is of an order $O(1/{\epsilon})$.

On the other hand, since
\[
\partial_{x_1}  u_{\epsilon}^{\rm sc}(x_1, 0)=-\frac{2 +\sin\theta \cdot O(k
\epsilon)}{p} \frac{1}{\pi }h_2'(x_1/\epsilon) +O(\epsilon^2 \ln
\epsilon),
\]
which implies that  at the resonant frequencies $\kappa ={\rm Re} k_n$ , on the
top aperture $\Gamma_{\epsilon}^+$, the scatter field satisfies
\begin{align*}
\partial_{x_1} u^{\rm sc} (x_1, 0)=- \frac{4 {\rm i} (1+O (\epsilon \ln
^2\epsilon))}{\pi q_0 \epsilon }h' (x_1/ \epsilon)  -\frac{2 {\rm i} \sin \theta
\cdot O(1)}{\pi q_0} h_2'(x_1/\epsilon)+O (\epsilon \ln^2 \epsilon).
\end{align*}
By Ampere's law, we know the electric field enhancement is also of an
order $O (1/\epsilon)$ on the top aperture $\Gamma_{\epsilon}^+$ when $\kappa
={\rm Re}  k_n.$

\subsection{Numerical experiments}

In this section, we present some numerical experiments to verify the theoretical
studies of the field enhancement for the PEC-PMC cavity. As a representative
example, we take the cavity width $\epsilon=0.005$, the cavity depth $d=1$, and
the angle of incidence $\theta=\pi/3$ in the numerics. Figure \ref{PECPMC}
(left) and (right) show the plot of the electric field enhancement
$Q_{\boldsymbol E}$ the the magnetic field enhancement factor $Q_{\boldsymbol
H}$ against the wavenumber $\kappa$, respectively. We observe that both the
electric and magnetic enhancement factors do reach peaks at the resonant
frequencies \eqref{kns}, which are close to $\kappa=n\pi, n=1, 2, \dots$.
Besides the peaks at the resonant frequencies, the electric field enhancement
factor $Q_{\boldsymbol E}$ also displays a peak when the wavenumber $\kappa$ is
sufficiently small, i.e., when the wavelength of the incident field $\lambda$
is sufficiently large. In this situation, the scales satisfy $\epsilon\ll
d\ll \lambda$. The electric field should have an enormous enhancement according
to our theoretical result in Theorem \ref{NTH}. In contrast, the magnetic
field enhancement factor $Q_{\boldsymbol H}$ does not show a peak when
the wavenumber approaches zero, which is also confirmed in Theorem \ref{NTH}
that the magnetic field has no enhancement when  $\epsilon\ll
d\ll \lambda$.

\begin{figure}
\centering
\includegraphics[width=0.45\textwidth]{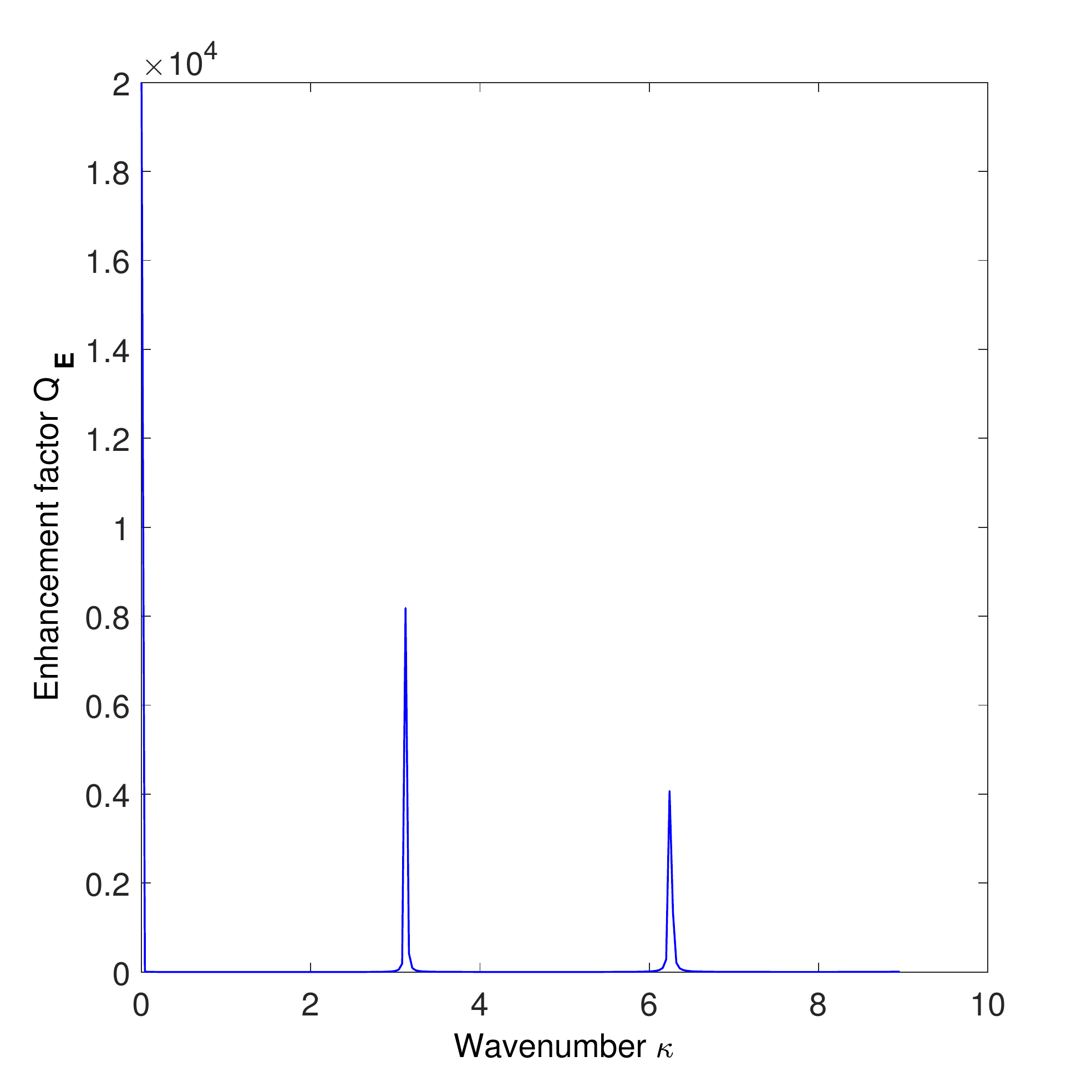}
\includegraphics[width=0.45\textwidth]{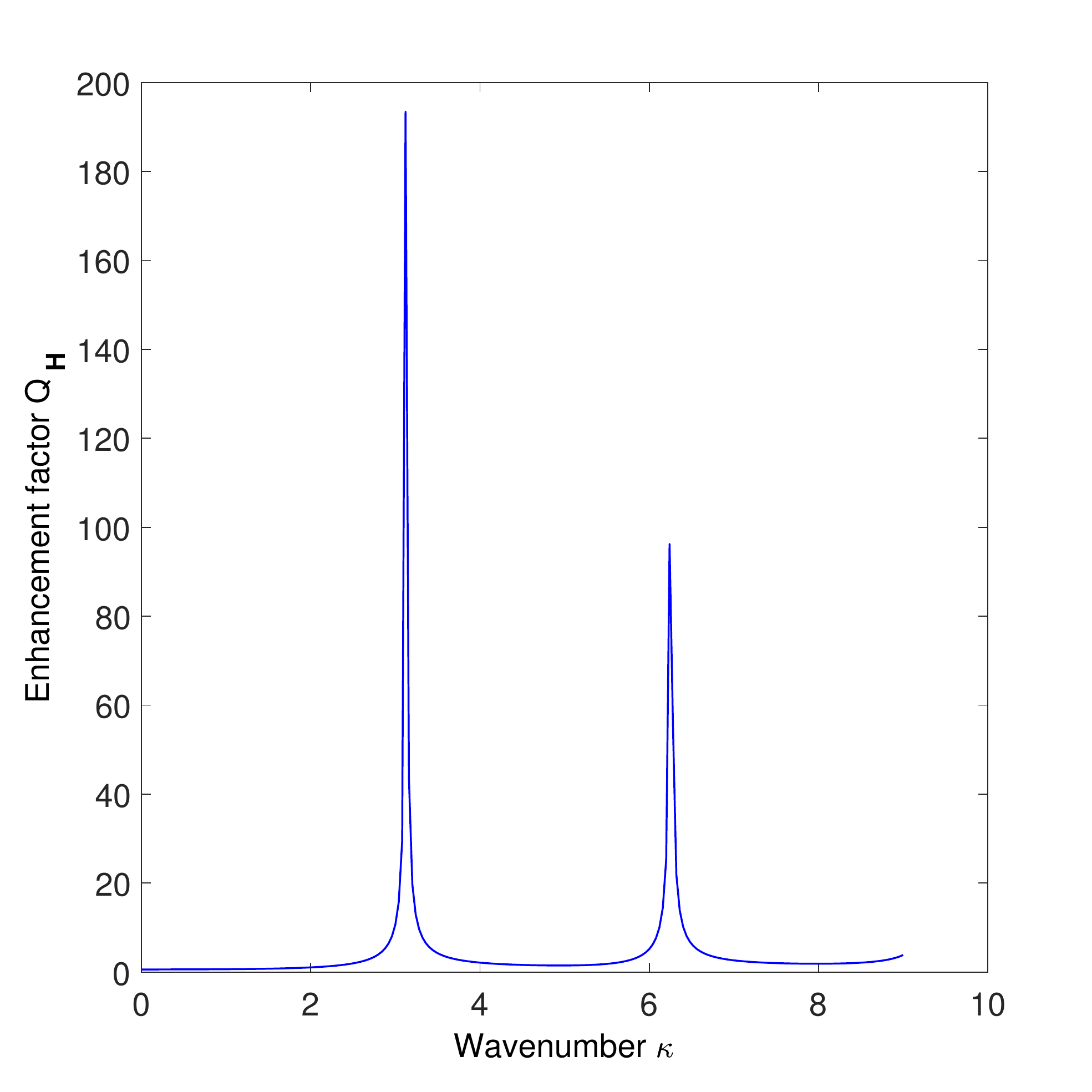}
\caption{The field enhancement factor is plotted against the
wavenumber $\kappa$ for the PEC-PMC cavity: (left) the electric
field enhancement factor $Q_{\boldsymbol E}$; (righ) the magnetic
field enhancement factor $Q_{\boldsymbol H}$.}
\label{PECPMC}
\end{figure}

\section{PEC-PEC cavity without resonance}\label{sec4}

In this section, assuming $\epsilon \ll d \ll \lambda$, we show that there is a
weak enhancement of the electric field and there is no enhancement of the
magnetic field in the PEC-PEC cavity.

\subsection{Problem formulation}

Consider the following model problem of the electromagnetic scattering by an
open cavity:
\begin{align}\label{PEC1}
\begin{cases}
\Delta u_{\epsilon}+\kappa^2 u_{\epsilon} = 0
\quad  &\text {in}~\Omega,\\
\partial_\nu u_{\epsilon}=0  \quad  &\text {on}~ \partial \Omega,\\
\lim\limits_{r \rightarrow \infty} \sqrt r \left( \partial_r u_{\epsilon}^{\rm
s} - {\rm i} \kappa  u_{\epsilon}^{\rm s} \right)=0 \quad &\text{in}~\mathbb
R_+^2,
 \end{cases}
\end{align}
where $u_\epsilon^{\rm sc} = u_{\epsilon} -(u^{\rm inc} +u^{\rm ref})$ in
$\mathbb R_2^+$. Here the incident field $u^{\rm inc} = e^{{\rm i}
\kappa\boldsymbol x\cdot \boldsymbol d}$ is a plane wave propagating in the
direction $\boldsymbol d =(\sin \theta, -\cos \theta)^{\top}$ and the reflected
field $u^{\rm r}= e^{{\rm i} \kappa \boldsymbol x \cdot\boldsymbol d'}$  with
the propagating direction $d'= (\cos \theta, \sin \theta)$.

Let $\phi_0 (x_1) =\frac{1}{\sqrt \epsilon}$ and $\phi_n (x_1) =\sqrt
{\frac{2}{\epsilon}} \cos \left( \frac{n \pi x_1}{\epsilon}\right) (n \geq 1)$
be an orthonormal basis on the interval $(0, \epsilon)$. Since the total field
satisfies the Neumann boundary condition on the bottom and lateral sides of the
cavity, we have from \eqref{PEC1} that $u_{\epsilon}$ can be expressed as the
sum of waveguide modes:
\begin{align}\label{wg1}
 u_{\epsilon} (\boldsymbol x) = \sum\limits_{n =0}^{\infty} \left(  \alpha_n^{+}
e^{ -{\rm i} \beta_n x_2} + \alpha_n^{-} e^{{\rm i} \beta_n (x_2 + d)}\right)
\phi_n (x_1),\quad \boldsymbol x\in D_\epsilon,
\end{align}
where the coefficients $\beta_n$ are defined as
\begin{align*}
 \beta_n =
 \begin{cases}
  \kappa, \quad & n=0,\\
  {\rm i} \sqrt{({n \pi}/{  \epsilon})^2 -\kappa^2}, \quad & n \geq 1.
 \end{cases}
\end{align*}
If $\epsilon<\lambda/2$, then $\beta_n$ are pure imaginary
numbers for all $n \geq 1$. For each $n$, it can be seen if $ n \pi
/{\epsilon} \leq \kappa $, the expansion consists of two propagating wave modes
traveling upward and downward respectively; if $ n \pi / \epsilon > \kappa$,
the expansion consists of two evanescent wave modes decaying exponentially away
from the bottom side and open aperture of the cavity, respectively.

Using the expansion \eqref{wg}, we may reformulate the model \eqref{PEC1} into
the following coupled problem:
\begin{align}\label{cpe1}
 \begin{cases}
\Delta u_{\epsilon} + \kappa^2 u_{\epsilon} =0
\quad  &\text {in}~\mathbb R_2^+,\\
u_{\epsilon} = \sum\limits_{n =0}^{\infty} \left(  \alpha_n^{+}
e^{ -{\rm i} \beta_n x_2} + \alpha_n^{-} e^{{\rm i} \beta_n (x_2 + d)}\right)
\phi_n (x_1) \quad & \text {in}~D_{\epsilon},\\
\partial_\nu u_{\epsilon}=0 \quad & \text {on}~ \partial \Omega \setminus
\Gamma^+_{\epsilon} ,\\
u_{\epsilon} (x_1, 0+) =u_{\epsilon} (x_1, 0-),~\partial_{x_2} u_{\epsilon}
(x_1, 0+)=\partial_{x_2}  u_{\epsilon} (x_1, 0-)
\quad & \text{on}~ \Gamma_\epsilon^+,\\
\lim\limits_{r \rightarrow \infty} \sqrt r \left( \partial_r u_{\epsilon}^{\rm
sc} - {\rm i} \kappa u_{\epsilon}^{\rm sc} \right)=0 \quad  &\text
{in}~\mathbb R_2^+.
\end{cases}
\end{align}
By matching the expansion series  \eqref{wg1} and using the continuity
conditions in \eqref{cpe1}, we may obtain
\begin{align*}
\alpha_n^+= \frac{e^{{\rm i} \beta_n d} u_{{\epsilon}, n}^{-} -u_{{\epsilon},
n}^+}{ e^{{\rm i} 2 \beta_n d}-1},\quad
\alpha_n^-= \frac{e^{{\rm i} \beta_n d} u_{{\epsilon}, n}^{+} -u_{{\epsilon},
n}^-}{ e^{{\rm i} 2 \beta_n d}-1}.
\end{align*}
Therefore, the wave field
\begin{align}\label{wf1}
u_{\epsilon} (\boldsymbol x) = \sum\limits_{n =0}^{\infty} \left(  \frac{e^{{\rm
i} \beta_n d} u_{{\epsilon}, n}^{-} -u_{{\epsilon}, n}^+}{ e^{{\rm i} 2 \beta_n
d}-1} e^{ -{\rm i} \beta_n x_2} + \frac{e^{{\rm i} \beta_n d} u_{{\epsilon},
n}^{+} -u_{{\epsilon}, n}^-}{ e^{{\rm i} 2 \beta_n d}-1} e^{{\rm i} \beta_n (x_2
+ d)}\right) \phi_n (x_1), \quad \boldsymbol x\in D_\epsilon.
\end{align}
A straightforward calculation yields
\begin{align}
&\partial_{x_2} u_{\epsilon} (x_1, 0-) =\sum\limits_{n=0}^{\infty} {\rm i }
\beta_n\left( \frac{e^{{\rm i} \beta_n d} u_{{\epsilon}, n}^{+} -u_{{\epsilon},
n}^-}{ e^{{\rm i} 2 \beta_n d}-1} e^{{\rm i} \beta_n  d}
-\frac{e^{{\rm i} \beta_n d} u_{{\epsilon}, n}^{-} -u_{{\epsilon}, n}^+}{
e^{{\rm i} 2 \beta_n d}-1}\right)\phi_n (x_1) \quad \text
{on}~\Gamma_{\epsilon}^+, \label{td1}\\
& \partial_{x_2} u_{\epsilon } (x_1, -d+) =\sum\limits_{n=0}^{\infty} {\rm i}
\beta_n\left( \frac{e^{{\rm i} \beta_n d} u_{{\epsilon}, n}^{+} -u_{{\epsilon},
n}^-}{ e^{{\rm i} 2 \beta_n d}-1}- \frac{e^{{\rm i} \beta_n d} u_{{\epsilon},
n}^{-} -u_{{\epsilon}, n}^+}{ e^{{\rm i} 2 \beta_n d}-1}
 e^{ {\rm i} \beta_n  d} \right)\phi_n (x_1)=0 \quad \text
{on}~\Gamma_{\epsilon}^{-}. \label{ttd1}
\end{align}
It follows from \eqref{ttd1} that
\begin{align}\label{uf1}
 u_{{\epsilon}, n}^{-} =  \frac{2  u_{{\epsilon}, n}^{+}}{e^{-{\rm i} \beta_n
d}+e^{{\rm i} \beta_n d}}.
\end{align}
Substituting \eqref{uf1} into \eqref{td1} yields
\begin{align*}
\partial_{x_2} u_{\epsilon} (x_1, 0-) =  \sum\limits_{n=0}^{\infty} {\rm i }
\beta_n  \frac{e^{{\rm i}  \beta_n d} -e^{-{\rm i}  \beta_n d}}{e^{{\rm i}
\beta_n d} +e^{-{\rm i}  \beta_n d}} u_{{\epsilon}, n}^+ \phi_n (x_1)
\quad \text {on}  ~\Gamma_{\epsilon}^+.
\end{align*}

Given a function $u\in H^{1/2} (\Gamma_\epsilon^+)$, we let $u^{+}= u  (x_1, 0)$
and define a DtN map on $\Gamma_{\epsilon}^+$:
\begin{align}\label{dtn1}
\mathscr B^{\rm PEC}[u] = \sum\limits_{n=0}^{\infty} {\rm i}\beta_n\frac{e^{{\rm
i} \beta_n d} -e^{-{\rm i}  \beta_n d}}{e^{{\rm i}  \beta_n d} +e^{-{\rm i}
\beta_n d}} u_{ n}^+ \phi_n (x_1),
\end{align}
where the Fourier coefficients $u_{n}^+ = \langle u^{+}, \phi_n
\rangle_{\Gamma_{\epsilon}^+}.$ Following a similar proof to Lemma \ref{tth},
we may show that the DtN map $\mathscr B^{\rm PEC}$ is also bounded. The proof
is omitted for brevity.

\begin{lemm}\label{tth1}
The DtN map $\mathscr B^{\rm PEC}$ defined by \eqref{dtn} is bounded from
$H^{1/2} (\Gamma_{\epsilon}^+) \rightarrow  H^{-1/2} (\Gamma_{\epsilon}^+)$,
i.e.,
\begin{align*}
\|\mathscr B^{\rm PEC} [u] \|_{H^{-1/2} (\Gamma_{\epsilon}^+)} \leq C
\|u\|_{H^{1/2} (\Gamma_{\epsilon}^+)} \quad \forall u \in {H^{1/2}
(\Gamma_{\epsilon}^+)},
 \end{align*}
where the constant $C =\max \left\{ \frac{\kappa \sin (\kappa d)}{\cos (\kappa
d)}, \frac{1-e^{-2 \sqrt {(n \pi / \epsilon)^2-\kappa^2}}}{1+e^{-2
\sqrt {(n \pi / \epsilon)^2-\kappa^2}}}\right\} <1$ is independent of
$\lambda, d.$
\end{lemm}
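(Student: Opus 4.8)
The plan is to imitate the proof of Lemma~\ref{tth} almost verbatim, changing only the Fourier multiplier of the operator. First I would fix $u, w \in H^{1/2}(\Gamma_\epsilon^+)$ and expand them in the orthonormal basis $\{\phi_n\}_{n\geq 0}$ as $u = \sum_{n\geq 0} u_n^+\phi_n$ and $w = \sum_{n\geq 0} w_n^+\phi_n$ with $u_n^+ = \langle u, \phi_n\rangle_{\Gamma_\epsilon^+}$ and $w_n^+ = \langle w, \phi_n\rangle_{\Gamma_\epsilon^+}$. Using the definition~\eqref{dtn1} and the orthonormality of $\{\phi_n\}$ one obtains
\begin{align*}
\langle \mathscr B^{\rm PEC}[u], w\rangle_{\Gamma_\epsilon^+} = \sum_{n=0}^{\infty} m_n\, u_n^+ \bar w_n^+,\qquad m_n := {\rm i}\beta_n\,\frac{e^{{\rm i}\beta_n d} - e^{-{\rm i}\beta_n d}}{e^{{\rm i}\beta_n d} + e^{-{\rm i}\beta_n d}},
\end{align*}
so the whole statement reduces to a pointwise bound on the multiplier sequence $m_n$ against the weights $(1 + (n\pi/\epsilon)^2)^{1/2}$ appearing in \eqref{snm}.

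Next I would estimate $m_n$ separately for $n = 0$ and $n \geq 1$. For $n = 0$ we have $\beta_0 = \kappa$, hence $m_0 = -\kappa\tan(\kappa d)$ and $|m_0| = \kappa\,|\sin(\kappa d)|/|\cos(\kappa d)|$; since the scale assumption $d \ll \lambda$ forces $\kappa d = 2\pi d/\lambda$ to be small, $\cos(\kappa d) \geq 1/2$ and $|m_0|$ is in fact small, in particular bounded by $\kappa\sin(\kappa d)/\cos(\kappa d) < 1$. For $n \geq 1$, writing $\gamma_n = \sqrt{(n\pi/\epsilon)^2 - \kappa^2} > 0$ so that $\beta_n = {\rm i}\gamma_n$, a short computation (the imaginary $\beta_n$ turning the $\tan$ into a $\tanh$, the signs cancelling) gives $m_n = \gamma_n\tanh(\gamma_n d) > 0$. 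Then I would combine $\gamma_n \leq n\pi/\epsilon \leq (1 + (n\pi/\epsilon)^2)^{1/2}$ with
\begin{align*}
0 < \tanh(\gamma_n d) = \frac{1 - e^{-2\sqrt{(n\pi/\epsilon)^2 - \kappa^2}\,d}}{1 + e^{-2\sqrt{(n\pi/\epsilon)^2 - \kappa^2}\,d}} < 1
\end{align*}
to conclude $|m_n| \leq (1 + (n\pi/\epsilon)^2)^{1/2}$ and, more precisely, $|m_n|/(1 + (n\pi/\epsilon)^2)^{1/2} < 1$.

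Finally I would write $|m_n| = \big(|m_n|/(1+(n\pi/\epsilon)^2)^{1/2}\big)\,(1+(n\pi/\epsilon)^2)^{1/4}\,(1+(n\pi/\epsilon)^2)^{1/4}$, pull out the constant $C := \sup_{n\geq 0}|m_n|/(1+(n\pi/\epsilon)^2)^{1/2} = \max\{\kappa\sin(\kappa d)/\cos(\kappa d),\ \sup_{n\geq 1}\gamma_n\tanh(\gamma_n d)/(1+(n\pi/\epsilon)^2)^{1/2}\}$, and apply the Cauchy--Schwarz inequality together with \eqref{snm} to get $|\langle \mathscr B^{\rm PEC}[u], w\rangle_{\Gamma_\epsilon^+}| \leq C\,\|u\|_{H^{1/2}(\Gamma_\epsilon^+)}\|w\|_{H^{1/2}(\Gamma_\epsilon^+)}$. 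Taking the supremum over $w$ with $\|w\|_{H^{1/2}(\Gamma_\epsilon^+)} = 1$ and using that $H^{-1/2}(\Gamma_\epsilon^+)$ is the dual of $H^{1/2}(\Gamma_\epsilon^+)$ with respect to the pairing $\langle\cdot,\cdot\rangle_{\Gamma_\epsilon^+}$ then yields the claimed estimate, with $C < 1$ by the two bounds above and with the first entry of the maximum controlled by $d \ll \lambda$.

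The argument is entirely routine; the only steps needing a little care are the sign bookkeeping that reduces $m_n$ to the clean real quantity $\gamma_n\tanh(\gamma_n d)$ for $n \geq 1$, and verifying that the supremum defining $C$ is genuinely finite and bounded uniformly in the Fourier index $n$, which is what makes the constant independent of $\lambda$ and $d$ in the stated sense. There is no real obstacle here beyond these verifications.
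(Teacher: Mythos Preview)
Your proposal is correct and is precisely the approach the paper intends: the paper itself omits the proof of Lemma~\ref{tth1}, stating only that it follows from a similar argument to Lemma~\ref{tth}, and your write-up carries out that argument in detail with the appropriate multiplier $m_n = \gamma_n\tanh(\gamma_n d)$ replacing the PMC multiplier. The only minor point is that the constant displayed in the paper's statement has a typographical slip (the exponent is missing the factor $d$), which your computation correctly supplies.
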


By \eqref{dtn1}, we derive the TBC:
\begin{align*}
\partial_{x_2} u_{\epsilon} = \mathscr B^{\rm PEC}[u_{\epsilon}] \quad \text
{on}~\Gamma_\epsilon^+,
\end{align*}
which helps to reduce \eqref{PEC1} into to a boundary value problem in
$\mathbb R_2^+$:
\begin{align}\label{rdp11}
\begin{cases}
 \Delta u_{\epsilon} + \kappa^2 u_{\epsilon}=0 \quad  & \text {in}~\mathbb
R_2^+,\\
 \partial_\nu u_{\epsilon} =0 \quad  &\text {on}~  \Gamma_0 \setminus
\Gamma_{\epsilon}^+,\\
 \partial_{x_2} u_{\epsilon} = \mathscr B^{\rm PEC} [u_{\epsilon}] \quad & \text
{on}~\Gamma_{\epsilon}^+,\\
 \lim\limits_{r \rightarrow \infty} \sqrt r \left( \partial_r u_{\epsilon}^{\rm
sc} - {\rm i} \kappa u_{\epsilon}^{\rm sc} \right)=0 \quad  &\text {in}~\mathbb
R_2^+.
 \end{cases}
\end{align}
The solution $u_\epsilon$ in the cavity $D_{\epsilon}$ may be obtained from
\eqref{wf1} once the Fourier coefficients $u_{{\epsilon}, n}^+$ are available
(note that $u_{{\epsilon}, n}^-$ can be computed from from \eqref{uf1}).

To find an approximate the model problem to \eqref{rdp11}, we similarly drop all
the nonzero wave modes and define
 \begin{align}\label{adtn1}
 \mathscr B_0^{\rm PEC} [v]= {\rm i } \kappa  \frac{e^{{\rm i} \kappa  d}
-e^{-{\rm i} \kappa  d}}{e^{{\rm i}  \kappa d}+e^{-{\rm i}  \kappa d}} v_0^+
\phi_0 (x_1).
 \end{align}
Hence we may consider the following problem:
\begin{align}\label{Eardp}
\begin{cases}
\Delta v_{\epsilon} + \kappa^2 v_{\epsilon}=0 \quad & \text {in}~\mathbb
R_2^+,\\
\partial_\nu v_{\epsilon} =0  \quad  &\text {on}~   \Gamma_0\setminus
\Gamma_{\epsilon}^+,\\
\partial_{x_2} v_{\epsilon}=\mathscr B_0^{\rm PEC} [v_{\epsilon}]\quad & \text
{on}~\Gamma_{\epsilon}^+,\\
\lim\limits_{r \rightarrow \infty} \sqrt r \left( \partial_r v_{\epsilon}^{\rm
sc} - {\rm i} \kappa  v_{\epsilon}^{\rm sc} \right)=0 \quad  &\text
{in}~\mathbb R_2^+,
  \end{cases}
\end{align}
where $v_{\epsilon}^{s} = v_{\epsilon} -(u^{\rm inc} +u^{\rm ref})$ in
$\mathbb R_2^+$. Inside the cavity, we may approximate
$u_{\epsilon}$ with one single mode:
\begin{align}\label{Eom1}
v_{\epsilon} (\boldsymbol x)= \left(  \tilde  \alpha_0^+ e^{-{\rm i} \kappa x_2}
+ \tilde  \alpha_0^{-} e^{{\rm i} \kappa (x_2 + d)}\right) \phi_0 (x_1),
\end{align}
where the coefficients $\tilde \alpha_0^+$ and  $\tilde \alpha_0^-$  are given by
\begin{align}\label{Eza1}
\tilde \alpha_0^+= \frac{e^{{\rm i} \kappa d} v_{{\epsilon}, 0}^{-} -
v_{{\epsilon}, 0}^{+}}{ e^{{\rm i}  2 \kappa d} -1},
 \quad \tilde \alpha_0^-= \frac{e^{{\rm i} \kappa d} v_{{\epsilon}, 0}^{+} -
v_{{\epsilon}, 0}^{-}}{ e^{{\rm i}  2 \kappa d} -1}.
\end{align}
It follows from  $ \partial_\nu v_{\epsilon} =0 $ on $\Gamma_{\epsilon}^{-} $
that $\tilde {\alpha}_0^{-} = \tilde {\alpha}_0^{+} e^{{\rm i} \kappa d}$, which
yields
\begin{align}\label{Eom11}
 v_{{\epsilon}, 0}^{-} = \frac{2 v_{\epsilon, 0}^+}{e^{{\rm i} \kappa d} +e^{-{\rm i} \kappa  d} }.
\end{align}

\subsection{Enhancement of the approximated field}

This section presents the estimates of the solution for the approximate model
problem \eqref{Eardp}.

\begin{theo}\label{EEE1}
Let $v_{\epsilon}$ be the solution of \eqref{Eardp} and be given by \eqref{Eom1}
inside the cavity, then there exist positive constants $C_1, C_2$ independent on
$\lambda, \epsilon, d$ such that
\begin{align*}
 C_1  \kappa^2  \sqrt  \epsilon d^{3/2} \leq \|\nabla v_{\epsilon}\|_{L^2
(D_{\epsilon})}  \leq  C_2 \kappa^2  \sqrt \epsilon d^{3/2}.
\end{align*}
\end{theo}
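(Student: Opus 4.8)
The plan is to follow the argument of Theorem~\ref{get} almost verbatim; the only structural change is that the Neumann condition on the bottom $\Gamma_\epsilon^-$ forces $\tilde\alpha_0^- = \tilde\alpha_0^+ e^{{\rm i}\kappa d}$ (see \eqref{Eom1}--\eqref{Eza1}) instead of $\tilde\alpha_0^- = -\tilde\alpha_0^+ e^{{\rm i}\kappa d}$. Consequently the single-mode approximation inside the cavity reads
\begin{align*}
v_\epsilon(\boldsymbol x) = 2\tilde\alpha_0^+ e^{{\rm i}\kappa d}\cos\bigl(\kappa(x_2+d)\bigr)\phi_0(x_1),\qquad \boldsymbol x\in D_\epsilon,
\end{align*}
so that $\partial_{x_2}v_\epsilon(x_1,0) = -2\kappa\tilde\alpha_0^+ e^{{\rm i}\kappa d}\sin(\kappa d)\phi_0(x_1)$ on $\Gamma_\epsilon^+$. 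First I would reproduce the half-space Green-function representation exactly as in \eqref{gfhs}--\eqref{vs}: using the Neumann half-space Green function $G$, the continuity of the single-layer potential, and $\partial_{x_2}(u^{\rm inc}+u^{\rm ref})=0$ on $\Gamma_0$, one gets
\begin{align*}
v_\epsilon(x_1,0) = u^{\rm inc}(x_1,0)+u^{\rm ref}(x_1,0) - \frac{\rm i}{2}\int_{\Gamma_\epsilon^+} H_0^{(1)}(\kappa|x_1-y_1|)\,\partial_{x_2}v_\epsilon(y_1,0)\,{\rm d}y_1,\qquad x_1\in\Gamma_\epsilon^+.
\end{align*}

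Next I would insert the two displays above, test against $\phi_0$ with $\langle\cdot,\phi_0\rangle_{\Gamma_\epsilon^+}$, use $u^{\rm inc}(x_1,0)=u^{\rm ref}(x_1,0)=e^{{\rm i}\kappa\sin\theta x_1}$ and the identity $\frac{\kappa}{2\sqrt\epsilon}\langle h_1,\phi_0\rangle_{\Gamma_\epsilon^+}=c_0$ (with $h_1$, $c_0$ from \eqref{H1}, \eqref{C0}) to obtain the closed form
\begin{align*}
\tilde\alpha_0^+ = \frac{e^{-{\rm i}\kappa d}\,\langle u^{\rm inc},\phi_0\rangle_{\Gamma_\epsilon^+}}{\cos\kappa d - {\rm i}c_0\sin\kappa d}.
\end{align*}
Under $\epsilon\ll d\ll\lambda$ one has $\cos\kappa d\ge 1/2$, $|\sin\kappa d|\le\kappa d$, and $|c_0|=O(\epsilon|\ln\epsilon|)$ by \eqref{C0}, so the denominator is bounded above and below by absolute positive constants once $\epsilon$ is small; together with $|\langle u^{\rm inc},\phi_0\rangle_{\Gamma_\epsilon^+}|=\sqrt\epsilon+O(\epsilon^{3/2})$ from \eqref{inq3} this gives $c\sqrt\epsilon\le|\tilde\alpha_0^+|\le C\sqrt\epsilon$. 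The contrast with the PEC-PMC case is precisely that the denominator no longer degenerates like $\kappa d$, so $\tilde\alpha_0^+$ stays of order $\sqrt\epsilon$ and $v_\epsilon$ itself is of order one in the cavity.

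Finally I would compute the Dirichlet energy directly. Since $\phi_0$ is constant, $\partial_{x_1}v_\epsilon\equiv 0$, and
\begin{align*}
\|\nabla v_\epsilon\|^2_{L^2(D_\epsilon)} = \int_0^\epsilon\int_{-d}^0 4\kappa^2|\tilde\alpha_0^+|^2\sin^2\bigl(\kappa(x_2+d)\bigr)\phi_0(x_1)^2\,{\rm d}x_2\,{\rm d}x_1 = 4\kappa^2|\tilde\alpha_0^+|^2\int_{-d}^0\sin^2\bigl(\kappa(x_2+d)\bigr)\,{\rm d}x_2.
\end{align*}
The remaining integral equals $\tfrac{d}{2}-\tfrac{\sin 2\kappa d}{4\kappa}=\tfrac{\kappa^2 d^3}{3}-O(\kappa^4 d^5)$, hence it is comparable to $\kappa^2 d^3$ because $\kappa d\ll 1$ — this is the analog of \eqref{I1}--\eqref{I2}. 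Combining with $|\tilde\alpha_0^+|^2\asymp\epsilon$ yields $\|\nabla v_\epsilon\|^2_{L^2(D_\epsilon)}\asymp\kappa^4\epsilon d^3$, which is the asserted two-sided bound with absolute constants $C_1,C_2$. There is no genuinely hard step; the only point needing care is the lower bound, where one must retain the first nonvanishing term $\kappa^2 d^3/3$ in the small-argument expansion of $\int_0^d\sin^2\kappa t\,{\rm d}t$ and verify that the remainder $O(\kappa^4 d^5)$ is of strictly lower order — which is exactly what $d\ll\lambda$ guarantees.
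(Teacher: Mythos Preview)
Your proposal is correct and follows essentially the same approach as the paper: the half-space Green representation, the single-mode formula $v_\epsilon = 2\tilde\alpha_0^+ e^{{\rm i}\kappa d}\cos(\kappa(x_2+d))\phi_0$, the closed form for $\tilde\alpha_0^+$, the two-sided $O(1)$ bound on its denominator, and the small-$\kappa d$ expansion of $\int_0^d\sin^2\kappa t\,{\rm d}t$ are exactly the paper's steps. Your expression $\tilde\alpha_0^+ = e^{-{\rm i}\kappa d}\langle u^{\rm inc},\phi_0\rangle_{\Gamma_\epsilon^+}/(\cos\kappa d - {\rm i}c_0\sin\kappa d)$ is just the paper's $\tilde\alpha_0^+ = 2\langle u^{\rm inc},\phi_0\rangle_{\Gamma_\epsilon^+}/\bigl((1+c_0)+(1-c_0)e^{{\rm i}2\kappa d}\bigr)$ after multiplying numerator and denominator by $e^{-{\rm i}\kappa d}/2$.
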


\begin{proof}
For the approximate model \eqref{Eardp}, we obtain from Green's formula that
\begin{align*}
v_{\epsilon} (\boldsymbol x)=u^{\rm inc}(\boldsymbol x) +u^{\rm r}(\boldsymbol
x) +\int_{\Gamma_\epsilon^+} G (\boldsymbol x, \boldsymbol y) \frac{\partial
v_{\epsilon}(\boldsymbol y)}{ \partial \nu_{\boldsymbol y}} {\rm d}
s_{\boldsymbol y}, \quad \boldsymbol x \in \mathbb R_2^+,
\end{align*}
where $G$ is the half-space Green function satisfying the Neumann boundary
condition and is given in \eqref{gfhs}. It follows from the continuity of the
single layer potential that
\begin{align}\label{Evs}
  v_{\epsilon} (\boldsymbol x)= u^{\rm inc}(\boldsymbol x)+ u^{\rm
ref}(\boldsymbol x) - \frac{\rm i}{2} \int_{\Gamma_{\epsilon}^+} H_0^{(1)} (k
|\boldsymbol x -\boldsymbol y|) \frac{\partial v_{\epsilon}(\boldsymbol y)}{
\partial \nu_{\boldsymbol y}} {\rm d} s_{\boldsymbol y}, \quad \boldsymbol x \in
\Gamma_{\epsilon}^+.
 \end{align}
In light of \eqref{Eom1}--\eqref{Eom11}, it yields that
\begin{align*}
 \partial_{x_2} v_{\epsilon}= {\rm i} \kappa \left(-\tilde {\alpha}_0^+ + \tilde
{\alpha}_0^{-} e^{{\rm i} \kappa d} \right) \phi_0 (x_1) \quad \text
{on}~\Gamma_{\epsilon}^+.
\end{align*}
Substituting the above equality into \eqref{Evs} and using the fact that
$\phi_0 (x_1) = \frac{1}{ \sqrt {\epsilon}}$, we get
\begin{align*}
 v_{\epsilon} (x_1, 0)=u^{\rm inc} (x_1, 0) +u^{\rm ref} (x_1, 0)
+\frac{\kappa}{2} \left( -\tilde {\alpha}_0^+ + \tilde {\alpha}_0^{-} e^{{\rm i}
\kappa  d }\right) \frac{1}{\sqrt {\epsilon}} h_1 (x_1),  \quad x_1 \in (0,
\epsilon),
\end{align*}
where $h_1 (x_1)$ is given in \eqref{H1}.
Therefore, the Fourier coefficients $v_{\epsilon, 0}^+$ may be expressed as
\begin{align*}
 v_{\epsilon, 0}^+ = \langle u^{\rm inc}, \phi_0  \rangle_{\Gamma_{\epsilon}^+}
 +\langle u^{\rm ref}, \phi_0  \rangle_{\Gamma_{\epsilon}^+} +\frac{\kappa }{2}
\left( -\tilde {\alpha}_0^+ + \tilde {\alpha}_0^{-} e^{{\rm i} \kappa d }\right)
\frac{1}{\sqrt {\epsilon}} \langle h_1, \phi_0 \rangle_{\Gamma_{\epsilon}^+}.
\end{align*}
It follows from the fact   $ u^{\rm inc} (x_1, 0)= u^{\rm ref} (x_1, 0)$ and
the inner product of \eqref{Eom1} with $\phi_0(x_1)$ that
\begin{align*}
 \tilde \alpha_{0}^+ +\tilde \alpha_{0}^{-} e^{{\rm i} \kappa  d} =2 \langle
u^{\rm inc}, \phi_0  \rangle_{\Gamma_{\epsilon}^+}
 + \frac{\kappa }{2  \sqrt {\epsilon}}  \langle h_1, \phi_0
\rangle_{\Gamma_{\epsilon}^+}  \left( -\tilde {\alpha}_0^+
 + \tilde {\alpha}_0^{-} e^{{\rm i} \kappa  d }\right).
\end{align*}
Using $ \tilde {\alpha}_0^{-} = \tilde {\alpha}_0^{+} e^{{\rm i} \kappa d}$ and
the above equation gives
\begin{align}\label{Ealp}
 \tilde \alpha_0^+ = \frac{2  \langle u^{\rm inc}, \phi_0
\rangle_{\Gamma_{\epsilon}^+} }{  (1+ c_0) +  (1- c_0) e^{{\rm i} 2 \kappa d}},
 \quad \tilde \alpha_0^{-}= \frac{2  e^{{\rm i } \kappa  d} \langle u^{\rm inc},
\phi_0  \rangle_{\Gamma_{\epsilon}^+} }{  (1+ c_0) +  (1- c_0) e^{{\rm i} 2
\kappa  d}},
\end{align}
where $c_0 =  \frac{\kappa }{2  \sqrt {\epsilon}}  \langle h_1, \phi_0
\rangle_{\Gamma_{\epsilon}^+}. $

Using the assumption $\epsilon\ll d \ll \lambda$ and expression of $c_0$ in \eqref{C0}, it gives
that
\begin{align}
&1/2 \leq |1+c_0| \leq 2, \quad 1/2 \leq  |1-c_0|\leq 2,\notag\\
&|(1+c_0)+(1-c_0)e^{{\rm i} 2 \kappa  d}|\leq  2 |1+e^{{\rm i} 2 \kappa d}| \leq 4, \nonumber\\
&|(1+c_0)+(1-c_0)e^{{\rm i} 2 \kappa  d}|\geq 2- |c_0 (1-e^{{\rm i} 2 \kappa d})| \geq 1. \label{Ei11}
\end{align}
Recalling the definition of the incident wave, we get
\begin{align}\label{Einq}
|\langle u^{\rm inc}, \phi_0
\rangle_{\Gamma_{\epsilon}^+}|=\left|\frac{1}{\sqrt \epsilon }\int_0^\epsilon
e^{{\rm i} \kappa  \sin \theta x_1} {\rm d} x_1\right|=\sqrt{\epsilon} +O
(\epsilon ^{3/2}).
\end{align}
 Substituting \eqref{Ealp} into \eqref{Eom1} yields
\begin{align}\label{Enep}
 v_{\epsilon} (\boldsymbol x)= \left(
 \frac{2  \langle u^{\rm inc}, \phi_0  \rangle_{\Gamma_{\epsilon}^+} }{  (1+
c_0) + (1- c_0) e^{{\rm i} 2 \kappa d}} e^{-{\rm i} \kappa  x_2}
 +\frac{2  e^{{\rm i } \kappa d} \langle u^{\rm inc}, \phi_0
\rangle_{\Gamma_{\epsilon}^+} }{  (1+ c_0) +  (1- c_0) e^{{\rm i} 2 \kappa d}}
e^{{\rm i} \kappa  (x_2 + d)}\right)\phi_0 (x_1).
\end{align}
A simple calculation yields
\begin{align*}
 \frac{\partial v_{\epsilon}}{\partial {x_2}}
 &= {\rm i} \kappa e^{{\rm i} \kappa  d } \frac{2  \langle u^{\rm inc}, \phi_0
\rangle_{\Gamma_{\epsilon}^+} }{  (1+ c_0) +(1- c_0) e^{{\rm i} 2 \kappa  d}}
\phi_0 (x_1) \left( -e^{ -{\rm i} \kappa  (x_2 +d)} + e^{ {\rm i} \kappa  (x_2
+d)}\right) \\
 &= - \kappa  e^{{\rm i} \kappa  d } \frac{2  \langle u^{\rm inc},
\phi_0  \rangle_{\Gamma_{\epsilon}^+} }{  (1+ c_0) + (1- c_0) e^{{\rm i} 2
\kappa  d}} 2 \sin \kappa  (x_2 +d)\phi_0 (x_1),
\end{align*}
which gives
\begin{align*}
 \|\nabla v_{\epsilon}\|^2_{L^2 (D_{\epsilon})}
   =\kappa ^2\left|\frac{2  \langle u^{\rm inc}, \phi_0
\rangle_{\Gamma_{\epsilon}^+} }{  (1+ c_0) + (1- c_0) e^{{\rm i} 2 \kappa
d}}\right|^2   \int_{-d}^{0} 4 \sin ^2 \kappa  (x_2 +d) {\rm d} x_2.
\end{align*}
Noting that $d \ll \lambda$, we also have \eqref{I1}--\eqref{I2} holds.
It follows from \eqref{Ei11}--\eqref{Einq} and  the estimates in  \eqref{I1}--\eqref{I2} that there
exist constants $C_1$ and $C_2$ independent on $\epsilon, \lambda, d
$ such that
\begin{align*}
C_1  \kappa^2   \sqrt {\epsilon }  d^{3/2} \leq \|\nabla v_{\epsilon}\|_{L^2
(D_{\epsilon})}  \leq  C_2 \kappa^2 \sqrt {\epsilon} d^{3/2},
\end{align*}
which completes the proof.
\end{proof}

\begin{theo}\label{TET2}
Let $v_{\epsilon}$ be the solution of \eqref{Eardp} and be given by \eqref{Eom1}
inside the cavity, then there exist positive constants $C_3, C_4$ independent on
$\lambda, \epsilon, d$ such that
\begin{align*}
C_3  \sqrt {\epsilon  d}  \leq \|v_\epsilon\|_{L^2 (D_{\epsilon})} \leq
C_4 \sqrt { \epsilon d}.
\end{align*}
\end{theo}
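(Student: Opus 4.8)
The plan is to imitate the proof of Theorem~\ref{ght}, the present $L^2$-estimate standing to Theorem~\ref{EEE1} as Theorem~\ref{ght} stands to Theorem~\ref{get}. The starting point is the explicit one-mode representation \eqref{Enep} of $v_\epsilon$ inside $D_\epsilon$. Factoring $e^{{\rm i}\kappa d}$ out of the bracket there, one has $e^{-{\rm i}\kappa x_2}+e^{{\rm i}\kappa d}e^{{\rm i}\kappa(x_2+d)}=2e^{{\rm i}\kappa d}\cos\kappa(x_2+d)$, so that
\begin{align*}
v_\epsilon(\boldsymbol x)=\frac{2\,e^{{\rm i}\kappa d}\langle u^{\rm inc},\phi_0\rangle_{\Gamma_\epsilon^+}}{(1+c_0)+(1-c_0)e^{{\rm i}2\kappa d}}\cdot 2\cos\kappa(x_2+d)\,\phi_0(x_1).
\end{align*}
Since $\|\phi_0\|_{L^2(0,\epsilon)}=1$, the $L^2$ norm over $D_\epsilon$ splits into a product,
\begin{align*}
\|v_\epsilon\|_{L^2(D_\epsilon)}^2=\left|\frac{2\,e^{{\rm i}\kappa d}\langle u^{\rm inc},\phi_0\rangle_{\Gamma_\epsilon^+}}{(1+c_0)+(1-c_0)e^{{\rm i}2\kappa d}}\right|^2\int_{-d}^{0}4\cos^2\kappa(x_2+d)\,{\rm d}x_2.
\end{align*}

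Next I would bound the two factors separately. For the integral, the substitution $t=x_2+d$ gives $\int_0^d 4\cos^2\kappa t\,{\rm d}t=2d+\frac{\sin 2\kappa d}{\kappa}$; since $d\ll\lambda$ means $\kappa d\to 0$, the elementary inequalities $2\kappa d-\frac{(2\kappa d)^3}{6}\le\sin 2\kappa d\le 2\kappa d$ show this quantity lies between $3d$ and $4d$ for $\epsilon$ small enough, hence is comparable to $d$ with absolute constants. For the prefactor, I would use $|e^{{\rm i}\kappa d}|=1$, the estimate $|\langle u^{\rm inc},\phi_0\rangle_{\Gamma_\epsilon^+}|=\sqrt\epsilon+O(\epsilon^{3/2})$ from \eqref{Einq}, and the crucial two-sided bound $1\le|(1+c_0)+(1-c_0)e^{{\rm i}2\kappa d}|\le 4$ recorded in \eqref{Ei11}, which itself rests on the asymptotics \eqref{C0} of $c_0$ together with $\epsilon\ll d\ll\lambda$. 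Multiplying the two bounds, $\|v_\epsilon\|_{L^2(D_\epsilon)}^2$ is sandwiched between absolute multiples of $\epsilon d$, which is precisely the assertion with suitable $C_3,C_4$ independent of $\lambda,\epsilon,d$.

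The computation is largely routine; the only delicate point is the lower bound on the denominator $(1+c_0)+(1-c_0)e^{{\rm i}2\kappa d}$, but this is already supplied by \eqref{Ei11} and is the PEC-PEC counterpart of \eqref{inq2}. It is worth noting the structural contrast with the PEC-PMC case of Theorem~\ref{ght}: the Neumann condition at the cavity bottom yields a $\cos\kappa(x_2+d)$ profile whose square integrates to something of size $d$, rather than the size-$\kappa^2 d^3$ produced by the Dirichlet $\sin$ profile there; this is exactly compensated by the denominator being of size $1$ here instead of size $\kappa d$, so that the final order $\sqrt{\epsilon d}$ is the same in both cavity types.
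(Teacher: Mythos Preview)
Your proof is correct and follows essentially the same route as the paper: both start from \eqref{Enep}, factor out $e^{{\rm i}\kappa d}$ to obtain the $\cos\kappa(x_2+d)$ profile, compute $\|v_\epsilon\|_{L^2(D_\epsilon)}^2$ as the product of the prefactor and $\int_{-d}^0 4\cos^2\kappa(x_2+d)\,{\rm d}x_2$, and then invoke \eqref{Ei11}--\eqref{Einq} together with $d\ll\lambda$ to bound each factor. Your explicit evaluation of the integral and the closing remark on the structural contrast with Theorem~\ref{ght} are nice additions but do not change the argument.
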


\begin{proof}
By \eqref{Enep}, we have
\begin{align*}
\|v_{\epsilon}\|^2_{L^2 (D_{\epsilon})} &=\int_{0}^{\epsilon}\int_{- d}^{0}
|v_{\epsilon}|^2{\rm d} x_2 {\rm d} x_1\\
&=\left|\frac{2  \langle u^{\rm inc}, \phi_0
\rangle_{\Gamma_{\epsilon}^+} e^{{\rm i} \kappa d} }{  (1+ c_0) + (1- c_0)
e^{{\rm i} 2 \kappa d}}\right|^2 \int_{-d}^{0}| e^{-{\rm i} \kappa (x_2
+d)}+e^{{\rm i} \kappa (x_2 +d)}|^2 {\rm d} x_2 \\
& =\frac {\left| 2 \langle u^{\rm inc},
\phi_{0}\rangle_{\Gamma_{\epsilon}^+}  \right|^2}
{\left| (1+c_0)+ (1 -c_0)^{{\rm i} 2 \kappa  d}\right|^2} \int_{-d}^{0}  4
\cos^2  \kappa (x_2 +d) {\rm d} x_2.
\end{align*}
It follows from $d \ll \lambda$ that
\begin{align*}
\int_{-d}^{0}  4 \cos^2  \kappa (x_2 +d) {\rm d} x_2= 2 (d + O((d/\lambda)^2)).
\end{align*}
In light of  \eqref{Ei11}--\eqref{Einq}, there exist constants $C_3, C_4$ such
that
\begin{align*}
C_3^2 \epsilon  d \leq \|v_\epsilon\|^2_{L^2 (D_{\epsilon})} \leq C_4^2
\epsilon d,
\end{align*}
which completes the proof.
\end{proof}

\subsection{Field enhancement without resonance}

Following from the same steps as those in subsection \ref{sub2.4}, we may also
show that
\begin{align}\label{Eaccm}
\|\nabla u_{\epsilon} -\nabla v_{\epsilon}\|_{L^2 (D_{\epsilon})} \leq
C(R) \epsilon, \quad \|u_{\epsilon} -v_{\epsilon}\|_{L^2
(D_{\epsilon})} \leq C(R) \epsilon \sqrt{\epsilon |\ln \epsilon|},
\end{align}
where $C(R)$ is a positive constant depending on $R$ but is independent of
$\lambda, \epsilon, d$. The details are omitted for brevity.

\begin{rema}
In \eqref{Eaccm}, the constant $C(R)$ comes from two places: one is the estimate
of the DtN operator $\mathscr B^{\rm PEC}$ in Lemma \ref{tth1}, where the
constant $C$ is independent of $\lambda, d$, and the other is the Sobolev
embedding results in $\mathbb R_+^2$, which depends only on $R$. In Lemma
\ref{accm}, the constant $C(\lambda, d, R)$ depends on $\lambda, d, R$ because
of the estimate of the DtN operator $\mathscr B^{\rm PMC}$ in Lemma \ref{tth},
where the constant $C(\lambda, d)$ depends on $\lambda, d$.
\end{rema}

The following estimates are the main results of the electromagnetic field
enhancement for the PEC-PEC cavity when $\epsilon\ll d\ll\lambda$.

\begin{theo}\label{NTH1}
If $\epsilon $ is small enough such that $2 C(R)  \sqrt {\epsilon d}
<\min \left\{C_1 (\kappa d)^2, ~C_2(\kappa d)^2\right\}$, then the electric
field has no enhancement. If  $d \ll \lambda$ such that $\frac{d}{\lambda}
\leq \min \left\{\frac{C(R)}{ 8 \pi^2 C_1 }\sqrt {\epsilon d},
~\frac{C(R)}{ 8 \pi^2 C_1 }\sqrt {\epsilon d} \right\}$, then
the electric field enhancement factor satisfies
\begin{align*}
\frac{ C(R)\lambda}{4 \pi} \sqrt{\frac{\epsilon}{ d}}  \leq
Q_{\boldsymbol E} \leq \frac{3 C(R)\lambda}{4 \pi}
\sqrt{\frac{\epsilon}{ d}}
\end{align*}
and the magnetic field enhancement factor satisfies
\[\frac{1}{2} C_3
\leq Q_{\boldsymbol H} \leq  \frac{3}{2} C_4,
\]
where the constant $C(R)$ is given in \eqref{Eaccm}, the
constants $C_1, C_2$ are given in Theorem \ref{EEE1}, and the constants $C_3,
C_4$ are given in Theorem \ref{TET2}.
\end{theo}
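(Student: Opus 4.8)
The plan is to read off the enhancement factors from the approximate‑field estimates of Theorems~\ref{EEE1} and~\ref{TET2}, the approximation‑error bound~\eqref{Eaccm}, and the two elementary identities
\[
\|\nabla u^{\rm inc}\|_{L^2(D_\epsilon)}^2=\int_{-d}^0\!\!\int_0^\epsilon|\nabla u^{\rm inc}|^2\,{\rm d}\boldsymbol x=\kappa^2\epsilon d,\qquad \|u^{\rm inc}\|_{L^2(D_\epsilon)}^2=\epsilon d,
\]
obtained exactly as in the proof of Theorem~\ref{NTH}. Since $\boldsymbol H_\epsilon=(0,0,u_\epsilon)$ with $|\nabla\times\boldsymbol H_\epsilon|=|\nabla u_\epsilon|$ and $\nabla\times\boldsymbol H_\epsilon=-{\rm i}\omega\varepsilon\boldsymbol E_\epsilon$, one has $Q_{\boldsymbol E}=\|\nabla u_\epsilon\|_{L^2(D_\epsilon)}/(\kappa\sqrt{\epsilon d})$ and $Q_{\boldsymbol H}=\|u_\epsilon\|_{L^2(D_\epsilon)}/\sqrt{\epsilon d}$, so the statement reduces to two‑sided bounds on $\|\nabla u_\epsilon\|_{L^2(D_\epsilon)}$ and $\|u_\epsilon\|_{L^2(D_\epsilon)}$, which I would obtain by comparing $u_\epsilon$ with the one‑mode field $v_\epsilon$ through the triangle inequality.

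The magnetic field is the routine half. By Theorem~\ref{TET2} and~\eqref{Eaccm}, $\bigl|\,\|u_\epsilon\|_{L^2(D_\epsilon)}-\|v_\epsilon\|_{L^2(D_\epsilon)}\,\bigr|\le C(R)\epsilon\sqrt{\epsilon|\ln\epsilon|}=C(R)\sqrt{\epsilon d}\cdot\epsilon\sqrt{|\ln\epsilon|/d}$, and since $\epsilon\sqrt{|\ln\epsilon|/d}\to0$ the right‑hand side is $\le\tfrac12\min\{C_3,C_4\}\sqrt{\epsilon d}$ once $\epsilon$ is small; hence $\tfrac12 C_3\sqrt{\epsilon d}\le\|u_\epsilon\|_{L^2(D_\epsilon)}\le\tfrac32 C_4\sqrt{\epsilon d}$, i.e.\ $\tfrac12 C_3\le Q_{\boldsymbol H}\le\tfrac32 C_4$, no magnetic enhancement. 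For the electric field the decisive dichotomy is whether $C_1\kappa^2\sqrt\epsilon\,d^{3/2}$ (the intrinsic gradient, from Theorem~\ref{EEE1}) or $C(R)\epsilon$ (the approximation correction, from~\eqref{Eaccm}) dominates. Under the first hypothesis $2C(R)\sqrt{\epsilon d}<\min\{C_1,C_2\}(\kappa d)^2$, dividing by $\sqrt d$ and multiplying by $\sqrt\epsilon$ turns it into $C(R)\epsilon<\tfrac12 C_1\kappa^2\sqrt\epsilon\,d^{3/2}\le\tfrac12\|\nabla v_\epsilon\|_{L^2(D_\epsilon)}$, so $\|\nabla u_\epsilon\|_{L^2(D_\epsilon)}\in[\tfrac12,\tfrac32]\|\nabla v_\epsilon\|_{L^2(D_\epsilon)}$ and $\tfrac12 C_1\kappa d\le Q_{\boldsymbol E}\le\tfrac32 C_2\kappa d$; since $\kappa d=2\pi d/\lambda\ll1$ this is $\ll1$, no electric enhancement. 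Under the second hypothesis $d/\lambda\le\frac{C(R)}{8\pi^2 C_1}\sqrt{\epsilon d}$, equivalently $C_1\kappa d\le\tfrac14 C(R)\sqrt{\epsilon d}$, Theorem~\ref{EEE1} gives $\|\nabla v_\epsilon\|_{L^2(D_\epsilon)}\le C_2\kappa^2\sqrt\epsilon\,d^{3/2}\le\tfrac12 C(R)\epsilon$ for $\epsilon$ small, so the intrinsic field is negligible; then $\|\nabla u_\epsilon\|_{L^2(D_\epsilon)}\le\|\nabla u_\epsilon-\nabla v_\epsilon\|_{L^2(D_\epsilon)}+\|\nabla v_\epsilon\|_{L^2(D_\epsilon)}\le\tfrac32 C(R)\epsilon$, which after dividing by $\kappa\sqrt{\epsilon d}$ gives the upper bound $Q_{\boldsymbol E}\le\frac{3C(R)\epsilon}{2\kappa\sqrt{\epsilon d}}=\frac{3C(R)\lambda}{4\pi}\sqrt{\epsilon/d}$, the asserted $O(\lambda\sqrt{\epsilon/d})$ weak enhancement.

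The main obstacle is the matching lower bound $Q_{\boldsymbol E}\ge\frac{C(R)\lambda}{4\pi}\sqrt{\epsilon/d}$ in the second regime, i.e.\ $\|\nabla u_\epsilon\|_{L^2(D_\epsilon)}\ge\tfrac12 C(R)\epsilon$: the estimate~\eqref{Eaccm} bounds $\|\nabla u_\epsilon-\nabla v_\epsilon\|_{L^2(D_\epsilon)}$ only from above, and $\|\nabla v_\epsilon\|_{L^2(D_\epsilon)}$ is itself negligible here, so~\eqref{Eaccm} alone cannot force $\|\nabla u_\epsilon\|_{L^2(D_\epsilon)}$ to be bounded below. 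To close this I would upgrade~\eqref{Eaccm} to a two‑sided estimate $\|\nabla u_\epsilon-\nabla v_\epsilon\|_{L^2(D_\epsilon)}\asymp\epsilon$ by identifying the leading term of the difference rather than merely bounding it. Because $\phi_0$ is constant, $\partial_{x_1}v_\epsilon\equiv0$ and $\langle\phi_n,\phi_0\rangle_{\Gamma_\epsilon^+}=0$ for $n\ge1$, so in the waveguide expansion~\eqref{wf1} the modes decouple in $L^2(D_\epsilon)$ and
\[
\|\nabla u_\epsilon\|_{L^2(D_\epsilon)}^2=\|\nabla v_\epsilon\|_{L^2(D_\epsilon)}^2+O\bigl(\epsilon^3|\ln\epsilon|\bigr)+\sum_{n\ge1}E_n,\qquad E_n\asymp\frac{n}{\epsilon}\,|u_{\epsilon,n}^+|^2 ,
\]
the $O(\epsilon^3|\ln\epsilon|)$ accounting for $\alpha_0^\pm-\tilde\alpha_0^\pm$, and the evanescent sum being controlled by its $n=1$ term. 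The required fact is then $|u_{\epsilon,1}^+|\asymp\epsilon^{3/2}$: the first non‑constant Fourier coefficient of the aperture trace is driven by the non‑constant incident data, $\langle u^{\rm inc}+u^{\rm ref},\phi_1\rangle_{\Gamma_\epsilon^+}\asymp\kappa|\sin\theta|\,\epsilon^{3/2}$ at oblique incidence (at normal incidence it is instead produced, at the same order $\epsilon^{3/2}$, by the off‑diagonal coupling of $\phi_0$ to $\phi_1$ through the single‑layer kernel $-\tfrac{{\rm i}}{2}H_0^{(1)}(\kappa|\cdot|)$ in~\eqref{Evs}), and the remaining single‑layer self‑interaction perturbs this only at higher order, so no cancellation occurs. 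Establishing this two‑sided estimate on $u_{\epsilon,1}^+$ is the only genuinely non‑mechanical step; everything else is constant bookkeeping, structurally identical to Theorem~\ref{NTH}.
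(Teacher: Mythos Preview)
Your approach is essentially the paper's own: triangle inequality against the one-mode approximation $v_\epsilon$, with Theorems~\ref{EEE1}, \ref{TET2} providing two-sided bounds on $v_\epsilon$ and~\eqref{Eaccm} controlling $u_\epsilon-v_\epsilon$. The magnetic-field part and case~(i) for the electric field are identical to what the paper does.

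You are right to flag the lower bound $Q_{\boldsymbol E}\ge\frac{C(R)\lambda}{4\pi}\sqrt{\epsilon/d}$ in case~(ii) as the only non-mechanical step. In fact the paper's own proof does \emph{not} supply this: it simply writes $\frac{C(R)}{2}\epsilon\le\|\nabla u_\epsilon\|_{L^2(D_\epsilon)}\le\frac{3C(R)}{2}\epsilon$ and proceeds, implicitly treating the one-sided bound~\eqref{Eaccm} as though it were two-sided. Since in this regime $\|\nabla v_\epsilon\|_{L^2(D_\epsilon)}$ is negligible compared to $C(R)\epsilon$, the triangle inequality plus~\eqref{Eaccm} gives only the upper bound; the asserted lower bound $\frac{C(R)}{2}\epsilon$ is not justified by the ingredients invoked. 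So the gap you identify is shared by the paper, and your proposal to upgrade~\eqref{Eaccm} to a two-sided estimate by isolating the first evanescent mode and showing $|u_{\epsilon,1}^+|\asymp\epsilon^{3/2}$ actually goes beyond what the paper proves. Note, though, that your route would produce a lower constant depending on the $n=1$ mode analysis rather than the specific constant $C(R)/2$; the exact constants in the theorem statement appear to be artifacts of treating~\eqref{Eaccm} as sharp, and what is really supportable is the order $O(\lambda\sqrt{\epsilon/d})$.
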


\begin{proof}
By Theorem \ref{EEE1} and \eqref{Eaccm}, we can obtain
\begin{align*}
\|\nabla v_{\epsilon}\|_{L^2 (D_{\epsilon})} -\|\nabla u_{\epsilon} -\nabla
v_{\epsilon}\|_{L^2 (D_{\epsilon})}  \leq \|\nabla u_{\epsilon} \|_{L^2
(D_{\epsilon})}\leq \|\nabla v_{\epsilon}\|_{L^2 (D_{\epsilon})} +\|\nabla
u_{\epsilon} -\nabla v_{\epsilon}\|_{L^2
(D_{\epsilon})}.
\end{align*}
We consider two cases:

(i) If $\epsilon $ is small enough such that $2 C(R)  \sqrt
{\epsilon d} < \min \left\{C_1 (\kappa d)^2, C_2(\kappa d)^2\right\}$, then we
have
\begin{align*}
\|\nabla u_{\epsilon} \|_{L^2 (D_{\epsilon})}
 \leq C_2 (\kappa d)^2  \sqrt{\frac {\epsilon }{d}}+  C(R)
\epsilon \leq \frac{3}{2} C_2 (\kappa d)^2  \sqrt{\frac {\epsilon }{d}}
\end{align*}
and
\begin{align*}
\|\nabla u_{\epsilon} \|_{L^2 (D_{\epsilon})} \geq C_1 (\kappa d)^2
\sqrt{\frac {\epsilon }{d}} -  C(R) \epsilon\geq   \frac{1}{2} C_1
(\kappa d)^2  \sqrt{\frac {\epsilon }{d}}.
\end{align*}
Since $\|\nabla u^{\rm inc}\|^2_{L^2 (D_\epsilon)}=\kappa^2 \epsilon d$, we
obtain
\[
C_1 \pi \frac{d}{\lambda} \leq  \frac{\|\nabla u_{\epsilon} \|_{L^2
(D_{\epsilon})}}{\|\nabla u^{\rm inc}\|_{L^2 (D_\epsilon)}} \leq 3 \pi
C_2\frac{d}{\lambda}.
\]

(ii) If $d \ll \lambda$ such that $\frac{d}{\lambda}
\leq \min \left\{\frac{C(R)}{ 8 \pi^2 C_1 }\sqrt {\epsilon d},
~\frac{C(R)}{ 8 \pi^2 C_1 }\sqrt {\epsilon d} \right\}$, i.e.,
\[
C(R) \geq \max\{2C_1 \kappa^2 d^{3/2}\sqrt{\epsilon}, ~2C_2\kappa^2
d^{3/2}\sqrt{\epsilon}\},
\]
then we get
\begin{align*}
 \frac{ C(R)}{2} \epsilon  \leq \|\nabla u_{\epsilon} \|_{L^2
(D_{\epsilon})} \leq \frac{3 C(R)}{2} \epsilon,
\end{align*}
which gives
\begin{align*}
\frac{ C(R)\lambda}{4 \pi} \sqrt{\frac{\epsilon}{ d}}
\leq \frac{\|\nabla u_{\epsilon} \|_{L^2 (D_{\epsilon})}}{\|\nabla u^{\rm
inc}\|_{L^2 (D_\epsilon)}} \leq \frac{3 C (R)\lambda}{4 \pi}
\sqrt{\frac{\epsilon}{ d}}.
\end{align*}

Noting that $|\nabla \times \boldsymbol H_{\epsilon}|=|\nabla u_{\epsilon}|$
and using Ampere's law $\nabla \times \boldsymbol H_{\epsilon}=-{\rm i}\omega
\varepsilon \boldsymbol E_{\epsilon}$, we obtain
\begin{align*}
Q_{\boldsymbol E}=\frac{\|\boldsymbol E\|_{L^2 (D_\epsilon)}}{\|\boldsymbol
E^{\rm inc}\|_{L^2 (D_\epsilon)}}=\frac{\|\nabla u_{\epsilon} \|_{L^2
(D_{\epsilon})}}{\|\nabla u^{\rm inc}\|_{L^2 (D_\epsilon)}},
\end{align*}
which shows that for case (i) the electric field $\boldsymbol E$  has no
enhancement; for case (ii) the electric field enhancement factor has
an order $O(\lambda\sqrt{\epsilon/d}).$

For the magnetic field, by Theorem \ref{TET2} and \eqref{Eaccm}, if
$C (R)\epsilon \sqrt {|\ln \epsilon|/d} \leq \min \{ C_3/2,~
C_4/2\}$ then we have
\begin{align*}
 \frac{ C_3}{2} \sqrt {\epsilon d} \leq \|u_{\epsilon}\|_{L^2
(D_\epsilon)} \leq \frac{3 C_4}{2} \sqrt {\epsilon d}.
\end{align*}
Since $\|u^{\rm inc}\|^2_{L^2 (D_{\epsilon})}= \epsilon d$, we obtain
\begin{align*}
\frac{1}{2}C_3 \leq Q_{\boldsymbol H}= \frac{\|\boldsymbol H\|_{L^2
(D_\epsilon)}}{\|\boldsymbol H^{\rm inc}\|_{L^2 (D_\epsilon)}}=\frac{\|
u_{\epsilon} \|_{L^2 (D_{\epsilon})}}{\| u^{\rm inc}\|_{L^2 (D_\epsilon)}} \leq
\frac{3}{2} C_4,
\end{align*}
which completes the proof.
\end{proof}

\subsection{Numerical experiments}

We present a numerical example to illustrate and verify the results stated in
Theorem \ref{NTH1}. The width of the cavity is $\epsilon=0.005$, the depth of
the cavity is $d=1$, and the incident angle is $\theta=\pi/3$. Figure
\ref{PECdunear0} plots the electric field enhancement factor $Q_{\boldsymbol E}$
against the wavenumber $\kappa\in (0, 10^{-8})$. It shows that when $\kappa$ is
sufficient small, i.e., when the wavelength $\lambda$ is sufficiently large, the
electric field does have an enhancement, which corresponds to case (ii) in
Theorem \ref{NTH1}; when the wavenumber $\kappa$ increases (the wavelength
$\lambda$ decreases) but is still small, the electric field has no enhancement,
which corresponds to case (i) in Theorem \ref{NTH1}. The plot is not shown for
the magnetic field since it has no enhancement in this region.

\begin{figure}
\centering
\includegraphics[width=0.45\textwidth]{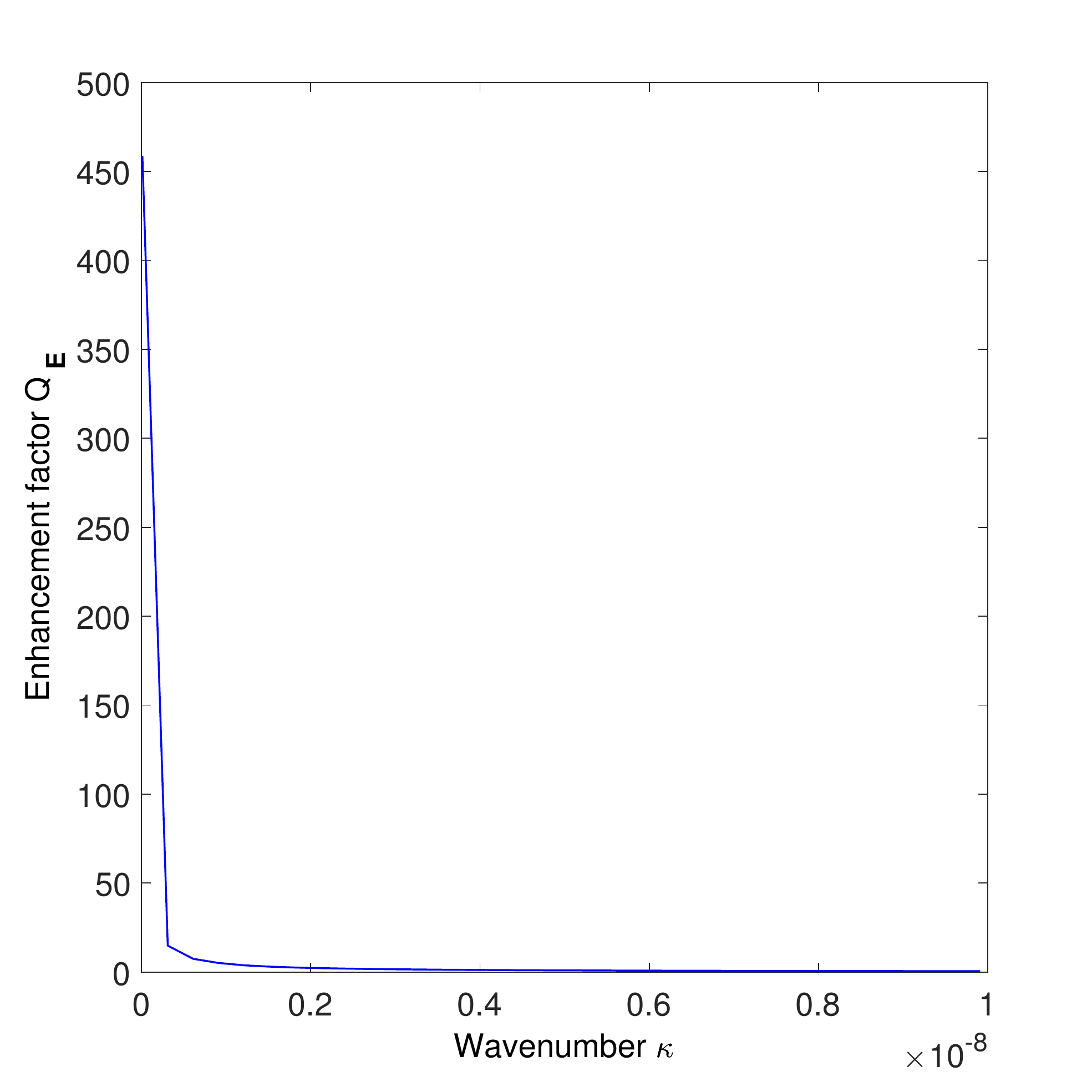}
\caption{The amplification  of electric field in the PEC cavity with
$\epsilon=0.005$, $d=1$ and $\theta =\pi/3$ when $\kappa$ near zero.}
\label{PECdunear0}
\end{figure}

\section{PEC-PEC cavity with resonance} \label{sec5}

In this section, we show the field enhancement at resonant frequencies for the
PEC-PEC cavity. Consider the model problem:
\begin{align}\label{PM1}
\begin{cases}
\Delta u_{\epsilon} +\kappa^2 u_{\epsilon} = 0
\quad  &\text {in}~\Omega,\\
\partial_\nu u_{\epsilon}=0  \quad  &\text {on}~ \partial \Omega,\\
\lim\limits_{r \rightarrow \infty} \sqrt r \left( \partial_r u_{\epsilon}^{\rm
sc} - {\rm i} \kappa  u_{\epsilon}^{\rm sc} \right)=0 \quad &\text{in}~\mathbb
R_2^+.
 \end{cases}
\end{align}
Here we assume that $\epsilon\ll \lambda$.

\subsection{Boundary integral equations}

Let $g^{\rm PEC}_{\epsilon}$ be the Green function for the Helmholtz equation
with Neumann boundary condition in $D_{\epsilon}$, i.e., it satisfies
\begin{align*}
\begin{cases}
\Delta g^{\rm PEC}_{\epsilon}(\boldsymbol x, \boldsymbol y)+\kappa^2
g^{\rm PEC}_{\epsilon} (\boldsymbol x, \boldsymbol y)=-\delta (\boldsymbol x,
\boldsymbol y), \quad & \boldsymbol x, \boldsymbol y \in D_{\epsilon},\\
\frac{\partial g^{\rm PEC}_{\epsilon} (\boldsymbol x, \boldsymbol y)}{\partial
\nu_{\boldsymbol y}}=0 \quad & \text {on}~\partial D_{\epsilon}.
\end{cases}
\end{align*}
It can be verified that
\begin{align*}
g^{\rm PEC}_{\epsilon} (\boldsymbol x, \boldsymbol y)=\sum\limits_{m, n
=0}^{\infty} c_{m, n} \phi_{m, n} (\boldsymbol x) \phi_{m, n}
(\boldsymbol y),
\end{align*}
where
\begin{align}\label{Ecad}
& c_{m, n} = \frac{1}{ \kappa^2 - (m \pi / {\epsilon})^2 - (n \pi / d)^2},
\nonumber\\
&\phi_{m, n} (\boldsymbol x) =\sqrt {\frac{\alpha_{m, n}}{{\epsilon} d}}
\cos \left( \frac{m \pi x_1}{ \epsilon} \right) \cos \left(  \frac{n \pi}{d}
(x_2+d)\right),\nonumber\\
 & \alpha_{m, n} =
 \begin{cases}
 1,\quad & m=0, n =0,\\
 2, \quad & m =0,  n \geq 1 ~\text {or}~n=0, m\geq 1\\
 4, \quad & m \geq 1, n \geq 1.
 \end{cases}
 \end{align}

Using Green's formula, we obtain
 \begin{align*}
 &u_{\epsilon} (\boldsymbol x)=\int_{\Gamma_{\epsilon}^+} G (\boldsymbol x,
\boldsymbol y) \frac{\partial u_{\epsilon} (\boldsymbol y)}{ \partial
\nu_{\boldsymbol y}} {\rm d} s_{\boldsymbol y}+u^{\rm inc} (\boldsymbol
x)+u^{\rm ref} (\boldsymbol x),  \quad  \boldsymbol x \in \mathbb R_2^+,\\
 &u_{\epsilon} (\boldsymbol x)=-\int_{\Gamma_{\epsilon}^+}  g^{\rm
PEC}_{\epsilon} (\boldsymbol x, \boldsymbol y) \frac{\partial u_{\epsilon}
(\boldsymbol y)}{\partial \nu_{\boldsymbol y}} {\rm d} s_{\boldsymbol y}, \quad
\boldsymbol x \in D_{\epsilon}.
 \end{align*}
It follows from the continuity of the single layer potential that we have
\begin{align*}
u_{\epsilon} (\boldsymbol x)=\int_{\Gamma_{\epsilon}^+} \left(-\frac{\rm i}{2}
\right)H_0^1 (\kappa |\boldsymbol x -\boldsymbol y|)
\frac{\partial u_{\epsilon} (\boldsymbol y)}{\partial \nu_{\boldsymbol y}} {\rm
d} s_{\boldsymbol y}+u^{\rm inc} (\boldsymbol x)+u^{\rm ref} (\boldsymbol x),
\quad \boldsymbol x \in \Gamma_{\epsilon}^+
\end{align*}
and
\begin{align*}
u_{\epsilon} (\boldsymbol x)=-\int_{\Gamma_{\epsilon}^+} g^{\rm PEC}_{\epsilon}
(\boldsymbol x, \boldsymbol y) \frac{\partial u_{\epsilon} (\boldsymbol
y)}{\partial \nu_{\boldsymbol y}} {\rm d} s_{\boldsymbol y}, \quad  \boldsymbol
x \in \Gamma_{\epsilon}^+.
\end{align*}
By imposing the continuity of the solution on $\Gamma_\epsilon^+$, we have the
following lemma.

\begin{lemm}
The scattering problem \eqref{PM1} is equivalent to the boundary
integral equation:
\begin{align}\label{BIT}
\int_{\Gamma_{\epsilon}^+} \left(  \left(- \frac{{\rm i}}{2} \right) H_{0}^1
(\kappa  |\boldsymbol x- \boldsymbol y|)
+g^{\rm PEC}_{\epsilon} (\boldsymbol x, \boldsymbol y)\right)\frac{\partial
u_{\epsilon} (\boldsymbol y)}{ \partial \nu_{\boldsymbol y}}   {\rm d}
s_{\boldsymbol y} +u^{\rm inc}(\boldsymbol x) +u^{\rm ref} (\boldsymbol x)=0,
\quad \boldsymbol x\in\Gamma_{\epsilon}^+.
\end{align}
\end{lemm}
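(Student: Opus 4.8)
The plan is to mirror the proof of Lemma~\ref{ell}, the PEC--PMC counterpart, adapting it to the present situation in which the homogeneous Neumann condition is imposed on the whole of $\partial\Omega$ and the cavity Green function $g^{\rm PEC}_{\epsilon}$ of \eqref{Ecad} satisfies a homogeneous Neumann condition on all of $\partial D_{\epsilon}$. The equivalence will be established in both directions: first that any solution of the scattering problem \eqref{PM1} yields a density $\varphi=\partial_\nu u_{\epsilon}|_{\Gamma_{\epsilon}^+}$ satisfying \eqref{BIT}, and conversely that from any solution of \eqref{BIT} one reconstructs a solution of \eqref{PM1}.

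For the forward direction I would first observe, exactly as in Lemma~\ref{ell}, that $\partial_\nu u^{\rm inc}+\partial_\nu u^{\rm ref}=0$ on $\Gamma_0$, hence in particular on $\Gamma_{\epsilon}^+$, so that $\partial_\nu u_{\epsilon}^{\rm sc}=\partial_\nu u_{\epsilon}$ there. Applying Green's identity in $\mathbb R_2^+$ with the half-space Neumann Green function $G$ of \eqref{gfhs}, and using the Sommerfeld radiation condition to discard the contribution of the semicircle at infinity, gives the representation of $u_{\epsilon}$ in the upper half-space as $u^{\rm inc}+u^{\rm ref}$ plus the single-layer potential of $\partial_\nu u_{\epsilon}$ over $\Gamma_{\epsilon}^+$; applying Green's identity inside $D_{\epsilon}$ with $g^{\rm PEC}_{\epsilon}$ gives the corresponding representation inside the cavity, where one uses that, since $\epsilon$ is small, $\kappa^2$ is not a Neumann eigenvalue of $-\Delta$ on $D_{\epsilon}$, so $g^{\rm PEC}_{\epsilon}$ is well defined. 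Letting $\boldsymbol x$ approach $\Gamma_{\epsilon}^+$ from either side and invoking the continuity of single-layer potentials across $\Gamma_{\epsilon}^+$ yields the two boundary identities displayed just before the lemma. Since $u_{\epsilon}$ is continuous across the open aperture $\Gamma_{\epsilon}^+$, the two boundary values coincide, and subtracting them produces precisely \eqref{BIT}.

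For the converse, given a density $\varphi$ on $\Gamma_{\epsilon}^+$ solving \eqref{BIT}, I would define $u_{\epsilon}$ in $\mathbb R_2^+$ by $u^{\rm inc}+u^{\rm ref}$ plus the single-layer potential of $\varphi$ against $G$, and in $D_{\epsilon}$ by the single-layer potential of $\varphi$ against $g^{\rm PEC}_{\epsilon}$ with the orientation used in the cavity representation above. By construction this function solves the Helmholtz equation in $\mathbb R_2^+$ and in $D_{\epsilon}$, satisfies the radiation condition, obeys the Neumann condition on $\Gamma_0\setminus\Gamma_{\epsilon}^+$ (from the Neumann property of $G$ together with $\partial_\nu u^{\rm inc}+\partial_\nu u^{\rm ref}=0$) and on $\partial D_{\epsilon}\setminus\Gamma_{\epsilon}^+$ (from the Neumann property of $g^{\rm PEC}_{\epsilon}$); the jump relation for the normal derivative of the single-layer potential, combined with the opposite orientations of the normal on $\Gamma_{\epsilon}^+$ seen from $\mathbb R_2^+$ and from $D_{\epsilon}$, shows that $\partial_\nu u_{\epsilon}$ is continuous across $\Gamma_{\epsilon}^+$ with common value $\varphi$; finally, the scalar equation \eqref{BIT} is exactly the statement that the two one-sided Dirichlet traces on $\Gamma_{\epsilon}^+$ agree, so $u_{\epsilon}$ lies in $H^1_{\rm loc}(\Omega)$ and solves \eqref{PM1}.

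The step I expect to demand the most care is the bookkeeping of the normal orientations and the jump relations on $\Gamma_{\epsilon}^+$ in the converse direction, keeping the sign conventions consistent between the exterior and the cavity representations, together with the standard but necessary remark that for $\epsilon$ small $\kappa^2$ avoids the Neumann spectrum of $-\Delta$ on $D_{\epsilon}$, which guarantees the existence of $g^{\rm PEC}_{\epsilon}$. The remaining manipulations are the same routine Green's-identity computations already carried out for Lemma~\ref{ell}, so the proof can be presented essentially by analogy, pointing only to the differences caused by the all-Neumann boundary conditions.
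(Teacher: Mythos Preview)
Your proposal is correct and follows essentially the same approach as the paper: Green's identity in $\mathbb R_2^+$ with the half-space Neumann Green function and in $D_{\epsilon}$ with $g^{\rm PEC}_{\epsilon}$, followed by continuity of the single-layer potential and matching of traces on $\Gamma_{\epsilon}^+$. The paper in fact only spells out the forward direction (and places the argument just before the lemma statement), so your explicit treatment of the converse via the jump relations and your remark that $\kappa^2$ must avoid the Neumann spectrum of $-\Delta$ on $D_{\epsilon}$ go slightly beyond what the paper records, but the core argument is the same.
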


Similarly, we introduce the rescaling variables $X=x_1/\epsilon, Y=y_1/\epsilon$
and define the following boundary integral operators:
\begin{align}\label{Ebio}
(T^e \varphi) (X):=\int_0^2 G_{\epsilon}^e (X, Y) \varphi (Y) {\rm d} Y, \quad
X\in (0, 1), \nonumber\\
(T^{\rm i} \varphi) (X):= \int_0^1  G_{\epsilon}^i(X, Y) \varphi
(Y) {\rm d} Y, \quad X \in (0, 1).
\end{align}
where $G_{\epsilon}^e, G_{\epsilon}^i, \varphi$ are give in
\eqref{ND1} with $g^{\rm PMC}_{\epsilon} $ being replaced by $g^{\rm
PEC}_{\epsilon}$. Then the boundary integral equation \eqref{BIT} is equivalent
to the operator equation:
\begin{align}\label{FOE}
(T^e + T^i)\varphi=f/\epsilon,
\end{align}
where $f=(u^{\rm inc} +u^{\rm ref})\big|_{\Gamma_{\epsilon}^+}= 2 e^{{\rm i}
\kappa  \sin\theta \epsilon X}.$

\subsection{Asymptotics of the integral operators}

In this subsection, we study the asymptotic properties of the integral
operators in \eqref{Ebio}.

\begin{lemm}
If $\epsilon\ll\lambda$, then we have the following asymptotic formulas:
\begin{align*}
&G_{\epsilon}^{e}=\Gamma_1 (\kappa, {\epsilon})+\frac{1}{\pi} \ln
|X-Y|+R_1^{\epsilon} (X, Y)+R_2 ^{\epsilon} (X,Y),\\
&G_{\epsilon}^i =\Gamma_2 (\kappa, {\epsilon})+\frac{1}{\pi }
\left(  \ln \left|\sin \frac{\pi (X+Y)}{2}\right|+\ln \left|\sin \frac{\pi
(X-Y)}{2}\right| \right) +R_3 ^{\epsilon} (X, Y),
\end{align*}
where $\Gamma_2 (\kappa, {\epsilon}) =\frac{\cot \kappa d}{\epsilon
\kappa}+\frac{2 \ln 2}{\pi},$ and $\Gamma_1 (\kappa, {\epsilon}), R_1^{\epsilon}
(X, Y), R_2 ^{\epsilon} (X,Y), R_3 ^{\epsilon} (X, Y) $ are given in Lemma
\ref{AE}.
\end{lemm}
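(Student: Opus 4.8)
The plan is to mirror the proof of Lemma~\ref{AE} almost verbatim, the only structural change being that the Dirichlet condition on $\Gamma_\epsilon^-$ is replaced by a Neumann condition: the modal expansion of $g^{\rm PEC}_\epsilon$ uses the cosine modes $\cos(n\pi(x_2+d)/d)$ in the $x_2$ variable instead of the sine modes of $g^{\rm PMC}_\epsilon$, and correspondingly $\tan\kappa d$ is replaced by $\cot\kappa d$ in the leading term. I would first observe that the exterior kernel $G_\epsilon^e(X,Y)=-\frac{\rm i}{2}H_0^{(1)}(\epsilon\kappa|X-Y|)$ is \emph{identical} to the one in the PEC-PMC setting, so the expansion $G_\epsilon^e=\Gamma_1(\kappa,\epsilon)+\frac1\pi\ln|X-Y|+R_1^\epsilon(X,Y)+R_2^\epsilon(X,Y)$, together with $|R_1^\epsilon|\le C_1\epsilon^2|\ln\epsilon|$ and $|R_2^\epsilon|\le C_2\epsilon^2$, follows word for word from the small-argument asymptotics of $H_0^{(1)}$ (cf.~\cite{Abramowitz1972}) as in the first half of Lemma~\ref{AE}; nothing new is needed there.

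For the interior kernel I would start from $G_\epsilon^i(X,Y)=g^{\rm PEC}_\epsilon(\epsilon X,0;\epsilon Y,0)=\frac1{\epsilon d}\sum_{m,n\ge0}c_{m,n}\alpha_{m,n}\cos(m\pi X)\cos(m\pi Y)$, with $c_{m,n},\alpha_{m,n}$ from \eqref{Ecad} (here $\phi_{m,n}(\epsilon X,0)$ carries $\cos(n\pi)=(-1)^n$, whose square is $1$, so the $n$-dependence of the modes collapses exactly as in the PEC-PMC case). Summing over $n$, I set $C_m(\epsilon,\kappa)=\sum_{n\ge0}c_{m,n}\alpha_{m,n}$. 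For $m=0$, the partial-fraction identity $\cot z=\frac1z+\sum_{n\ge1}\frac{2z}{z^2-n^2\pi^2}$ (cf.~\cite{Gradshteyn2015}) evaluated at $z=\kappa d$ gives $C_0(\kappa)=\frac1{\kappa^2}+\sum_{n\ge1}\frac{2}{\kappa^2-(n\pi/d)^2}=\frac{d\cot(\kappa d)}{\kappa}$, which is the origin of the $\frac{\cot\kappa d}{\epsilon\kappa}$ term of $\Gamma_2$. For $m\ge1$, writing $a_m=\sqrt{(m\pi/\epsilon)^2-\kappa^2}$ (real and of order $1/\epsilon$ when $\epsilon<\lambda/2$) and using the companion identity $\coth z=\frac1z+\sum_{n\ge1}\frac{2z}{z^2+n^2\pi^2}$ at $z=a_m d$, I obtain $C_m(\epsilon,\kappa)=-\frac{2d\coth(a_m d)}{a_m}$. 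Since $a_m d\ge\pi d/\epsilon\gg1$, one has $\coth(a_m d)=1+O(e^{-2\pi d/\epsilon})$ uniformly in $m$, so $C_m=-\frac{2d\epsilon}{m\pi}-\frac{\epsilon^3\kappa^2 d}{m^3\pi^3}+O(\epsilon^5/m^5)$ — the same polynomial-in-$\epsilon$ expansion as in the PEC-PMC case, the replacement of $\tanh$ by $\coth$ affecting only exponentially small tails.

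I would finish by substituting $C_0$ and the $C_m$ $(m\ge1)$ back into the series, writing $\cos(m\pi X)\cos(m\pi Y)=\frac12[\cos(m\pi(X+Y))+\cos(m\pi(X-Y))]$, and invoking the Fourier summation formulas $\sum_{m\ge1}\frac{\cos m\pi x}{m}=-(\ln2+\ln|\sin\frac{\pi x}{2}|)$ and $\sum_{m\ge1}\frac{\cos m\pi x}{m^3}=\sum_{m\ge1}m^{-3}+\frac{(\pi x)^2}{2}\ln(\pi x)+O(x^2)$ on $0<x<2$ (cf.~\cite{Kress1999,lewin1975theory}) evaluated at $x=X+Y$ and $x=X-Y$. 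Collecting the $\epsilon$-independent logarithmic terms yields $\Gamma_2(\kappa,\epsilon)+\frac1\pi\big(\ln|\sin\frac{\pi(X+Y)}{2}|+\ln|\sin\frac{\pi(X-Y)}{2}|\big)$ with $\Gamma_2=\frac{\cot\kappa d}{\epsilon\kappa}+\frac{2\ln2}{\pi}$, and the $O(\epsilon^2)$ remainder is exactly $R_3^\epsilon(X,Y)$ from Lemma~\ref{AE}, with $|R_3^\epsilon|\le C_3\epsilon^2$.

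The hard part — essentially the only genuinely new computation — will be the evaluation of $C_0$ and of $C_m$ $(m\ge1)$ via the cotangent and hyperbolic-cotangent partial-fraction expansions, together with the verification that, after dividing by $\epsilon d$, the resulting $m$-series converges uniformly so that its tail can be bundled into an $R_3^\epsilon$ with a constant independent of $\epsilon$ (and independent of $\kappa$ in the relevant range). This last point is handled exactly as for the $C_m$-series in the proof of Lemma~\ref{AE}, and everything else transfers unchanged.
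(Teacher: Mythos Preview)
Your proposal is correct and follows essentially the same approach as the paper's own proof: the exterior kernel is handled identically to Lemma~\ref{AE}, and for the interior kernel you set $C_m(\epsilon,\kappa)=\sum_{n\ge0}c_{m,n}\alpha_{m,n}$, evaluate $C_0$ via the cotangent partial-fraction expansion and $C_m$ $(m\ge1)$ via the hyperbolic-cotangent expansion, then substitute back and sum using the same Fourier identities. The paper's argument is line-for-line the same (modulo a harmless typo in the paper where $m^2$ appears in place of $m^3$ in the intermediate expansion of $C_m$).
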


\begin{proof}
The proof is similar to that for Lemma \ref{AE}. The integral kernels
$G_{\epsilon}^e (X, Y)$ and $G_{\epsilon}^i (X, Y)$ can be expressed as
\begin{align*}
&G_{\epsilon}^e (X, Y)=\Gamma_1 (\kappa, {\epsilon})+\frac{1}{\pi} \ln
|X-Y|+R_1^{\epsilon} (X, Y)+R_2 ^{\epsilon} (X,Y),\\
&G_{\epsilon}^{i} (X, Y)=\frac{1}{\epsilon d} \sum\limits_{m,
n=0}^{\infty} c_{m, n} \alpha_{m,n} \cos (m \pi X) \cos (m \pi Y).
\end{align*}
Recalling the definitions of $c_{m, n}$ and $\alpha_{m, n}$ in
\eqref{Ecad}, we let
\begin{align*}
 C_{m} (\epsilon, \kappa)=\sum\limits_{n=0}^{\infty} c_{m, n}
\alpha_{m,n}=\sum\limits_{n=0}^{\infty} \frac{\alpha_{m, n}}{\kappa^2 -
(m \pi / {\epsilon})^2 - (n \pi / d)^2}.
\end{align*}
Using the fact (cf. \cite{Gradshteyn2015}) that
\begin{align*}
\cot (\pi x)=\frac{1}{\pi x}+\frac{2 x}{\pi} \sum\limits_{n=1}^{\infty}
\frac{1}{ x^2 -n^2}, \quad  x\neq n,
\end{align*}
for $m=0$, we get
\begin{align*}
C_0(\kappa)=\frac{1}{\kappa^2}+\sum\limits_{n=1}^{\infty}\frac{2}{\kappa^2
-(n \pi / d)^2}=\frac{d}{\kappa} \left( \frac{1}{\pi (d \kappa/\pi) } +\frac{2 d
\kappa /\pi}{\pi} \sum\limits_{n=1}^{\infty} \frac{1}{ (d \kappa/\pi)^2 -n^2}
\right)= \frac{d \cot (d \kappa)}{\kappa}.
\end{align*}
Following from the identity
\begin{align*}
\coth (\pi x)=\frac{1}{ \pi x} +\frac{2 x}{\pi} \sum\limits_{n=1}^{\infty}
\frac{1}{x^2 +n^2},
\end{align*}
we have for $m \geq 1$ that
\begin{align*}
C_{m} (\epsilon, \kappa) &=\frac{2}{\kappa^2 - (m\pi/\epsilon)^2}+\sum\limits_{n
\geq 1}^{\infty} \frac{4}{\kappa^2 -(m\pi/\epsilon)^2 - (n \pi/d)^2}\\
&=\frac{-2d}{\sqrt{(m \pi/\epsilon)^2 -\kappa^2}} \left(  \frac{2
\frac{d}{\pi}\sqrt{(m \pi/\epsilon)^2 -\kappa^2} }{\pi}
\sum\limits_{n\geq 1}^{\infty} \frac{1}{n^2 +(\frac{d}{\pi})^2
((m\pi/\epsilon)^2 -\kappa^2)} +\frac{1}{d \sqrt{(m \pi/\epsilon)^2 -\kappa^2}
} \right)\\
&=\frac{-2d}{\sqrt{(m \pi/\epsilon)^2 -\kappa^2}} \coth (d \sqrt{(m
\pi/\epsilon)^2 -\kappa^2})\\
&=-\frac{2 d \epsilon}{m \pi}-\frac{\epsilon^3 \kappa^2 d}{m^2 \pi^3}+O
\left(\frac{\epsilon^5}{m^5}\right).
\end{align*}
Here we use the asymptotic expression of $(1-x^2)^{-1/2}$ and $\coth (d
\sqrt{(m \pi/\epsilon)^2 -\kappa^2}) \rightarrow 1$ as $\epsilon \rightarrow 0.$

Substituting $C_0(\kappa), C_m( \epsilon, \kappa) (m \geq 1)$ into
to $G_\epsilon^i$, we obtain
\begin{align*}
G_{\epsilon}^{i} (X, Y) &=\frac{1}{\epsilon d} \left( \frac{d \cot
(\kappa d)}{\kappa}-\sum\limits_{m=1}^{\infty} \left(\frac{2 d \epsilon }{m\pi}
+ \frac{\epsilon^3 \kappa^2 d}{m^3 \pi^3}\right) \cos (m \pi X) \cos (m \pi Y)
+O \left(  \sum\limits_{m=1}^{\infty}\frac{\epsilon^5}{m^5}\right)\right)\\
&=  \frac{d \cot (\kappa d)}{\kappa \epsilon}+\frac{1}{\pi } \left( 2 \ln 2 +\ln
\left|\sin \frac{\pi (X+Y)}{2}\right|+\ln \left|\sin \frac{\pi (X-Y)}{2}\right|
\right)\\
&\hspace{1cm}- \frac{{\epsilon}^2 \kappa^2}{\pi}\bigg(\frac{1}{\pi ^2}
\sum\limits_{m=1}^{\infty} \frac{1}{m^3}+ \frac{(X+Y)^2}{4} \ln (\pi (X+Y))
+\frac {(X-Y)^2}{4} \ln (\pi (X-Y))\\
&\hspace{1cm}+ O ((X+Y)^2 +(X-Y)^2)\bigg)\\ &= \Gamma_2 (\kappa,
\epsilon)+ \frac{1}{\pi } \left(\ln \left|\sin \frac{\pi (X+Y)}{2}\right|+\ln
\left|\sin \frac{\pi (X-Y)}{2}\right| \right)
 +R_3^{\epsilon} (X, Y),
\end{align*}
which completes the proof.
\end{proof}

Let $\Gamma =\Gamma_1(\kappa, \epsilon)+  \Gamma_2 (\kappa,
\epsilon)$, $K, K_{\infty}^1, K_{\infty}^2$ are defined same as in \eqref{Gd1}.
Then for the operator $T^e+T^i$ in \eqref{FOE}, we have a similar
decomposition
\begin{align*}
T^e + T^i = \Gamma  P +K +K_{\infty}^1 +K_{\infty}^2,
\end{align*}
where $K,  K_{\infty}^1, K_{\infty}^2$ have the same properties as those in
Lemma \ref{ope}.

\subsection{Asymptotics of the resonances}

For convenience, we write
\begin{align*}
\Gamma P +K +K_{\infty}^1 +K_{\infty}^2 := \mathscr P +\mathscr L,
\end{align*}
where $\mathscr P =\Gamma P,  \mathscr L = K +K_{\infty}^1
+K_{\infty}^2.$  The eigenvalues of operator $ {\mathscr L}^{-1} {\mathscr
P} +\mathscr I$ are
\begin{align*}
\lambda (\kappa, \epsilon)=1+ \Gamma (\kappa ,\epsilon)\langle \mathscr
L^{-1} 1_{(0,1)}, 1_{(0,1)}\rangle_{L^2 (0, 1)}.
\end{align*}
Therefore, the characteristic values of the operator $\mathscr P
+\mathscr L$ are the roots of the analytic functions $\lambda (\kappa,
{\epsilon})$, and the associated characteristic function is given by
 \begin{align*}
\varphi_0 = \Gamma (\kappa ,{\epsilon}) \mathscr L^{-1} 1_{(0,1)}.
 \end{align*}

\begin{theo}
The resonance of the scattering problem \eqref{PM1}  are the roots of the
analytic function $ \lambda (\kappa, \epsilon)=0.$
Moreover, the resonance set $\{k_n\}, n=0, 1, 2, \cdots$ satisfies the following asymptotic
expansion
\begin{align}\label{knse}
k_{n}=\frac{(n+\frac{1}{2})
\pi}{d}+\frac{(n+\frac{1}{2})\pi}{d^2}\left(\frac{1}{\pi} {\epsilon}\ln
{\epsilon} +\left(\frac{1}{q_0} +\frac{1}{\pi} (2\ln 2 +\ln
\frac{(n+\frac{1}{2})\pi}{d}+\gamma_1)\right){\epsilon}\right)+O({\epsilon}^2\ln
{\epsilon}).
 \end{align}
\end{theo}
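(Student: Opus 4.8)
The argument is the PEC--PEC counterpart of the proof of Theorem \ref{th3.5}, and the only substantive change is that here the singular part of the interior kernel contributes $\Gamma_2(\kappa,\epsilon)=\frac{\cot\kappa d}{\epsilon\kappa}+\frac{2\ln 2}{\pi}$ in place of $-\frac{\tan\kappa d}{\epsilon\kappa}+\frac{2\ln 2}{\pi}$. First I would record the decomposition $T^e+T^i=\Gamma P+K+K^1_\infty+K^2_\infty=:\mathscr P+\mathscr L$ with $\mathscr P=\Gamma P$ and $\mathscr L=K+K^1_\infty+K^2_\infty$, which is invertible for small $\epsilon$ by Lemma \ref{ope}, and then establish the analogue of Lemma \ref{rt}: a resonance of \eqref{PM1} is precisely a root of the analytic function
\begin{align*}
\lambda(\kappa,\epsilon)=1+\Gamma(\kappa,\epsilon)\,\langle\mathscr L^{-1}1_{(0,1)},1_{(0,1)}\rangle_{L^2(0,1)},
\end{align*}
and a Neumann series for $\mathscr L^{-1}=(K+K^1_\infty+K^2_\infty)^{-1}$ together with the bounds $\|K^1_\infty\|\le\epsilon^2\ln|\epsilon|$, $\|K^2_\infty\|\le\epsilon^2$ gives $\langle\mathscr L^{-1}1_{(0,1)},1_{(0,1)}\rangle_{L^2(0,1)}=q_0+O(\epsilon^2\ln\epsilon)$, with $q_0=\langle K^{-1}1_{(0,1)},1_{(0,1)}\rangle_{L^2(0,1)}\neq 0$ as in Lemma \ref{ope}(ii).

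Next I would set $p(\kappa,\epsilon):=\epsilon\,\lambda(\kappa,\epsilon)$, so that
\begin{align*}
p(\kappa,\epsilon)=\epsilon+\left(\frac{\cot\kappa d}{\kappa}+\epsilon\,\rho(\kappa)+\frac{1}{\pi}\epsilon\ln\epsilon\right)\left(q_0+O(\epsilon^2\ln\epsilon)\right),\qquad\rho(\kappa)=\frac{1}{\pi}\left(2\ln 2+\ln\kappa+\gamma_1\right).
\end{align*}
The unperturbed leading term $\kappa^{-1}\cot\kappa d$ is analytic away from $\kappa=0$ and $\kappa=j\pi/d$, $j\in\mathbb Z$, and its zeros there are exactly the simple points $k_{n,0}=(n+\frac{1}{2})\pi/d$, $n=0,1,2,\dots$, with $\left(\kappa^{-1}\cot\kappa d\right)'\big|_{\kappa=k_{n,0}}=-d/k_{n,0}=-d^2/((n+\frac{1}{2})\pi)$. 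As in the proof of Theorem \ref{th3.5}, I would restrict attention to a bounded domain $\Omega_{\delta_0,\theta_0,M}$ lying away from $\kappa=0$, from the poles $j\pi/d$, and from the branch cut of $\ln\kappa$; there $p(\kappa,\epsilon)$ is analytic, and Rouche's theorem, comparing $p$ with $q_0\,\kappa^{-1}\cot\kappa d$, shows that for all sufficiently small $\epsilon$ the roots $k_n$ of $\lambda(\kappa,\epsilon)$ are simple and lie close to $k_{n,0}$.

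To obtain the leading order I would introduce $p_1(\kappa,\epsilon):=\epsilon+\left(\frac{\cot\kappa d}{\kappa}+\epsilon\,\rho(\kappa)+\frac{1}{\pi}\epsilon\ln\epsilon\right)q_0$, expand $\kappa^{-1}\cot\kappa d$ and $\rho$ around $k_{n,0}$, and solve $p_1(\kappa,\epsilon)=0$ for the root $k_{n1}$ as an analytic function of $\epsilon$ and $\epsilon\ln\epsilon$, which yields
\begin{align*}
k_{n1}=k_{n,0}+\frac{k_{n,0}}{d}\left(\frac{1}{\pi}\epsilon\ln\epsilon+\left(\frac{1}{q_0}+\frac{1}{\pi}\left(2\ln 2+\ln\frac{(n+\frac{1}{2})\pi}{d}+\gamma_1\right)\right)\epsilon\right)+O(\epsilon^2\ln\epsilon).
\end{align*}
Finally, writing $p(\kappa,\epsilon)-p_1(\kappa,\epsilon)=\left(\kappa^{-1}\cot\kappa d+\frac{1}{\pi}\epsilon\ln\epsilon+\rho(\kappa)\epsilon\right)O(\epsilon^2\ln\epsilon)$ and using the lower bound on $|p_1|$ on the circles $\{|\kappa-k_{n1}|=C_n\epsilon^2\ln\epsilon\}$, a second application of Rouche's theorem gives $k_n=k_{n1}+O(\epsilon^2\ln\epsilon)$, which is \eqref{knse}.

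The main obstacle is exactly the delicate point that already arises in Theorem \ref{th3.5}: correctly carrying out the analyticity and domain-restriction step so that Rouche's theorem is legitimately applicable, i.e.\ verifying that $p$ has no spurious zeros approaching the poles $j\pi/d$ or the logarithmic branch cut, and that the remainder $p-p_1$ is genuinely $O(\epsilon^2\ln\epsilon)$ uniformly on the relevant circles. Beyond this one has to confirm that the change from $-\tan\kappa d$, whose zeros are $n\pi/d$, to $\cot\kappa d$, whose zeros are $(n+\frac{1}{2})\pi/d$, alters nothing structurally: $\cot\kappa d$ still has only simple zeros with the same local behaviour, so the computation goes through verbatim once $k_{n,0}=(n+\frac{1}{2})\pi/d$ is used, and in particular the resonance index now starts at $n=0$.
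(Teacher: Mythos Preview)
Your proposal is correct and follows essentially the same route as the paper's own proof: identify resonances as roots of $\lambda(\kappa,\epsilon)$, rescale to $p(\kappa,\epsilon)=\epsilon\lambda(\kappa,\epsilon)$, locate the unperturbed zeros of $\kappa^{-1}\cot\kappa d$ at $k_{n,0}=(n+\tfrac{1}{2})\pi/d$, apply Rouche's theorem once to localize the true roots and a second time to pass from $p$ to the truncated $p_1$, yielding \eqref{knse}. The only cosmetic difference is that the paper writes the excluded set as $|z-2j\pi/d|\ge\delta_0$ (which appears to be a typo for $|z-j\pi/d|\ge\delta_0$, the actual poles of $\cot\kappa d$), whereas you state this correctly.
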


\begin{proof}
Given roots of $\lambda (\kappa, {\epsilon})$, it is easy to check that they
are the characteristic values of the operator $\mathscr P +\mathscr L$ with
corresponding characteristic function defined above.

Consider the roots of
\begin{align*}
\lambda (\kappa, \epsilon)= 1+(\Gamma_1(\kappa, {\epsilon})+
\Gamma_2 (\kappa, {\epsilon}))\langle \mathscr L^{-1}1_{(0,1)},
1_{(0,1)}\rangle_{L^2 (0,1)}=0.
\end{align*}
 Recall that
$\Gamma_1 (\kappa, {\epsilon})=\frac{1}{\pi} (\ln \kappa + \gamma_1) +\frac{1}{
\pi} \ln {\epsilon}$ and  $\Gamma_2 (\kappa, {\epsilon})=\frac{\cot
\kappa d}{ {\epsilon} \kappa} +\frac{2 \ln 2}{ \pi}.$
The above equation can be written as
\begin{align*}
1+\left( \frac{\cot  \kappa d}{ {\epsilon} \kappa} +\frac{1}{\pi} (2\ln 2+\ln
\kappa +\gamma_1)+ \frac{1}{\pi}\ln {\epsilon}\right)\langle \mathscr
L^{-1}1_{(0,1)}, 1_{(0,1)}\rangle_{L^2 (0,1)}=0.
 \end{align*}
By Lemma \ref{rt}, similar as \eqref{pk}, we let
\begin{align*}
p(\kappa, {\epsilon}):= {\epsilon} \lambda (\kappa,
{\epsilon})={\epsilon} +\left(\frac{\cot \kappa d}{\kappa}+ {\epsilon}\rho
(\kappa)+\frac{1}{\pi}{\epsilon}\ln {\epsilon} \right) \left( q_0 +O
({\epsilon}^2 \ln {\epsilon})  \right),
 \end{align*}
 where $\rho (\kappa):=\frac{1}{\pi} (2\ln 2+\ln \kappa +\gamma_1).$ All the
roots of $p (\kappa, \epsilon)$ lie in the domain
$ \Omega_{\delta_0, \theta_0, M}:= \{z \in \mathbb C: |z-2 j\pi/d| \geq
\delta_0, j \in \mathbb Z\} \cap\Omega_{\theta_0, M},$
where
 \[
 \Omega_{\theta_0, M}=\left\{z \in \mathbb C: |z| \leq M, - (\frac{\pi}{d}
-\theta_0) \leq {\rm agr} z \leq \frac{\pi}{d} -\theta_0 \right \}.
\]

It is clear to note that $\frac{\cot \kappa d}{\kappa}$ is analytic in the
domain $\Omega_{\delta_0, \theta_0, M}$ and the roots are given by
 \[
 k_{n, 0} =\left(n+\frac{1}{2}\right) \frac{\pi}{d}, \quad n=0, 1, 2, \cdots.
 \]
Denote by $k_n$ the roots of $\lambda (\epsilon, \kappa)$. Applying  Rouche's
theorem, we deduce that $k_n$ are simple and  close to $ k_{n,
0}$. The corresponding $ p_1 (\kappa, \epsilon)$ are defined by
 \begin{align*}
p_1 (\kappa, {\epsilon})= {\epsilon} +\left( \frac{\cot \kappa d}{\kappa
}+ {\epsilon}\rho (k)+\frac{1}{\pi}{\epsilon}\ln {\epsilon} \right) q_0.
\end{align*}
 Expanding $p_1(\kappa, {\epsilon})$ at $k_{n,0}$ yields
  \begin{align*}
p_1(\kappa, {\epsilon})
=&{\epsilon} +\bigg(\frac{-d^2}{(n+\frac{1}{2})\pi}
(\kappa-k_{n0})+\frac{\epsilon}{\pi} \left(2\ln 2
+\ln\frac{(n+\frac{1}{2})\pi}{d}+\gamma_1 \right)\\
&\hspace{1cm}+ \frac{{\epsilon} d}{(n+\frac{1}{2})\pi^2}(\kappa-k_{n0})
  +O (\kappa- k_{n0})^2+\frac{1}{\pi}{\epsilon}\ln {\epsilon}\bigg)q_0.
  \end{align*}
  We conclude that $p_1(\kappa, {\epsilon})$ has simple roots in
$\Omega_{\delta_0, \theta_0, M}$ which are close to $k_{n0}$.
Moreover, these roots are analytic with respect to the variable ${\epsilon}$ and
  ${\epsilon} \ln {\epsilon}.$ Denote the roots of $p_1 (\kappa,
\epsilon)$ by $k_{n1}$ and expand them in term of ${\epsilon}$ and
${\epsilon} \ln {\epsilon},$ we obtain
\begin{align*}
k_{n1}=&k_{n0}+\frac{k_{n0}}{d}\left(\frac{1}{\pi} {\epsilon}\ln
{\epsilon} +\left(\frac{1}{q_0} +\frac{1}{\pi} (2\ln 2 +\ln
\frac{(n+\frac{1}{2})\pi}{d}+\gamma_1)\right){\epsilon}\right)
  +O({\epsilon}^2\ln {\epsilon}).
  \end{align*}
 Using Rouche's theorem again, we obtain
 \begin{align*}
k_n = k_{n1}+O (\epsilon^2 \ln \epsilon),
 \end{align*}
which completes the proof.
\end{proof}

\subsection {The field enhancement with resonance}

In the far field region, we can follow the same steps as those in Subsection
\ref{sub3.4} with the new $p (\kappa, \epsilon)$. Then we have the following
theorem with the detailed proof omitted.

\begin{theo}
At the resonant frequencies $\kappa={\rm Re}k_n$ where $k_n$ is given in
\eqref{knse}, the scattered electric and magnetic fields have enhancement of an
order $O (1/\epsilon)$  in the far field region.
\end{theo}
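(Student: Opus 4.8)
The plan is to repeat the three-step argument of Subsection \ref{sub3.4} almost verbatim, replacing the PEC--PMC data by their PEC--PEC analogues: the interior Green function $g^{\rm PEC}_{\epsilon}$, the rescaled operators $T^e,T^i$ of \eqref{Ebio}, the operator equation \eqref{FOE}, and the new $\Gamma_2(\kappa,{\epsilon})=\frac{\cot\kappa d}{{\epsilon}\kappa}+\frac{2\ln 2}{\pi}$ together with the corresponding $p(\kappa,{\epsilon})={\epsilon}\lambda(\kappa,{\epsilon})$. A key point is that the operators $K$, $K^1_{\infty}$, $K^2_{\infty}$ and the constant $q_0=\langle K^{-1}1_{(0,1)},1_{(0,1)}\rangle_{L^2(0,1)}\neq 0$ are literally unchanged from Section \ref{sec3}, since $K$ depends only on the logarithmic kernel of \eqref{Gd1}; hence Lemmas \ref{ope} and \ref{rt} carry over word for word, and in particular $\langle\mathscr L^{-1}1_{(0,1)},1_{(0,1)}\rangle_{L^2(0,1)}=q_0+O({\epsilon}^2\ln{\epsilon})$.

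First I would prove the analogue of Lemma \ref{lm3.6}, namely $p(\kappa,{\epsilon})=-\frac{{\rm i}q_0}{2}{\epsilon}+O({\epsilon}^2\ln^2{\epsilon})$ at $\kappa={\rm Re}\,k_n$. Expanding $p$ (equivalently $p_1$) at $k_{n,0}=(n+\tfrac12)\pi/d$ and noting that $\cot(\kappa d)$ vanishes to first order there with $\left(\frac{\cot\kappa d}{\kappa}\right)'\big|_{\kappa=k_{n,0}}=-\frac{d}{k_{n,0}}$ — exactly the same leading coefficient that $-\tan\kappa d$ produces at $n\pi/d$ in the PEC--PMC case — the expansion \eqref{knse} has the same structure as \eqref{kns}. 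Using $\gamma_1=\gamma_0-\ln 2-\frac{\pi{\rm i}}{2}$ one then reads off ${\rm Im}\,k_n=\frac{k_{n,0}}{d\pi}({\rm Im}\,\gamma_1){\epsilon}+O({\epsilon}^2\ln{\epsilon})=-\frac{k_{n,0}{\epsilon}}{2d}+O({\epsilon}^2\ln{\epsilon})$, and substituting $\kappa={\rm Re}\,k_n$ into $p(\kappa,{\epsilon})=p_1'(k_n)(\kappa-k_n)+O({\epsilon}^2\ln^2{\epsilon})$ with $p_1'(k_n)=\left(-\frac{d}{k_{n,0}}+\frac{{\epsilon}}{\pi k_{n,0}}\right)q_0+O(|k_n-k_{n,0}|)$ makes the real part cancel to leading order, leaving $p(\kappa,{\epsilon})=-\frac{{\rm i}q_0}{2}{\epsilon}+O({\epsilon}^2\ln^2{\epsilon})$ and hence $\frac1p=\frac{2{\rm i}}{q_0{\epsilon}}\bigl(1+O({\epsilon}\ln^2{\epsilon})\bigr)$.

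Next I would reproduce Lemma \ref{lm3.7} for \eqref{FOE}: writing it as $(\mathscr L^{-1}\mathscr P+\mathscr I)\varphi=\mathscr L^{-1}(f/{\epsilon})$ with eigenvalue $\lambda(\kappa,{\epsilon})$ and expanding $f/{\epsilon}=2/{\epsilon}+{\rm i}\kappa X\sin\theta+O(\kappa^2{\epsilon})$ yields, exactly as before, $\langle\varphi,1_{(0,1)}\rangle_{L^2(0,1)}=\frac{q_0(2+\sin\theta\cdot O(\kappa{\epsilon}))}{p}+O({\epsilon}^2\ln{\epsilon})$. Then, for $\boldsymbol x\in\mathbb R^2_+\setminus\bar B_R^+$, Green's identity with the half-space Green function \eqref{gfhs} gives $u_{\epsilon}^{\rm sc}(\boldsymbol x)=-{\epsilon}\int_0^1 G(\boldsymbol x,({\epsilon}Y,0))\varphi(Y)\,{\rm d}Y=-{\epsilon}G(\boldsymbol x,(0,0))\bigl(1+O({\epsilon})\bigr)\langle\varphi,1_{(0,1)}\rangle_{L^2(0,1)}$; inserting the value of $\langle\varphi,1_{(0,1)}\rangle$ at $\kappa={\rm Re}\,k_n$ and the formula for $1/p$ above gives $u_{\epsilon}^{\rm sc}(\boldsymbol x)=-2{\rm i}\,G(\boldsymbol x,(0,0))\bigl(2+\sin\theta\cdot O(\kappa{\epsilon})\bigr)+O({\epsilon}\ln^2{\epsilon})$. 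Since off resonance $u_{\epsilon}^{\rm sc}(\boldsymbol x)=O({\epsilon})$ in this region (because $\langle\varphi,1_{(0,1)}\rangle=O(1)$ there) while at $\kappa={\rm Re}\,k_n$ it is $O(1)$, the scattered magnetic field is enhanced by a factor $O(1/{\epsilon})$; the same order for the scattered electric field then follows from $|\nabla\times\boldsymbol H_{\epsilon}|=|\nabla u_{\epsilon}|$ and Ampere's law $\nabla\times\boldsymbol H_{\epsilon}=-{\rm i}\omega\varepsilon\boldsymbol E_{\epsilon}$.

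The main obstacle — really the only place needing fresh care — is the first step: one must set up the branch cuts and the root-localisation domain $\Omega_{\delta_0,\theta_0,M}$ so that it now avoids the origin and the poles of $\cot\kappa d$, and then verify via Rouche's theorem that each $k_{n,0}=(n+\tfrac12)\pi/d$ still carries a single simple root $k_n$. Once one observes that $\cot\kappa d$ has a simple zero at each $(n+\tfrac12)\pi/d$, exactly like $-\tan\kappa d$ at $n\pi/d$, the sign change from $-\tan$ to $\cot$ is immaterial and the rest of the argument is already available — the operator $K$, the nonvanishing of $q_0$, and the bounds on $K^1_{\infty},K^2_{\infty}$ from Section \ref{sec3}, and the single-layer continuity used in deriving \eqref{BIT} from the preceding subsection — so no new analytic input is needed.
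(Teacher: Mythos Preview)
Your proposal is correct and follows exactly the route the paper indicates: the paper omits the proof entirely, saying only ``we can follow the same steps as those in Subsection~\ref{sub3.4} with the new $p(\kappa,\epsilon)$,'' and you have spelled out precisely those steps with the appropriate substitutions. Your observation that $K$, $K^1_\infty$, $K^2_\infty$, and $q_0$ are literally unchanged is the key structural point, and your computation $\left(\frac{\cot\kappa d}{\kappa}\right)'\big|_{\kappa=k_{n,0}}=-\frac{d}{k_{n,0}}=-\frac{d^2}{(n+\tfrac12)\pi}$ matches the paper's expansion of $p_1$; note also that the ``obstacle'' you flag (branch cuts and root localisation) is in fact already handled in the paper's proof of the preceding theorem on the asymptotics of the resonances~\eqref{knse}, so nothing new is required there either.
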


Next we demonstrate the field enhancement inside the cavity. It
follows from \eqref{wg1} that the total field $u_{\epsilon }$
can be expanded as the sum of waveguide modes:
\begin{align}\label{Ewg}
 u_{\epsilon} (\boldsymbol x) =  \frac{1}{\sqrt \epsilon} \left( \alpha_0^+
e^{-{\rm i} \kappa x_2} +\alpha_0^- e^{{\rm i} \kappa (x_2+d)}\right)+
\sum\limits_{n= 1}^{\infty}  \sqrt {\frac{2}{\epsilon}}\left(  \alpha_n^{+} e^{
-{\rm i} \beta_n x_2} + \alpha_n^{-} e^{{\rm i} \beta_n (x_2 + d)}\right) \cos
\left(\frac{m \pi x_1}{\epsilon} \right).
\end{align}

\begin{lemm}\label{Lem5.5}
The coefficients in \eqref{Ewg} have the following properties:
\begin{align*}
\frac{\alpha_0^+}{\sqrt{\epsilon}} = \frac{ q_0\left( 2 + \sin \theta \cdot
O(\kappa \epsilon)  \right)}{{\rm i} \kappa (-1+e^{{\rm i} 2 \kappa d})p }+O
({\epsilon}^2 \ln {\epsilon}),\quad
\frac{\alpha_0^+}{\sqrt{\epsilon}}=\frac{ q_0\left( 2 +\sin \theta \cdot
O(\kappa \epsilon) \right)}{-2 \kappa \sin (\kappa d) p }+O ({\epsilon}^2 \ln
{\epsilon})
\end{align*}
and
\begin{align*}
\left|\sqrt{\frac{2}{\epsilon}} \alpha_n^+ \right|\leq \frac{C}{ \sqrt n}, \quad
 \left|\sqrt{\frac{2}{\epsilon}} \alpha_n^- \right|\leq \frac{C}{ \sqrt n},
\quad n\geq 1,
\end{align*}
where the positive constant $C$ is independent of $\lambda, \epsilon $ and $ n$.
\end{lemm}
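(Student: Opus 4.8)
The plan is to mirror the proof of Lemma~\ref{cep}, with the Dirichlet cavity bottom there replaced by the Neumann bottom here. First I would use $\partial_{x_2}u_\epsilon(x_1,-d)=0$ on $\Gamma_\epsilon^-$, equivalently the relation \eqref{uf1}, to deduce $\alpha_n^-=\alpha_n^+e^{{\rm i}\beta_n d}$ for every $n\geq0$; the sign here is $+$, whereas the PMC bottom in Lemma~\ref{cep} produced $-$. Substituting this into \eqref{Ewg} rewrites the cavity field as a sum of \emph{standing-wave} modes, each proportional to $\cos\beta_n(x_2+d)$ rather than to $\sin\beta_n(x_2+d)$:
\begin{align*}
u_\epsilon(\boldsymbol x)=\frac{\alpha_0^-}{\sqrt\epsilon}\bigl(e^{-{\rm i}\kappa(x_2+d)}+e^{{\rm i}\kappa(x_2+d)}\bigr)+\sum_{m\geq1}\sqrt{\tfrac{2}{\epsilon}}\,\alpha_m^-\bigl(e^{-{\rm i}\beta_m(x_2+d)}+e^{{\rm i}\beta_m(x_2+d)}\bigr)\cos\Bigl(\frac{m\pi x_1}{\epsilon}\Bigr).
\end{align*}
I would then differentiate in $x_2$, set $x_2=0$, and use the continuity of $\partial_{x_2}u_\epsilon$ across $\Gamma_\epsilon^+$ together with the definition $\varphi(Y)=-\partial_{y_2}u_\epsilon(\epsilon Y,0)$ of \eqref{ND1} to write $\partial_{x_2}u_\epsilon(x_1,0)=-\varphi(x_1/\epsilon)$. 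Expanding both sides in the orthonormal basis $\{\phi_n\}$ of $L^2(0,\epsilon)$ and matching the $\phi_0$- and $\phi_m$-components (the pairings with $\varphi$ read in the $\tilde H^{-1/2}(0,1)$--$H^{1/2}(0,1)$ duality) expresses $\alpha_0^-$ through $\langle\varphi,1_{(0,1)}\rangle_{L^2(0,1)}$ with the scalar factor $2\kappa\sin(\kappa d)$ (the analogue of the factor $2{\rm i}\kappa\cos(\kappa d)$ in \eqref{al2}), and expresses $\alpha_m^-$ ($m\geq1$) through $\langle\varphi,\cos(m\pi\,\cdot)\rangle_{L^2(0,1)}$ with the factor $\beta_m\sin(\beta_m d)$; together with $\alpha_m^+=\alpha_m^-e^{-{\rm i}\beta_m d}$ this pins down every coefficient.

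For the $n=0$ coefficients, I would insert the asymptotics of $\langle\varphi,1_{(0,1)}\rangle_{L^2(0,1)}$ at $\kappa={\rm Re}\,k_n$. This is the PEC-PEC analogue of Lemma~\ref{lm3.7} and of \eqref{af}, obtained by rerunning Section~\ref{sec3} with the present $\Gamma_2(\kappa,\epsilon)=\cot(\kappa d)/(\epsilon\kappa)+2\ln2/\pi$ and the corresponding $p(\kappa,\epsilon)$: it gives $\langle\varphi,1_{(0,1)}\rangle_{L^2(0,1)}=q_0(2+\sin\theta\cdot O(\kappa\epsilon))/p+O(\epsilon^2\ln\epsilon)$ and $\|\varphi\|_{\tilde H^{-1/2}(0,1)}\lesssim1/\epsilon$. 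Dividing by $2\kappa\sin(\kappa d)$, which is of order one and bounded below at $\kappa={\rm Re}\,k_n$ because $k_{n,0}d=(n+\tfrac12)\pi$ and $\sin((n+\tfrac12)\pi)=(-1)^n\neq0$, then passing to $\alpha_0^+$ via $\alpha_0^+=\alpha_0^-e^{-{\rm i}\kappa d}$ and the identity ${\rm i}\kappa(-1+e^{{\rm i}2\kappa d})=-2\kappa e^{{\rm i}\kappa d}\sin(\kappa d)$, yields the two stated expansions for $\alpha_0^\pm/\sqrt\epsilon$.

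For $m\geq1$ I would estimate $|\langle\varphi,\cos(m\pi\,\cdot)\rangle_{L^2(0,1)}|\leq\|\varphi\|_{\tilde H^{-1/2}(0,1)}\|\cos(m\pi\,\cdot)\|_{H^{1/2}(0,1)}\lesssim(1/\epsilon)\sqrt m$ and combine it with $|\beta_m|=\sqrt{(m\pi/\epsilon)^2-\kappa^2}\geq c\,m/\epsilon$ to get $\sqrt{2/\epsilon}\,|\alpha_m^-|\leq C/\sqrt m$. The delicate point --- and the one I expect to be the main obstacle --- is the bound for $\alpha_m^+$: the relation $\alpha_m^+=\alpha_m^-e^{-{\rm i}\beta_m d}$ carries the exponentially large factor $e^{-{\rm i}\beta_m d}=e^{d\sqrt{(m\pi/\epsilon)^2-\kappa^2}}$, which would naively destroy the estimate. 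It is, however, exactly compensated by the $\sin(\beta_m d)={\rm i}\sinh\bigl(d\sqrt{(m\pi/\epsilon)^2-\kappa^2}\bigr)$ sitting in the denominator, because
\begin{align*}
\frac{e^{d\sqrt{(m\pi/\epsilon)^2-\kappa^2}}}{\sinh\bigl(d\sqrt{(m\pi/\epsilon)^2-\kappa^2}\bigr)}=\frac{2}{1-e^{-2d\sqrt{(m\pi/\epsilon)^2-\kappa^2}}}
\end{align*}
stays bounded by a constant independent of $m$, $\kappa$, and $\epsilon$ for small $\epsilon$. Hence $\sqrt{2/\epsilon}\,|\alpha_m^+|\leq C/\sqrt m$ with the same uniform $C$, completing the proof.
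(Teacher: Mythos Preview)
Your proposal is correct and follows the same approach as the paper: rewrite the waveguide expansion using the Neumann bottom relation $\alpha_n^-=\alpha_n^+e^{{\rm i}\beta_n d}$, differentiate at $x_2=0$, match Fourier coefficients against $\varphi$, and invoke the PEC--PEC analogue of Lemma~\ref{lm3.7} together with the duality bound on $\langle\varphi,\cos(m\pi\,\cdot)\rangle$. The paper sidesteps your ``delicate point'' by parametrizing the high modes by $\alpha_n^+$ rather than $\alpha_n^-$, so the scalar factor on $\Gamma_\epsilon^+$ is ${\rm i}\beta_n(-1+e^{{\rm i}2\beta_n d})\approx -{\rm i}\beta_n$ with no exponentially large piece, and then $|\alpha_n^-|=|\alpha_n^+|\,e^{-|\beta_n|d}\leq|\alpha_n^+|$ is immediate --- your compensation argument is correct but unnecessary once the bookkeeping is done this way.
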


\begin{proof}
It follows from the boundary condition $\partial_\nu u_{\epsilon}=0$ on
$\Gamma_{\epsilon}^{-}$ that the expansion of \eqref{Ewg}
becomes
\begin{align*}
u_{\epsilon} (\boldsymbol x)=&\frac{\alpha_0^+ e^{{\rm i} \kappa
d}}{\sqrt{\epsilon}}\left(e^{-{\rm i} \kappa (x_2 +d)} +e^{{\rm i} \kappa
(x_2 +d)} \right)\\
&+\sum\limits_{n= 1}^{\infty} \sqrt{\frac{2}{\epsilon}}
\alpha_{n}^+  e^{{\rm i} \beta_n d} \left(e^{-{\rm i} \beta_n (x_2 +d)} +e^{{\rm
i} \beta_n (x_2 +d)} \right)\cos \left( \frac{n \pi x_1}{ \epsilon } \right),
\quad \boldsymbol x\in D_\epsilon.
\end{align*}
Taking the derivative of the above equality with respect to $x_2$ on
$\Gamma_{\epsilon}^+$ yields
\begin{align*}
\frac{\partial u_{\epsilon}(x_1, 0)}{\partial x_2} =
\frac{\alpha_0^+}{\sqrt{\epsilon}} {\rm i} \kappa (-1+e^{{\rm i} 2 \kappa
d})+\sum\limits_{n=1}^{\infty} \sqrt{\frac{2}{\epsilon}} \alpha_n^+ {\rm i}
\beta_n (-1+e^{{\rm i} 2 \beta_n d}) \cos \left( \frac{n \pi x_1}{ \epsilon }
\right).
\end{align*}
Multiplying the above equality by $\phi_0(x_1)$ and integrating it on
$\Gamma_{\epsilon}^+$ yields
\begin{align*}
\frac{\alpha_0^+}{\sqrt{\epsilon}} {\rm i} \kappa (-1+e^{{\rm i} 2 \kappa d})
&=\frac{1}{\epsilon}\int_{\Gamma_{\epsilon}^+} \frac{\partial
u_{\epsilon}}{\partial x_2} (x_1, 0) {\rm d} x_1=\frac{1}{\epsilon}
\int_{\Gamma_{\epsilon}^+} -\varphi (x_1/\epsilon) {\rm d} x_1\\
&=\int_0^1 \varphi (X) {\rm d} X=\frac{ q_0\left( 2 +\sin \theta \cdot O(\kappa
\epsilon)  \right)}{\tilde p}+O ({\epsilon}^2 \ln {\epsilon}).
\end{align*}
Here we use $ \langle
\varphi, 1_{(0,1)} \rangle_{L^2 (0,1)}=\frac{ q_0\left( 2 +\sin \theta \cdot
O(\kappa \epsilon) \right)}{\tilde p}+O ({\epsilon}^2 \ln {\epsilon})$, which
is shown in the proof of Lemma \ref{lm3.7}. By  $\alpha_0^-= \alpha_0^+ e^{{\rm
i} \kappa d}$, we get
\begin{align*}
&\frac{\alpha_0^+}{\sqrt{\epsilon}} = \frac{ q_0\left( 2 +\sin \theta \cdot
O(\kappa \epsilon) \right)}{{\rm i} \kappa (-1+e^{{\rm i} 2 \kappa d}) \tilde p
}+O ({\epsilon}^2 \ln {\epsilon}),\\
&\frac{\alpha_0^-}{\sqrt{\epsilon}}=\frac{ q_0\left( 2 +\sin \theta \cdot
O(\kappa \epsilon)  \right)}{-2 \kappa \sin (\kappa d) \tilde p }+O
({\epsilon}^2 \ln {\epsilon}).
\end{align*}

Similarly, for $n \geq 1,$ we may obtain
\begin{align*}
\sqrt{\frac{2}{\epsilon}} \alpha_n^+ {\rm i} \beta_n (-1+e^{{\rm i} 2 \beta_n
d}) =-\int_0^1 \varphi (X) \cos (n \pi X) {\rm d} X.
\end{align*}
Note that ${\rm i} \beta_n =O (- \frac{n}{\epsilon})$, $\|\varphi\|_{H^{-1/2}(0,
1)} \lesssim \frac{1}{\epsilon}$, $\|\cos (n \pi X)\|_{H^{1/2} (0,
1)}\lesssim \sqrt n$. Thus we have
\begin{align*}
\left|\sqrt{\frac{2}{\epsilon}} \alpha_n^+ \right|\leq \frac{C}{ \sqrt n}, \quad
 \left |\sqrt{\frac{2}{\epsilon}} \alpha_n^- \right|\leq \frac{C}{ \sqrt n},
\quad n\geq 1,
\end{align*}
where the positive constant $C$ is independent of $\lambda, \epsilon,  n$.
\end{proof}

The following result gives the field enhancement inside the
cavity.

\begin{theo}\label{PECt}
Let $\mathring{D}_{\epsilon}:=\{  \boldsymbol x \in
D_{\epsilon}:-d \leq  x_2 \leq -\epsilon\}$ be the interior of the cavity
$D_\epsilon$. If $\kappa = {\rm Re}k_n$ where $k_n$ is given in \eqref{knse},
then we have for $\epsilon \ll \lambda$ that
\begin{align*}
u_{\epsilon} (\boldsymbol x)= \left( \frac{2}{\epsilon}+\sin \theta \cdot O(1) +
O(\ln ^2{\epsilon}) \right) \frac{2 {\rm i} \cos \kappa (x_2 +d)}{\kappa  \sin
(\kappa d)} +O ({\epsilon}^2 \ln {\epsilon}),\quad x\in \mathring{D}_\epsilon.
\end{align*}
Moreover, the electric and magnetic field enhancements are of an order $O
(1/\epsilon).$
\end{theo}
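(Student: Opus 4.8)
The plan is to follow the argument used for the PEC-PMC cavity in Section~\ref{sec3} (the field-enhancement theorem following Lemma~\ref{cep}), adapting its three ingredients to the Neumann bottom condition: the modal expansion \eqref{Ewg} together with the coefficient estimates of Lemma~\ref{Lem5.5}, the behaviour of the quantity $p(\kappa,\epsilon)$ at the resonant frequencies, and Ampere's law relating $\boldsymbol E_\epsilon$ to $u_\epsilon$.

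First I would establish the PEC-PEC analogue of Lemma~\ref{lm3.6}: at $\kappa={\rm Re}\,k_n$ one has $p(\kappa,\epsilon)=-\tfrac{{\rm i}q_0}{2}\epsilon+O(\epsilon^2\ln^2\epsilon)$, hence $\tfrac1p=\tfrac{2{\rm i}}{q_0\epsilon}\bigl(1+O(\epsilon\ln^2\epsilon)\bigr)$. The computation is the same in spirit as the one starting from \eqref{dfp}: expand $p_1$ about $k_{n,0}=(n+\tfrac12)\pi/d$, using that $\bigl(\tfrac{\cot\kappa d}{\kappa}\bigr)'\big|_{\kappa=k_{n,0}}=-\tfrac{d}{k_{n,0}}$ — exactly as $\bigl(-\tfrac{\tan\kappa d}{\kappa}\bigr)'\big|_{\kappa=n\pi/d}=-\tfrac{d}{n\pi/d}$ in the PMC case — so that $p_1'(k_{n,0})=-\tfrac{d}{k_{n,0}}q_0+O(\epsilon)$; then read off from \eqref{knse} that ${\rm Im}\,k_n=\tfrac{k_{n,0}}{d\pi}({\rm Im}\,\gamma_1)\epsilon=-\tfrac{k_{n,0}\epsilon}{2d}$ since ${\rm Im}\,\gamma_1=-\tfrac\pi2$; and set $\kappa={\rm Re}\,k_n$ in $p(\kappa,\epsilon)=p_1'(k_n)(\kappa-k_n)+O((\kappa-k_n)^2)+O(\epsilon^2\ln\epsilon)$.

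Next I would substitute the formulas of Lemma~\ref{Lem5.5} into \eqref{Ewg}. Using the Neumann bottom relation $\alpha_0^-=\alpha_0^+e^{{\rm i}\kappa d}$, the $n=0$ contribution collapses to $\tfrac{\alpha_0^-}{\sqrt\epsilon}\bigl(e^{-{\rm i}\kappa(x_2+d)}+e^{{\rm i}\kappa(x_2+d)}\bigr)=\tfrac{2\alpha_0^-}{\sqrt\epsilon}\cos\kappa(x_2+d)$, so that in $\mathring D_\epsilon$
\[
u_\epsilon(\boldsymbol x)=\frac{q_0\bigl(2+\sin\theta\cdot O(\kappa\epsilon)\bigr)\cos\kappa(x_2+d)}{-\kappa\sin(\kappa d)\,p}+(\text{evanescent modes})+O(\epsilon^2\ln\epsilon),\qquad\boldsymbol x\in\mathring D_\epsilon.
\]
The sum over the modes $n\ge1$ is controlled by $|\sqrt{2/\epsilon}\,\alpha_n^\pm|\le C/\sqrt n$ from Lemma~\ref{Lem5.5} together with the exponential decay of $e^{\pm{\rm i}\beta_n x_2}$ away from $\Gamma_\epsilon^+$, and is of lower order than the leading $O(1/\epsilon)$ term in $\mathring D_\epsilon$. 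Note that at $k_{n,0}$ one has $\cos(\kappa d)=\cos\bigl((n+\tfrac12)\pi\bigr)=0$ while $\sin(\kappa d)=(-1)^n\ne0$, so $\cos\kappa(x_2+d)/\sin(\kappa d)$ is regular throughout the cavity; this is why the Neumann-bottom resonant mode is built from $\cos$ rather than, as in the PMC case, from $\sin$. Inserting $\tfrac1p=\tfrac{2{\rm i}}{q_0\epsilon}\bigl(1+O(\epsilon\ln^2\epsilon)\bigr)$ and expanding the $(2+\sin\theta\cdot O(\kappa\epsilon))/\epsilon$ factor gives the asserted expansion of $u_\epsilon$. The enhancement of $\boldsymbol H_\epsilon=(0,0,u_\epsilon)$ of order $O(1/\epsilon)$ then follows since $\|u^{\rm inc}\|_{L^2(D_\epsilon)}=\sqrt{\epsilon d}$, and the same order for $\boldsymbol E_\epsilon$ follows from Ampere's law $\nabla\times\boldsymbol H_\epsilon=-{\rm i}\omega\varepsilon\boldsymbol E_\epsilon$, exactly as in the PEC-PMC case.

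The main obstacle is the first step, making the resonance asymptotics of $p$ rigorous: one must combine the two-term expansion of $p_1$ at $k_{n,0}$ with the already-proven expansion \eqref{knse} of the true roots $k_n$ and check that the remainders are genuinely $O(\epsilon^2\ln^2\epsilon)$ once $\kappa$ is taken equal to ${\rm Re}\,k_n$. The rest is bookkeeping: the modal decomposition, the boundedness of $\mathscr B^{\rm PEC}$ (Lemma~\ref{tth1}), and the operator-theoretic machinery have all been set up already, and passing from the PMC to the PEC bottom only replaces $\tan\kappa d$ by $\cot\kappa d$, and $\sin$ by $\cos$, in the reflection factor, which does not change the structure of the argument.
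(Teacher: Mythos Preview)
Your proposal is correct and follows essentially the same route as the paper: combine the modal expansion \eqref{Ewg} with the coefficient formulas of Lemma~\ref{Lem5.5}, reduce the $n=0$ term to $\tfrac{q_0(2+\sin\theta\cdot O(\kappa\epsilon))\cos\kappa(x_2+d)}{-\kappa\sin(\kappa d)\,p}$, bound the $n\ge1$ modes by their exponential decay in $\mathring D_\epsilon$, insert $\tfrac1p=\tfrac{2{\rm i}}{q_0\epsilon}(1+O(\epsilon\ln^2\epsilon))$, and conclude with Ampere's law. In fact you are slightly more explicit than the paper, since you sketch the PEC-PEC analogue of Lemma~\ref{lm3.6} (the asymptotics of $p$ at $\kappa={\rm Re}\,k_n$), which the paper simply quotes from the PMC computation.
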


\begin{proof}
In the region $\mathring{D}_{\epsilon},$ we have from  the expansion of
\eqref{Ewg} and Lemma \ref{Lem5.5} that
\begin{align*}
u_{\epsilon} (\boldsymbol x)
&=\frac{q_0 (2+\sin \theta \cdot O(\kappa \epsilon)}{  {\rm i} \kappa
(-e^{-{\rm i} \kappa  d} +e^{{\rm i} \kappa  d}) p} \left( e^{{\rm -i} \kappa
(x_2+d)} +e^{{\rm i} \kappa  (x_2+d)} \right) +O ({\epsilon}^2 \ln {\epsilon})+O
(e^{- 1/{\epsilon}})\\
&=\frac{q_0  (2+ \sin \theta \cdot O(\kappa \epsilon))\cos  \kappa (x_2+d)}
{-\kappa \sin \kappa d} \frac{1}{p}+O ({\epsilon}^2 \ln {\epsilon})+O
(e^{- 1/{\epsilon}}).
\end{align*}
Using the fact that at the resonant frequencies $\kappa ={\rm Re} k_n,$
\begin{align*}
\frac{1}{ p}=\frac{2 {\rm i}} {q_0 {\epsilon}} (1+O ({\epsilon} \ln^2
{\epsilon})).
\end{align*}
we obtain
\begin{align*}
u_{\epsilon} (\boldsymbol x)= \left( \frac{2}{\epsilon}+ \sin \theta \cdot O(1)
+ O(\ln ^2{\epsilon}) \right) \frac{2 {\rm i} \cos \kappa (x_2 +d)}{\kappa \sin
(\kappa d)} +O ({\epsilon}^2 \ln {\epsilon}).
\end{align*}
Therefore, the magnetic field enhancement has an order of $O (1/{\epsilon})$.
We conclude from Ampere's law that the electric field enhancement also has
an order of $O(1/{\epsilon})$.
\end{proof}

Finally, on the open aperture $\Gamma_\epsilon^+$, we may follow similar steps
as those in subsection \ref{sub3.4.3} and show the electromagnetic field
enhancement. The proof is omitted again for brevity.

\begin{theo}
At the resonant frequencies $\kappa  ={\rm Re}k_n$ where $k_n$ is given in
\eqref{knse}, the enhancement of the scattered electric and magnetic fields has
an order $O(1/\epsilon)$ on the open aperture $\Gamma_{\epsilon}^+$.
\end{theo}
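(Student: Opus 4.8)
The plan is to transcribe the argument of Subsection \ref{sub3.4.3} to the PEC-PEC geometry, the only substantive changes being that the interior Green function and the scalar $p(\kappa,\epsilon)$ are now the ones built around $\Gamma_2(\kappa,\epsilon)=\frac{\cot\kappa d}{\epsilon\kappa}+\frac{2\ln 2}{\pi}$ and that the resonance set is \eqref{knse}. First I would start from Green's formula for the scattered field on the aperture, use $\partial_\nu(u^{\rm inc}+u^{\rm ref})=0$ on $\Gamma_0$, and rescale by $x_1=\epsilon X$, $y_1=\epsilon Y$, $\varphi=-\partial_\nu u_\epsilon|_{\Gamma_\epsilon^+}$ to get
\begin{align*}
u_\epsilon^{\rm sc}(x_1,0)=-\epsilon\int_0^1 G_\epsilon^e(X,Y)\varphi(Y)\,{\rm d}Y,\qquad X=x_1/\epsilon\in(0,1).
\end{align*}
Substituting $G_\epsilon^e=\Gamma_1(\kappa,\epsilon)+\frac{1}{\pi}\ln|X-Y|+R_1^\epsilon(X,Y)+R_2^\epsilon(X,Y)$ from Lemma \ref{AE} (the expansion of $G_\epsilon^e$ is unchanged for the PEC-PEC problem) and splitting the integral into a constant part $-\epsilon\Gamma_1\langle\varphi,1_{(0,1)}\rangle_{L^2(0,1)}$, a logarithmic part, and a remainder, the remainder is $O(\epsilon^2\ln\epsilon)$ because the operators attached to $R_1^\epsilon,R_2^\epsilon$ are bounded from $\tilde H^{-1/2}(0,1)$ to $H^{1/2}(0,1)$ with norms $O(\epsilon^2\ln|\epsilon|)$ and $O(\epsilon^2)$ while $\|\varphi\|_{H^{-1/2}(0,1)}\lesssim 1/\epsilon$ at resonance.

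Next I would invoke the PEC-PEC counterparts of Lemma \ref{lm3.7} and Lemma \ref{lm3.6}. The first — whose proof is word for word that of Lemma \ref{lm3.7} with the new $p$ — gives $\varphi=K^{-1}1_{(0,1)}\cdot\frac{2+\sin\theta\cdot O(\kappa\epsilon)}{p}+O(\epsilon^2\ln\epsilon)$ and $\langle\varphi,1_{(0,1)}\rangle_{L^2(0,1)}=\frac{q_0(2+\sin\theta\cdot O(\kappa\epsilon))}{p}+O(\epsilon^2\ln\epsilon)$; writing $h_2(X)=\int_0^1(K^{-1}1_{(0,1)})(Y)\ln|X-Y|\,{\rm d}Y$, the split above becomes
\begin{align*}
u_\epsilon^{\rm sc}(x_1,0)=-\epsilon\Gamma_1(\kappa,\epsilon)\langle\varphi,1_{(0,1)}\rangle_{L^2(0,1)}-\frac{\epsilon}{\pi}\,\frac{2+\sin\theta\cdot O(\kappa\epsilon)}{p}\,h_2(x_1/\epsilon)+O(\epsilon^2\ln\epsilon).
\end{align*}
The second — which follows from \eqref{knse}, where ${\rm Im}\,\gamma_1=-\pi/2$ forces ${\rm Im}\,k_n=-k_{n,0}\epsilon/(2d)$ with $k_{n,0}=(n+\frac{1}{2})\pi/d$ — gives at $\kappa={\rm Re}\,k_n$ that $p(\kappa,\epsilon)=-\frac{{\rm i}q_0}{2}\epsilon+O(\epsilon^2\ln^2\epsilon)$, i.e. $1/p=\frac{2{\rm i}}{q_0\epsilon}(1+O(\epsilon\ln^2\epsilon))$. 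Inserting $\Gamma_1=\frac{1}{\pi}(\ln\kappa+\gamma_1)+\frac{1}{\pi}\ln\epsilon$ and this expansion of $1/p$ and collecting terms yields
\begin{align*}
u_\epsilon^{\rm sc}(x_1,0)=-\frac{4{\rm i}}{\pi}\ln\epsilon-\frac{4{\rm i}}{\pi}\left((\ln\kappa+\gamma_1)+\frac{h_2(x_1/\epsilon)}{q_0}\right)+O(\epsilon\ln^3\epsilon),
\end{align*}
so the scattered magnetic field on $\Gamma_\epsilon^+$ has size $O(\ln\epsilon)$, which is an $O(1/\epsilon)$ enhancement over the nonresonant regime where it is $O(\epsilon\ln\epsilon)$.

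For the electric field I would differentiate the aperture representation in $x_1$, obtaining $\partial_{x_1}u_\epsilon^{\rm sc}(x_1,0)=-\frac{2+\sin\theta\cdot O(\kappa\epsilon)}{\pi p}h_2'(x_1/\epsilon)+O(\epsilon^2\ln\epsilon)$, which at $\kappa={\rm Re}\,k_n$ is of order $O(1/\epsilon)$; together with $\partial_{x_2}u_\epsilon^{\rm sc}|_{\Gamma_\epsilon^+}=-\varphi=O(1/\epsilon)$ and Ampere's law $\nabla\times\boldsymbol H_\epsilon=-{\rm i}\omega\varepsilon\boldsymbol E_\epsilon$ with $\boldsymbol H_\epsilon=(0,0,u_\epsilon)$, this shows the scattered electric field on the aperture is also enhanced by $O(1/\epsilon)$. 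The main obstacle is not the aperture computation, which is a literal transcription of Subsection \ref{sub3.4.3}, but re-establishing the two imported lemmas after $\tan\kappa d$ has been replaced by $-\cot\kappa d$: one must rerun the Rouche/Neumann-series analysis of Section \ref{sec5} with the branch structure of $\cot(\kappa d)/\kappa$, checking that its simple zeros are $k_{n,0}=(n+\frac{1}{2})\pi/d$, that the perturbed roots $k_n$ retain the leading imaginary part $-k_{n,0}\epsilon/(2d)$, and hence that $p_1'(k_n)(\kappa-k_n)$ evaluated at $\kappa={\rm Re}\,k_n$ still reproduces $-\frac{{\rm i}}{2}q_0\epsilon+O(\epsilon^2\ln^2\epsilon)$ — steps already carried out for \eqref{kns}, so the verification is routine but must be done carefully.
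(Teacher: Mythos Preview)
Your proposal is correct and follows exactly the approach the paper intends: the paper itself omits the proof, stating only that one may follow the same steps as in Subsection \ref{sub3.4.3}, and your transcription does precisely that, correctly identifying that $G_\epsilon^e$ is unchanged, that Lemma \ref{lm3.7} carries over verbatim with the new $p$, and that the analogue of Lemma \ref{lm3.6} holds because the imaginary part of $k_n$ in \eqref{knse} is still $-k_{n,0}\epsilon/(2d)$. Your added remark that $\partial_{x_2}u_\epsilon^{\rm sc}|_{\Gamma_\epsilon^+}=-\varphi=O(1/\epsilon)$ is a nice explicit check for the electric-field component that the paper leaves implicit.
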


\subsection{Numerical experiments}

We show some numerical experiments to verify the theoretical findings
of the field enhancement for the PEC-PEC cavity. We take the cavity width
$\epsilon=0.005$, the cavity depth $d=1$, and the angle of incidence
$\theta=\pi/3$. Figure \ref{PEC} (left) and (right) show the plot of the
electric field enhancement $Q_{\boldsymbol E}$ the the magnetic field
enhancement factor $Q_{\boldsymbol H}$ against the wavenumber $\kappa$,
respectively. We observe that both the electric and magnetic enhancement factors
do obtain peaks at the resonant frequencies \eqref{knse}, which are close to
$\kappa=(n+\frac{1}{2})\pi, n=0, 1, 2, \dots$.

\begin{figure}
\centering
\includegraphics[width=0.45\textwidth]{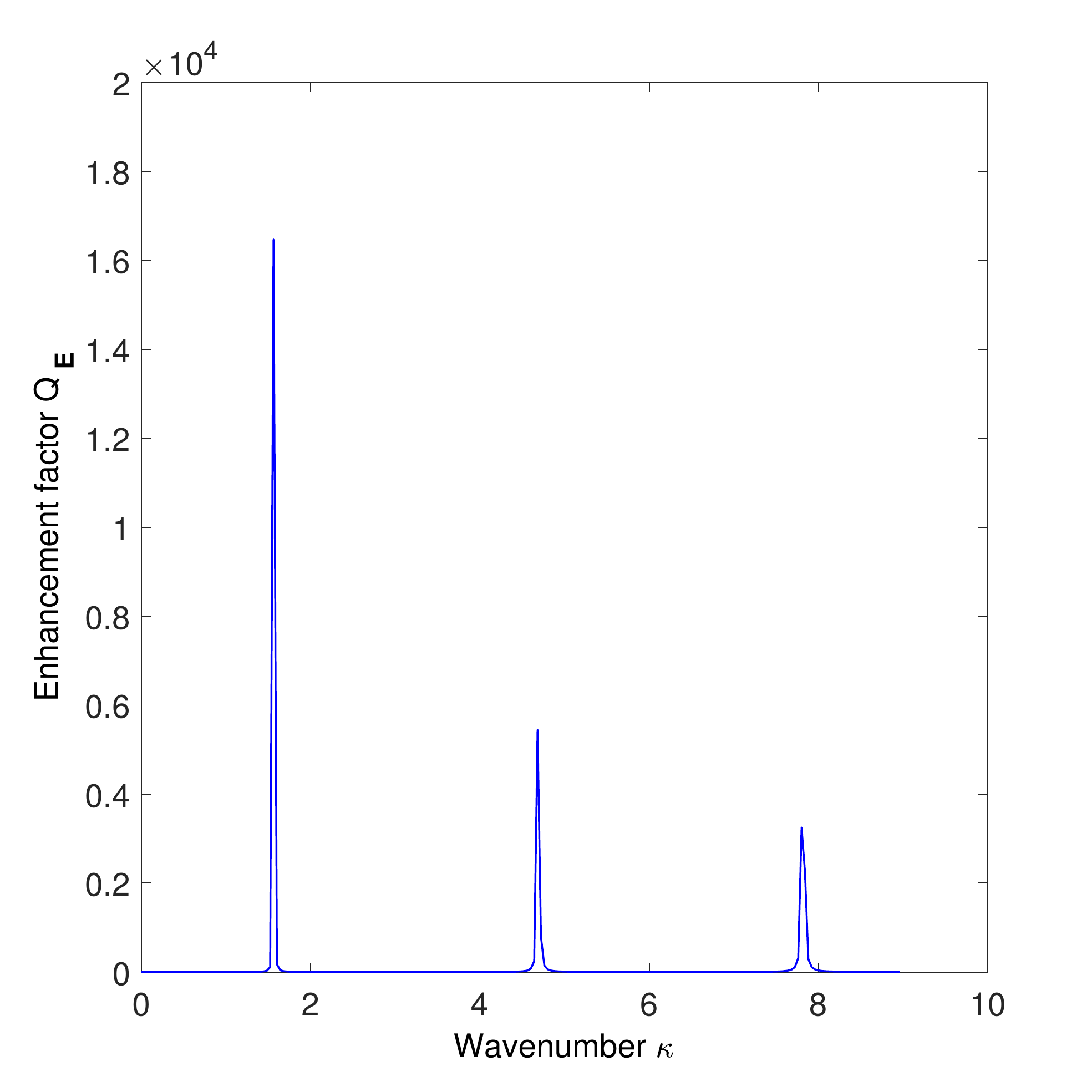}
\includegraphics[width=0.45\textwidth]{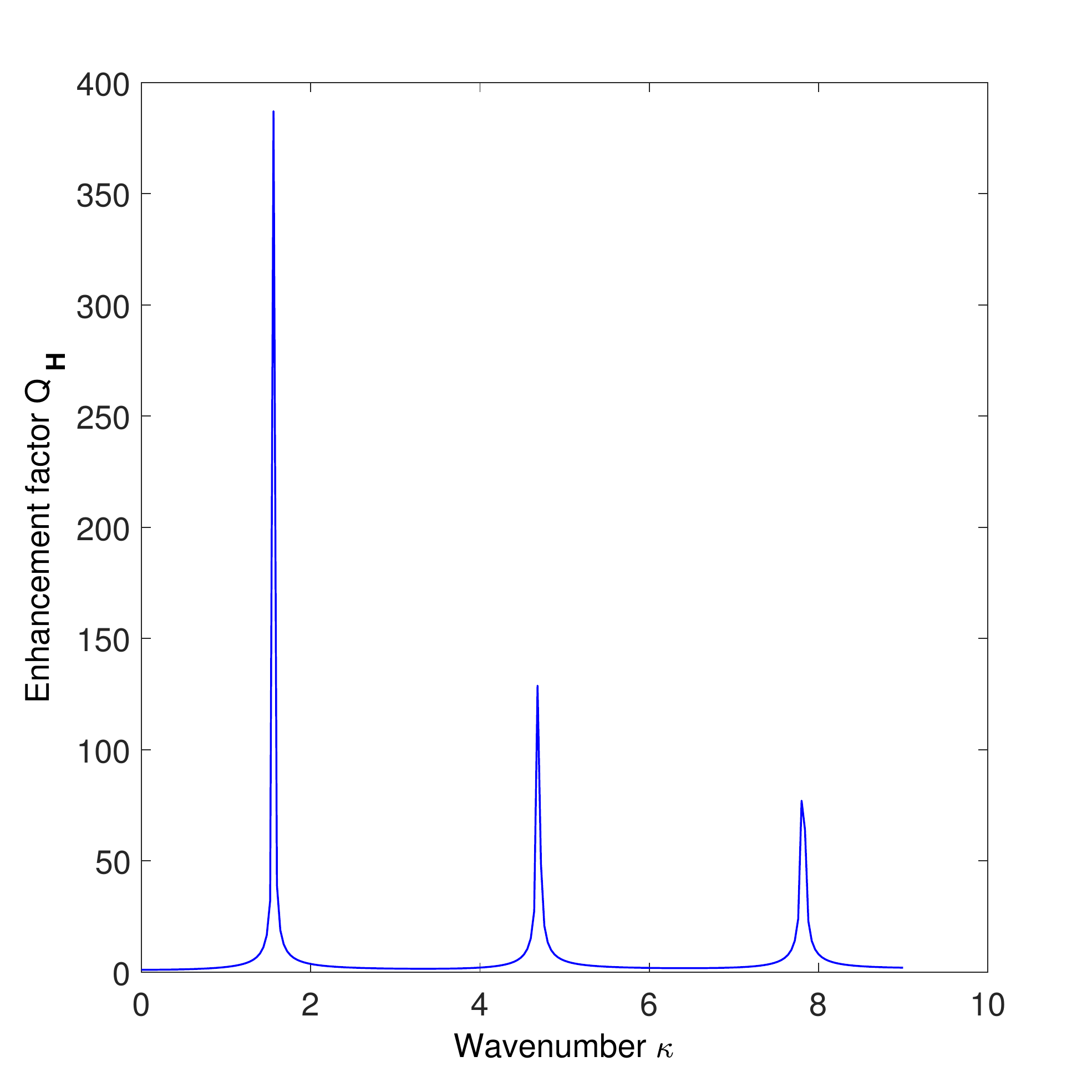}
\caption{The field enhancement factor is plotted against the
wavenumber $\kappa$ for the PEC-PEC cavity: (left) the electric
field enhancement factor $Q_{\boldsymbol E}$; (righ) the magnetic
field enhancement factor $Q_{\boldsymbol H}$.}
\label{PEC}
\end{figure}

\section{Concluding remarks}\label{sec6}

We have studied the electromagnetic field enhancement for the scattering of a
plane wave by a subwavelength rectangular open cavity. The TM polarized wave
model problem is considered for both the PEC-PMC and the PEC-PEC cavities. The
electric and magnetic field enhancements are examined in both the nonresonant
and resonant regimes. These results provide an understanding on some of the
mechanisms for the wave field amplification and localization in a subwavelength
structure. This work focuses on the TM polarization of the electromagnetic wave
in a single cavity. We will investigate the influence of multiple cavities,
whether the enhancement may occur in the transverse electric polarization (TE),
the more complicated three-dimensional Maxwell equations, and the elastic wave
equation. The results will be reported elsewhere.

\end{document}